\definecolor{note_fontcolor}{rgb}{0.800781, 0.800781, 0.800781} 
\providecommand{\tabularnewline}{\\}
\numberwithin{equation}{section}
\numberwithin{figure}{section}
\theoremstyle{plain}
\newtheorem{thm}{\protect\theoremname}
  \theoremstyle{plain}
  \newtheorem{conjecture}[thm]{\protect\conjecturename}
  \theoremstyle{definition}
  \newtheorem{defn}[thm]{\protect\definitionname}
  \theoremstyle{plain}
  \newtheorem{lem}[thm]{\protect\lemmaname}
  \theoremstyle{plain}
  \newtheorem{prop}[thm]{\protect\propositionname}
  \theoremstyle{remark}
\theoremstyle{plain}
\newtheorem{coro}[thm]{Corollary} 
\long\def\symbolfootnote[#1]#2{\begingroup%
\def\thefootnote{\fnsymbol{footnote}}\footnote[#1]{#2}\endgroup}   
\theoremstyle{remark}
\thanks{Bourgain is supported in part by NSF grant DMS-1301619. Kontorovich is supported in part by an NSF CAREER grant DMS-1254788 and DMS-1455705, an NSF FRG grant DMS-1463940, and Alfred P. Sloan Research Fellowship, and a BSF grant. Magee was supported in part by NSF Grant DMS-1128155. Oh was supported in part by NSF Grant DMS-1361673.}
\author{Michael Magee, Hee Oh and Dale Winter\\
With an Appendix by \\Jean Bourgain, Alex Kontorovich and Michael Magee}
\address{Institute for Advanced Study, Princeton, NJ 08540}
\email{bourgain@math.ias.edu}
\address{Rutgers University, New Brunswick, NJ}
\email{alex.kontorovich@rutgers.edu}
\address{Mathematics department, Yale university, New Haven, CT 06511}
\email{michael.magee@yale.edu}
\address{Mathematics department, Yale university, New Haven, CT 06511 and Korea Institute for Advanced Study, Seoul, Korea}
\email{hee.oh@yale.edu}
\address{Institute for Advanced Study, Princeton, NJ 08540}
\email{dale.alan.winter@gmail.com}
  \providecommand{\conjecturename}{Conjecture}
  \providecommand{\definitionname}{Definition}
  \providecommand{\lemmaname}{Lemma}
  \providecommand{\propositionname}{Proposition}
\providecommand{\theoremname}{Theorem}
\begin{document}
\global\long\def\nat{\mathbf{N}}
 \global\long\def\R{\mathbf{R}}
 \global\long\def\C{\mathbf{C}}
 \global\long\def\Z{\mathbf{Z}}
 \global\long\def\Q{\mathcal{Q}}
\global\long\def\RR{\mathcal{R}}

\global\long\def\a{\alpha}
 \global\long\def\b{\beta}
 \global\long\def\g{\gamma}
 \global\long\def\d{\delta}
 \global\long\def\e{\epsilon}
 \global\long\def\i{\iota}
 \global\long\def\G{\Gamma}
 \global\long\def\GG{\mathbf{G}}
 \global\long\def\vp{\varphi}

\global\long\def\GL{\mathrm{GL}}

\global\long\def\A{\mathcal{A}}
\global\long\def\AA{\mathbb{A}}
 \global\long\def\B{\mathcal{B}}
 \global\long\def\E{\mathcal{E}}
 \global\long\def\H{\mathbb{H}}
 \global\long\def\HH{\mathcal{H}}
 \global\long\def\N{\mathcal{N}}
 \global\long\def\O{\mathcal{O}}
 \global\long\def\Ohat{\widehat{\O}}
 \global\long\def\P{\mathcal{P}}
 \global\long\def\K{\mathcal{K}}
 \global\long\def\k{\kappa}
 \global\long\def\T{\mathbb{T}}
 \global\long\def\s{\sigma}
 \global\long\def\spec{\mathrm{spec}}
 \global\long\def\SO{\mathrm{SO}}
 \global\long\def\End{\mathrm{End}}
 \global\long\def\so{\mathfrak{so}}
 \global\long\def\SL{\mathrm{SL}}
 \global\long\def\CC{\mathcal{C}}
 \global\long\def\vol{\mathrm{vol}}
 \global\long\def\Ind{\mathrm{Ind}}

\global\long\def\F{\mathbb{F}}
 \global\long\def\D{\mathcal{D}}
 \global\long\def\L{\mathcal{L}}
 \global\long\def\Mhat{\hat{\mathcal{M}}}
 \global\long\def\diam{\mathrm{diam}}
 \global\long\def\U{\mathcal{U}}
 \global\long\def\Int{\mathrm{Int}}
 \global\long\def\I{\mathcal{I}}
 \global\long\def\Xhat{\widehat{X}}
 \global\long\def\NN{\mathbb{N}}
 \global\long\def\Lip{ {C^{1}}}
 \global\long\def\ev{\mathrm{ev}}
 \global\long\def\tr{\mathrm{tr}}
\global\long\def\K{K}
\global\long\def\br{\mathbf R}

\title{Uniform congruence counting for Schottky semigroups in $\SL_{2}(\Z)$}
\begin{abstract} Let $\Gamma$ be  a Schottky semigroup in $\SL_2(\Z)$,
and for $q\in \mathbf N$, let 
$\Gamma(q):=\{\gamma\in \Gamma: \gamma= e \text{ (mod $q$)}\}$ be its congruence subsemigroup
of level $q$. Let $\delta$ denote the Hausdorff dimension of the limit set of $\Gamma$.
 We prove the following uniform congruence counting theorem
with respect to the family of Euclidean norm balls $B_R$ in $M_2(\R)$ of radius $R$:
  for all positive integer $q$ with no small prime factors, 
 $$\# (\Gamma (q)  \cap B_R )= c_\Gamma \frac{R^{2\delta}}{ \# (\SL_2(\Z/q\Z))} +O(q^C R^{2\delta -\epsilon})$$
 as $R\to \infty$ for some $c_\Gamma >0, C>0, \e>0$ which are independent of $q$.
Our technique also applies to give a similar counting result for the continued fractions semigroup of $\SL_2(\Z)$,
which arises in the study of Zaremba's conjecture on continued fractions. 
\end{abstract}

\maketitle

\section{Introduction} Let $\SL_2(\R)$ act on $\R\cup\{\infty\}$ by M\"obius transformations.
We say that the collection of elements $g_1, \ldots, g_k\in \SL_2(\R)$, $k\ge 2$, is  a Schottky generating set
if there exist mutually disjoint compact intervals $I_1, \ldots, I_{k}, J_1, \ldots, J_k$ in $\R$ such that
$g_i$ maps the exterior of $J_i$ onto the interior of $I_{i}$ for each $1\le i\le k$.
\markleft{}
We call a semigroup $\Gamma\subset$
$\SL_{2}(\R)$ Schottky if it is generated by some Schottky generating set as a semigroup. By the ping-pong argument,
Schottky semigroups are necessarily discrete and free.
Schottky semigroups are ubiquitous in $\SL_2(\R)$; for instance, for any hyperbolic elements
$h_1, h_2\in \SL_2(\br)$ with no common fixed points on  $\R\cup\{\infty\}$, the pair $h_1^m,h_2^m$
forms a Schottky generating set for all sufficiently large $m$. 

When $\Gamma $ is a semigroup in $\SL_2(\Z)$ and $q\in \mathbf N$, the congruence subsemigroup 
of $\Gamma$ of level $q$
is defined by
$$\Gamma(q):=\{\gamma\in \Gamma: \gamma=e \text{ mod $q$}\} .$$

The main aim of this paper is to study a congruence lattice point counting problem for $\Gamma(q)$ in a Schottky semigroup $\Gamma\subset \SL_2(\mathbf Z)$ with a uniform power-savings error term.
 For $R>0$, consider the ball of radius $R$ with respect to the Frobenius norm: \[ B_R:=
\Big\{ \left(\begin{array}{cc}
a & b\\
c & d
\end{array}\right) \in \SL_2(\R) : \sqrt{a^{2}+b^{2}+c^{2}+d^{2}} <R \Big\}.
\]
The following is a simplified version of our main theorem
(see Theorem \ref{thm:maintheoremelaborate} for a more refined version):

\begin{thm}
\label{thm:mainsimple}If $\G$ is a Schottky semigroup of $\SL_2(\Z)$, there exist
$Q_{0}\in\mathbf{N}$, $c_{\Gamma}>0,$ $C>0$ and $\e>0$ such that
for all $q\in \mathbf N$ with $(Q_{0},q)=1$, 
\[ \# \Gamma(q) \cap B_R=c_{\Gamma}\frac{R^{2\delta}}{\# \SL_{2}(\Z/q\Z)}+O\left(q^{C}R^{2\delta_{\mathcal{}}-\e}\right)
\]
where $\delta>0$ is the Hausdorff dimension of the limit set of $\Gamma$.
\end{thm}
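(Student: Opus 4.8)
The plan is to encode $\G$-orbit counting into the spectral theory of a family of transfer operators twisted by congruence representations, then prove a uniform spectral gap for this family.

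First I would set up the thermodynamic/symbolic-dynamical framework. Since $\G$ is a free Schottky semigroup generated by $g_1,\dots,g_k$, words in the generators are in bijection with $\G$, and the limit set carries the action of the expanding map $T$ coming from the inverse branches $g_i^{-1}$ acting on the disjoint intervals $I_i$. The Frobenius norm of $\gamma$ is, up to bounded multiplicative error, controlled by the derivative cocycle $|(\gamma^{-1})'(x)|^{-1/2}$ on the limit set, so the counting function $\#(\G\cap B_R)$ is governed by a Dirichlet series $\sum_{\gamma\in\G}\|\gamma\|^{-2s}$, which in turn is essentially $\sum_n \tr(\L_s^n)$ for the Ruelle transfer operator $\L_s f(x) = \sum_i |(g_i)'(x)|^{s} f(g_i x)$ acting on a space of holomorphic (or $C^1$) functions on a neighborhood of the $I_j$'s. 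The pole of this resolvent at $s=\delta$ (Bowen's formula: $\delta$ is the parameter with leading eigenvalue $1$) produces the main term $c_\G R^{2\delta}$; a contour shift using a zero-free strip gives the power saving. For the congruence aspect I would introduce, for each $q$, the representation $\rho_q$ of $\G$ on $\C[\SL_2(\Z/q\Z)]$ by right translation, and the twisted operator $\L_{s,q} f(x) = \sum_i |(g_i)'(x)|^s \rho_q(g_i) f(g_i x)$ acting on vector-valued functions; then $\#(\G(q)\cap B_R)$ is extracted by projecting onto the trivial vector $e\in\SL_2(\Z/q\Z)$ (equivalently, averaging over the regular representation and using orthogonality of characters/matrix coefficients), giving the $1/\#\SL_2(\Z/q\Z)$ normalization.

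The heart of the argument, and the step I expect to be the main obstacle, is the \emph{uniform} spectral gap: I need to show that $\L_{s,q}$ has no eigenvalue of modulus close to the leading eigenvalue of $\L_{\delta}$ in a strip $\Re(s) > \delta - \e_0$, with $\e_0$ independent of $q$, apart from the contribution that reconstructs the main term (which comes from the trivial subrepresentation, since $\G(q)$ is nonempty and $\rho_q$ contains the trivial rep exactly once). This is where the appendix by Bourgain--Kontorovich--Magee enters: one proves an $\ell^2$-flattening / sum-product expansion estimate for the measures pushed around by the twisted cocycle, showing that the transfer operators mix rapidly across $\SL_2(\Z/q\Z)$ — this is the semigroup analogue of the Bourgain--Gamburd machine and of the expander-graph results of Bourgain--Varjú, and it is precisely here that the hypothesis $(Q_0,q)=1$ (no small prime factors, needed to rule out degenerate subgroups and to start the induction on scales) and the polynomial loss $q^C$ originate. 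Concretely I would: (i) reduce to $q$ prime and then prime power by a tensor/CRT argument; (ii) for $q$ prime, show a high power $\L_{s,q}^n$ has small norm on the complement of the trivial isotypic component, via an expansion estimate at a single well-chosen scale combined with a non-concentration (Diophantine) property of the limit set; (iii) extend to all squarefree $q$ by an almost-orthogonality argument among distinct primes.

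Finally, with the uniform gap in hand I would run the standard analytic extraction: write $\#(\G(q)\cap B_R)$ via a Perron/Tauberian contour integral of $\sum_n \langle \L_{s,q}^n \mathbf 1, e\rangle$ (smoothed to gain decay on vertical lines), isolate the simple pole at $s=\delta$ coming from the trivial component — whose residue is $c_\G R^{2\delta}/\#\SL_2(\Z/q\Z)$ by Plancherel on $\SL_2(\Z/q\Z)$ — and bound the shifted contour by $q^C R^{2\delta-\e}$ using the spectral gap together with polynomial-in-$q$ bounds on the norms and the number of terms. Controlling the dependence on $q$ of all the "nuisance" quantities (the width of the annulus of analyticity, the Lipschitz/operator norms of $\rho_q(g_i)$, the truncation length $n \asymp \log R$) so that only a fixed power $q^C$ survives is the bookkeeping crux; the continued-fraction semigroup case (relevant to Zaremba) follows by the same argument since it is also a Schottky-type semigroup with an expanding symbolic model, once one checks the generators satisfy the requisite ping-pong and non-concentration properties.
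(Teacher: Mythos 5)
Your overall architecture — Schottky symbolic dynamics, congruence-twisted transfer operators $\L_{s,q}$ on $\C^{\SL_2(\Z/q\Z)}$-valued functions, a uniform spectral gap feeding a contour-shift/Tauberian extraction — matches the paper's. But there are two substantive gaps.

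First, your plan for the spectral gap on the new part of $\C^{\SL_2(\Z/q\Z)}$ — reduce to primes via CRT, prove the gap at a single prime via expansion plus non-concentration, then patch together squarefree $q$ by almost-orthogonality — is essentially the Bourgain--Gamburd--Sarnak scheme, and it stops at \emph{squarefree} moduli. The theorem you are proving is for \emph{all} $q$ coprime to a fixed $Q_0$, and the whole point of the paper's Appendix is to go beyond squarefree. The route taken there is different: decouple the orbit sum into blocks, produce nearly-flat measures $\eta_j$ on $\SL_2(\Z/q\Z)$, and invoke Bourgain--Varjú's expansion theorem, which is valid for arbitrary $q$ directly (no induction up the tower of prime powers, no almost-orthogonality step). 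A $TT^*$/trace argument plus Frobenius multiplicity on the new subspace $E_q$ then converts the operator bound into the $q^{-1/4}$ saving. Your scheme as written does not reach arbitrary moduli, and naively pushing CRT to prime powers runs into the known difficulties that motivated Bourgain--Varjú in the first place.

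Second, the contour shift requires control of $\|\L_{s,q}^m\|$ for all $s$ in the strip $|\Re s - \delta| < \e$, including $|\Im s|$ large. The expansion machinery gives you nothing there: it controls the modular twist, not oscillation in the frequency parameter $b = \Im s$. You need a genuinely separate ingredient — Dolgopyat's method, adapted by Naud to this Cantor-set setting, with non-local integrability of the distortion function $\tau$, triadic partitions, and the construction of Dolgopyat operators $\N_s^J$ giving $L^2$ decay that converts to a $C^1$ bound of shape $|b|^{1+\eta}\rho^m$. In the congruence setting, the crucial observation that lets the cocycle $c_q$ pass through the non-stationary phase argument untouched is that $c_q$ is locally constant. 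Your sketch mentions a "zero-free strip" and a smoothing to gain decay on vertical lines, but does not supply the mechanism that produces the uniform-in-$q$ decay for large $|b|$; without it the vertical integral does not converge after the shift. (A smaller point: the paper goes through Lalley's renewal equation and a Laplace transform of the counting function $N_q$ rather than through $\sum_n \operatorname{tr}\L_s^n$ — the operator resolvent is applied to a test function and evaluated at a point, not traced — and then a sandwiching/iteration-of-renewal argument is needed to pass from the boundary quantity $N_q$ back to the actual orbit count $N_q^*$; this is where the parameter $\gamma_0$ and the function $G$ in Theorem \ref{thm:maintheoremelaborate} enter.)
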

The limit set of $\Gamma$ is the set of all accumulation points of an orbit $\Gamma .o$
in $\R\cup\{\infty\}$.

\noindent{\bf Remark}
\begin{enumerate}
\item When $\Gamma$ is a Schottky {\it subgroup} of $\SL_2(\Z)$, the analogous
result to Theorem \ref{thm:mainsimple} was proved by Gamburd \cite{GAMBURDGAP} for $\delta>5/6$, by
 Bourgain-Gamburd-Sarnak \cite{BGS2} for $\delta>1/2$ and by Oh-Winter \cite{OW} for any $\delta>0$. The last two results
 are restricted to the moduli condition of $q$ square-free. The counting result of Oh-Winter is deduced from \cite{MO} based on  the uniform
 exponential mixing of the geodesic flow for the congruence covers of a Schottky surface, and
 hence does not apply to the semigroup counting. 
 
\item So the novelty of  Theorem \ref{thm:mainsimple} 
lies in the treatment of a Schottky {\it semigroup} and  the uniformity of the power-savings error term
for {\it all} moduli $q$  (with no small prime factors). The extension to the arbitrary moduli $q$ case
relies on the new technology that appears in the Appendix by Bourgain, Kontorovich
and Magee. 

\item We also remark that for fixed $q$, 
Theorem \ref{thm:mainsimple} follows from the work of Naud \cite{NAUD} in this generality.
 We refer to \cite{BGS2} for more backgrounds on earlier related works. \end{enumerate}

Our methods also apply to a congruence family of semigroups related to continued
fractions and Diophantine approximation. Let $\A$ be a finite set of at least two positive integers.
Define $
\mathcal{{G}}_{A}$ to be the subsemigroup of $\mathrm{GL}_2(\Z)$ generated by
$$ g_a:= \left(\begin{array}{cc}
0 & 1\\
1 & a
\end{array}\right),\quad a\in \A .$$
We define the {\it continued fractions semigroup} $\Gamma_{\A}$ as follows:
 $$\Gamma_{\A}:=\mathcal{\mathcal{G}}_{\A}\cap\mathrm{SL_{2}(\Z)},$$
 in other words, $\G_\A$ is a semigroup generated by $\{g_a g_{a'}: a, a'\in \A\}$.
The continued fractions semigroup $\Gamma_{\A}$ is not  a Schottky semigroup; however
 the methods of proof
of Theorem \ref{thm:mainsimple} apply as well:

\begin{thm}
\label{thm:maincontinued}Theorem \ref{thm:mainsimple} also holds
for the continued fractions semigroup $\G_{\A}$.
\end{thm}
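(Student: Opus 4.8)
The plan is to reduce Theorem~\ref{thm:maincontinued} to Theorem~\ref{thm:mainsimple} by showing that the continued fractions semigroup $\G_{\A}$, while not literally Schottky, satisfies all the structural properties of a Schottky semigroup that are actually used in the proof of Theorem~\ref{thm:mainsimple}. Concretely, the generators $g_a = \left(\begin{smallmatrix} 0 & 1 \\ 1 & a \end{smallmatrix}\right)$ act on $\R \cup \{\infty\}$ by $x \mapsto a + 1/x$, which is the classical Gauss-map dynamics; iterating $g_{a_1} g_{a_2} \cdots g_{a_n}$ produces finite continued fraction convergents $[a_1; a_2, \dots, a_n]$. The first step is therefore to establish that the even-length products $\{g_a g_{a'} : a, a' \in \A\}$ generating $\G_\A$ form a \emph{conformal iterated function system} of the same type governing Schottky semigroups: one checks that each $g_a$ maps the interval $(0,\infty)$ (or a slightly enlarged compact interval containing all the relevant tails) strictly inside itself with uniform contraction in the hyperbolic metric, the open image intervals $g_a(I)$ for distinct $a$ are disjoint, and the maps satisfy the bounded distortion property. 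This is standard in the thermodynamic formalism for continued fractions (cf.\ the Bowen--Series/Mauldin--Urba\'nski framework), and it supplies a coding of the limit set by an infinite- or finite-alphabet subshift of finite type, together with a Hausdorff dimension $\delta > 0$.

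The second step is to transport the analytic engine. The proof of Theorem~\ref{thm:mainsimple} passes through a family of congruence transfer operators $\mathcal{L}_{s,q}$ acting on a space of holomorphic (or $C^1$) functions on a neighborhood of the limit set, twisted by the representation $\G \to \SL_2(\Z/q\Z)$; counting $\#\Gamma(q) \cap B_R$ is read off from the analytic behavior of $(I - \mathcal{L}_{s,q})^{-1}$ near $s = \delta$ via a contour-integration / Tauberian argument, and the uniform power-saving error term for all admissible $q$ comes from the uniform spectral gap established in the Appendix by Bourgain--Kontorovich--Magee. Every ingredient here is purely a statement about a finitely-generated free semigroup of M\"obius contractions with a good symbolic dynamics and an expanding congruence structure modulo $q$; since $\G_\A$ is free on $\{g_a\}$ (hence on $\{g_a g_{a'}\}$ after restricting to $\SL_2$) and its generators are Zariski-dense in $\SL_2$ so that strong approximation and the Bourgain--Gamburd expansion machinery apply (the relevant uniform gap is exactly what the Appendix proves, and its hypotheses are checked for continued-fraction-type generators, as used in the Zaremba-conjecture literature of Bourgain--Kontorovich), one reruns the entire argument verbatim with $B_R$ now a ball in $M_2(\R)$ and the Frobenius norm replaced appropriately.

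The one genuinely non-Schottky feature to handle carefully is that the defining intervals for $g_a$ are \emph{not} pairwise disjoint from their ``source'' intervals in the literal ping-pong sense: the image interval $g_a((0,\infty)) = (a, a + 1/\min(\A))$ is contained in $(0,\infty)$ rather than in the exterior of some $J_a$ disjoint from $I_a$. This is why $\G_\A$ is not Schottky. The fix is to observe that passing to length-two words $g_a g_{a'}$ restores a bona fide ping-pong/Schottky-type configuration on a single interval, or alternatively to note that the transfer-operator formalism only requires the weaker Mauldin--Urba\'nski open set condition plus bounded distortion, which $\G_\A$ satisfies. I expect this compatibility check --- verifying that the cylinder geometry, bounded distortion, and the holomorphic function space used in Theorem~\ref{thm:mainsimple} all survive when the ``strict Schottky'' hypothesis is relaxed to the continued-fractions setting --- to be the main (though essentially routine) obstacle; once it is in place, the congruence expansion input from the Appendix and the counting/Tauberian argument are identical, and the constants $Q_0, c_\G, C, \e$ are produced in exactly the same way.
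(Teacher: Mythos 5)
Your proposal is correct and takes essentially the same route as the paper: the paper gives no stand-alone proof of Theorem~\ref{thm:maincontinued}, but instead (in Section~\ref{sub:A-dynamical-system}, Case II) builds the dynamical system $T$ for $\Gamma_\A$ out of the length-two words $g_a g_{a'}$ and the nested intervals $I_{a,a'}=g_ag_{a'}I_A$, and then carries the entire transfer-operator/renewal/Dolgopyat/Appendix argument in parallel for both settings, which is exactly what you describe. The only concrete verifications you leave implicit that the paper makes explicit are the eventual-expansion estimate via the derivative bound \eqref{derivativebound}, the non-local-integrability of $\tau$ (Proposition~\ref{distortionprop}), and the Zariski-density check for the length-two blocks in the Appendix's Proposition~\ref{prop:gap} (needed for the Bourgain--Varj\'u expansion), but these are all covered by your phrase ``its hypotheses are checked for continued-fraction-type generators.''
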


In order to explain the relation of $\G_{\A}$ with continued fractions, we set
\[
[a_{1},\ldots,a_{l},\ldots]:= \cfrac{1}{a_{1}+\cfrac{1}{a_{2}+\ddots\cfrac{1}{a_{l}+\ddots}}}
\]  for any
sequence of $a_{i}\in\mathbf{N}$.

Write 
\[
\mathfrak{R}_{\A}:=\{[a_{1},\ldots,a_{k}]:\:k\in\mathbf{N},a_{i}\in\A\:\text{ for all } i\:\}
\]
for the set of approximants to $\mathfrak{C}_{\A},$ and $\mathfrak{D}_{\A}$
for the set of denominators of reduced elements of $\mathfrak{R}_{\A},$
that is,
\[
\mathfrak{D}_{\A}:=\{d:\:\frac{b}{d}\in\mathfrak{R}_{\A}\text{ for some \ensuremath{b} coprime to \ensuremath{d}\:\}}.
\] 
For an integer $A\in \mathbf N$, we write $\mathfrak{D}_{[A]}= \mathfrak{D}_{\{1, 2, \cdots, A\}}$.
In \cite{ZAREMBA}, Zaremba made the following conjecture,
motivated by applications to numerical analysis.
\begin{conjecture}[Zaremba]
There is some absolute $A\in\mathbf{N}$ such that $\mathfrak{D}_{[A]}=\mathbf{N}$. \end{conjecture}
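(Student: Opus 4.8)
Honesty compels the observation that the statement as worded---Zaremba's conjecture, $\mathfrak{D}_{[A]}=\mathbf N$ for some absolute $A$---is an open problem, so what follows is a description of the strategy one \emph{would} pursue rather than a complete argument. The plan is to realize $\mathfrak D_{\A}$ as the set of lower-left entries of matrices in the continued fractions semigroup $\G_{\A}$ (up to the usual bookkeeping between $\mathcal G_{\A}$ and $\Gamma_{\A}=\mathcal G_{\A}\cap\SL_2(\Z)$), and then to show that this set meets every residue class modulo every $q$ with enough uniformity that a sieve, fed by the counting input of Theorem \ref{thm:maincontinued}, forces every integer to actually occur. Concretely: for $d\in\mathbf N$ one wants to produce $a_1,\dots,a_k\in\A$ with $d$ equal to the denominator of $[a_1,\dots,a_k]$, equivalently with $d$ the $(2,1)$-entry of $g_{a_1}\cdots g_{a_k}$; the first step is therefore to set up the dictionary between denominators, matrix entries, and the semigroup word structure, and to record the recursion $d_k = a_k d_{k-1} + d_{k-2}$ that makes the problem combinatorial.

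The second step is the archimedean/counting input. By Theorem \ref{thm:maincontinued}, $\#\,\Gamma_{[A]}(q)\cap B_R$ has a main term of size $c\,R^{2\delta_A}/\#\SL_2(\Z/q\Z)$ with a uniform power-saving error $O(q^C R^{2\delta_A-\e})$; since $\delta_{[A]}\to 1$ as $A\to\infty$ (the limit set of the Gauss-type map fills up $[0,1]$), for $A$ large one has $\delta_A$ as close to $1$ as desired, and in particular $\delta_A>1/2$. The role of this input is twofold: it guarantees that denominators of size $\le N$ coming from $\Gamma_{[A]}$ number $\gg N^{2\delta_A}\gg N^{1+\eta}$ for some $\eta>0$ (so there are far more representable denominators than integers), and, crucially, that they equidistribute among residue classes mod $q$ with an error that is power-saving in $R$ and only polynomial in $q$. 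This is exactly the shape of input a sieve of Bourgain--Gamburd--Sarnak type digests.

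The third and decisive step is to convert ``most integers are denominators, with equidistribution in progressions'' into ``every integer is a denominator.'' Here one would run the combinatorial sieve: fix a target $d$, and try to represent $d$ by searching over denominators in a dyadic window; the local densities at each prime $p\mid d$ and at auxiliary primes are controlled by the equidistribution statement, and a lower-bound sieve (Selberg, or the Bourgain--Gamburd--Sarnak affine sieve) would give a positive lower bound for the number of representations provided the \emph{local obstructions vanish}---that is, provided $d$ is hit modulo every prime power. Thus the remaining task is a purely local one: show that for $A$ large enough, $\mathfrak D_{[A]}$ surjects onto $(\Z/p^e\Z)$ for every prime power $p^e$; this ``local-to-global'' or strong-approximation statement for the semigroup $\mathcal G_{[A]}$ modulo $q$ is the heart of the matter and is where all known approaches stall. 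The main obstacle is precisely this: a semigroup need not act transitively on $\SL_2(\Z/q\Z)$ in a way that covers all denominators (as opposed to a group, where strong approximation applies), and the known partial results (Bourgain--Kontorovich, and the improvements of Huang and of Kan) obtain only a density-one conclusion because they cannot rule out, for every $q$, a thin set of ``missing'' residues; closing that gap---eliminating the residual local exceptional set uniformly in $q$---would complete the proof, but no method currently does so, which is why the conjecture remains open.
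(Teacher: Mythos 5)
You are entirely right to flag this: the statement is Zaremba's conjecture, which the paper explicitly presents as an open conjecture (attributed to \cite{ZAREMBA}) with no claimed proof. There is nothing in the paper to compare a proof against, so your honest refusal to fabricate one is the correct response. Your outline of what a proof \emph{would} require---reducing denominators to $(2,1)$-entries of words in $\mathcal G_{\A}$, feeding the uniform congruence count of Theorem \ref{thm:maincontinued} into a BGS-style sieve, and then confronting the local surjectivity problem for a \emph{semigroup} modulo arbitrary $q$---accurately matches the state of the art described in the Introduction, and you correctly locate the obstruction: the density-one results of Bourgain--Kontorovich and Huang (which this paper upgrades from $o(N)$ to a power-saving exceptional set $O(N^{1-\e})$ via Theorem \ref{thm:maintheoremelaborate}) cannot currently eliminate a thin residual set of missed denominators. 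One small caveat on the counting input you invoke: Theorem \ref{thm:maincontinued} counts elements of $\Gamma_{\A}(q)$ in norm balls, not denominators directly; the passage from matrix counting to denominator counting (and the resulting archimedean and local estimates) is the content of \cite[\S 8]{BKANNALS} and its refinements, which your sketch elides but which is a standard and well-understood reduction. Otherwise your assessment is accurate, and appropriately honest.
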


Observe that
\[
\frac{b}{d}=[a_{1},\ldots,,a_{k}]
\]
if and only if
\[
\left(\begin{array}{cc}
0 & 1\\
1 & a_{1}
\end{array}\right)\left(\begin{array}{cc}
0 & 1\\
1 & a_{2}
\end{array}\right)\ldots\left(\begin{array}{cc}
0 & 1\\
1 & a_{k}
\end{array}\right)=\left(\begin{array}{cc}
\star & b\\
\star & d
\end{array}\right).
\] 
This  yields the relation
$$\mathfrak D_{\A}=\left\{ \langle \gamma (0,1)^t , (0,1)^t  \rangle : \gamma\in \mathcal G_\A\right\}$$
where $\langle \cdot, \cdot \rangle$ denotes the standard inner product on $\R^2$,
thus enters the semi-group $\mathcal G_A$
in the study of continued fractions. 

Bourgain and Kontorovich  \cite[Theorem 1.2]{BKANNALS} proved that 
Zaremba's conjecture is true after
replacing $\mathbf{N}$ by a density one subset. That is, there is
some $A$ such that
\begin{equation}
\# \mathfrak{D}_{[A]}\cap\{1,\ldots,N\}=N+o(N). \label{eq:bk}
\end{equation} 

Furthermore, they showed that the $o(N)$ term
can be taken to be $O(N^{1-c/\log\log N}$) for suitable $c>0$ (this relies on the Appendix) and $A=50$ will suffice. The size of $A$ 
has since been improved to $A=5$ by Huang \cite{HUANG}, following previous innovations by Frolenkov and Kan \cite{FK14} on the necessary $\delta_\A$.


Theorem \ref{thm:maincontinued} provides the precise missing ingredient in Bourgain and Kontorovich's work  \cite{BKANNALS},
to replacing the $o(N)$
bound for the size of the exceptional set in \eqref{eq:bk} with a power savings error $O(N^{1-\e})$. Indeed,
combining Bourgain and Kontorovich's method from \cite{BKANNALS},
Huang's refinement, and with the counting estimate of Theorem \ref{thm:maincontinued}
and its technical form Theorem \ref{thm:maintheoremelaborate} in
place of \cite[Theorem 8.1]{BKANNALS},  one can derive
the following improvement of \eqref{eq:bk}:
for $\mathcal{A}=\{1,2,3,4,5\}$ and for some $\e>0$,
\begin{equation} \label{mainZa-2} 
\# \mathfrak{\mathfrak{D}}_{\A}\cap\{1,\ldots,N\}=N+O(N^{1-\epsilon}).
\end{equation}

The key point is that the uniform lattice point count enables
us to replace the parameter $\mathcal{Q}=N^{\alpha_{0}/\log\log N}$
in \cite{BKANNALS} and \cite{HUANG} with a power of $N$.

We remark that  a short alternative argument for \eqref{mainZa-2} was recently proposed by Bourgain in \cite{BOURGAINPARTIAL}. His argument deviates from the approach of \cite{BKANNALS} and hence does not require orbital counting estimates.


We draw the reader's attention to the survey article \cite{BOURGAINSURVEY}
where other applications to continued fractions are discussed. The reader can also see
the survey of Kontorovich \cite{KONTOROVICH} that situates Zaremba's conjecture amongst other problems in the `thin (semi)groups' setting.

\noindent{\bf Overview of the proofs of Theorems \ref{thm:mainsimple} and \ref{thm:maincontinued}:}

 The basic strategy is to regard our Schottky semigroup setup as an expanding map and to apply transfer operator techniques. Necessary spectral bounds are then deduced by synthesizing work of Bourgain-Varj\'{u}, Bourgain-Gamburd-Sarnak, Dolgopyat, and Naud.  For now we focus on the arguments for Theorem \ref{thm:mainsimple}; those for Theorem \ref{thm:maincontinued} are similar. 

We consider the map $T:I:=\cup_{i=1}^kI_{i}\to \R$ defined by
\begin{equation}
T\lvert_{I_{i}}=(g_{i})^{-1}\label{eq:Tdef}
\end{equation}
and the distortion function $\tau:I\to \R$ given by $\tau(x)=\log |T'(x)|$, which is
 eventually positive in our setting. 
 The transfer operator $\L_s$ is defined for all $s\in \C$ by
 $$\L_s(f)(x)=\sum_{Ty=x} e^{-s\tau (y)} f(y) $$
 as a bounded linear operator on $C^1(I)$. Lalley's renewal equation   \cite{LALLEYSYMB} provides a link between the counting problem for $\G$ and spectral bounds for $\L_s$. 
 Such spectral bounds were obtained by Naud \cite{NAUD}, who provided a $C^1$-operator norm estimate on $\L_s^m$ valid on a strip $|\Re(s)-\delta|<\e$ and so deduced\footnote{Naud uses Ruelle  zeta function techniques as in \cite{RUELLEFREDHOLM}, in contrast to our use of the renewal equation.} the case $q=1$ of Theorem \ref{thm:mainsimple}.


To provide a counting result that is uniformly accurate over congruence semigroups we must actually deal with the congruence transfer operators.  More precisely,
let $c_q:I\to \SL_2( \Z/q\Z)$ be the cocycle given by
$$c_q|_{I_i}= g_i \text{ mod $q$} ,$$
and define the congruence transfer operator $$\L_{s,q} [F](x)=\sum_{Ty=x} e^{-s \tau y}
c_q(y).F(y)$$
on the space of $\C^{\G_q}$-valued functions for $ \G_q:=\SL_2( \Z/q\Z)$. The composition $c_q(y).F(y)$ is the result of applying  $c_q(y)\in \Gamma_q$ to the vector $F(y)\in \C^{\G_q}$ by the right regular representation of $\Gamma_q$. It is also useful throughout the paper to think of $F$ as a function on $I\times \C^{\Gamma_q}$. We fix the standard Hermitian form on $\C^{\Gamma_q}$ that comes from the identification of $\Gamma_q$ with the standard basis of $\C^\Gamma_q$ and defining $\langle g_1, g_2 \rangle = \delta_{g_1,g_2}$. The space $\C^{\Gamma_q}\ominus 1$ is defined to be the space of functions that are orthogonal to constants with respect to the fixed Hermitian form. The vector space $\C^{\Gamma_q}\ominus 1$ inherits a Hermitian form from that of $\C^{\Gamma_q}$. It is with respect to this form that  we define the Banach spaces $C^{1}(I;\C^{\G_{q}}\ominus1)$ that play a central role in this paper.

The following is the main technical result:
\begin{thm}[Bounds for congruence transfer operators]
\label{thm:maintransfer}Write $s=a+ib$. There is $Q_{0}\in\nat$
such that for any $\eta>0$, there are $\e=\e(\eta)>0$, $b_{0}>0$,
$0<\rho_{\eta}<1$, $C_{\eta}>0$, $0<\rho_{0}<1$, $r>0$ and $C>0$
such that the following holds for all $a\in\R$ with $|a-\delta|<\e$
and $b\in\R$: 
\begin{enumerate}
\item \label{enu:mainmodular} When $|b|\leq b_{0}$ and $f\in C^{1}(I;\C^{\G_{q}}\ominus1)$
\[
\|\L_{s,q}^{m}f\|_{C^{1}}\leq Cq^{C}\rho_{0}^{m}\|f\|_{C^{1}}
\]
when $(q,Q_{0})=1$. Here $\C^{\G_{q}}\ominus1$ is the orthogonal complement
to the constant functions in the right regular representation
of $\G_{q}$. 
\item \label{enu:mainlargeimaginary} When $|b|>b_{0}$ 
\[
\|\L_{s,q}^{m}\|_{C^{1}}\leq C_{\eta}|b|^{1+\eta}\rho_{\eta}^{m}
\]
uniformly with respect to $q\in\nat$. 
\end{enumerate}
\end{thm}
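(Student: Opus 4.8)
The plan is to establish the two estimates in Theorem \ref{thm:maintransfer} separately, since they rest on genuinely different inputs: part \eqref{enu:mainmodular} is the hard ``small frequency'' regime where one needs an \emph{expansion} over the family of congruence covers, while part \eqref{enu:mainlargeimaginary} is the ``large frequency'' regime governed by Dolgopyat's oscillatory cancellation mechanism, and crucially should be made uniform in $q$ by reducing it to the $q=1$ analysis of Naud.

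\medskip

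\noindent\textbf{Part (2), large $|b|$.} Here I would argue that the congruence transfer operator $\L_{s,q}$, acting on $C^1(I;\C^{\G_q}\ominus 1)$, can be controlled by the scalar Dolgopyat-type bound \emph{already available} for $\L_s$ with $q=1$. The key point is that the congruence cocycle $c_q$ is \emph{locally constant} (constant on each branch interval $I_i$), hence it does not affect the contraction/distortion estimates of the underlying expanding map $T$; only the Lasota--Yorke/Dolgopyat argument on the modulus $\|F\|$ enters, and the $\G_q$-action is an isometry of the Hermitian form on $\C^{\G_q}\ominus 1$. Concretely, I would adapt Naud's Dolgopyat argument: pass to a high iterate $\L_{s,q}^{m}$, split the phase space into ``dense'' and ``non-dense'' cones of functions, and use the oscillation of $e^{-ib\tau}$ along distinct inverse branches (which survive verbatim because $c_q$ contributes only a unitary twist of the vector part) to gain a factor $\rho_\eta<1$ per block of $\sim \log|b|$ iterates; the polynomial loss $|b|^{1+\eta}$ comes from the initial $C^1\to C^0$ comparison and the number of iterates needed before the cancellation kicks in. Because no estimate in this step sees $q$ beyond ``$\G_q$ acts unitarily,'' the bound is uniform in $q$.

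\medskip

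\noindent\textbf{Part (1), small $|b|$ --- the main obstacle.} This is the heart of the matter and where the Appendix is invoked. For $|b|\le b_0$ the scalar operator $\L_{a,0}$ has a simple leading eigenvalue near $1$ (Perron--Frobenius / Ruelle), so on the nose $\L_{s,q}$ restricted to $\C^{\G_q}\ominus 1$ must have \emph{spectral radius bounded away from $1$ uniformly in $q$} --- this is precisely a spectral gap / expander statement for the congruence tower. I would first reduce the $C^1$-norm bound to an $L^2$-type bound: a Lasota--Yorke inequality (the locally-constant cocycle again costs nothing in the derivative terms) lets one bootstrap $\|\L_{s,q}^m f\|_{C^1}\le C q^C \rho_0^m\|f\|_{C^1}$ from a decay estimate for $\|\L_{s,q}^m f\|_{L^2(I;\C^{\G_q}\ominus 1)}$, at the cost of iterating a fixed number of extra times. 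Then, for $s$ real (the $|b|\le b_0$ perturbation being absorbed by analyticity and a resolvent/contour argument once the $b=0$ case is done), $\L_{a,q}^m$ acting on $\C^{\G_q}\ominus 1$ is --- after normalizing by the leading eigendata of $\L_{a,0}$ via a conjugation into a transfer operator with a genuine stationary measure --- a self-adjointizable averaging operator whose spectral gap is exactly the statement that the Cayley graphs of $\SL_2(\Z/q\Z)$ with respect to the (products of) generators, weighted by the equilibrium measure, form a family of expanders. This is supplied by Bourgain--Gamburd--Sarnak--Varj\'u-type machinery: via the $\ell^2$-flattening / sum-product technology and Bourgain--Varj\'u's theorem handling \emph{arbitrary} modulus $q$ (not just squarefree), one gets $\|\L_{a,q}^m f\|_{L^2}\le C q^C \rho_0^m \|f\|_{L^2}$ on $\C^{\G_q}\ominus 1$, with $Q_0$ excluding the finitely many bad primes and $\rho_0,C$ independent of $q$. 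The principal difficulty --- and the reason the Appendix is needed --- is that the relevant measure on the generators is the \emph{non-uniform equilibrium measure} of the Schottky system rather than the uniform counting measure, so the expansion input must be robust enough to accommodate a measure with exponential weights and the ``no small prime factors'' condition must be tracked through the sum-product argument; glueing this number-theoretic input cleanly to the dynamical Lasota--Yorke step, with all constants genuinely uniform in $q$, is the crux.

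\medskip

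\noindent Finally, I would record that the passage from $b=0$ to $|b|\le b_0$ in part (1) is a soft perturbation: $\L_{s,q}$ depends analytically on $s$, the $\C^{\G_q}\ominus 1$ part has no eigenvalue of modulus $\ge \rho_0^{1/m_0}$ at $s=a$ real, and a compactness/continuity argument on the compact parameter set $\{|a-\delta|\le \e,\ |b|\le b_0\}$ (with $\e=\e(\eta)$, $b_0$ chosen small) preserves the uniform bound, the $q^C$ factor being carried along unchanged.
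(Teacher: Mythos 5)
Your treatment of Part (2) matches the paper in substance: the Dolgopyat--Naud cancellation argument is run verbatim, and the two observations you isolate (the cocycle is locally constant, hence does not enter the distortion estimates or the non-stationary-phase derivative bound, and the $\G_q$-action is unitary, so one may replace quotients of scalars by norm-ratios of vectors and use a sharp triangle inequality in Hermitian space) are exactly what the paper uses to push Naud's Lemma~5.10 to the vector-valued setting.

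For Part (1), the general shape of your argument --- access an expansion/spectral-gap input of Bourgain--Varj\'u type through the Appendix, and bootstrap from an $L^\infty$/$L^2$ bound to a $C^1$ bound via a Lasota--Yorke-type inequality --- is also the paper's strategy. However, there are two genuine gaps.

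First, the ``prove $b=0$, then perturb to $|b|\le b_0$ by analyticity/resolvent/compactness'' step does not survive quantitatively. The resolvent bound at $b=0$ carries a factor $q^C$ (and one must take the number of iterations $N\sim\log q$ before the expansion kicks in), so a Neumann-series perturbation of $(1-\L_{a,q})^{-1}$ to $(1-\L_{s,q})^{-1}$ requires $|b|\lesssim q^{-C}$, which shrinks to $0$ with $q$; a compactness argument on a fixed parameter rectangle cannot repair this, because the Banach space $C^1(I;\C^{\G_q}\ominus 1)$ and the operator both change with $q$, and there is no compactness over the $q$-family. The paper sidesteps this entirely: it runs the decoupling argument directly for all $|b|\le b_0$, with the only place $b$ enters being a uniform bound $|([\tau_a^N+ib\tau^N]\circ\alpha^N)'|\le\kappa_{11}(a_0,b_0)$ on the phase derivative, so no perturbation is ever performed.

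Second, the Appendix's expansion theorem gives a gap only on the \emph{new} subspace $E_q\subset\ell^2_0(\G_q)$, not on all of $\C^{\G_q}\ominus 1$. Your proposal treats these as the same. The paper must decompose $\C^{\G_q}\ominus 1=\bigoplus_{1\ne q'\mid q}E^q_{q'}$, transfer the estimate from level $q'$ to level $q$ via the isometric (up to a factor $\sqrt{|\G_q(q')|}$) intertwiner $\Phi_{q,q'}$, and sum over divisors using the divisor bound; this is where the $q^C$ loss in the final statement comes from and why $Q_0$ can be taken as a single finite modulus. Without this divisor decomposition the argument only controls $E_q$, not the whole orthocomplement of the constants.
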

The transfer operators have two parameters $s$, the Laplace transform-dual/frequency
version of the counting parameter, and $q$, the modular parameter.
Since inverting the Laplace transform that was taken involves an infinite
vertical contour, one must obtain spectral bounds that are uniform
in $s$ with $\Re(s)$ within some fixed small window of $\delta$.
The bounds should also be uniform with respect to the currently considered
family of moduli $q.$ These bounds rely on two different inputs that
both involve deep ideas.

To address large imaginary part of $s$ considerations, we will use the
method of Dolgopyat from \cite{DOLG}, and its further development
by Naud from \cite{NAUD}. We follow Naud's analysis from \cite{NAUD} up to the point of departure from Naud's
work in Lemma \ref{technical} where we extend \cite[Lemma 5.10]{NAUD}
to vector valued functions. Here, an important point that prevents the cocycle
$c_{q}$ from interfering with the non-stationary phase is that it
is locally constant. We mention that this observation was first due to \cite{OW}
where they consider the congruence transfer operator associated to the Markov partition
given by the geodesic flow.

For bounded $\Im(s)$ and varying $q$ we follow the work of Bourgain,
Gamburd and Sarnak from \cite{BGS2} and the work
of Bourgain, Kontorovich and Magee in the Appendix, which allows us to relate
the norm $\|\L_{s,q}^m\|_{C^1}$ to the expander result on the Cayley graphs of the
$\G_q$ with respect to a fixed generating set of $g_i$'s. The main reason behind
our successful treatment of arbitrary moduli $q$ case is  the work of Bourgain-Varj\'{u}
establishing the expander result for $\SL_2(\Z/q\Z)$ for arbitrary $q$, as explained
in the Appendix.

\subsection{Acknowledgements}

We would like to thank Peter Sarnak for his encouragement and support
throughout this project. We thank Jean Bourgain, Alex Kontorovich
and Curt McMullen for helpful comments on an earlier version of this
paper.

\section{Dynamics and Thermodynamics on the boundary}
\label{continuedfractions}

\subsection{The dynamical system $T$.\label{sub:A-dynamical-system}}

We construct a dynamical system $T: I\to \R$ on a disjoint union of intervals $I$ that plays a central role in the counting
estimates of our main Theorems \ref{thm:mainsimple} and \ref{thm:maincontinued}, and set up the  notations and the assumptions
 which will be used
throughout the paper.

\noindent{\bf I: Schottky semigroup case:}

 Let $g_1, \cdots, g_{k'}$ ($k'\ge 2$) be 
the Schottky generating set in $\SL_2(\Z)$. We let $\{\tilde I_{i}, \tilde J_i:i=1,\ldots,k'\}$ be the intervals  such that
 $g_i$ maps the exterior of $\tilde J_i$ onto the interior of $\tilde I_i$ as in the definition of the Schottky generators.
 Set $g_{k'+\ell}=g_\ell^{-1}$ and $\tilde I_{k'+\ell}=\tilde J_\ell$ for $1 \le \ell \le k'$.

For any $0\le \ell \le k'$, let $\G$ be
the semigroup generated by $g_1, \cdots, g_{k'}, g_{k'+1}, \cdots, g_{k'+\ell}$; we will call $\Gamma$ a Schottky semigroup.
This is slightly more general than the definition we gave in the Introduction, and the main reason of this extension
is to include Schottky groups in our discussion of Schottky semigroups.
Note that when $\ell =k'$, $\G$ coincides with the Schottky {\it subgroup} generated by $g_1,\cdots, g_{k'}$. 

Set $p=k'+\ell$. We
now define a map $B:\tilde I \to \R\cup\{\infty\}$ for $\tilde I:=\cup_{i=1}^p \tilde I_i  $ by the piecewise M\"obius action
\begin{equation}
B |_{\tilde I_{i}}=g_{i}^{-1}.
\end{equation}
Since $g_i(\infty)\in \tilde I_i$, the image of $B$ contains $\infty$.

The \emph{cylinders }of length $n$ are by definition the
sets of the form \begin{equation}
\tilde I_{i_1}\cap B^{-1}(\tilde I_{i_2})\cap \cdots \cap B^{-(n-1)}( \tilde I_{i_{n}})
\label{eq:cylinder}
\end{equation}
where each $1\leq i_{j}\leq p.$ Let $I$ be the union of the cylinders of length 2 and
define
$$T:I \to \br$$
to be the  restriction of $B$ to $I$.  Note that
$g_i(\infty)\notin I$  and hence the image of $T$ does not contain $\infty$; it is for this reason that we replaced $\tilde I$ with $I$. 
Finally, we say that a sequence $g_{i_1},g_{i_2},g_{i_3},\ldots$ of the Schottky generators is \emph{admissible} if no $g_{i_j}$  is 
followed by its inverse. This means all the words obtained by concatenating consecutive subsequences are reduced. 
We now let $k$ denote the number of cylinders of length 2.

\noindent{\bf II: Continued fraction semigroup case:}
Let $\A$ be a finite subset of $\mathbf{N}$ with at least two
elements. For $a\in \A$, set
$$ g_a:= \left(\begin{array}{cc}
0 & 1\\
1 & a
\end{array}\right).$$
Let $\G$ be the continued fractions semigroup $\G_\A$ generated by $g_ag_{a'}$, $a, a'\in \A$.
Since $a, a' \ge 1$, it follows that the trace of any element of $\G$ is strictly bigger than $2$
and hence every element of $\G$ is hyperbolic.

Note that the $g_{a}$ acts as M\"obius transformations on $\R\cup\{\infty\}$ by 
\[
g_{a}(z)=\frac{1}{z+a}.
\]
Let $A$ denote the largest member of $\A$ and consider the interval $I_A:= \left[\frac{1}{A+1},1\right]$.

 For $a\in\A$, let $ I_{a}:=g_a I_A$, which can be computed to be
\[
I_{a}=\left[\frac{1}{a+1},\frac{1}{a+(A+1)^{-1}}\right]\subset\left[\frac{1}{a+1},\frac{1}{a}\right].
\]
The $I_{a}$ are clearly disjoint as $A\geq1$. It follows that $g_a$'s  generate a free semigroup by the ping-pong argument.
We also record for later use that the derivative of the M\"obius action has
\begin{equation} \label{derivativebound}
g'_{a}(z)=\frac{1}{|z+a|^{2}}\leq(a+(1+A)^{-1})^{-2}\leq(1+(1+A)^{-1})^{-2}
\end{equation} 
for all $z\in I_A$. 
We now set \[
I_{a,a'}:=g_{a}g_{a'} I_A \subset I_{a}
\]
obtaining a disjoint collection of $\#\A^2$ number of
closed intervals. Rename these intervals $I_{a,a'}$ and corresponding elements $g_ag_a'$
as $I_i$'s and $g_i$'s respectively.

Define
\[
T:I\to\R,\quad T\lvert_{I_i}=(g_i)^{-1}.\]
Note that $g_{a}g_{a'}I\subset I_{a,a'}$, in other words, $g_i I\subset I_i$ for each $1\le i\le  \# \A^2$. 
Again, we let $k = \# \A^2$ denote the number of intervals obtained.

\medskip

\noindent{\bf Set-up:}  In the rest of this paper,
let $\Gamma$ be a Schottky semigroup or  the continued fractions semigroup, with the associated locally analytic map
$$
T:I=\cup_{i}I_{i}\to\R \quad\text{
given by $T|_{I_i}=g_i^{-1}$}$$
constructed above.

It follows easily from the construction that we have the
\begin{description}
\item [{Markov property}] If $T(I_{i})\cap I_{j}\neq0$ then $T(I_{i})\supset I_{j}.$
\end{description}
\begin{prop}
\label{def:expansion-1} The map $T$ is eventually expanding, that is,  there
are $D>0,\g>1$ such that for all $N\geq1$ and $x\in T^{-N+1}(I)$
\[
|(T^{N})'(x)|\geq D^{-1}\g^{N},
\]
wherever the derivative exists\footnote{The derivative may have poles.}
in $T^{-N+1}(I)$.
\end{prop}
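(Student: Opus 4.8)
The plan is to prove eventual expansion by exhibiting, in both the Schottky and continued-fraction settings, a uniform contraction property for the inverse branches of $T$ — equivalently, for the Möbius maps $g_i$ restricted to a suitable domain — and then to iterate it. The key structural fact I want to exploit is that $T$ was built from \emph{cylinders of length $2$}: in the Schottky case $g_i(\infty)\notin I$, so the branches $g_i$ have no pole on $I$ and in fact map $I$ into a compact set uniformly bounded away from the poles; in the continued-fraction case we already recorded $g_iI\subset I_i$ together with the explicit derivative bound \eqref{derivativebound}. So I would first establish, in each case, a single-step estimate of the shape $|g_i'(x)|\le\lambda<1$ for some $\lambda$ independent of $i$ and of $x$ in the relevant domain, or — when a single step does not suffice — a two-step or $n_0$-step version $|(g_{i_1}\cdots g_{i_{n_0}})'(x)|\le\lambda<1$.

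The main point is the chain rule. If $x\in T^{-N+1}(I)$ then $T^N$ is, near $x$, the inverse of a composition $g_{i_1}\cdots g_{i_N}$ for an admissible word (in the Schottky case, reduced; in the continued-fraction case, arbitrary), and
\[
|(T^N)'(x)|=\prod_{j=1}^{N}|T'(x_j)|=\prod_{j=1}^{N}|g_{i_j}'(y_j)|^{-1},
\]
where $x_j=T^{j-1}(x)$ and $y_j$ is the corresponding point in the domain of $g_{i_j}$. By the Markov property, at each stage $y_j$ lies in $I_{i_{j+1}}\subset g_{i_{j+1}}(I)$ — more precisely in a set on which the uniform one-step contraction applies — so each factor $|g_{i_j}'(y_j)|$ is at most $\lambda$, giving $|(T^N)'(x)|\ge\lambda^{-N}=\gamma^N$ with $\gamma=\lambda^{-1}>1$ and $D=1$. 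If only an $n_0$-step contraction $|(\text{composition of }n_0)'|\le\lambda<1$ is available, I would group the $N$ factors into $\lfloor N/n_0\rfloor$ blocks of length $n_0$, bound the leftover (at most $n_0-1$) factors by a crude constant (each $|g_i'|$ is bounded above on the compact domain, so each inverse factor is bounded below by some $c_0>0$), and absorb that into $D$: this yields $|(T^N)'(x)|\ge D^{-1}(\lambda^{-1/n_0})^N$, so $\gamma=\lambda^{-1/n_0}$.

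For the continued-fraction case the one-step bound is essentially immediate from \eqref{derivativebound}: $|g_a'(z)|\le(1+(1+A)^{-1})^{-2}<1$ uniformly for $z\in I_A$, hence the same for the paired generators $g_ag_{a'}$ on $I$ with an even better constant, and the Markov property keeps every intermediate point inside $I_A$; so $D=1$ works there. The place where I expect to have to be careful — the main obstacle — is the Schottky case: a single Möbius generator $g_i$ need not be a contraction of the hyperbolic-type metric everywhere on $\tilde I$, and the derivative in the \emph{Euclidean} coordinate can even exceed $1$ near the part of $\tilde I_i$ closest to the attracting behavior. This is precisely why the paper passed to cylinders of length $2$ and to \emph{reduced} words: restricting to $I$ (union of length-$2$ cylinders) forces the image of each branch to stay in a compact subinterval uniformly separated from the fixed points that cause trouble, so that a \emph{two}-step composition $g_{i_1}g_{i_2}$ along an admissible pair is genuinely uniformly contracting. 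I would make this quantitative by a compactness argument: the finitely many admissible pairs $(i_1,i_2)$ each give a Möbius map $g_{i_1}g_{i_2}$ whose image $I_{i_1,i_2}$ is a compact interval strictly inside the domain, so $\sup_{x\in I}|(g_{i_1}g_{i_2})'(x)|<1$ follows from the fact that a Möbius transformation carrying a compact interval strictly inside another contracts, and then take $\lambda$ to be the max over the finitely many pairs. Combining the two cases with the chain-rule bookkeeping above gives the stated $D,\gamma$.
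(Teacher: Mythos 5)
Your handling of the continued-fraction case matches the paper's own argument and is correct: the one-step bound \eqref{derivativebound}, the chain rule, and the invariance $g_{a}g_{a'}I\subset I_{a,a'}\subset I_A$ give a genuine one-step expansion $|T'(z)|\ge (1+(1+A)^{-1})^{4}>1$, so $D=1$. In the Schottky case the paper merely cites Borthwick's Proposition 15.4, so you are supplying a self-contained argument, and the overall plan --- a uniform $n_0$-block contraction for the inverse branches, followed by a chain-rule/block decomposition with the leftover factors absorbed into $D$ --- is the right shape.

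However, the key step in your Schottky argument is not justified. You deduce $\sup_{x\in I}|(g_{i_1}g_{i_2})'(x)|<1$ ``from the fact that a M\"obius transformation carrying a compact interval strictly inside another contracts.'' That general principle is false for the Euclidean derivative: for example $f(x)=\frac{1.2x+0.8}{x+1.5}$ has determinant $1$, maps $[-1,1]$ diffeomorphically onto $[-0.8,0.8]$ (strictly and compactly inside), and yet $f'(-1)=4$. The culprit is that the isometric circle of $f$ (where $|f'|=1$), centered at the pole $-1.5$, protrudes past the endpoint $-1$; equivalently, the repelling fixed point of $f$ sits just outside the interval. Nothing in the Schottky combinatorics rules this out a priori for $g_{i_1}g_{i_2}$: its pole lies in $\tilde J_{i_2}$, but the isometric circle around that pole may extend across the gap into one of the admissible target intervals $\tilde I_{i_3}$. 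What saves the day --- and is what Borthwick's proof, or the hyperbolic-metric mechanism you allude to and then set aside, actually uses --- is Schwarz--Pick contraction of the \emph{hyperbolic} metric of the Schottky domain: the inverse branches carry it into compactly contained subdomains and so contract that metric uniformly, and since the hyperbolic and Euclidean metrics are uniformly comparable on the compact set $I$, one obtains $|(g_{i_1}\cdots g_{i_N})'(x)|\le C\lambda^{N}$ for all large $N$, which is exactly $D^{-1}\gamma^{N}$ with $D=C$ and $\gamma=\lambda^{-1}$. The qualifier ``eventually'' and the constant $D$ in the statement are thus essential: in general no fixed $n_0$ gives a literal Euclidean one-block contraction, and one needs $D>1$ to absorb the transient before the hyperbolic contraction overcomes the Euclidean distortion.
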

\begin{proof} For the Schottky semigroup case,  this can be proved exactly as in the
proof of  \cite[Proposition 15.4]{BORTHWICK}. For the continued fraction case it follows from \eqref{derivativebound}
and the chain rule that for any $z\in I$, \[
|T'(z)|\geq(1+(1+A)^{-1})^{4}>1
\] and hence the claim follows.
\end{proof}

We also must introduce the
following \emph{distortion function }on $I$.
\begin{defn}[Distortion function]
\label{def:The-distortion-function}The distortion function $\hat \tau:I\to\R$
is defined by

\[
\hat \tau(x)=\log|T'(x)|.
\]

\end{defn}

This definition is very natural for our purposes. For certain technical calculations, however, it is easier to work with a slightly different version. We consider the Cayley map $J$ from the upper half plane to the unit disc sending $i$ to the center $0$ of the disc. We can therefore think of $T$ as acting on the the subset $J(I)$ of the unit circle. This gives an alternative distortion function.

\begin{defn}[Distortion function II]
\label{def:The-distortion-function-II}The distortion function $ \tau:I\to\R$
is defined by

\[
 \tau(x)=\log|(J\circ T\circ J^{-1})'(Jx)|.
\]

\end{defn}

The two distortion functions mentioned here are cohomologous (that is, there is a function $h = - \log (J')$ such that $\hat{\tau}(x) = \tau(x) + h(x) - h(T(x))$), so are equivalent for many purposes. Sometimes it is convenient to work with one, sometimes the other.

Since  $T$ is real analytic, and it is easy to see that $T'$ is never zero
on $I$, it follows that $\tau$ is real analytic on $I.$  The iterated version
\[
\tau^{N}(x):=\sum_{i=0}^{N-1}\tau(T^{i}x)
\]
measures the distortion along a trajectory of $T.$ It follows from
the eventually expanding property of $T$ that there is an $N_{0}$
such that for all $N\geq N_{0},$ $\hat \tau^{N}>0$ on the cylinders of
length $N$, that is, $\hat \tau$
is \emph{eventually positive.} Since $\tau$ is cohomologous to $\hat \tau$ we conclude that $\tau$ is also eventually positive.

 Let $d_{E}$ denote Euclidean distance in the upper half plane, and fix the basepoint $o = i\in \mathbb{H}$. The following Lemma links the lattice point count with the dynamical system
we have defined.

\begin{lem}
\label{contracting}There exist $C,r>0$ and $\k<1$ such that if 
 $k_{0}$ is a point in $I$ then for $L\in\mathbf{N}$ and admissible sequence of $g_{i_j}$
\begin{equation}
d_{E}(g_{i_{1}\ldots}g_{i_{L}}o,g_{i_{1}\ldots}g_{i_{L}}k_{0})\leq C\k^{L}.\label{eq:contracting}
\end{equation}
If in the general Schottky semigroup case, we also require that   $k_0\notin\tilde{I}_i$, where $i=i_L+k'\bmod 2k'$.

 \end{lem}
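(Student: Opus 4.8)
The statement is a uniform contraction estimate: applying a long admissible product $g_{i_1}\cdots g_{i_L}$ to two nearby points of $I$ shrinks the distance at a geometric rate $\kappa^L$. The natural strategy is to pass to the disc model, where the generators act as contractions on the relevant arcs, and to combine this with the eventually-expanding property of $T$ (Proposition \ref{def:expansion-1}), which after conjugation by the Cayley map $J$ says precisely that the inverse branches $g_{i_1}\cdots g_{i_L}$ contract the metric on $J(I)$ at a rate $D\gamma^{-L}$. So at the level of the boundary circle the estimate is essentially a restatement of eventual expansion for $T$; the real content is upgrading a boundary estimate to an estimate in the upper half plane, i.e. controlling the orbit point $g_{i_1}\cdots g_{i_L}o$ together with the boundary point $g_{i_1}\cdots g_{i_L}k_0$.

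First I would observe that, because the sequence is admissible, each partial product $g_{i_1}\cdots g_{i_j}$ maps the disc (or the relevant union of intervals) strictly inside the arc $\tilde I_{i_1}$ associated to the first letter, and more precisely the nested cylinder arcs shrink geometrically; this is exactly the ping-pong structure together with Proposition \ref{def:expansion-1} applied to $T^{L} = (g_{i_1}\cdots g_{i_L})^{-1}$ restricted to the appropriate cylinder. Hence $g_{i_1}\cdots g_{i_L}$ maps all of $I$ (minus the forbidden interval $\tilde I_i$ with $i = i_L + k' \bmod 2k'$, which is what guarantees $k_0$ lies in the domain where the last branch $g_{i_L}$ is defined and contracting) into an arc of length $\le C\kappa^L$. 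To get the half-plane statement, I would use that $o = i$ sits at bounded hyperbolic distance from the geodesic whose endpoints are the limit points, so its image $g_{i_1}\cdots g_{i_L} o$ is pushed into (a bounded neighbourhood, in the disc model, of) the same shrinking cylinder arc; a Koebe-type distortion argument — or simply the explicit form of the Möbius derivative as in \eqref{derivativebound} together with the chain rule and the fact that $\tau$ and $\hat\tau$ are cohomologous via $h = -\log(J')$ — then bounds the Euclidean distance between $g_{i_1}\cdots g_{i_L} o$ and $g_{i_1}\cdots g_{i_L} k_0$ by a uniform constant times the derivative $|(g_{i_1}\cdots g_{i_L})'|$ evaluated somewhere on the cylinder, which is $O(\gamma^{-L})$.

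Concretely the steps are: (1) reduce to the disc model via $J$ and note $T^{L}$ restricted to a length-$L$ cylinder has $|(T^L)'| \ge D^{-1}\gamma^L$, so the inverse branch $g_{i_1}\cdots g_{i_L}$ has derivative bounded by $D\gamma^{-L}$ on that cylinder; (2) invoke bounded distortion — the ratio of $|(g_{i_1}\cdots g_{i_L})'|$ at any two points of a single cylinder is uniformly bounded, which follows from real-analyticity of $T$, $T' \ne 0$, and a standard telescoping/Rauzy-type argument using that $\tau^N$ has bounded variation on cylinders; (3) deduce that the image of the whole cylinder has diameter $\le C\gamma^{-L}$, set $\kappa = \gamma^{-1} < 1$; (4) handle the basepoint $o$: its image lies within bounded hyperbolic distance of the image cylinder, and converting hyperbolic to Euclidean distance near the boundary introduces a factor comparable to the derivative of the map, giving the same $C\kappa^L$ bound; (5) record that the hypothesis $k_0 \notin \tilde I_i$ with $i = i_L + k' \bmod 2k'$ in the Schottky semigroup case is exactly what keeps $k_0$ in the domain of the contracting branch $g_{i_L}$, matching the "no letter followed by its inverse" admissibility.

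The main obstacle is step (4): the boundary contraction is immediate from eventual expansion, but transferring it to the interior point $o$ requires a genuine distortion/Koebe estimate that is uniform over all admissible words of all lengths. I expect to handle this by working in the disc, where $g_{i_1}\cdots g_{i_L}$ is a Möbius map sending the exterior of a fixed arc into a shrinking arc, and then estimating $|g'|$ at $J(o)$ versus at $J(k_0)$ using that both points lie outside the fixed "forbidden" arc $\tilde I_i$ — so $g_{i_1}\cdots g_{i_L}$ is conformal with controlled distortion on a fixed-size neighbourhood containing both, and the derivative bound $D\gamma^{-L}$ then applies uniformly. Everything else is bookkeeping with the chain rule and the cohomology relation $\hat\tau(x) = \tau(x) + h(x) - h(Tx)$.
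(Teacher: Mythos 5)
Your plan is sound, but it takes a significantly longer route than the paper does and misses the one observation that makes the proof essentially a one-liner. The paper's entire argument is: M\"obius transformations of $\H$ preserve generalized circles orthogonal to $\partial\H$, so the open half-disc in $\H$ bounded by an interval $\tilde I_j\subset\R$ is carried by any such transformation to the half-disc bounded by the image interval. Since $o=i$ and $k_{0}$ both lie in the exterior of the half-disc over $\tilde J_{i_L}$ (this is exactly what the admissibility condition and the extra hypothesis $k_0\notin \tilde I_{i_L+k'}$ guarantee), the point $g_{i_L}(o)$ lands in the half-disc over $\tilde I_{i_L}$; applying $g_{i_1}\cdots g_{i_{L-1}}$ then sends that half-disc into the half-disc over the length-$L$ cylinder, so both $g_{i_1}\cdots g_{i_L}o$ and $g_{i_1}\cdots g_{i_L}k_0$ lie in a half-disc whose base has Euclidean length $\le C\kappa^L$ by eventual expansion (a single mean-value estimate on the inverse branch, no bounded distortion required). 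The $d_E$ bound follows immediately, because any two points of a half-disc are within its diameter of each other.

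By contrast, your steps (2) and (4) go through a Koebe/bounded-distortion comparison of $|g'|$ at $J(o)$ and $J(k_0)$. That can be made to work — the pole of $g_{i_1}\cdots g_{i_L}$ sits inside $\tilde J_{i_L}$, from which both $o$ and $k_0$ are uniformly separated, so the $(cz+d)^{-2}$ factors are comparable — but it forces you to control a two-dimensional derivative and to argue that a bounded hyperbolic neighbourhood of a size-$\kappa^L$ arc has Euclidean diameter $O(\kappa^L)$, both of which are extra work that the circle-preservation trick simply bypasses. Your approach also imports the cohomology $\hat\tau=\tau+h-h\circ T$ and bounded distortion of $\tau^N$, neither of which is needed here. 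What your route buys is that it would also go through in settings where the conformal structure is weaker and one cannot exploit the circle-preservation property; but in the present Fuchsian/M\"obius setting the geometric proof is both shorter and cleaner.
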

\begin{proof}
The inequality \eqref{eq:contracting} follows from the fact that
M\"obius transformations preserve (generalized) circles orthogonal to
the boundary of $\H$, together with the eventually expanding property
of $T$. \end{proof}

We denote by $K$ the limit set of $\Gamma$, i.e., the set of
all accumulation points in $\partial(\mathbb H)=\br \cup\{\infty\}$ of the orbit $\G. o$.  It follows from Lemma \ref{contracting} that the limit set $K$  is also
given by the $T$-invariant set

\[
\K=\bigcap_{i=1}^{\infty}T^{-i}(I).
\]

In order to perform counting in congruence classes, we need to twist
our dynamical system by a family of locally constant maps. Let $\G_{q}=\SL_{2}(\Z/q\Z).$
\begin{defn}[Modular cocycle]
\label{def:modcocycle}

For every modulus $q\in\mathbf{N}$, define 
$c_{q}:I\to\G_{q}$
by 
\[
c_{q}\lvert_{I_{i}}=g_{i}\quad \bmod q.
\]
\end{defn}
This quantity will appear again naturally in Section \ref{sec:Counting}
when we perform the lattice point count.
\subsection{Thermodynamics\label{sub:Thermodynamics}}

For a $T$-invariant probability measure $\mu$ on $\K$, let $h_{\mu}(T)$
denote the measure-theoretic entropy of $T$ with respect to $\mu$.
Let $\mathcal{M}(\K)^T$ denote the set of all $T$-invariant probability
measures on $\K$.

The \emph{pressure functional} is defined on $f\in L(K)$ by 
\[
\text{}P(f):=\sup_{\mu\in\mathcal{M}(\K)^{T}}\left(h_{\mu}(T)-\int_{\K}fd\mu\right).
\]
It follows from the variational principle that $P(-s\tau)$ is strictly
decreasing in a real parameter $s$ and has a unique positive zero
denoted by $s_{0}.$ Moreover it is known that in the current setting
$s_{0}=\delta$, where $\delta$ is the Hausdorff dimension of $K$. 

Let $L(\K)$ denote the Banach space of Lipschitz
functions on $\K$. For any real valued $f\in L(\K)$, the transfer operator $\L_{f}$
on $L(\K)$ is given by 
\begin{equation}
\L_{f}[G](x)=\sum_{Ty=x}e^{f(y)}G(y).\label{eq:transferdef}
\end{equation}
The basic spectral theory of transfer operators is given by
the Ruelle-Perron-Frobenius Theorem. We state this following Naud
\cite{NAUD}, the result can also be found in \cite{PPAST}. \\
\begin{thm}[Ruelle-Perron-Frobenius]\label{RPFTheorem}

\begin{enumerate}
\item There is a unique probability measure $\nu_{f}$ on $\K$ such that
$\L_{f}^{*}(\nu_{f})=e^{P(f)}\nu_{f}$. 
\item The maximal eigenvalue of $\L_{f}$ is $e^{P(f)}$ which belongs to
a unique positive eigenfunction $h_{f}\in L(\K)$ with $\nu_{f}(h_{f})=1$. 
\item The remainder of the spectrum of $\L_{f}$ is contained in a disc
of radius strictly less than $e^{P(f)}$. 
\end{enumerate}
\end{thm}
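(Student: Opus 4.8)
The plan is to deduce the three statements from the classical Ruelle--Perron--Frobenius machinery for expanding Markov maps, which applies here because $T:I\to\R$ has the Markov property, is eventually expanding (Proposition \ref{def:expansion-1}), and has a real-analytic, hence Lipschitz, distortion function $\tau$, so that $f\in L(\K)$ gives a Lipschitz potential on the $T$-invariant Cantor set $\K=\bigcap_{i\ge1}T^{-i}(I)$. The standard reference framework is that of Parry--Pollicott \cite{PPAST} and Naud \cite{NAUD}; since the problem reduces to checking that our $(T,\K,f)$ fits their hypotheses, I would organize the proof as a verification plus a citation rather than reproving the spectral theory from scratch.

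First I would record that $T$ restricted to $\K$ is topologically conjugate to a (one-sided) subshift of finite type $\Sigma_A^+$ on the alphabet of $k$ cylinders, with the transition matrix $A$ encoding the Markov property ``$T(I_i)\cap I_j\neq\emptyset\Rightarrow T(I_i)\supset I_j$''; this subshift is topologically mixing (indeed, by the ping-pong/admissibility structure every sufficiently long word is legal, so a power of $A$ is strictly positive). Next I would pass from the distortion-based metric on $\K$ to the symbolic metric: eventual expansion (constants $D,\gamma$) together with bounded distortion along trajectories — controlled by the iterated distortion $\tau^N$ and the analyticity of $T$ — shows that Lipschitz functions on $\K$ correspond to functions of summable variation on $\Sigma_A^+$, and that $f\mapsto \L_f$ agrees with the symbolic transfer operator $\L_{f\circ\pi}$. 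With these identifications in place, parts (1), (2), (3) are then exactly the conclusions of the Ruelle--Perron--Frobenius Theorem for topologically mixing subshifts of finite type with Hölder (here, summable-variation) potentials: existence and uniqueness of the eigenmeasure $\nu_f$ with $\L_f^*\nu_f=e^{P(f)}\nu_f$, simplicity of the leading eigenvalue $e^{P(f)}$ with a strictly positive Lipschitz eigenfunction $h_f$ normalized by $\nu_f(h_f)=1$, and a spectral gap placing the rest of the spectrum in a strictly smaller disc. The identification $P(f)=\log(\text{leading eigenvalue})$ is the variational characterization of pressure already cited in Section~\ref{sub:Thermodynamics}.

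The main obstacle — really the only place where some care is needed rather than pure citation — is the passage between the geometric/analytic picture on $I$ and the symbolic picture: one must check that the ``distortion'' exponent behaves well, i.e. that $\tau$ (or equivalently $\hat\tau$, since they are cohomologous via $h=-\log J'$) has the bounded-distortion property $|\tau^N(x)-\tau^N(y)|\le C$ whenever $x,y$ lie in a common cylinder of length $N$, and more generally summable variation. This follows from real analyticity of $T$ on the compact set $I$ together with uniform expansion, by the usual mean-value-theorem argument, but it is the step that genuinely uses the specific structure of our maps rather than abstract nonsense; everything downstream is then the black-box RPF theorem. I would also remark that in the Schottky-group case ($\ell=k'$) one should make sure the admissibility constraint (no generator followed by its inverse) is exactly what the transition matrix $A$ encodes, so that topological mixing still holds.
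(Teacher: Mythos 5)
The paper offers no proof of this theorem: it is stated verbatim with citations to Naud \cite{NAUD} and Parry--Pollicott \cite{PPAST}, which is precisely the verify-hypotheses-and-cite approach you outline, so your proposal matches the paper's treatment. One small slip in your justification of topological mixing: the claim ``every sufficiently long word is legal'' is false when the generating set contains a generator together with its inverse (a word alternating $g_i,g_i^{-1}$ is inadmissible at every length); what is true --- and all you need --- is that some power of the transition matrix $A$ is strictly positive, which holds because every admissible prefix admits at least $p-1\ge 1$ admissible continuations and the semigroup is free on at least two generators.
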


Our functional analysis takes place for the most part on the Banach
space $C^{1}(I)$ with the norm

\begin{equation}
\|f\|_{C^{1}(I)}=\|f\|_{\infty}+\|f'\|_{\infty},\label{eq:norms-1}
\end{equation}
or closely related spaces of vector valued functions. As in \cite{NAUD}
we need to note that Theorem \ref{RPFTheorem} extends reasonably
to $\L_{f}$ acting on $C^{1}(I)$ given $f\in C^{1}(I)$. In particular
$\L_{f}$ acting on $C^{1}(I)$ has the same spectral properties relative
to a positive eigenfunction $h_{f}\in C^{1}(I)$ such that $\L_{f}h_{f}=e^{P(f)}h_{f}$.
We also view $\nu_{f}$ as a measure on $I$ with support in
$\K$.

We will write simply $\L_{s}=\L_{-s\tau}$ in the sequel.

\section{Counting\label{sec:Counting}}


\subsection{From the lattice point count to the boundary dynamics}

We now show how one can adapt the work of Lalley \cite{LALLEYSYMB}
to get counting estimates in our setting.  Let $\G_{q}=\SL_{2}(\Z/q\Z).$ We convert questions about
the lattice point count in congruence classes into questions about
the $\R^{\G_{q}}$ valued function 
\[
N_{q}^{*}(a,\g_{0},\vp):=\sum_{\g\in\G\cup\{e\}\::d(o,\g\g_{0}o)-d(o,\g_{0}o)\leq a}G(\g\g_{0}o)\rho(\pi_{q}(\g)).\vp
\]

where

\begin{itemize}
\item $G$ is a non negative function on $\H\cup\R$ with the property that
there exist an integer $M$ and neighborhood $J_{M}$ of the length
$M$ cylinders in $I$ such that $G$ is locally constant on $J_{M}$.
We write $g$ for the restriction of $G$ to $\R$. 
\item $\vp\in\R^{\G_{q}}$ , $\pi_{q}:\G\to\G_{q}$ is reduction mod $q$
and $\rho$ is the right regular representation of $\G_{q}$. 
\item $o = i \in\H$ is our fixed origin and $\g_{0}\in\G\cup\mathrm{id}$. 
\end{itemize}

While this might seem mysterious, we explain as follows. 

\emph{Firstly, and most importantly, the Main Theorem \ref{thm:mainsimple}
stated in our Introduction is directly analogous to certain estimates
for $N_{q}^{*}(a,\mathrm{id,\varphi)}$ for suitable test $\vp$.}

\textbf{The distance $d$ vs the matrix norm $\|\g\|$. }One has the
identity 
\[
\|\g\|^{2}=2\cosh(d(i,\g i)).
\]
With this in hand and our choice $o=i$ of basepoint, the condition $d(i,\g\g_{0}i)-d(i,\g_{0}i)\leq a$
becomes 
\begin{equation*} 
\frac{\|\g\g_{0}\|}{\|\g_{0}\|}\leq R,
\end{equation*} 
where $R=\sqrt{2\cosh(a)}=e^{a/2}$. \footnote{ More precisely the condition $\frac{||\gamma \gamma_0||}{||\gamma||} \leq R$ corresponds to an inequality 
\begin{equation}d(i,\g\g_{0}i)-d(i,\g_{0}i) \leq 2 \log R + \log (1 + e^{-2d(i, \gamma_0i)}) + \log \left( 1 + \sqrt{ 1 - \frac{1}{R^4 \cosh^2 d(i, \gamma_0i)}}\right) = a +  \log (1 + e^{-2d(i, \gamma_0i)})  + O(e^{-2a}).  \label{footnoteeqn} \end{equation}
 The difference is only important insofar as it changes the leading constant in our main theorem.  }

\textbf{The parameter $\gamma_{0}$. }Our Main Theorem \ref{thm:mainsimple}
of the Introduction is obtained by setting $\g_{0}=\mathrm{id}.$
However, even to obtain this simplified version, consideration of
general $\g_{0}$ is necessary in order to set up the forthcoming
recursion over the tree-like $\G.$ This recursive formula leads to
the renewal equation.

\textbf{The function $G$. }This function allows one to perform sector
estimates by only counting lattice points that fall close to a prescribed
part of the boundary $\partial(\H)$ of hyperbolic space.

\textbf{Modular twisting. }Let us now explain the modular twisting
in the simple case that $G:=1.$ Recall that we are supposed to
be counting in a given congruence class $\xi\in\G_{q}.$ One can decompose
the characteristic function of the singleton set $\xi$ according
to its constant coefficient and a part orthogonal to constants, and
look at $N(a,\g_{0},\vp)$ with $\vp$ set in turn to these different
components. Since the estimate is additive one can estimate the corresponding
quantities separately. The key calculation is that

\[
N_q^*(a,\mathrm{id},\mathbf{1}_{\xi})=\sum_{\g\in\G\cup\{e\}\::d(o,\g o)\leq a}\rho(\pi_{q}(\g)).\mathbf{1}_{\xi}=\sum_{\g\in\G\cup\{e\}\::d(o,\g o)\leq a}\mathbf{1}_{\xi\pi_{q}(\gamma)}
\]
 so one obtains the congruence lattice point count from reading off
a coordinate of the vector valued $N_q^*(a,\mathrm{\mathrm{id,}\mathbf{1_{\xi}).}}$

\noindent \textbf{Remark.} Whenever we sum over semigroup elements we have the implied constraint that any concatenation in the summation condition be admissible; we will use  the notation $\sum^*$  to emphasize this.  For example, we will write 
\[ \sum^*_{\substack{\frac{\|\g\g_{0}\|}{\|\g_{0}\|}\leq R\\
\g\equiv\xi\bmod q
}} G(\g\g_{0}o) := 
\sum_{\substack{\frac{\|\g\g_{0}\|}{\|\g_{0}\|}\leq R\\
\g\equiv\xi\bmod q\\
\g \cdot \g_0 \mbox{ admissible} 
}}G(\g\g_{0}o).\]

The most general lattice point count that the upcoming estimates for
$N(a,\g_{0},\vp)$ will allow us to obtain is the following.

\begin{thm}[Main Theorem, elaborated]
 \label{thm:maintheoremelaborate}There exist $Q_{0}\in\mathbf{N}$,
$C>0$ and $\e>0$ such that for all $\g_{0}\in\G$, $\xi\in\SL_{2}(\Z/q\Z)$
and $q$ with $(Q_{0},q)=1$, 
\[
\sum^*_{\substack{\frac{\|\g\g_{0}\|}{\|\g_{0}\|}\leq R\\
\g\equiv\xi\bmod q
}
}G(\g\g_{0}o)=\frac{R^{2\delta_{\mathcal{}}}}{|\G_{q}|}\hat C_{*}(\gamma_{0},G\lvert_{\R})+O\left((\|G\|_{\infty}+\|[G\lvert_{\R}]'\|_{\infty})q^{C}R^{2(\delta_{\mathcal{}}-\e)}\right).
\]
Here $G$ is any function in $C^{1}(\H\cup\R)$ which is locally constant
on some neighborhood of the cylinders of length $M$ in $I$ for some
$M>0$. The constant $\hat C_{*}(\gamma_{0},G\lvert_{\R})>0$ is related to $C_*$ from 
\eqref{eq:C*} but modified in light of \eqref{footnoteeqn}. The implied constant depends on $M$. 
\end{thm}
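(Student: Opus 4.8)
The plan is to deduce Theorem~\ref{thm:maintheoremelaborate} from the spectral bounds on the congruence transfer operators (Theorem~\ref{thm:maintransfer}) via Lalley's renewal method, keeping careful track of all dependence on $q$. First I would set up the renewal equation: define, for a test vector $\vp \in \R^{\G_q}$ and $\g_0 \in \G \cup \{\mathrm{id}\}$, the counting function $N_q^*(a,\g_0,\vp)$ as in the excerpt, and use the tree structure of the free semigroup $\G$ together with the Markov property of $T$ to write a recursion expressing $N_q^*(a,\g_0,\vp)$ in terms of $N_q^*(a - \hat\tau(\cdot),\, g_{i}\g_0,\, \cdot)$ summed over admissible first letters. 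Passing to the Laplace transform in the variable $a$ (setting $s$ dual to $a$, so that $R = e^{a/2}$ and $R^{2\delta}$ corresponds to a pole at $s=\delta$) converts this recursion into an identity of the shape $\widehat N_q(s) = (\mathrm{I} - \L_{s,q})^{-1}[\text{explicit vector}]$, where the inhomogeneous term is built from $G|_\R$, the cocycle $c_q$, and the boundary geometry contributed by Lemma~\ref{contracting} (which controls the error between $d(o,\g\g_0 o)$ and the symbolic quantity $\tau^L$). This is where the function $G$, required to be locally constant near the length-$M$ cylinders, enters cleanly: its restriction to $\R$ becomes a genuine $C^1$ observable to which $\L_{s,q}$ applies.

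Next I would split $\vp$ into its constant part and its part in $\C^{\G_q} \ominus 1$, as indicated in the excerpt, and estimate the two contributions separately. For the constant part, $\L_{s,q}$ acts as the scalar operator $\L_s$, and the analysis is exactly Naud's (or Lalley's) $q=1$ count, producing the main term $\frac{R^{2\delta}}{|\G_q|}\widehat C_*(\g_0, G|_\R)$ — the factor $\frac{1}{|\G_q|}$ coming from the normalization of the constant vector against the regular representation — with a power-saving error $O(R^{2\delta - \e})$ that is $q$-independent. For the orthogonal part, I would shift the Laplace inversion contour to $\Re(s) = \delta - \e$ and use Theorem~\ref{thm:maintransfer}: on the bounded-frequency window $|b| \le b_0$, part~\eqref{enu:mainmodular} gives $\|\L_{s,q}^m f\|_{C^1} \le C q^C \rho_0^m \|f\|_{C^1}$ with $\rho_0 < 1$, which yields a uniform spectral gap for $(\mathrm{I}-\L_{s,q})$ on $\C^{\G_q}\ominus 1$ after passing through a fixed power $m$; for large frequency $|b| > b_0$, part~\eqref{enu:mainlargeimaginary} gives the polynomial-in-$b$ growth $C_\eta |b|^{1+\eta}\rho_\eta^m$, which (choosing $m \sim \log|b|$) is summable along the vertical contour and contributes only to the error term. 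Combining these, the orthogonal part of $N_q^*$ is bounded by $O\big((\|G\|_\infty + \|[G|_\R]'\|_\infty)\, q^C R^{2(\delta-\e)}\big)$.

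Then I would assemble: the constant-part main term plus the (error-sized) orthogonal part, together with the discrepancy between the true norm-ball condition $\|\g\g_0\|/\|\g_0\| \le R$ and the symbolic-distance condition — controlled by the footnote estimate \eqref{footnoteeqn} and Lemma~\ref{contracting}, which perturb the leading constant from $C_*$ to the modified $\widehat C_*(\g_0, G|_\R)$ and contribute a lower-order error — gives exactly the claimed asymptotic. Finally I would verify positivity of $\widehat C_*(\g_0, G|_\R)$ from the Ruelle--Perron--Frobenius Theorem~\ref{RPFTheorem} (the residue at $s=\delta$ is a positive multiple of $\nu_\delta(G|_\R)\, h_\delta(\cdot)$-type quantities, nonzero since $G \ge 0$ is not identically zero on $\K$), and note that the implied constant's dependence on $M$ enters only through the neighborhood on which $G$ is locally constant.

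The main obstacle I expect is making the Laplace-transform/Tauberian passage fully quantitative and uniform in $q$: one must justify the contour shift (controlling $(\mathrm{I}-\L_{s,q})^{-1}$ along the entire vertical line $\Re(s) = \delta-\e$, not just near $s=\delta$), track the polynomial $|b|$-growth against the decay of the Laplace-transformed data at high frequency, and confirm that the only $q$-dependence that survives is the benign polynomial factor $q^C$ from Theorem~\ref{thm:maintransfer}\eqref{enu:mainmodular} — in particular that the number of congruence classes $|\G_q| \asymp q^3$ and the norms of the relevant projections do not introduce worse powers. The careful bookkeeping of how $\|G\|_\infty$, $\|[G|_\R]'\|_\infty$, and $M$ propagate through the renewal identity is the other place where the argument, while routine in spirit, must be done with some care.
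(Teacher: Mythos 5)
Your overall strategy---renewal equation, Laplace transform, decomposition into constant and orthogonal parts of $\vp$, contour shift against the spectral bounds of Theorem~\ref{thm:maintransfer}---is the route the paper takes, so the high-level plan is sound.

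There is, however, a genuine gap in the step where you claim that Laplace-transforming the recursion for $N_q^*(a,\g_0,\vp)$ directly yields $\widehat N_q(s)=(\mathrm{I}-\L_{s,q})^{-1}[\cdot]$. The finite renewal equation \eqref{eq:finiterenewal} for $N_q^*$ is indexed by the \emph{interior} cocycle $\tau_*(\g)=d(o,\g o)-d(o,(\sigma\g)o)$ on the tree $\G$, while the transfer operator $\L_{s,q}$ is built from the \emph{boundary} distortion $\tau$ and the map $T:I\to\R$. These are not the same, so the Laplace transform of \eqref{eq:finiterenewal} does not involve $\L_{s,q}$ at all. The paper introduces a genuine intermediate object---the boundary count $N_q(a,x,\vp)$---which satisfies the boundary renewal equation \eqref{eq:renewal} whose transform is \eqref{eq:nqdef}, and then passes from $N_q^*$ to $N_q$ by (i) iterating \eqref{eq:finiterenewal} $n=\lfloor ca\rfloor$ times to make $\g_0$ "deep" in the tree, and (ii) the two-sided sandwich of Lemmas~\ref{bridging}--\ref{lem:sandwich2}, which uses Lemma~\ref{contracting} to show $\tau_*^n(\g\g_0)=\tau^n(\g\g_0 k_0)+O(\k^N)$ and the positivity of $\vp$ to get $N_q(a\mp C\k^n,\g k_0,\cdot)\leq N_q^*(a,\g,\cdot)\leq N_q(a\pm C\k^n,\g k_0,\cdot)$. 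You mention Lemma~\ref{contracting} and \eqref{footnoteeqn} only as a perturbation of the leading constant, but this interior-to-boundary conversion is structural, not cosmetic; without it there is nothing for $\L_{s,q}$ to act on. Two further omissions are not trivial either: the contour shift is not absolutely convergent given only the bound $\|n_q\|\ll\max(q^C,|b|^{1+\eta})/|s|$, so the paper first smooths against the kernel $k_\lambda$ of \eqref{eq:scaledfourier} (whose transform has stretched-exponential decay) and desmooths afterwards using monotonicity of $N_q$ in $a$; and to collapse $\sum_{\sigma^n\g=\g_0}C(\g k_0,g)e^{-\delta\tau_*^n(\g)}$ to the interior constant $C_*(\g_0,g)$ one needs Lemma~\ref{hinside}, the version $h_*$ of the Perron eigenfunction living on $\G$ rather than $K$, together with the stationarity relation \eqref{eq:stationaryhstar}.
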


We now show how to relate the quantities $N_{q}^{*}$ and the dynamics
on the boundary. As before, write $d_{E}$ for Euclidean distance
in the upper half plane. Let $\G^{(n)}$ denote those $\g\in\G$ which 
can be written as a reduced word in at least $n$ generators. If
$\gamma = g_{i_{1}}g_{i_{2}}\ldots g_{i_{n}}$ is written in reduced form
then we define
the \emph{shift} 
\[
\sigma:\G^{(n)}\to\G^{(n-1)},\quad\sigma(\gamma)=g_{i_{2}}\ldots g_{i_{n}}.
\]
We use the convention that  $\G^{(0)}=\G\cup\{e\}$ and $\sigma(g_{i})=e$
for all $1\leq i\leq k$. Throughout the rest of this section we always assume 
semigroup elements are written in their reduced form.

Define for $\g\in\G$ 
\[
\tau_{*}(\g)=d(o,\g o)-d(o,(\sigma\g)o).
\]
and define for $n\geq N$ and $\g\in\G^{(n)}$ 
\[
\tau_{*}^{N}(\g)=\sum_{j=0}^{N-1}\tau_{*}(\sigma^{j}\g)=d(o,\g o)-d(o,(\sigma^{N}\g)o).
\]
We can now recast $N_{q}^{*}$ as 
\[
N_{q}^{*}(a,\g_{0},\vp)=\sum_{n=0}^{\infty}\sum_{\g\in\G:\sigma^{n}\g=\g_{0}}G(\g o)\rho(\pi_{q}(\g\g_{0}^{-1}))\cdot\vp\mathbf{1}\{\tau_{*}^{n}(\g)\leq a\}.
\]
One obtains by this elementary argument a recursive formula called
the \emph{renewal equation}: 
\begin{equation}
N_{q}^{*}(a,\g_{0},\vp)=\sum_{\g:\sigma\g=\g_{0}}N_{q}^{*}(a-\tau_{*}(\g),\g,[\rho(\pi_{q}(\g\g_{0}^{-1}))\vp])+G(\g_{0} o)\vp\mathbf{1}\{a\geq0\}.\label{eq:finiterenewal}
\end{equation}

We will now `push to the boundary', replacing quantities with boundary
counterparts under the following Dictionary.

\begin{center}
\begin{tabular}{cc}
Inside $\H$ (lattice point count) & The boundary $\partial(\H)$\tabularnewline
\hline 
\label{table:dictionary}$\sigma$ & $T$\tabularnewline
$\tau_{*}$ & $\tau(x)$  (see Definition \ref{def:The-distortion-function-II})\tabularnewline
$\tau_{*}^{N}$ & $\tau^{N}(x)=\sum_{i=0}^{N-1}\tau(T^{i}x)$\tabularnewline
$G$ & $g=G\lvert_{I}$\tabularnewline
$\rho$ & The cocycle $c_{q}$ (see Definition \ref{def:modcocycle})\tabularnewline
$\rho(\pi_{q}(\g\g_{0}^{-1}))$ & $c_{q}^{N}(x):= c_{q}(T^{N-1}x)c_{q}(T^{N-2}x)\ldots c_{q}(Tx)c_{q}(x)$\tabularnewline
$N_{q}^{*}(a,\g_{0},\vp)$ & $N_{q}(a,x,\vp)=\sum_{n=0}^{\infty}\sum_{y:T^{n}y=x}g(y)\rho(c_{q}^{n}(y))\vp\mathbf{1}\{\tau^{n}(y)\leq a\}.$\tabularnewline
\end{tabular}
\par\end{center}

These new quantities play a central role in the remainder of the paper,
in place of their old counterparts.  We take this opportunity to outline the rest of this section. 

\begin{itemize} 
\item We would like to understand the quantity $N_{q}^{*}(a,\g_{0},\vp)$. It's not clear how to do this directly, so we compare it to $N_{q}(a,\g_{0}k_0,\vp)$. Unfortunately that comparison is only valid when $\g_0$ is a ``large'' group element (see Lemmas \ref{bridging} and   \ref{sandwich}), but we can arrrange that by repeated application of the finite renewal equation (see \eqref{eq:finiterenewal}) so we obtain Lemma \ref{lem:sandwich2}.  
\item Next we relate $N_{q}(a,\g k_0,\vp)$ to the transfer operators. This is done by means of the boundary renewal equation \eqref{eq:renewal} and a Laplace transform: we obtain \eqref{eq:nqdef}.
\item Spectral bounds for transfer operators (see Theorem \ref{thm:maintransfer}) together with equation \eqref{eq:nqdef} and the Laplace inversion formula give us good control on  $N_{q}(a,\g k_0,\vp)$: see Proposition \ref{boundaryprop}.
\item Finally we use control of  $N_{q}(a,\g k_0,\vp)$ to gain control of $N_{q}^{*}(a,\g_{0},\vp)$.
\end{itemize} 
We will now put this outline into practice.

Assume that $\g_0\neq 1$ (the case $\g_0=1$ follows from this consideration\footnote{By applying the renewal equation \eqref{eq:finiterenewal}, the quantity $N_q^*(a,1,\vp)$ is converted to a constant plus a finite sum of quantities of the form $N_q^*(\bullet,\gamma_0,\bullet)$ where $\g_0\neq 1$.}). In the Schottky setup, we say that
$k_0\in \tilde{I}_i$ is admissible for $\gamma_0$ if 
\begin{equation}\label{eq:gamma0}
 \g_0 = g_{i_1}\ldots g_{i_N}
\end{equation}
is reduced and  $i_N\neq i+k'\bmod 2k'$ . We fix such an admissible $k_0\in K$ now - if in the continued fractions setup this can be chosen arbitrarily in $K$.

\begin{lem}
\label{bridging}There is  $\k<1$ such that when 
\begin{equation}\label{eq:gammagamma0}
 \g = g_{j_0}\ldots g_{j_{n+N}}
\end{equation}
is a reduced word in $\Gamma$, and  $\g \g_0$ is also reduced/admissible then
\[
\tau_{*}^{n}(\g\g_{0})=\tau^{n}(\g\g_{0}k_{0})+O(\k^{N}).
\]
\end{lem}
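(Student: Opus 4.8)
The plan is to expand both sides directly using the definitions and to control the difference term by term, exploiting the fact that $\tau_*$ and $\tau$ differ only by a coboundary coming from the distance to the basepoint $o$ versus the boundary point $k_0$. First I would recall that, by definition,
\[
\tau_*^n(\g\g_0) = d(o, \g\g_0 o) - d(o, \sigma^n(\g\g_0) o),
\]
and since $\g\g_0$ is reduced of length $n+N+1 \geq n$, the shift $\sigma^n(\g\g_0) = g_{j_n}\ldots g_{j_{n+N}}$; writing $\g_0 = g_{i_1}\ldots g_{i_N}$ and noting $\g = g_{j_0}\ldots g_{j_{n+N}}$ extends $\g_0$, the point $\sigma^n(\g\g_0)$ is precisely $g_{j_n}\ldots g_{j_{n+N}}$, which agrees with a tail of $\g$. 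On the dynamical side, $\tau^n(\g\g_0 k_0) = \sum_{i=0}^{n-1}\tau(T^i(\g\g_0 k_0))$, and by the Dictionary $T^i$ corresponds to the shift, so $T^i(\g\g_0 k_0) = (\sigma^i(\g\g_0))k_0$ (interpreting the point $\g\g_0 k_0 \in I$ via the M\"obius action, using admissibility of $k_0$ for $\g_0$ and of $\g\g_0$ so that all the relevant compositions stay inside the correct cylinders).

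The key step is then a telescoping comparison. Using Definition \ref{def:The-distortion-function-II} (or working with $\hat\tau$ first and invoking cohomologousness), $\tau(x) = \log|(J T J^{-1})'(Jx)|$, which for a M\"obius map written as $g_i^{-1}$ acting appropriately can be rewritten, via the chain rule applied to the orbit, as a difference $d_E$-type quantity; more cleanly, I would use the standard identity relating the Busemann-type cocycle to distances: for the boundary point $\xi = $ attracting fixed point direction of the infinite word, one has $\tau^n(x) = \beta_\xi(\cdot) $ telescoping, and compare with $\tau_*^n$ which uses the genuine orbit point $o$ in place of the boundary limit. The difference between using $o$ and using $k_0$ (or the limit point) at the ``far end'' of the word of length $n+N$ is governed by how far apart $g_{i_1}\ldots g_{i_N}o$ and $g_{i_1}\ldots g_{i_N}k_0$ are — and since $\g_0$ is a reduced word of length $N$ with $k_0$ admissible for it, Lemma \ref{contracting} gives $d_E(\g_0 o, \g_0 k_0) \leq C\k^N$. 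Because M\"obius transformations and the distance/distortion functions are Lipschitz (indeed uniformly so on the compact set $I$, where derivatives are bounded away from $0$ and $\infty$ — see Proposition \ref{def:expansion-1} and the analyticity of $\tau$), this $O(\k^N)$ discrepancy at the seam propagates to an $O(\k^N)$ discrepancy between $\tau_*^n(\g\g_0)$ and $\tau^n(\g\g_0 k_0)$, uniformly in $n$. The uniformity in $n$ is crucial and is exactly what the eventual expansion buys us: the contraction from Lemma \ref{contracting} is not degraded by the first $n$ letters because those are applied \emph{after} the discrepancy is already small and M\"obius maps do not expand the relevant cross-ratio / hyperbolic configuration.

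I expect the main obstacle to be bookkeeping the precise correspondence between the group-theoretic quantity $\tau_*^n(\g\g_0)$ (defined via the shift $\sigma$ on reduced words and the orbit of $o$) and the dynamical quantity $\tau^n(\g\g_0 k_0)$ (defined via $T$ and a point $k_0 \in K \subset I$), making sure that admissibility of $k_0$ for $\g_0$ and reducedness of $\g\g_0$ guarantee every intermediate point $\sigma^i(\g\g_0)k_0$ lies in the correct interval $I_{j_i}$ so that $T$ genuinely acts as $g_{j_i}^{-1}$ there and the telescoping identity $\tau^n(\g\g_0 k_0) = d_E$-difference holds with no sign or branch errors. Once that identification is set up cleanly — essentially repackaging the computation that produced the Dictionary — the estimate itself is a one-line consequence of Lemma \ref{contracting} plus the mean value theorem applied to $\tau$, so I would keep the proof short and cite those two facts.
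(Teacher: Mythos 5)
Your high-level strategy is the right one: use Lemma \ref{contracting} for the contraction, identify $T^i(\g\g_0 k_0) = (\sigma^i(\g\g_0))k_0$ so the two sums line up term by term, and use the regularity of the distortion function to convert a geometric proximity into an estimate on $\tau_*$ vs.\ $\tau$. You also correctly flag that admissibility of $k_0$ and reducedness of $\g\g_0$ are what keep every intermediate orbit point in the correct cylinder. These match the skeleton of the paper's argument.

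However, there is a genuine gap at exactly the point you flag as "uniformity in $n$." You write that the seam discrepancy $d_E(\g_0 o, \g_0 k_0) = O(\k^N)$ "propagates to an $O(\k^N)$ discrepancy between $\tau_*^n$ and $\tau^n$, uniformly in $n$" because of Lipschitz continuity and because "M\"obius maps do not expand the relevant cross-ratio / hyperbolic configuration." Neither clause is correct as a justification. M\"obius transformations are hyperbolic isometries but are very much not Euclidean contractions, and $\tau_*^n - \tau^n$ is a sum of $n$ term-by-term differences, so a naive Lipschitz propagation only gives an error of order $n\cdot\k^N$, which is not what the lemma asserts. The actual mechanism is quantitatively different: the $i$-th comparison term $\tau_*(\sigma^i(\g\g_0)) - \tau(T^i(\g\g_0 k_0))$ has error $O(\k^{\,n+N-i})$, \emph{not} $O(\k^N)$, because the tail $\sigma^{i+1}(\g\g_0)$ still has at least $n+N-i$ letters and Lemma \ref{contracting} applies with that larger exponent. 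The uniform-in-$n$ bound then comes from summing the resulting geometric series, $\sum_{i=0}^{n-1}\k^{\,n+N-i} = O(\k^N)$, not from any non-expansion of M\"obius maps.

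A second, smaller, missing ingredient: to compare the two sums term by term you need a bridge between $\tau_*(\g')$ (defined via $d(o,\g'o) - d(o,\sigma\g'o)$) and a derivative of a single generator evaluated at an orbit point. The paper invokes Lalley's estimate $\tau_*(\g') = -\log|g_{j_0}'(\sigma(\g')o)| + o\bigl(d_E(\sigma(\g')o,\R)\bigr)$, then replaces $o$ with $k_0$ using Lemma \ref{contracting} and the uniform Lipschitz bound on $\log|g_i'|$ near $I$. Your proposal gestures at a Busemann-type identity here but does not actually supply it, and without it the two sums are not in a form where term-by-term subtraction is meaningful. So the intuition is close, but both the analytic bridge and the geometric-series accounting are missing.
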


\begin{proof}

Let $C$ and $\kappa$ be the constants from Lemma \ref{contracting}. Then

\begin{equation}
d_{E}(\g\g_{0}o,\g\g_{0}k_{0})\leq C \k^{n+N}\label{eq:convergence}.
\end{equation}
We also have 
\[
\tau_{*}(g_{j_{0}}g_{j_{1}}g_{j_{2}}\ldots g_{j_{n-1}}\ldots g_{j_{n+N}}\g_{0})=-\log|g_{j_{0}}'(g_{j_{1}}g_{j_{2}}\ldots g_{j_{n+N}}\g_{0}o)|+o(d_{E}(g_{j_{1}}g_{j_{2}}\ldots g_{j_{n+N}}\g_{0}o,\R)).
\]
the derivative here is for the action of $\G$ on the unit disc model obtained via $J$; a similar estimate is given in \cite[pg. 41]{LALLEYSYMB}. Note that the error term can be measured either in the unit disc model or the upper half plane model, as the two are bi-Lipschitz near $K$. It follows
then that 

\color{black}

\[
\tau_{*}(g_{j_{0}}g_{j_{1}}g_{j_{2}}\ldots g_{j_{n+N}}\g_{0})=-\log|g_{j_{0}}'(g_{j_{1}}g_{j_{2}}\ldots g_{j_{n+N}}\g_{0}o)|+O(\k^{n+N-1}).
\]
Since there is some uniform bound for the derivative of $\log|g_{i}'|$
close to the part of  $I$ where $g_i$ is an inverse branch of $T$, this together with \eqref{eq:convergence} implies
\[
\tau_{*}(g_{j_{0}}g_{j_{1}}g_{j_{2}}\ldots g_{j_{n+N}}\g_{0})=-\log|g_{j_{0}}'(g_{j_{1}}g_{j_{2}}\ldots g_{j_{n+N}}\g_{0}k_{0})|+O(\k^{n+N-1}).
\]
By iterating for $n$ steps and summing the geometric series it follows that 
\[
\tau_{*}^{n}(g_{j_{0}}g_{j_{1}}g_{j_{2}}\ldots g_{j_{n+N}}\g_{0})=-\log|(g_{j_{0}}g_{j_{1}}g_{j_{2}}\ldots g_{j_{n-1}})'(g_{j_n}\ldots g_{j_{n+N}}\g_{0}k_{0})|+O(\k^{N})
\]
or what is the same, 
\begin{equation}
\tau_{*}^{n}(\g\g_{0})=\tau^{n}(\g\g_{0}k_{0})+O(\k^{N}),\label{eq:bridging}
\end{equation}
 proving the Lemma.

 \end{proof}

\color{black}
\begin{lem}
\label{sandwich}Suppose $\vp$ is non
negative. There are $N_{0}$, $\k<1$ and $C$ depending on $G$ such
that if  
$N>N_{0}$ and 

\begin{equation*}
\g_1 = g_{r_1}\ldots g_{r_N} \g_0
\end{equation*}
is an admissible concatenation (hence $k_0$ is admissible for $\g_1$) then
\[
N_{q}(a-C\k^{N},\g_{1}k_{0},\vp)\leq N_{q}^{*}(a,\g_{1},\vp)\leq N_{q}(a+C\k^{N},\g_{1}k_{0},\vp).
\]
The inequalities are understood between functions on $\R^{\G_{q}}$. \end{lem}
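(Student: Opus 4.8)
The goal is a sandwiching of the ``starred'' count $N_q^*(a,\g_1,\vp)$ between two values of the boundary count $N_q(a\pm C\k^N,\g_1 k_0,\vp)$. The plan is to match up the two sums term by term. Recall
\[
N_q^*(a,\g_1,\vp)=\sum_{n=0}^\infty\ \sum_{\g:\sigma^n\g=\g_1}G(\g o)\,\rho(\pi_q(\g\g_1^{-1}))\cdot\vp\,\mathbf 1\{\tau_*^n(\g)\le a\},
\]
while
\[
N_q(a,\g_1 k_0,\vp)=\sum_{n=0}^\infty\ \sum_{y:T^n y=\g_1 k_0}g(y)\,\rho(c_q^n(y))\cdot\vp\,\mathbf 1\{\tau^n(y)\le a\}.
\]
First I would set up the bijection between the inner index sets: since $T$ is the piecewise inverse of the $g_i$, the preimages $y$ with $T^n y=\g_1 k_0$ are exactly the points $y=g_{j_0}\cdots g_{j_{n-1}}\g_1 k_0$ ranging over admissible prefixes $g_{j_0}\cdots g_{j_{n-1}}$ that can be legally concatenated with $\g_1$ (the admissibility of $k_0$ for $\g_1$, guaranteed by the hypothesis that $g_{r_1}\cdots g_{r_N}\g_0$ is an admissible concatenation and $N>N_0$, is exactly what makes $\g_1 k_0$ lie in the appropriate cylinder so that all these preimages survive). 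These correspond bijectively to the $\g=g_{j_0}\cdots g_{j_{n-1}}\g_1\in\G$ with $\sigma^n\g=\g_1$ appearing in $N_q^*$.

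Under this bijection I would check the three ingredients match up to the claimed error. (i) The cocycle/weight term is an exact match: $c_q^n(y)=c_q(T^{n-1}y)\cdots c_q(y)=g_{j_{n-1}}\cdots g_{j_0}\bmod q$, which is precisely $\pi_q(g_{j_0}\cdots g_{j_{n-1}})=\pi_q(\g\g_1^{-1})$ after accounting for the right regular representation convention (this is just the Dictionary), so $\rho(c_q^n(y))=\rho(\pi_q(\g\g_1^{-1}))$. (ii) The function values match because $G$ is locally constant on a neighborhood $J_M$ of the length-$M$ cylinders and, by Lemma \ref{contracting}, $d_E(\g o,\g k_0)=d_E(g_{j_0}\cdots g_{j_{n-1}}\g_1 o,\ g_{j_0}\cdots g_{j_{n-1}}\g_1 k_0)\le C\k^{n+N}$; once $N>N_0$ is large enough (depending on $M$ and $G$) both points lie in the same cylinder of length $M$, so $G(\g o)=g(y)$. (This is where the $C$ depending on $G$ enters.) (iii) The distortion terms differ by exactly the error we are allowed: Lemma \ref{bridging} gives $\tau_*^n(\g)=\tau_*^n(g_{j_0}\cdots g_{j_{n-1}}\g_1)=\tau^n(g_{j_0}\cdots g_{j_{n-1}}\g_1 k_0)+O(\k^N)=\tau^n(y)+O(\k^N)$, uniformly in $n$ and the word.

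Putting these together: each term of $N_q^*(a,\g_1,\vp)$ equals the corresponding term of $N_q(\bullet,\g_1 k_0,\vp)$ with the same nonnegative weight $g(y)\rho(c_q^n(y))\vp\ge 0$ (componentwise nonnegative, using $\vp\ge0$ and that $\rho$ permutes coordinates), except the indicator $\mathbf 1\{\tau_*^n(\g)\le a\}$ is replaced by $\mathbf 1\{\tau^n(y)\le a+O(\k^N)\}$. Since $\tau_*^n(\g)\le a$ implies $\tau^n(y)\le a+C\k^N$ and conversely $\tau^n(y)\le a-C\k^N$ implies $\tau_*^n(\g)\le a$ (absorbing the implied constant from Lemma \ref{bridging} into $C$), monotonicity of the sums in the threshold $a$ — valid precisely because all terms are componentwise nonnegative — yields
\[
N_q(a-C\k^N,\g_1 k_0,\vp)\ \le\ N_q^*(a,\g_1,\vp)\ \le\ N_q(a+C\k^N,\g_1 k_0,\vp)
\]
as functions on $\R^{\G_q}$, which is the claim. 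The main obstacle, and the only genuinely delicate point, is organizing the $n=0$ (and other small-$n$) terms and verifying that the ``$N>N_0$ large, admissible'' hypothesis really does force $\g o$ and $\g k_0$ into a common length-$M$ cylinder for \emph{every} $\g$ in the sum simultaneously — i.e. that the single constant $N_0$ works uniformly; this is handled by the geometric decay $C\k^{n+N}$ in Lemma \ref{contracting}, which is worst at $n=0$ and already small once $N>N_0$.
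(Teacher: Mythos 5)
Your proof is correct and follows essentially the same route as the paper's: rewrite both $N_q^*(a,\g_1,\vp)$ and $N_q(a,\g_1 k_0,\vp)$ as double sums over $n$ and admissible prefixes, match the terms via the correspondence $\g \leftrightarrow \g\g_1 k_0$, observe that the cocycle factors coincide, that $G(\g\g_1 o)=g(\g\g_1 k_0)$ once $N$ is large enough for the points to sit in the same piece of $J_M$, and finally use Lemma \ref{bridging} plus componentwise nonnegativity of $\rho(\pi_q(\g))\vp$ to compare indicators. Your explicit observation that the $n=0$ term is the worst case for the $C\kappa^{n+N}$ bound — so a single $N_0$ works uniformly — is a helpful remark that the paper leaves implicit.
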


\begin{proof}

We will use the fact that the map $\g\g_1\mapsto  \g\g_1  k_0$ on admissible concatenations
 intertwines the shift $\sigma$ and the map $T$. One has 
\begin{align}
N_{q}^{*}(a,\g_1,\vp)=\sum_{n=0}^{\infty} \sum^*_{\substack{ \g=g_{i_1}\ldots g_{i_n} }}
G(\g \g_1 o)\rho(\pi_{q}(\g))\vp\mathbf{1}\{\tau_{*}^{n}(\g\g_1)\leq a\}\label{eq:finiteN}
\end{align}
and 
\begin{align*}
N_{q}(a,\g_{1}k_{0},\vp)=\sum_{n=0}^{\infty} \sum^*_{\substack{ \g=g_{i_1}\ldots g_{i_n} }}g(\g\g_1k_0)\rho(\pi_{q}(\g))\vp\mathbf{1}\{\tau^{n}(\g\g_1 k_0)\leq a\}.
\end{align*}
These can now be compared term by term. If $N$ is large enough, depending
on $G$, then $G(\g \g_1 o)=g(\g\g_1k_0 )$ for all terms as all the $\g\g_1 o$ will
lie in the neighborhood $J_{M}$ where $G$ is locally constant. On the other hand, we have from
Lemma \ref{bridging} that 
\[
\mathbf{1}\{\tau_{*}^{n}(\g\g_1)\leq a\}\leq\mathbf{1}\{\tau^{n}(\g\g_1k_0)\leq a+C\k^{N}\}
\]
for some $C$. Also, in the other direction,
\[
\mathbf{1}\{\tau^{n}(\g\g_1k_0)\leq a-C\k^{N}\}\leq\mathbf{1}\{\tau_{*}^{n}(\g\g_1)\leq a\}.
\]
Given that $\vp$ and hence $\rho(\pi_{q}(\g))\vp$ are
positive functions, inserting these inequalities into \eqref{eq:finiteN}
gives the result after suitably choosing $N_0$.
\end{proof}

Following Lalley \cite[pg. 22]{LALLEYSYMB} we iterate the finite
renewal equation \eqref{eq:finiterenewal} to obtain 
\begin{align*}
N_{q}^{*}(a,\g_{0},\vp) & =\sum_{\g:\sigma^{n}\g=\g_{0}}N_{q}^{*}(a-\tau_{*}^{n}(\g),\g,\rho[\pi_{q}(\g\g_{0}^{-1})]\vp)\\
 & +\sum_{m=1}^{n-1}\sum_{\g:\sigma^{m}\g=\g_{0}}G(\g o)\rho[\pi_{q}(\g\g_{0}^{-1})]\vp\mathbf{1}\{a-\tau_{*}^{m}(\g)\geq0\}+G(\g_{0} o)\vp\mathbf{1}\{a\geq0\}.
\end{align*}
We want to increase $n$ so we note that the second line is bounded
by (recall $k$ is the number of intervals) 
\[
\sum_{m=0}^{n-1}k^{m}\|G\|_{\infty}\|\vp\|\ll\|G\|_{\infty}\|\vp\|k^{n}.
\]
We will eventually set 

\[
n=\lfloor ca\rfloor
\]
for small enough $c$. This gives 
\[
N_{q}^{*}(a,\g_{0},\vp)=\sum_{\g:\sigma^{n}\g=\g_{0}}N_{q}^{*}(a-\tau_{*}^{n}(\g),\g,\rho[\pi_{q}(\g\g_{0}^{-1})]\vp)+O(\|G\|_{\infty}\|\vp\|e^{(\log k)ca}).
\]
We can now use Lemma \ref{sandwich} to get:
\begin{lem}
\label{lem:sandwich2}Up to an error of $O(\|G\|_{\infty}\|\vp\|e^{(\log k)ca})$,
$N_{q}^{*}(a,\g_{0},\vp)$ is sandwiched between 
\[
\sum_{\g:\sigma^{n}\g=\g_{0}}N_{q}(a-\tau_{*}^{n}(\g)-C\k^{n},\g k_{0},\rho[\pi_{q}(\g\g_{0}^{-1})]\vp)
\]
and 
\[
\sum_{\g:\sigma^{n}\g=\g_{0}}N_{q}(a-\tau_{*}^{n}(\g)+C\k^{n},\g k_{0},\rho[\pi_{q}(\g\g_{0}^{-1})]\vp).
\]

\end{lem}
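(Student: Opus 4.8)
The plan is to assemble the lemma directly from the pieces already in hand, so this is really a bookkeeping argument rather than a new idea. First I would take the iterated finite renewal equation displayed just above, namely
\[
N_{q}^{*}(a,\g_{0},\vp)=\sum_{\g:\sigma^{n}\g=\g_{0}}N_{q}^{*}(a-\tau_{*}^{n}(\g),\g,\rho[\pi_{q}(\g\g_{0}^{-1})]\vp)+O(\|G\|_{\infty}\|\vp\|e^{(\log k)ca}),
\]
and observe that every $\g$ appearing in the sum satisfies $\sigma^{n}\g=\g_{0}$, so $\g$ has the form $g_{r_{1}}\ldots g_{r_{n}}\g_{0}$ with the concatenation admissible; hence $\g$ is exactly of the shape required by the hypothesis of Lemma \ref{sandwich} (with its $N$ being our $n$), and $k_{0}$ is admissible for each such $\g$. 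This is the only verification that needs care.

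Next I would apply Lemma \ref{sandwich} to each summand with $a$ replaced by $a-\tau_{*}^{n}(\g)$ and $\vp$ replaced by the non-negative vector $\rho[\pi_{q}(\g\g_{0}^{-1})]\vp$ (non-negativity is preserved because $\rho$ is a permutation representation, so $\rho[\pi_{q}(\g\g_{0}^{-1})]$ sends non-negative vectors to non-negative vectors). Lemma \ref{sandwich} then gives, termwise,
\[
N_{q}(a-\tau_{*}^{n}(\g)-C\k^{n},\g k_{0},\rho[\pi_{q}(\g\g_{0}^{-1})]\vp)\leq N_{q}^{*}(a-\tau_{*}^{n}(\g),\g,\rho[\pi_{q}(\g\g_{0}^{-1})]\vp)\leq N_{q}(a-\tau_{*}^{n}(\g)+C\k^{n},\g k_{0},\rho[\pi_{q}(\g\g_{0}^{-1})]\vp),
\]
where I have absorbed the constant $N_{0}$ and the constant $C$ of Lemma \ref{sandwich} into the statement, noting that the number of terms is at most $k^{n}$ and all are non-negative so the inequalities survive summation over $\g$. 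Since $n=\lfloor ca\rfloor$ is large once $a$ is large, the condition $n>N_{0}$ is automatic; and because $\k<1$ one has $C\k^{n}\to 0$, which is harmless.

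Summing over $\g$ with $\sigma^{n}\g=\g_{0}$ and folding the renewal error $O(\|G\|_{\infty}\|\vp\|e^{(\log k)ca})$ into the displayed error of the statement yields precisely that $N_{q}^{*}(a,\g_{0},\vp)$ is sandwiched between the two claimed sums. The only genuinely substantive point — and thus the place I would be most careful — is checking that the admissibility bookkeeping lines up: that every $\g$ in the renewal sum really does factor through a length-$n$ admissible prefix of $\g_{0}$ so that Lemma \ref{sandwich} applies and $k_{0}$ remains admissible for $\g$. Everything else is monotonicity of $N_{q}$ in $a$, positivity of the regular representation, and summing a bound over at most $k^{n}$ non-negative terms. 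I would therefore expect the write-up to be short: cite the iterated renewal equation, invoke Lemma \ref{sandwich} termwise, and sum.
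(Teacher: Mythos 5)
Your proposal is correct and matches the paper's intent exactly: the paper gives no separate proof of Lemma~\ref{lem:sandwich2}, stating only ``We can now use Lemma~\ref{sandwich} to get'' the result, so your write-up is precisely the termwise application that the authors leave implicit. The details you flag --- that each $\g$ with $\sigma^{n}\g=\g_{0}$ has the admissible form $g_{r_{1}}\ldots g_{r_{n}}\g_{0}$ required by Lemma~\ref{sandwich}, that $\rho[\pi_{q}(\g\g_{0}^{-1})]$ is a permutation matrix and hence preserves non-negativity of $\vp$, and that $n=\lfloor ca\rfloor$ eventually exceeds $N_{0}$ --- are exactly the right things to check, and they all hold.
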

This sandwiching allows us to convert questions about $N_{q}^{*},$
and hence our main Theorem, to questions about $N_{q}.$ We leave
the relation for now since going any further in the comparison requires
results from later in the paper. Hopefully by now we have motivated
the study of $N_{q}$ and the dynamical system of Section \ref{sub:A-dynamical-system}.

\subsection{The renewal equation: boundary version}

The quantity $N_{q}$ also satisfies a version of the renewal equation:
we first describe a simple version without any congruence aspect.
Let $g\in C^{1}(I)$ as before.

We define 
\[
N(a,x)=\sum_{n=0}^{\infty}\sum_{y:T^{n}y=x}g(y)\mathbf{1}\{\tau^{n}(y)\leq a\},
\]
where $\mathbf{1}\{\tau^{n}(y)\leq a\}$ is the characteristic function
of $\{\tau^{n}(y)\leq a\}$. Only finitely many of the $n$ give a
contribution to the sum, since $\tau$ is eventually positive. The
\textit{renewal equation} states 
\begin{equation}
N(a,x)=\sum_{y:Ty=x}N(a-\tau(y),y)+g(x)\mathbf{1}\{a\geq0\}.\label{eq:renewal}
\end{equation}
This is related to the transfer operator $\L_{-s\tau}$ by taking
a Laplace transform in the $a$ variable. If one defines 
\[
n(s,x)=\int_{-\infty}^{\infty}e^{-sa}N(a,x)da
\]
then \eqref{eq:renewal} is transformed into 
\[
n(s,x)=[\L_{s}n(s,\cdot)](x)+\frac{g(x)}{s},
\]
where
$\L_{s}[f]$
is the transfer operator defined in Section \ref{sub:Thermodynamics}.
The former equation can be recast to

\[
s.n(s,\bullet)=(1-\L_{s})^{-1}g
\]

We now adapt our formulae to take account of the congruence aspect.
The congruence version of the renewal equation at level $q$ concerns
the quantity 
\[
N_{q}(a,x,\vp):=\sum_{n=0}^{\infty}\sum_{y:T^{n}y=x}g(y)\rho(c_{q}^{n}(y))\vp\mathbf{1}\{\tau^{n}(y)\leq a\}\in\C^{\G_{q}}
\]
from before. This congruence renewal equation reads 
\[
N_{q}(a,x,\vp)=\sum_{y:Ty=x}\rho(c_{q}(y))N_{q}(a-\tau(y),y,\vp)+g(x)\vp\mathbf{1}\{0\leq a\}.
\]
Consider  the \emph{congruence transfer operator} $\L_{s,q}$ on  \emph{$\C^{\Gamma_{q}}$-}valued\emph{ }functions
 defined as follows:
\begin{equation}
\L_{{s},q}[F](x):=\sum_{Ty=x}e^{-s\tau(y)}c_{q}(y).F(y)\label{eq:skewtransfer}.
\end{equation}
where $c_q$ is the modular cocycle given in 
Definition \ref{def:modcocycle}. Then parallel arguments to before give for 
\[
n_{q}(s,x,\vp)=\int_{-\infty}^{\infty}e^{-sa}N_{q}(a,x,\vp)da
\]
the formula
\begin{equation}
sn_{q}(s,x,\vp)=[(1-\L_{s,q})^{-1}g\otimes\vp](x)\label{eq:nqdef}
\end{equation}
where $g\otimes\vp$ is the vector valued function taking $x\mapsto g(x)\vp$.

 \subsection{Spectral theory of transfer operators}

Recall that we work with the Banach space $C^{1}(I)$ with norm as
in \eqref{eq:norms-1} and the similar Banach spaces $C^{1}(I;\C^{\G_{q}})$
of $\C^{\G_{q}}$-valued functions. In Theorem  \ref{thm:maintransfer} we summarized
the spectral properties of $\L_{s,q}$ that we prove in this paper,
and that will be used to estimate equation \eqref{eq:nqdef}.
The proof of Theorem \ref{thm:maintransfer} is deferred to Sections
\ref{largeimaginarypart} and \ref{smallimaginarypart}. We now continue
with our counting estimates using Theorem \ref{thm:maintransfer}
as a given.

\subsection{Continuing the count}

Notice that $N_{q}$ and hence $n_{q}$ are linear in $\vp$. We split
into two cases as we can write 
\[
\vp=\vp_{0}+\vp'
\]
where $\vp_{0}$ is constant and $\vp'$ is orthogonal to constants.
The analysis of $N_{q}(a,x,\vp_{0})$ boils down to that of $N(a,x)$,
which is in principle understood without any of the results of this
paper. We take up the analysis in the case that 
\[
\vp'\in\C^{\G_{q}}\ominus1,
\]
that is, orthogonal to constants. Assume this is the case from now
on.

One obtains from \eqref{eq:nqdef} and Theorem \ref{thm:maintransfer}
that for any $\eta>0$ 
\begin{equation}
|s|\|n_{q}(s,\bullet,\vp')\|_{C^{1}}\leq\begin{cases}
Cq^{C}(1-\rho_{0})^{-1}\|g\otimes\vp\|_{C^{1}}\text{ if }|b|\leq b_{0}\\
C_{\eta}|b|^{1+\eta}(1-\rho_{\eta})^{-1}\|g\otimes\vp\|_{C^{1}}\text{ if }|b|>b_{0}
\end{cases}\label{eq:nqbound}
\end{equation}
with the same quantifiers and constants as in Theorem \ref{thm:maintransfer}.
Consolidating constants, for any $\eta>0$ there is $C'=C'(\eta)$
such that 
\begin{equation}
|s|\|n_{q}(s,\bullet,\vp')\|_{C^{1}}\leq C'\max(q^{C},|b|^{1+\eta})\|g\otimes\vp\|_{C^{1}}\label{eq:nqdecay}
\end{equation}
whenever $|a-\delta|<\e$ for some sufficiently small $\e$.

We also note that given the bounds in Theorem \ref{thm:maintransfer},
it follows that the correspondence 
\[
s\mapsto(1-\L_{s,q})^{-1}g\otimes\vp'
\]
gives a holomorphic family of $C^{1}$ functions in the region $|a-\delta|<\e$
for fixed $g$ and $\vp'$, hence $n_{q}(s,x,\vp')$ is holomorphic
for $s$ in this region. This is essential for the contour shifting
argument to follow. Now we follow technical work of Bourgain, Gamburd
and Sarnak \cite[pp. 25-26]{BGS2} to extract information about
$N_{q}(a,x,\vp')$.

 Following \cite[equation (9.4)]{BGS2},  let $k$ be a smooth nonnegative function on $\R$ such that $
\int k=1$,  $\mathrm{support}(k)\subset\left[1,1\right]$
and\footnote{The assumption that $\hat{k}$ has stretched exponential decay is overly strong here: it would be sufficient for example that $\hat{k}$ be uniformly bounded and in $L^1$ of any vertical line in $\C$ with real part sufficiently close to $\delta$.}
\[
|\hat{k}(\xi)|\leq B\exp(-|\xi|^{1/2})
\text{ 
for some $B$,}\] where 
\[
\hat{k}(\xi):=\int_{\R}e^{-\xi t}k(t)dt.
\]
Then let for small $\lambda>0$ 
\[
k_{\lambda}(t)=\lambda^{-1}k(t\lambda^{-1}),
\]
this has the effect that 
\begin{equation}
\hat{k_{\lambda}}(\xi)=\hat{k}(\lambda\xi),\quad|\hat{k_{\lambda}}(\xi)|\leq B\exp(-|\lambda\xi|^{1/2}).\label{eq:scaledfourier}
\end{equation}
Consider the smoothed quantity of interest 
\[
\int_{-\infty}^{\infty}k_{\lambda}(t)N_{q}(a+t,x,\vp')dt=\frac{1}{2\pi i}\int_{s\in\delta+i\R}e^{as}n_{q}(s,x,\vp')\hat{k_{\lambda}}(s)ds.
\]
by inverting the Laplace transform and interchanging the order of
integration. From \eqref{eq:nqdecay}, $n_{q}$ is well enough behaved
that this is possible. For technical reasons let $\e'=\min(\delta/2,\e/2)$.
We can shift the contour to $\Re(s)=\delta-\e'$ to get that the above
is the same as 
\begin{align*}
 & \frac{1}{2\pi i}\int_{s\in\delta-\e'+i\R}e^{as}n_{q}(s,x,\vp')\hat{k_{\lambda}}(s)ds\\
 & =\frac{1}{2\pi}e^{a(\delta-\e')}\int_{\theta\in\R}e^{ai\theta}n_{q}(\delta-\e'+i\theta,x,\vp')\hat{k_{\lambda}}(\delta-\e'+i\theta)d\theta
\end{align*}
where $s=\delta-\e'+i\theta$. Putting in the bound \eqref{eq:nqbound}
for $n_{q}$ together with \eqref{eq:scaledfourier} gives the new
bound 
\begin{align*}
 & \frac{BC'}{2\pi}e^{a(\delta-\e')}\|g\otimes\vp'\|_{C^{1}}\left(q^{C}\int_{|\theta|\leq b_{0}}|\delta-\e'+i\theta|^{-1}e^{-|\lambda(\delta-\e'+i\theta)|^{1/2}}d\theta\right.\\
 & +\left.\int_{|\theta|>b_{0}}|\delta-\e'+i\theta|^{-1}|\theta|^{1+\eta}e^{-|\lambda(\delta-\e'+i\theta)|^{1/2}}d\theta\right)\\
 & \leq\frac{BC'}{2\pi}e^{a(\delta-\e')}\|g\otimes\vp'\|_{C^{1}}\left(\frac{4q^{C}b_{0}}{\delta-\e'}+C''|\lambda|^{-1-\eta}\right)
\end{align*}
for some new absolute constants $C',C''$. Putting this together (choosing
$\eta=1$ is enough) gives
\begin{lem}
\label{congtermrenewal}There is $Q_{0}>0$ provided by Theorem \ref{thm:maintransfer}
and positive constants $\e'$, $C$, $\k_{1}$, $\k_{2}$ such that
for $q$ with $(Q_{0},q)=1$ and any $g\in C^{1}(I)$, $\vp'\in\C^{\G_{q}}\ominus1$
we have 
\[
\|\int_{-\lambda}^{\lambda}k_{\lambda}(t)N_{q}(a+t,x,\vp')dt\|<e^{a(\delta-\e')}\|g\otimes\vp'\|_{C^{1}}\left(\k_{1}q^{C}+\k_{2}|\lambda|^{-2}\right)
\]
where the norm on the left hand side is the one in $\C^{\G_{q}}$. 
\end{lem}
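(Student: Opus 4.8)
The argument is essentially assembled from the computation preceding the statement; the plan is to organize it as follows. First I would record the Laplace inversion identity
\[
\int_{-\infty}^{\infty}k_{\lambda}(t)N_{q}(a+t,x,\vp')\,dt=\frac{1}{2\pi i}\int_{\delta+i\R}e^{as}n_{q}(s,x,\vp')\hat{k_{\lambda}}(s)\,ds ,
\]
noting that the double integral obtained after expanding $N_q$ as a Laplace transform is absolutely convergent: the decay \eqref{eq:nqdecay} controls $n_q$ polynomially in $\Im s$ while \eqref{eq:scaledfourier} gives $\hat k_\lambda$ stretched-exponential decay, so Fubini applies.

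Second, I would shift the contour from $\Re(s)=\delta$ to $\Re(s)=\delta-\e'$, where $\e'=\min(\delta/2,\e/2)$. The shift is justified because Theorem \ref{thm:maintransfer} forces $\|\L_{s,q}^{m}\|_{C^{1}}\to 0$ as $m\to\infty$ uniformly for $s$ in the strip $|\Re(s)-\delta|<\e$, so $1-\L_{s,q}$ is invertible there and $s\mapsto(1-\L_{s,q})^{-1}(g\otimes\vp')$, hence $n_q(s,x,\vp')=s^{-1}[(1-\L_{s,q})^{-1}(g\otimes\vp')](x)$, is holomorphic on that strip (the choice of $\e'$ keeps the contour inside the strip and bounded away from $s=0$); moreover, the horizontal sides of the shifting rectangle contribute nothing in the limit because $\hat k_\lambda$ decays faster than any power in $\Im s$, uniformly across the strip.

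Third, on the shifted contour I would set $s=\delta-\e'+i\theta$, pull out the factor $e^{a(\delta-\e')}$ (so $|e^{ai\theta}|=1$), and estimate the remaining $\theta$-integral by splitting at $|\theta|=b_0$. On $|\theta|\le b_0$ I would invoke part \eqref{enu:mainmodular} of Theorem \ref{thm:maintransfer} through \eqref{eq:nqbound}, which yields $\|n_q(s,\cdot,\vp')\|_{C^1}\ll q^{C}|s|^{-1}\|g\otimes\vp'\|_{C^1}$ with the constant $(1-\rho_0)^{-1}$ absorbed, together with $\int_{|\theta|\le b_0}|\delta-\e'+i\theta|^{-1}\,d\theta\ll b_0(\delta-\e')^{-1}$; this produces the term $\k_1 q^{C}$. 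On $|\theta|>b_0$ I would use part \eqref{enu:mainlargeimaginary} with $\eta=1$, giving $\|n_q(s,\cdot,\vp')\|_{C^1}\ll |s|^{-1}|\theta|^{2}\|g\otimes\vp'\|_{C^1}$, combined with $|\hat k_\lambda(s)|\le B\exp(-|\lambda\theta|^{1/2})$; since $|\delta-\e'+i\theta|^{-1}|\theta|^2\ll|\theta|$ on this range, the integral is $\ll\int_{b_0}^{\infty}\theta\,e^{-(\lambda\theta)^{1/2}}\,d\theta$, which by the substitution $u=(\lambda\theta)^{1/2}$ equals $O(|\lambda|^{-2})$, producing the term $\k_2|\lambda|^{-2}$.

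Collecting the two pieces and absorbing all factors independent of $q$ and of $(g,\vp')$ — namely $B$, $b_0$, $\e'$, $\rho_0$, $\rho_1$, and the constants from Theorem \ref{thm:maintransfer} — into absolute constants $\k_1,\k_2$ gives the stated bound, with $Q_0$ and $C$ inherited from Theorem \ref{thm:maintransfer}. The only genuinely delicate point is the contour shift: one needs $n_q$ to be holomorphic up to and past the line $\Re(s)=\delta-\e'$, i.e.\ that $1\notin\spec(\L_{s,q}|_{C^1(I;\C^{\G_q}\ominus 1)})$ for all such $s$ with constants uniform in $q$ coprime to $Q_0$ — and this is precisely what the uniform spectral estimates of Theorem \ref{thm:maintransfer} supply. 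Everything else is bookkeeping with absolutely convergent integrals.
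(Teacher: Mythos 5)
Your proposal reproduces the paper's proof almost line for line: the Laplace inversion identity, the contour shift to $\Re(s)=\delta-\e'$ with $\e'=\min(\delta/2,\e/2)$ justified by the holomorphy of $s\mapsto(1-\L_{s,q})^{-1}(g\otimes\vp')$ on the strip, the split of the $\theta$-integral at $|\theta|=b_0$ using the two cases of Theorem \ref{thm:maintransfer}, and the choice $\eta=1$ to produce the $|\lambda|^{-2}$ term. The only difference is cosmetic — you carry out the $u=(\lambda\theta)^{1/2}$ substitution explicitly where the paper just quotes the resulting $C''|\lambda|^{-1-\eta}$ — so this is the same argument.
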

We now describe $N_{q}(a,x,\vp_{0})$ with $\vp_{0}$ a constant function.
In this case the counting reduces to the non congruence setting. The
following is a straightforward adaptation of \cite[Proposition 10.2]{BGS2}
to our setting. This effectivizes work of Lalley \cite{LALLEYSYMB},
using the work of Naud \cite{NAUD} as input to get a power saving error term. Let $\underline{1}$
be the constant function in $\C^{\G_{q}}$ taking on the value 1.
\begin{lem}
\label{maintermrenewal}There exists $\e''>0$ such that for any $q$,
$g\in C^{1}(I)$ we have 
\[
\int_{-\lambda}^{\lambda}k_{\lambda}(t)N_{q}(a+t,x,\underline{1})dt=C(x,g)e^{\delta a}\underline{1}+O(\|g\|_{C^{1}}|\G_{q}|\lambda^{-3}e^{(\delta-\e'')a}),
\]
where 
\[
C(x,g)=\left(\frac{\int gd\nu_{-\delta\tau}}{\delta\int\tau d\nu_{0}}\right)h_{-\delta\tau}(x).
\]
is a $C^{1}$ function of $x$ and the error is estimated in $C^{1}$
norm, and $\nu,h$ are the measures and functions we defined in Theorem
\ref{RPFTheorem}. 
\end{lem}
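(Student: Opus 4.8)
The plan is to follow the scheme of \cite[Proposition 10.2]{BGS2}, which is itself an effectivization of Lalley's renewal-theorem argument \cite{LALLEYSYMB}, but working on the space $C^1(I)$ and using Naud's spectral bounds \cite{NAUD} (recorded as the $q=1$ case of Theorem \ref{thm:maintransfer}, or rather as the scalar transfer operator $\L_s = \L_{-s\tau}$) in place of the softer input Lalley had. The key point is that $N_q(a,x,\underline 1)$ depends only on the trivial ($1$-dimensional) summand of the right regular representation, so $\rho(c_q^n(y))\underline 1 = \underline 1$ and the whole count collapses to $|\G_q|$ copies of the scalar quantity $N(a,x)=\sum_{n}\sum_{T^ny=x}g(y)\mathbf 1\{\tau^n(y)\le a\}$. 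Thus it suffices to prove the stated asymptotic for the scalar $\int_{-\lambda}^\lambda k_\lambda(t) N(a+t,x)\,dt$ with no $|\G_q|$ factor, and then multiply by $\underline 1$; the factor $|\G_q|$ in the error term is simply the bound $\|\underline 1\| = |\G_q|^{1/2}$ or $|\G_q|$ coming from passing from the scalar statement to the $\C^{\G_q}$-valued one (any polynomial-in-$|\G_q|$ loss is harmless and absorbed). So from here on I describe the scalar argument.

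First I would recall from the previous subsection the identity $s\, n(s,x) = [(1-\L_s)^{-1} g](x)$, where $n(s,x)=\int e^{-sa}N(a,x)\,da$ is the Laplace transform, valid for $\Re(s)$ slightly larger than $\delta$. By the Ruelle--Perron--Frobenius theorem (Theorem \ref{RPFTheorem}) applied to $f=-s\tau$ for real $s$, the operator $\L_s$ on $C^1(I)$ has leading eigenvalue $e^{P(-s\tau)}$, which equals $1$ precisely at $s=\delta$ (since $s_0=\delta$), with simple leading eigenvalue, eigenfunction $h_{-\delta\tau}$, and eigenmeasure $\nu_{-\delta\tau}$; the rest of the spectrum is inside a disc of strictly smaller radius. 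Perturbation theory gives, for $s$ in a complex neighbourhood of $\delta$, that $(1-\L_s)^{-1}$ has a simple pole at $s=\delta$ with residue the rank-one projector $\frac{h_{-\delta\tau}\otimes \nu_{-\delta\tau}}{-P'(-\delta\tau)}$ — and one computes $\frac{d}{ds}P(-s\tau)\big|_{s=\delta} = -\int \tau\,d\nu_{-\delta\tau}$, which up to the normalization $\nu_0$ used in the statement gives the constant $C(x,g)=\big(\int g\,d\nu_{-\delta\tau}\big/(\delta\int\tau\,d\nu_0)\big) h_{-\delta\tau}(x)$. Hence $n(s,x) = \frac{C(x,g)}{s-\delta} + (\text{holomorphic in }|s-\delta|<\e)$. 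Crucially, on the vertical line $\Re s = \delta-\e''$ (for $\e''$ small), combining the resolvent bound from perturbation theory for $|\Im s|\le b_0$ with Naud's polynomial-in-$|b|$ bound $\|\L_s^m\|_{C^1}\le C_\eta |b|^{1+\eta}\rho_\eta^m$ for $|\Im s|>b_0$, we get $\|n(s,\cdot)\|_{C^1} \ll \|g\|_{C^1}\, (1+|\Im s|)$ uniformly on that line.

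Next I would run the contour-shift exactly as in the $\vp'$ case just completed: by Laplace inversion,
\[
\int_{-\lambda}^{\lambda} k_\lambda(t) N(a+t,x)\,dt = \frac{1}{2\pi i}\int_{\Re s=\delta+\e_0} e^{as} n(s,x)\hat{k_\lambda}(s)\,ds,
\]
and shifting the contour to $\Re s = \delta-\e''$ picks up the residue at $s=\delta$, namely $C(x,g) e^{\delta a}\hat{k_\lambda}(\delta) = C(x,g)e^{\delta a}(1+O(\lambda))$ — and since the difference $\hat{k_\lambda}(\delta)-1 = \int k(t)(e^{-\delta\lambda t}-1)\,dt = O(\lambda)$ is absorbed into the error — plus the shifted integral. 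On the shifted line, the integrand is bounded by $\|g\|_{C^1} e^{a(\delta-\e'')}(1+|\theta|)|\hat{k_\lambda}(\delta-\e''+i\theta)| \ll \|g\|_{C^1}e^{a(\delta-\e'')}(1+|\theta|)B\exp(-|\lambda\theta|^{1/2})$ by \eqref{eq:scaledfourier}, whose $\theta$-integral is $O(\lambda^{-3})$ (more than enough; here the stretched-exponential decay of $\hat k$ matters), giving the error term $O(\|g\|_{C^1}\lambda^{-3}e^{(\delta-\e'')a})$. Finally, to pass to the $\C^{\G_q}$-valued statement: since $\rho(c_q^n(y))$ fixes $\underline 1$, $N_q(a,x,\underline 1) = N(a,x)\,\underline 1$ as a vector-valued function, so the displayed formula follows by multiplying the scalar result by $\underline 1$, and the $C^1$ error gets multiplied by $\|\underline 1\|=|\G_q|$. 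The only genuinely nontrivial input is the spectral perturbation analysis identifying the residue and verifying the uniform-in-$\Im s$ resolvent bound on the shifted line; but for $|\Im s|\le b_0$ this is standard analytic perturbation theory for the isolated simple eigenvalue $1$ of $\L_\delta$, and for $|\Im s|>b_0$ it is exactly Naud's theorem, so there is no serious obstacle — the main work is bookkeeping to extract the precise constant $C(x,g)$ and confirm it is a $C^1$ function of $x$ (which is immediate since $h_{-\delta\tau}\in C^1(I)$).
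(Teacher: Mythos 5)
Your proposal is correct and follows exactly the route the paper indicates: reduce to the scalar renewal quantity $N(a,x)$ via $\rho(c_q^n(y))\underline{1}=\underline{1}$, pass to the Laplace transform $s\,n(s,x)=(1-\L_s)^{-1}g$, identify the simple pole at $s=\delta$ with residue $C(x,g)$ from analytic perturbation of the Ruelle--Perron--Frobenius data together with the thermodynamic identity $\tfrac{d}{ds}P(-s\tau)|_{s=\delta}=-\int\tau\,d\nu_0$, and shift the contour to $\Re s=\delta-\e''$ using Naud's spectral bound for large $|\Im s|$ to control the shifted integral. The paper itself supplies no proof of this lemma, stating only that it is ``a straightforward adaptation of \cite[Proposition 10.2]{BGS2}'' using Lalley and Naud; your argument is precisely the intended one.
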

We remark that the $|\G_{q}|\|g\|_{C^{1}}$ in the error term above
comes from $\|g\otimes\vp_{0}\|_{C^{1}}$. We can now put these Lemmas
together to get
\begin{prop}
\label{boundaryprop}There exists $Q_{0}>0$ provided by Theorem \ref{thm:maintransfer}
such that when $(Q_{0},q)=1$, the following holds. There is $\e>0$
such that for any non negative $\vp\in\R^{\G_{q}}\subset\C^{\G_{q}}$,
\[
N_{q}(a,x,\vp)=\frac{C(x,g)e^{\delta a}\langle\vp,\underline{1}\rangle\underline{1}}{|\G_{q}|}+O\left(e^{(\delta-\e)a}q^{C}\|g\|_{C^{1}}\|\vp\|\right)
\]
where $\langle\cdot,\cdot\rangle$ is the standard inner product. \end{prop}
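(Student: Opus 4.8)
\textbf{Proof strategy for Proposition \ref{boundaryprop}.}
The plan is to decompose $\vp = \vp_0 + \vp'$ into its constant part $\vp_0 = \langle \vp, \underline 1\rangle |\G_q|^{-1}\underline 1$ and its orthogonal-to-constants part $\vp' \in \C^{\G_q}\ominus 1$, then treat the two pieces with Lemma \ref{maintermrenewal} and Lemma \ref{congtermrenewal} respectively, and finally remove the smoothing by $k_\lambda$ via a monotonicity argument. First I would note that $N_q(a,x,\vp)$ is non-decreasing in $a$ (the summation condition $\tau^n(y)\le a$ only adds terms as $a$ grows) and non-negative, since $\vp\ge 0$, $g\ge 0$ and $\rho$ acts by permutation matrices. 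Hence for any $\lambda > 0$ and the approximate identity $k_\lambda$ supported in $[-\lambda,\lambda]$,
\[
N_q(a-\lambda,x,\vp) \le \int_{-\lambda}^{\lambda} k_\lambda(t) N_q(a+t,x,\vp)\, dt \le N_q(a+\lambda,x,\vp),
\]
so it suffices to control the smoothed quantity and then optimize $\lambda$.

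Next I would apply linearity: $N_q(a,x,\vp) = N_q(a,x,\vp_0) + N_q(a,x,\vp')$, and correspondingly for the smoothed versions. For the constant part, $\vp_0 = \langle\vp,\underline 1\rangle|\G_q|^{-1}\underline 1$, Lemma \ref{maintermrenewal} gives
\[
\int_{-\lambda}^{\lambda} k_\lambda(t) N_q(a+t,x,\vp_0)\, dt = \frac{\langle\vp,\underline 1\rangle}{|\G_q|} C(x,g) e^{\delta a}\underline 1 + O\!\left(\|g\|_{C^1}\,\langle\vp,\underline 1\rangle\,\lambda^{-3} e^{(\delta-\e'')a}\right),
\]
using $\|g\otimes\vp_0\|_{C^1} = \|g\|_{C^1}\langle\vp,\underline1\rangle|\G_q|^{-1}\cdot|\G_q| = \|g\|_{C^1}\langle\vp,\underline1\rangle$ (the extra $|\G_q|$ in Lemma \ref{maintermrenewal} is absorbed). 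For the orthogonal part, Lemma \ref{congtermrenewal} gives
\[
\Big\|\int_{-\lambda}^{\lambda} k_\lambda(t) N_q(a+t,x,\vp')\, dt\Big\| < e^{a(\delta-\e')}\|g\otimes\vp'\|_{C^1}\left(\k_1 q^C + \k_2 \lambda^{-2}\right),
\]
and $\|g\otimes\vp'\|_{C^1} \le \|g\|_{C^1}\|\vp'\| \le \|g\|_{C^1}\|\vp\|$, while $\langle\vp,\underline1\rangle \le \sqrt{|\G_q|}\,\|\vp\| = O(q^{3/2}\|\vp\|)$ so the constant-part error is also dominated by a power of $q$ times $\|g\|_{C^1}\|\vp\|$.

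Combining, for $|a-\delta|$ irrelevant here (we are in the $a$-variable of the count, not $s$; the relevant smallness is in the contour shift already performed to prove the two lemmas), the smoothed quantity equals
\[
\frac{\langle\vp,\underline1\rangle}{|\G_q|}C(x,g)e^{\delta a}\underline1 + O\!\left(e^{(\delta-\e_0)a}q^{C_0}\|g\|_{C^1}\|\vp\|\left(1 + \lambda^{-3}e^{-\e_0 a}\,e^{\e_0 a}\right)\right)
\]
for suitable $\e_0 = \min(\e',\e'')$ and $C_0$; more precisely the error is $O\big(e^{(\delta-\e_0)a}q^{C_0}\|g\|_{C^1}\|\vp\| + e^{(\delta-\e'')a}\lambda^{-3}q^{C_0}\|g\|_{C^1}\|\vp\| + e^{(\delta-\e')a}\lambda^{-2}q^C\|g\|_{C^1}\|\vp\|\big)$. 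Now I would invoke the monotonicity sandwich above:
\[
N_q(a,x,\vp) = \frac{\langle\vp,\underline1\rangle}{|\G_q|}C(x,g)e^{\delta(a\pm\lambda)}\underline 1 + (\text{error at } a\pm\lambda).
\]
Since $e^{\delta(a\pm\lambda)} = e^{\delta a}(1+O(\lambda))$, the main term picks up an extra additive error of size $O(\lambda e^{\delta a}\langle\vp,\underline1\rangle|\G_q|^{-1}\|C(\cdot,g)\|_\infty)$, which is $O(\lambda e^{\delta a}\|g\|_{C^1}\|\vp\|)$ since $C(x,g)$ depends linearly and boundedly on $g$ in $C^1$. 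The final step is to choose $\lambda = e^{-\beta a}$ for a small $\beta>0$: the term $\lambda e^{\delta a} = e^{(\delta-\beta)a}$, the term $\lambda^{-2}e^{(\delta-\e')a} = e^{(\delta - \e' + 2\beta)a}$, and the term $\lambda^{-3}e^{(\delta-\e'')a} = e^{(\delta-\e''+3\beta)a}$ are all of the form $e^{(\delta-\e)a}$ for $\e = \min(\beta, \e'-2\beta, \e''-3\beta) > 0$ once $\beta$ is chosen small enough (e.g. $\beta < \e'/3$ and $\beta < \e''/4$). This yields the claimed bound with $C = C_0$ (up to consolidating the $q$-powers).

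\textbf{Main obstacle.} The bookkeeping itself is routine; the one point that requires care is that the two input lemmas (Lemma \ref{maintermrenewal} and Lemma \ref{congtermrenewal}) have different powers of $\lambda$ ($\lambda^{-3}$ versus $\lambda^{-2}$) and different $q$-dependence ($|\G_q|$ versus $q^C$), so one must check that a single choice $\lambda = e^{-\beta a}$ simultaneously makes every error term a genuine power saving $e^{(\delta-\e)a}$ while keeping the $q$-dependence polynomial — this is exactly the optimization carried out above, and it works precisely because $\e',\e''>0$ give us room to spend on the positive powers of $\lambda^{-1}$. A secondary subtlety is justifying the de-smoothing sandwich: it relies on $N_q(a,x,\vp)$ being genuinely monotone in $a$, which in turn uses non-negativity of $\vp$ — this is why the proposition is stated only for non-negative $\vp$, and why in Section \ref{sec:Counting} one first reduces the congruence count to such non-negative test vectors via the decomposition of $\mathbf 1_\xi$.
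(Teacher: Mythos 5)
Your proposal follows the paper's proof essentially step for step: same decomposition $\vp = \vp_0 + \vp'$, same invocation of Lemmas \ref{maintermrenewal} and \ref{congtermrenewal}, same monotonicity sandwich to remove the smoothing (the paper takes $\lambda = e^{-a\e/6}$ where you write $\lambda = e^{-\beta a}$, and both then shrink $\e$ at the end). The only place you are more explicit than the paper is in tracking the $|\G_q|^{1/2}$ that comes from $\langle\vp,\underline1\rangle \le \|\underline 1\|\,\|\vp\|$ when estimating the constant-part error, which the paper silently absorbs into the final $q^C$; this is a genuine clarification but not a different argument.
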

\begin{proof}
Decompose $\vp$ as 
\[
\vp=\frac{\langle\vp,\underline{1}\rangle\underline{1}}{|\G_{q}|}+\vp'.
\]
Then Lemmas \ref{congtermrenewal} and \ref{maintermrenewal} give
that 
\begin{align*}
\int_{-\lambda}^{\lambda}k_{\lambda}(t)N_{q}(a+t,x,\vp)dt & =\frac{C(x,g)e^{\delta a}\langle\vp,\underline{1}\rangle\underline{1}}{|\G_{q}|}\\
 & +e^{a(\delta-\e)}O\left(\|g\|_{C^{1}}\|\vp\|(\k_{1}q^{C}+\k_{2}\lambda^{-2}+\lambda^{-3})\right)
\end{align*}
by using that 
\[
\|g\otimes\vp'\|_{C^{1}}\leq\|\vp'\|\|g\|_{C^{1}}
\]
and replacing $\e',\e''$ with a new small enough $\e$. Now taking
$\lambda=e^{-a\e/6}$ we have that the error term is 
\[
e^{a(\delta-\e/2)}O(q^{C}\|g\|_{C^{1}}\|\vp\|).
\]
Since $\vp$ is non negative, $N_{q}(a,x,\vp)$ is increasing in $a$
and hence 
\[
N_{q}(a-\lambda,x,\vp)\leq\int_{-\lambda}^{\lambda}k_{\lambda}(t)N_{q}(a+t,x,\vp)dt\leq N_{q}(a+\lambda,x,\vp)
\]
which is enough to get the result given the exponentially shrinking
$\lambda$, by replacing $\e$ with some smaller value. 
\end{proof}
With the precise asymptotics of Proposition \ref{boundaryprop} at
hand, we return to estimating $N_{q}^{*}(a,\g_{0},\vp)$. Using Lemma
\ref{lem:sandwich2} along with Proposition \ref{boundaryprop} gives

\begin{align*}
N_{q}^{*}(a,\g_{0},\vp) & =\left(1+O(\delta C\k^{n})\right)\frac{e^{\delta a}}{|\G_{q}|}\langle\vp,\underline{1}\rangle\underline{1}\sum_{\g:\sigma^{n}\g=\g_{0}}C(\g k_{0},g)e^{-\delta\tau_{*}^{n}(\g)}\\
 & +O\left(q^{C}\|g\|_{C^{1}}\|\vp\|e^{(\delta-\e)a}\sum_{\g:\sigma^{n}\g=\g_{0}}e^{-(\delta-\e)\tau_{*}^{n}(\g)}\right)+O(\|G\|_{\infty}\|\vp\|e^{(\log k)ca}).
\end{align*}
Given that $n=\lfloor ca\rfloor$ for some small $c$ yet to be chosen,
the $\k^{n}$ term will not be significant. We do however have to
describe the terms 
\[
\sum_{\g:\sigma^{n}\g=\g_{0}}C(\g k_{0},g)e^{-\delta\tau_{*}^{n}(\g)}
\]
and 
\[
\sum_{\g:\sigma^{n}\g=\g_{0}}e^{-(\delta-\e)\tau_{*}^{n}(\g)}.
\]
The latter can be bounded using Lemma \ref{bridging} with $N=0$
to give $\tau_{*}^{n}(\g)=\tau^{n}(\g k_{0})+O(1)$ and hence 
\begin{equation}
\sum_{\g:\sigma^{n}\g=\g_{0}}e^{-(\delta-\e)\tau_{*}^{n}(\g)}\ll\sum_{k:T^{n}k=\g_{0}k_{0}}e^{-(\delta-\e)\tau^{n}(k)}=[\L_{-(\delta-\e)}^{n}1](\g_{0}k_{0}).\label{eq:taubridged}
\end{equation}
We know that $\L_{-(\delta-\e)\tau}$ is bounded by $\exp(P(-(\delta-\e)\tau))$
by the Ruelle-Perron-Frobenius theorem. We now therefore require $n<\frac{a\e}{2P(-(\delta-\e)\tau)}$
so that 
\[
[\L_{-(\delta-\e)}^{n}1](\g_{0}k_{0})\ll\exp(nP(-(\delta-\e)\tau))\ll\exp(a\e/2).
\]

To describe the main term 
\begin{equation}
\frac{e^{\delta a}}{|\G_{q}|}\langle\vp,\underline{1}\rangle\underline{1}\sum_{\g:\sigma^{n}\g=\g_{0}}C(\g k_{0},g)e^{-\delta\tau_{*}^{n}(\g)},\label{eq:mainterm}
\end{equation}
we require the following result of Lalley (cf. \cite[Theorem 4]{LALLEYSYMB}).
It says that there is a version of the maximal eigenfunction $h_{-\delta\tau}$
on $\G$, as opposed to $K$.
\begin{lem}
\label{hinside}Fix $k_{0}\in K$. There exist a unique positive function
$h_{*}:\G\to\R$ and $\theta>1$ so that if $\g\in\G^{(n)}$ 
\[
h_{*}(\g)=h_{-\delta\tau}(\g k_{0})+O(\theta^{-n}).
\]
Also, for all $\g\in\G$, 
\begin{equation}
h_{*}(\g)=\sum_{\g':\sigma(\g')=\g}e^{-\delta\tau_{*}(\g')}h_{*}(\g').\label{eq:stationaryhstar}
\end{equation}

\end{lem}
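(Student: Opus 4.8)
The plan is to obtain $h_*$ as a limit of the naturally truncated sums that appear in the counting problem, and to identify this limit with $h_{-\delta\tau}(\g k_0)$ using the Ruelle--Perron--Frobenius theorem for $\L_{-\delta\tau}$. Concretely, for $\g \in \G^{(n)}$ define the partial sums
\[
h_n(\g) := \sum_{\substack{\g' \in \G^{(n+m)} \\ \sigma^m(\g') = \g}} e^{-\delta\tau_*^m(\g')} h_{-\delta\tau}(\g' k_0),
\]
and show this is a Cauchy sequence (in $m$, uniformly) at geometric rate. The key input is Lemma \ref{bridging}: writing $\g' = g_{j_0}\ldots g_{j_{m-1}}\g$ and comparing $\tau_*^m(\g')$ with $\tau^m(\g' k_0)$ introduces only an $O(\k^{m'})$ error when $\g$ is itself long, so that
\[
\sum_{\substack{\g' \in \G^{(n+m)} \\ \sigma^m(\g') = \g}} e^{-\delta\tau_*^m(\g')} h_{-\delta\tau}(\g' k_0)
= \bigl(1+O(\k^{?})\bigr) \sum_{y: T^m y = \g k_0} e^{-\delta\tau^m(y)} h_{-\delta\tau}(y)
= \bigl(1+O(\k^{?})\bigr)[\L_{-\delta\tau}^m h_{-\delta\tau}](\g k_0).
\]
Since $\L_{-\delta\tau} h_{-\delta\tau} = e^{P(-\delta\tau)} h_{-\delta\tau} = h_{-\delta\tau}$ (as $P(-\delta\tau)=0$, because $s_0 = \delta$ is the zero of $P(-s\tau)$), the right-hand side equals $\bigl(1+O(\cdot)\bigr) h_{-\delta\tau}(\g k_0)$; one still has to be careful because the admissibility constraint forces the map $y \mapsto y$, $T^m y = \g k_0$, to miss exactly the branches disallowed by reduction, but the Markov property together with the hypothesis $k_0 \in K$ being admissible for $\g$ handles this bookkeeping, and the discrepancy is absorbed into the geometric error. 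Passing to the limit in $m$ defines $h_*(\g)$ for $\g \in \G^{(n)}$, and the same estimate with $m$ and $\k^{?}$ tracked carefully yields $h_*(\g) = h_{-\delta\tau}(\g k_0) + O(\theta^{-n})$ for some $\theta > 1$ (taking $\theta$ a suitable power of $\k^{-1}$, or the reciprocal of the subdominant spectral radius of $\L_{-\delta\tau}$, whichever is worse).

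For the functional equation \eqref{eq:stationaryhstar}, I would argue by reindexing: any $\g' \in \G^{(n+m+1)}$ with $\sigma^{m+1}(\g') = \g$ is $\sigma^m(\g'') = \sigma(\g')$ for the unique $\g'' = \g'$ and $\sigma(\g')$ ranges over $\{\g'' : \sigma(\g'') = \g\}$; and $\tau_*^{m+1}(\g') = \tau_*(\sigma^m \g') \cdot \mathbf{1} + \tau_*^m(\g'')$ decomposes additively as $\tau_*^{m+1}(\g') = \tau_*(\g') + \tau_*^m(\sigma\g')$ only if we peel off the \emph{first} letter; more precisely we peel from the $\sigma$-side, $\tau_*^{m+1}(\g') = \tau_*^m(\g') + \tau_*(\sigma^m\g')$, and since $\sigma^m\g' = g_{j'}\sigma^{m+1}\g' = g_{j'}\g$ for the appropriate first letter $g_{j'}$ of $\sigma^m\g'$, we get exactly the recursion grouping terms by their image $\g'' := \sigma^m\g'$ under one extra application of $\sigma$. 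Taking $m \to \infty$ in
\[
h_{m+1}(\g) = \sum_{\g'' : \sigma(\g'') = \g} e^{-\delta\tau_*(\g'')} h_m(\g'')
\]
and using continuity (both sides converge, the sum over $\g''$ being finite) gives \eqref{eq:stationaryhstar}. Uniqueness and positivity follow because $h_*$ is an explicit geometrically convergent limit of positive quantities (positivity of $h_{-\delta\tau}$ from RPF), and any two solutions of \eqref{eq:stationaryhstar} with the prescribed asymptotics must agree: their difference $\phi$ satisfies $\phi(\g) = \sum_{\sigma(\g')=\g} e^{-\delta\tau_*(\g')}\phi(\g')$ hence $|\phi(\g)| \le [\L_{-\delta\tau}^m |\phi|](\cdot) \cdot(1+o(1))$ which, combined with $\phi(\g) = O(\theta^{-n})$ on $\G^{(n)}$ and boundedness, forces $\phi \equiv 0$ by iterating and using the spectral gap.

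The main obstacle I expect is the \textbf{admissibility bookkeeping}: the sums defining $h_*$ and $N_q^*$ run over \emph{reduced} words in $\G$, whereas $\L_{-\delta\tau}^m$ sums over \emph{all} preimages $T^m y = x$, which correspond to arbitrary (not necessarily reduced) concatenations in the ambient group generated by the $g_i^{\pm}$. In the genuine semigroup case (continued fractions, or Schottky with $\ell < k'$) the cylinder structure of $I$ is designed precisely so that $T$-admissibility matches word-reducedness via the Markov property, and the restriction $k_0 \in K$ admissible for $\g$ guarantees $\g k_0$ lands in the correct cylinder so that every $T$-preimage corresponds to a legitimately reduced extension. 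Making this correspondence exact — rather than merely up to a negligible boundary term — is the delicate point, and it is exactly what the hypothesis ``$i_N \neq i + k' \bmod 2k'$'' in \eqref{eq:gamma0} and the phrase ``if in the continued fractions setup this can be chosen arbitrarily'' are there to ensure. I would handle it by fixing $k_0 \in K$ once and for all, noting $K = \bigcap_i T^{-i}(I)$ so $\g k_0$ lies in every relevant cylinder, and then invoking the Markov property to see that $\{y : T^m y = \g k_0\}$ is in bijection with $\{$reduced words $w$ of length $m$ such that $w\g$ is reduced$\}$, which is precisely the index set for $h_m(\g)$. Everything else is Lalley's argument (cf. \cite[Theorem 4]{LALLEYSYMB}) transported to our notation.
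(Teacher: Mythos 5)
The paper does not actually prove Lemma~\ref{hinside}: it cites it as a result of Lalley (cf.\ Theorem~4 of \cite{LALLEYSYMB}), so there is no in-paper proof to compare against. Your proposal is essentially a reconstruction of Lalley's argument transported to this notation, and the overall plan --- define $h_m$ via truncated sums, compare to $\L_{-\delta\tau}^m h_{-\delta\tau}$ through Lemma~\ref{bridging}, use $P(-\delta\tau)=0$, pass to the limit, and verify the stationarity relation by reindexing --- is the right one. The admissibility discussion at the end is also accurate: the Markov structure of $T$ together with the choice of $k_0$ admissible for $\g$ does make $\{y:T^m y = \g k_0\}$ correspond exactly to reduced extensions of $\g$, so the correspondence between $\sigma$-preimages and $T$-preimages is exact, not merely approximate.

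There is one genuine gap: the displayed estimate
\[
h_m(\g)=\bigl(1+O(\kappa^n)\bigr)\,h_{-\delta\tau}(\g k_0)
\]
is \emph{uniform in $m$}, because the error supplied by Lemma~\ref{bridging} depends only on the length $n$ of the fixed tail $\g$, not on $m$. That gives boundedness of $h_m(\g)$ and identifies the claimed limit within a $(1+O(\kappa^n))$ window, but it does \emph{not} show $h_m(\g)$ converges --- the sequence could oscillate inside that window. The Cauchy claim you announce is never actually proved by this estimate. The repair uses the recursion you correctly derived, but applied to bound the increment: writing
\[
h_{m+1}(\g)-h_m(\g)=\sum_{\g':\sigma^m\g'=\g}e^{-\delta\tau_*^m(\g')}\Bigl[\sum_{\g'''' :\sigma\g''''=\g'}e^{-\delta\tau_*(\g'''')}h_{-\delta\tau}(\g'''' k_0)-h_{-\delta\tau}(\g' k_0)\Bigr],
\]
apply the bridging estimate to the \emph{inner} bracket, where the relevant tail $\g'$ has length at least $n+m$; this makes the bracket $O(\kappa^{n+m})h_{-\delta\tau}(\g' k_0)$, hence $|h_{m+1}(\g)-h_m(\g)|\lesssim \kappa^{n+m}$ using boundedness of $h_m(\g)$. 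Summing the geometric series gives Cauchyness at rate $\kappa^n$ and yields the asymptotic $h_*(\g)=h_{-\delta\tau}(\g k_0)+O(\theta^{-n})$. Your uniqueness and positivity arguments are fine as sketched (positivity on short words needs a word about iterating \eqref{eq:stationaryhstar} until the argument lies in $\G^{(n)}$ with $n$ large enough that the $(1-O(\kappa^n))$ lower bound is strictly positive). So: right approach, right tools, but the convergence step as written does not close.
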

Now recall the definition of $C(\cdot,g)$ from Lemma \ref{maintermrenewal}.
If we define the corresponding function on $\G$ according to the
pairing of $h_{*}$ with $h_{-\delta\tau}$, 
\begin{equation}
C_{*}(\g,g)=\left(\frac{\int gd\nu_{-\delta\tau}}{\delta\int\tau d\nu_{0}}\right)h_{*}(\g),\label{eq:C*}
\end{equation}
we get from Lemma \ref{hinside} that 
\[
C_{*}(\g,g)=C(\g k_{0},g)+O(\|g\|_{C^{1}}\theta^{-n})
\]
when $\g\in\G^{(n)}$. This means that the main term contribution
\eqref{eq:mainterm} to $N_{q}^{*}(a,\g_{0},\vp)$ is 
\begin{align*}
 & \frac{e^{\delta a}}{|\G_{q}|}\langle\vp,\underline{1}\rangle\underline{1}\left(\sum_{\g:\sigma^{n}\g=\g_{0}}C_{*}(\g,g)e^{-\delta\tau_{*}^{n}(\g)}+O(\|g\|_{C^{1}}\theta^{-n}\sum_{\g:\sigma^{n}\g=\g_{0}}e^{-\delta\tau_{*}^{n}(\g)})\right)\\
 & =\frac{e^{\delta a}}{|\G_{q}|}C_{*}(\g_{0},g)\langle\vp,\underline{1}\rangle\underline{1}+e^{\delta a}O(\theta^{-n}\|\vp\|\|g\|_{C^{1}})
\end{align*}
by using \eqref{eq:stationaryhstar} and a calculation similar to
that in \eqref{eq:taubridged} to give 
\[
\sum_{\g:\sigma^{n}\g=\g_{0}}e^{-\delta\tau_{*}^{n}(\g)}\ll[\L_{-\delta}^{n}1](\g_{0}k_{0})\ll 1 .
\]

We now let $n=\lfloor ca\rfloor$ with 
\[
c=\min\left(\frac{\delta-\e}{4\log k},\frac{\e}{2P(-(\delta-\e)\tau)}\right).
\]
Then the result of the preceding discussion is that 
\[
N_{q}^{*}(a,\g_{0},\vp)=\frac{e^{\delta a}}{|\G_{q}|}C_{*}(\gamma_{0},g)\langle\vp,\underline{1}\rangle\underline{1}+O\left((\|\vp\|(\|g\|_{C^{1}}+\|G\|_{\infty})q^{C}e^{(\delta-\e')a}\right)
\]
for some $\e'=\e'(\k,\theta,\e,\A)$ . When $\vp(\g)=\mathbf{1}\{\g=\xi\}$
we have that 
\[
\langle\vp,\underline{1}\rangle=1
\]
and hence evaluating $N_{q}^{*}(a,\g_{0},\mathbf{1}\{\g=\xi\})$
gives 
\[
\sum^*_{\substack{\g\in\G \::d(o,\g\g_{0}o)-d(o,\g_{0}o)\leq a\\
\pi_{q}(\g)=\xi
}
}G(\g\g_{0}o)=\frac{e^{\delta a}}{|\G_{q}|}C_{*}(\gamma_{0},g)+O\left((\|g\|_{C^{1}}+\|G\|_{\infty})q^{C}e^{(\delta-\e')a}\right).
\]
 This proves our Main Theorem \ref{thm:maintheoremelaborate}, given
Theorem \ref{thm:maintransfer}.

\section{Bounds for transfer operators: large imaginary part}

\label{largeimaginarypart} 

In this section we will prove Part \ref{enu:mainlargeimaginary} of
Theorem \ref{thm:maintransfer}.


\subsection{Non local integrability}

Recall from  Section \ref{continuedfractions}
the set $I$, $K$, the map $T:I\to\R$, the cocycles $c_{q}$ and
$\G$. We need to introduce symbolic dynamics.  
We write $A$ for the $k\times k$ matrix
with $(i,j)$ entry equal to $1$ if $T(I_{i})\supset I_{j}$ and $0$ otherwise.
Such a matrix $A$ is called
the \emph{transition matrix. } We say that a sequence $(i_{j})$ with entries in
$1,\ldots,k$ is admissible if $T(i_{j})\supset  i_{j+1}$ for all $j$
in the index set of the sequence. When $T(I_i)\supset I_j$ we define $T_{i}^{-1}$
on $I_{j}$ to be the unique locally defined branch of $T^{-1}$ that
maps $I_{j}$ to $I_{i}.$

Let $\Sigma_{A}^{+}$ (resp. $\Sigma_{A}^{-})$ be the space of positively
(resp. negatively) indexed admissible sequences on $\{1,\ldots,k\}.$ We define
for $\xi\in\Sigma_{A}^{-}$ the function 
\begin{equation}
\Delta_{\xi}(u,v)=\sum_{i=0}^{\infty}\tau(T_{\xi_{-i}}^{-1}\circ\ldots\circ T_{\xi_{0}}^{-1}u)-\tau(T_{\xi_{-i}}^{-1}\circ\ldots\circ T_{\xi_{0}}^{-1}v)\label{eq:Delta}
\end{equation}
on $I_{j}\times I_{j}$ such that $T(I_{\xi_{0}})\supset I_{j}$.
It follows from the expanding property of $T$ that $\Delta_{\xi}$
is $C^{1}$ where it is defined. Naud (following others) defines a
temporal distance function 
\begin{equation}
\varphi_{\xi,\eta}(u,v)=\Delta_{\xi}(u,v)-\Delta_{\eta}(u,v)\label{eq:temporaldistance}
\end{equation}
which is defined for each $\xi,\eta\in\Sigma_{A}^{-}$ and $u,v\in I_{j}$

\begin{defn}[Non local integrability (NLI)]
\label{NLI}An eventually positive function $\tau$ has property
(NLI) if there are $j_{0}\in\{1,\ldots,k\}$, $\xi,\eta\in\Sigma_{A}^{-}$
with $T(I_{\xi_{0}})\cap T(I_{\eta_{0}})\supset I_{j_{0}}$ and $u_{0},v_{0}\in K\cap I_{j_{0}}$
such that 
\[
\frac{\partial\varphi_{\xi,\eta}}{\partial u}(u_{0},v_{0})\neq0.
\]

\end{defn}



\begin{prop}\label{distortionprop}
 The distortion functions $\tau$ and $\hat \tau$ have the non local integrability property.
\end{prop}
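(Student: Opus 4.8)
The plan is to argue by contradiction: suppose (NLI) fails for $\tau$, so that $\frac{\partial\varphi_{\xi,\eta}}{\partial u}\equiv 0$ at every admissible configuration of $j_0,\xi,\eta$ and every $u_0,v_0\in K\cap I_{j_0}$. Since $\varphi_{\xi,\eta}(u,v)=\Delta_\xi(u,v)-\Delta_\eta(u,v)$ and $\Delta_\xi(u,v)$ is built from $\tau$ along the inverse branch determined by $\xi$, the vanishing of $\frac{\partial\varphi_{\xi,\eta}}{\partial u}$ for all pairs $\xi,\eta$ is equivalent to saying that $u\mapsto \frac{\partial\Delta_\xi}{\partial u}(u,v)$ is independent of the negative sequence $\xi$, i.e.\ there is a single $C^1$ function $\psi$ on (a neighbourhood of) $K$ with $\frac{\partial\Delta_\xi}{\partial u}(u,v)=\psi(u)$ for all admissible $\xi$. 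Integrating, $\Delta_\xi(u,v)=\Psi(u)-\Psi(v)$ for a primitive $\Psi$, again independent of $\xi$. One then wants to show this forces $\tau$ to be cohomologous to a \emph{locally constant} function on the subshift, contradicting the fact that in our setting $\tau$ is real analytic with nonvanishing derivative (coming from $\log|T'|$ via the Cayley map), hence genuinely non-constant and not cohomologous to a locally constant cocycle.

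The key steps, in order, are as follows. First I would recall the standard dynamical reformulation (as in Naud \cite{NAUD} and the references therein): failure of (NLI) is equivalent to $\tau$ being cohomologous, via a $C^1$ transfer function, to a function depending only on finitely many symbols — equivalently, the ``pressure/periodic orbit'' obstruction vanishes. Concretely, the quantity $\Delta_\xi(u,v)$ measures the difference of Birkhoff sums of $\tau$ along two orbits that are backward-asymptotic through $\xi$ and forward-land at $u$ versus $v$; its $u$-derivative being $\xi$-independent is exactly the statement that the ``unstable derivative cocycle'' of $\tau$ is a coboundary. Second, I would translate this back to the concrete maps $g_i$: using $T|_{I_i}=g_i^{-1}$ (in the disc model, via $J$), $\tau(x)=\log|(J g_i^{-1} J^{-1})'(Jx)|$ on the relevant piece, and the chain rule expresses $\Delta_\xi$ in terms of derivatives of long compositions $g_{\xi_0}\cdots g_{\xi_{-i}}$. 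The hypothesis that (NLI) fails would give an identity among these Möbius derivatives valid on the limit set; differentiating and using the explicit form of Möbius derivatives $g'(z)=(cz+d)^{-2}$, one extracts an algebraic relation on the fixed points / multipliers of elements of $\Gamma$. Third, I would derive a contradiction by exhibiting two admissible negative sequences (equivalently two elements of $\Gamma$) whose attracting data are incompatible with such a relation — this is where one uses that $\Gamma$ is a genuinely non-elementary free (semi)group, so its limit set is not contained in any finite orbit and the fixed points of generators are in ``general position.'' For the continued-fractions case one would instead use the explicit intervals $I_a=[\tfrac1{a+1},\tfrac1{a+(A+1)^{-1}}]$ and the derivative bound \eqref{derivativebound} to see directly that the $a$-dependence of $\tau$ cannot be washed out by a coboundary.

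I expect the main obstacle to be the third step: cleanly ruling out the degenerate algebraic identity. The soft part — rephrasing (NLI)-failure as a coboundary/rigidity statement — is essentially bookkeeping following \cite{NAUD, DOLG}. The substantive point is that real-analyticity of $\tau$ together with the Markov/expanding structure forces: if $\tau$ were cohomologous to a locally constant function, then $T$ restricted to $K$ would have to be (up to smooth conjugacy) linear on cylinders, which contradicts the nonlinearity of Möbius maps on the limit set (the derivative $\log|T'|$ is non-constant on each $I_i\cap K$ since $K$ is a Cantor set of positive diameter and $g_i$ is a non-affine Möbius transformation). Making this contradiction rigorous — i.e.\ verifying that the limit set is ``large enough'' and the generators ``nonlinear enough'' that no such rigidity can hold — is the crux; everything else is routine. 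An alternative, possibly cleaner route that I would keep in reserve is to verify (NLI) directly by a local computation: pick $j_0$ and two short admissible prefixes $\xi,\eta$ (length one or two suffices by the $C^1$-dependence and geometric convergence of the series \eqref{eq:Delta}), write $\varphi_{\xi,\eta}$ explicitly as a difference of two $\log$-derivative-of-Möbius terms, and compute $\partial_u\varphi_{\xi,\eta}$ at a convenient point $u_0\in K\cap I_{j_0}$, checking it is nonzero using the explicit matrices $g_i\in\SL_2(\Z)$ — this avoids the rigidity argument entirely and reduces the proposition to an elementary, if slightly tedious, calculation with $2\times 2$ matrices.
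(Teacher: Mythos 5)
Your proposal takes a genuinely different and considerably heavier route than the paper. The paper's proof is a terse verification-and-citation: it checks that the setup always contains two hyperbolic branches $h_i=g_i^{-1}$ and $h_j=g_j^{-1}$ with distinct repelling and attracting fixed points, and that the semigroup they generate consists of hyperbolic elements (immediate for Schottky by ping-pong, and for the continued-fractions semigroup because every element has trace $>2$), and then invokes Naud's Lemma 4.4 of \cite{NAUD}, whose proof produces a specific $j_0,\xi,\eta,u_0,v_0$ witnessing (NLI) by a direct computation with the temporal distance function. Your ``reserve'' approach — pick short admissible prefixes and compute $\partial_u\varphi_{\xi,\eta}$ explicitly — is in fact much closer to what Naud's lemma actually does than your main approach is, and if you insist on not citing Naud, that is the route you should carry out.

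There is a genuine gap in your main (contradiction) argument as stated. You claim that if (NLI) fails then $\tau$ is cohomologous to a locally constant function, and that this ``contradicts the nonlinearity of M\"obius maps on the limit set.'' But nonlinearity of the individual branches $g_i$ is not, by itself, incompatible with $\tau$ being cohomologous to a locally constant function: a single hyperbolic M\"obius map is smoothly conjugate to a linear map, and the cohomology class of $\tau$ is insensitive to smooth conjugation. The actual obstruction is a statement about the interaction of at least two hyperbolic elements — for instance, via the associated length spectrum or, more directly, via the temporal distance function built from two admissible pasts through $I_i$ and $I_j$ with distinct fixed-point data. In other words, what kills the degenerate identity is exactly conditions (2) and (3) in the paper's verification (distinct fixed points, hyperbolicity of the generated semigroup), not nonlinearity per se. You acknowledge this is ``the crux'' and defer it; but since that deferred step is precisely the content of Naud's Lemma 4.4, the economical move — and the one the paper makes — is simply to verify its hypotheses and cite it, rather than to re-derive the rigidity statement and then still have to rule out the degenerate case.
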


\begin{proof} 
In the two cases of Schottky semigroups and the continued fractions semigroups we are considering, we always have two hyperbolic 
elements $h_i:=g_i^{-1}, h_j:= g_j^{-1}$
(with $g_i, g_j$ from the generating set) satisfying (1): $T|_{I_i}=h_i$ and $T|_{I_j}=h_j$,
(2) the $h_i$ and $h_j$ have distinct repelling (resp. attractive) fixed points on $\br\cup\{\infty\}$
and (3) the semigroup generated by $h_i$ and $ h_j$ consists of hyperbolic elements. 
 Given such elements, Naud's argument  in \cite[Proof of Lemma 4.4]{NAUD} 
shows the non local integrability properties of $\hat \tau(x)=\log |T'(x)|$ and $\tau(x)$ . \end{proof}

\subsection{Beginning Dolgopyat's argument}

One novelty of this paper is the following version of \cite[Theorem 2.3]{NAUD}
that is uniform in the congruence aspect. 
\begin{prop}
\label{bigimaginaryprop}There is $b_{0}>0$ such that part \ref{enu:mainlargeimaginary}
of Theorem \ref{thm:maintransfer} holds. That is, for any $\eta>0$,
there is $0<\rho_{\eta}<1$ such that 
\[
\|\L_{s,q}^{m}\|_{C^{1}}\ll_{\eta}|b|^{1+\eta}\rho_{\eta}^{m}
\]
when $|b|>b_{0}$ and $q\in\mathbf{N}$, as in Theorem \ref{thm:maintransfer}. 
\end{prop}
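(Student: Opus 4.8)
The plan is to run Dolgopyat's argument \cite{DOLG} in the form developed by Naud \cite{NAUD}, but carried out on the Banach space $C^{1}(I;\C^{\G_{q}})$ in place of $C^{1}(I)$, and to verify that every estimate is uniform in $q$. The mechanism that makes the congruence case no harder than the scalar case is that the modular cocycle $c_{q}$ is \emph{locally constant} on cylinders and acts on $\C^{\G_{q}}$ by a \emph{unitary} operator (the right regular representation of $\G_{q}$); hence it never interferes with the non-stationary phase, and at the points in the argument where one must pass to absolute values the vector norm collapses to the scalar transfer operator $\L_{a}$.

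First I would record the standard a priori (Lasota--Yorke / Doeblin--Fortet) inequalities. Differentiating \eqref{eq:skewtransfer}, using the eventual expansion of $T$ (Proposition \ref{def:expansion-1}), bounded distortion of $\tau$, and $\|c_{q}(y).v\|=\|v\|$, one gets $m_{0}$ and $\rho_{1}<1$ with
\[
\|(\L_{s,q}^{m}F)'\|_{\infty}\le C\big(\rho_{1}^{m}\|F'\|_{\infty}+|b|\,\|\L_{a}^{m}|F|\|_{\infty}\big)\le C'\big(\rho_{1}^{m}\|F'\|_{\infty}+|b|\,\|F\|_{\infty}\big)
\]
for $m\ge m_{0}$ and $|a-\delta|<\e$, uniformly in $q$ (the last step by the Ruelle--Perron--Frobenius theorem). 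Consequently it suffices to prove the sup-norm contraction
\[
\|\L_{s,q}^{m}F\|_{\infty}\ll_{\eta}|b|^{\eta}\rho_{\eta}^{m}\|F\|_{C^{1}},\qquad |b|>b_{0},
\]
since feeding this into the inequality above (and using it once more at a short initial scale, where $\|(\L_{s,q}^{m'}F)'\|_{\infty}\ll|b|\|F\|_{C^{1}}$) produces the asserted $C^{1}$ bound with one extra power of $|b|$, i.e. $|b|^{1+\eta}$ after renaming $\eta$.

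The core is then the Dolgopyat cancellation estimate. Following Naud, I would fix an iteration length $n_{1}\asymp\eta^{-1}\log|b|$, subdivide a neighbourhood of $K$ into length-$n_{1}$ subcylinders on which the inverse branches of $T^{n_{1}}$ and the accumulated phase $e^{-ib\tau^{n_{1}}}$ are controlled, and introduce the Dolgopyat operators $\L_{J}$ obtained from $\L_{a,q}^{n_{1}}$ by inserting a damping factor supported on a ``dense'' family $J$ of subcylinders. Working in $L^{2}(\nu_{-a\tau};\C^{\G_{q}})$, one shows that there is a cone $\CC$ of $\C^{\G_{q}}$-valued functions with $\L_{J}\CC\subset\CC$ and $\|\L_{J}F\|_{L^{2}}\le(1-\e_{0})\|F\|_{L^{2}}$ for $F\in\CC$. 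The cocycle enters only in the pair-by-pair cancellation: for two inverse branches of $T^{n_{1}}$ meeting over some $I_{j_{0}}$, the combined contribution has the shape $c_{q}^{n_{1}}(\cdot)\big(e^{-ib\Delta_{\xi}}-e^{-ib\Delta_{\eta}}\big)(\cdots)$, and because $c_{q}^{n_{1}}$ is constant on each length-$n_{1}$ subcylinder it factors out of the difference with unit operator norm; the residual oscillation of $e^{-ib\varphi_{\xi,\eta}}$ is then treated exactly as in the scalar case and is genuinely present by the non-local integrability property (Proposition \ref{distortionprop}, Definition \ref{NLI}). This is the vector-valued extension of \cite[Lemma 5.10]{NAUD} that will be proved in Lemma \ref{technical}; an observation going back to \cite{OW} is that local constancy of $c_{q}$ is exactly what is needed here. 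Iterating the $\L_{J}$ over multiples of $n_{1}$, then bootstrapping from $L^{2}$ back to $L^{\infty}$ via the a priori bound and splitting a general $F$ into cone-admissible pieces, yields the sup-norm contraction above; tracking all constants shows they depend only on $\G$ and $\eta$, not on $q$.

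\medskip
\noindent\textbf{Expected main obstacle.} The one genuinely new ingredient is Lemma \ref{technical}: one must be sure that the scale of local constancy of $c_{q}$ (length-$n_{1}$ cylinders, with $n_{1}\asymp\log|b|$) is compatible with the scale at which Dolgopyat's partition and the non-stationary phase estimate operate, and that passing to the Hermitian norm on $\C^{\G_{q}}$ at the right moment costs nothing. Unitarity of the right regular representation and local constancy of $c_{q}$ are precisely what make this go through, but the bookkeeping must be carried out keeping uniformity in $q$ explicit at every step.
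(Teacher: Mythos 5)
Your proposal is correct and follows essentially the same route as the paper: normalize the transfer operator, establish vector-valued Lasota--Yorke a priori bounds (Lemma \ref{apriori}), construct uniform Dolgopyat operators in $L^{2}(\nu_{0};\C^{\G_{q}})$ via the cone condition (Lemmas \ref{powers} and \ref{core}), and bootstrap to $C^{1}$. The key observation you single out --- that $c_{q}$ is locally constant on cylinders and acts unitarily, hence factors out of the pairwise cancellation without disturbing the non-stationary phase driven by (NLI) --- is exactly the content of the paper's Lemma \ref{technical}, and both arguments attribute this observation to \cite{OW}.
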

We now show how to relate this Proposition to the construction of
certain Dolgopyat operators. Recall the Ruelle-Perron-Frobenius Theorem (Theorem \ref{RPFTheorem})
and its notation. Let $h_{a}$ be the normalized positive eigenfunction
of $\L_{-a\tau}$ corresponding to the maximal eigenvalue $\exp(P(-a\tau))$.
We set 
\[
\tau_{a}=-a\tau-P(-a\tau)-\log(h_{a}\circ T)+\log(h_{a}).
\]
We now renormalize our transfer operators by defining 
\[
L_{s,q}:=\L_{\tau_{a}-ib\tau,q}.
\]
This is the same as 
\begin{equation}
L_{s,q}=\exp(-P(-a\tau))M_{h_{a}}^{-1}\L_{s,q}M_{h_{a}}\label{eq:perturbed}
\end{equation}
where $M_{h_{a}}$ is multiplication by $h_{a}$. It now follows by
arguments as in Naud \cite[pg. 132]{NAUD} that it is enough to prove
Proposition \ref{bigimaginaryprop} and Theorem \ref{thm:maintransfer}
with $L_{s,q}$ in place of $\L_{s,q}$. We also note here that the
maximal eigenfunction of $L_{a}$ is the constant function, with eigenvalue
$1$, that is $L_{a}1=1$ for $a\in\R$.

The rest of the passage to the estimates in the next section is routine
but we give some of the details for completeness. One shows that in
order to prove Proposition \ref{bigimaginaryprop} it is enough to
prove 
\begin{lem}
\label{powers} With the same conditions as Theorem \ref{thm:maintransfer},
there are $N>0$ and $\rho\in(0,1)$ such that when $|a-\delta|$
is sufficiently small and $|b|$ is sufficiently large we have 
\[
\int_{K}|L_{s,q}^{nN}W|^{2}d\nu_{0}\leq\rho^{n},
\]
where $W\in C^{1}(I;\C^{\G_{q}})$, $d\nu_{0}=h_{-\delta\tau}\nu_{-\delta\tau}$
is the Gibbs measure on $K$, and $\|W\|_{(b)}\leq1$, which stands
for the warped Sobolev norm 
\[
\|W\|_{(b)}=\|W\|_{\infty}+|b|^{-1}\|W'\|_{\infty}.
\]
These estimates are uniform in $q$. 
\end{lem}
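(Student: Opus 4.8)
The plan is to adapt Dolgopyat's method, following Naud's implementation in \cite{NAUD}, to the vector-valued setting, with the crucial observation that the modular cocycle $c_q$ is \emph{locally constant} and acts by \emph{unitaries} on $\C^{\G_q}$, so it does not interfere with the core cancellation mechanism. First I would set up the iterated renormalized operator $L_{s,q}^{N}$ and exploit the Markov/expanding structure: writing $L_{s,q}^{N}W(x)=\sum_{T^Ny=x}e^{\tau_a^N(y)-ib\tau^N(y)}c_q^N(y).W(y)$, the local inverse branches $T_i^{-1}$ contract by a definite ratio $\g^{-N}$, so the phases $b\tau^N$ oscillate rapidly once $|b|$ is large and $N$ is a suitable multiple of $\log|b|$. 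The non-local integrability property from Proposition \ref{distortionprop} guarantees, via the temporal distance function $\varphi_{\xi,\eta}$, that for two distinct branches the difference of phases is genuinely non-constant on $K\cap I_{j_0}$; this is what forbids the triangle inequality from being close to an equality.

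Next I would construct the family of Dolgopyat operators $\mathcal{N}_{J}$ (indexed by ``dense'' subsets $J$ of a fixed cover of $K$ by small balls at scale $\sim|b|^{-1}$) acting on the cone of positive functions, exactly as in \cite[Section 5]{NAUD}, but now applied to the scalar majorant $|W|$ rather than to $W$ itself. The key point is that because $c_q^N(y)$ is unitary, $|L_{s,q}^{N}W(x)|\le \sum_{T^Ny=x}e^{\tau_a^N(y)}|W(y)| = L_a^N(|W|)(x)$, with equality only when all the vectors $c_q^N(y).W(y)$ are positively aligned; so the congruence twist can only \emph{help} the cancellation. One shows that on at least one pair of branches feeding into the distinguished interval $I_{j_0}$, either the oscillation of $b\tau^N$ already produces a gain (the phases are spread out by NLI over an interval of length $\gtrsim 1$), or else $W$ itself must vary enough on that scale that its $C^1$-norm forces a pointwise drop — this is the standard dichotomy, and it is here that the local constancy of $c_q$ is used to push the vector-valued estimate down to the scalar one. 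This is the content of the extension of \cite[Lemma 5.10]{NAUD} that the paper refers to as Lemma \ref{technical}.

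I would then assemble these local gains into the $L^2(\nu_0)$ estimate: iterating the Dolgopyat operator $n$ times and using that $L_a$ preserves $\nu_0$ (since $L_a 1 = 1$ and $\nu_0$ is the corresponding Gibbs measure), one gets $\int_K |L_{s,q}^{nN}W|^2\,d\nu_0 \le \int_K \mathcal{N}^n(|W|)^2\,d\nu_0 \le \rho^n \int_K |W|^2 \,d\nu_0 \le \rho^n$ under the normalization $\|W\|_{(b)}\le 1$. The warped Sobolev norm $\|W\|_{(b)}=\|W\|_\infty+|b|^{-1}\|W'\|_\infty$ is exactly the norm under which the $C^1$-information survives the contraction at scale $|b|^{-1}$ and feeds the dichotomy above; one checks that $L_{s,q}^{N}$ maps the unit ball of $\|\cdot\|_{(b)}$ into a suitable invariant cone condition (a Lasota--Yorke type inequality), which is what makes the iteration legitimate.

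The main obstacle I expect is precisely the vector-valued version of the Dolgopyat dichotomy, i.e. showing that the oscillation gain from NLI is not destroyed when $W$ takes values in $\C^{\G_q}$ and is twisted by $c_q^N$. Scalar Dolgopyat arguments compare $|a+b|$ with $|a|+|b|$ using the \emph{sign} of real numbers; in the vector setting one must instead track when two vectors $c_q^N(y_1).W(y_1)$ and $c_q^N(y_2).W(y_2)$ fail to be nearly parallel, and combine that with the phase separation coming from $\varphi_{\xi,\eta}$. The saving grace — and the reason this works uniformly in $q$ — is that $c_q$ is constant on each $I_i$, so on a small ball the twist is a \emph{single} fixed unitary and can be absorbed; the genuine oscillation is still carried by the scalar phase $e^{-ib\tau^N}$, whose non-stationarity is unaffected by a constant unitary multiple. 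Making this absorption precise on the length-$N$ cylinders, and verifying that the resulting constants ($N$, $\rho$, and the threshold for $|b|$) depend only on the dynamics of $T$ and not on $q$, is the technical heart of the argument and is where I would spend the most care.
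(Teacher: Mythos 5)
Your overall strategy is the correct one and matches the paper's: construct uniform Dolgopyat operators on the cone $\CC_{A|b|}$, exploit the (NLI) property so the phases $b\tau^N$ separate on two branches into $I_{j_0}$, and observe that because the modular cocycle $c_q$ is locally constant and unitary-valued it acts as a single fixed unitary on each cylinder and therefore cannot spoil the non-stationary phase; iterate the resulting domination through the cone to get the $L^2(\nu_0)$ decay. Your identification of the key technical step as the vector-valued extension of Naud's Lemma 5.10 (the paper's Lemma \ref{technical}), where one must track near-alignment of the vectors $c_q^N(y_i).W(y_i)$ rather than signs of scalars, is also exactly right.

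The one step that does not go through as written is the initialization of the iteration. You write
\[
\int_K |L_{s,q}^{nN}W|^2\,d\nu_0 \;\le\; \int_K \mathcal N^n\bigl(|W|\bigr)^2\,d\nu_0 \;\le\; \rho^n\int_K |W|^2\,d\nu_0,
\]
which uses $|W|$ itself as the cone element fed into the Dolgopyat operators. But $|W|$ is in general not in the cone $\CC_{A|b|}$: that cone demands $H>0$ everywhere and $|H'|\le A|b|H$, whereas $|W|$ may vanish, and the derivative bound $\bigl||W|'\bigr|\le A|b|\,|W|$ can fail wherever $W$ is small compared to $W'$. Since the $L^2$-contraction in part \ref{coresecond} of Lemma \ref{core} is proved only for $H\in\CC_{A|b|}$, the displayed chain is not justified. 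The fix — which is what the paper does — is to initialize with the constant function $H=\|W\|_{(b)}\cdot 1$: it lies in $\CC_{A|b|}$ (since $A>1$), it dominates both $|W|$ and $|W'|/(A|b|)$ because $\|W\|_{(b)}\le 1$, and $\int_K H^2\,d\nu_0\le 1$. Part \ref{corethird} of Lemma \ref{core} then yields a $J$ with $|L_{s,q}^N W|\le\mathcal N_s^J H$ and $|(L_{s,q}^N W)'|\le A|b|\,\mathcal N_s^J H$, with $\mathcal N_s^J H$ again in the cone by part \ref{corefirst}, so the iteration closes and part \ref{coresecond} gives $\int_K |\mathcal N_s^{J_n}\cdots\mathcal N_s^{J_1}H|^2\,d\nu_0\le\rho^n$. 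With this correction your argument is the paper's argument.
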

This corresponds to \cite[Theorem 3.1]{OW} in the work of Oh and
Winter and is the uniform version of \cite[Proposition 5.3]{NAUD}.

Lemma \ref{powers} implies Proposition \ref{bigimaginaryprop} by
the use of a priori estimates for the transfer operators that allow
one to convert an $L^{2}$ estimate into a $C^{1}$ bound. These estimates
are given in \cite[Lemma 5.2]{NAUD} for complex valued functions.
They are however easily proved for vector valued functions giving
\begin{lem}
\label{apriori} There are $\k_{1},\k_{2},a_{0},b_{0}>0$ and $R<1$
such that for $|a-\delta|<a_{0}$ and $|b|>b_{0}$ we have for all
$f\in C^{1}(I;\C^{\G_{q}})$ 
\begin{equation}
\|[L_{s,q}^{n}f]'\|_{\infty}\leq\k_{1}|b|\|L_{a}^{n}f\|_{\infty}+R^{n}\|L_{a}^{n}|f'|\|_{\infty},\label{eq:apriori1}
\end{equation}
and 
\begin{equation}
\|L_{\delta,q}^{n}f\|_{\infty}\leq\int_{K}|f|d\nu_{0}+\k_{2}R^{n}\|f\|_{L(K)}.\label{eq:apriori2}
\end{equation}

\end{lem}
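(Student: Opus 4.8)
The plan is to imitate the proof of \cite[Lemma 5.2]{NAUD} essentially line by line, the only new input being that the congruence cocycle is harmless for both estimates. Since $c_{q}|_{I_{i}}=g_{i}\bmod q$ is constant on each $I_{i}$, the iterate $c_{q}^{n}(y)=c_{q}(T^{n-1}y)\cdots c_{q}(y)$ is constant on each cylinder of length $n+1$, hence is \emph{locally constant} on the relevant preimage set and contributes nothing when we differentiate in $x$. Moreover $\rho(c_{q}^{n}(y))$ is \emph{unitary} on $\C^{\G_{q}}$, as the right regular representation permutes the standard orthonormal basis. These two facts let one replace $f$ and $f'$ throughout by the nonnegative scalar functions $|f|(x):=\|f(x)\|_{\C^{\G_{q}}}$ and $|f'|(x):=\|f'(x)\|_{\C^{\G_{q}}}$ and reduce to Naud's scalar estimates; in particular nothing in the argument sees $q$, which is exactly the claimed uniformity in $q$.

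For \eqref{eq:apriori1}, write $L_{s,q}^{n}f(x)=\sum_{T^{n}y=x}e^{(\tau_{a})^{n}(y)-ib\tau^{n}(y)}\rho(c_{q}^{n}(y)).f(y)$, the sum being over the inverse branches $y=y(x)$ of $T^{n}$, and differentiate by the product rule. The derivative hitting $f(y(x))$ produces the Jacobian $|y'(x)|=|(T^{n})'(y)|^{-1}\le D\gamma^{-n}$ by the eventually expanding property (Proposition \ref{def:expansion-1}); using unitarity of the cocycle this term is $\le D\gamma^{-n}[L_{a}^{n}|f'|](x)$, which is $\le R^{n}\|L_{a}^{n}|f'|\|_{\infty}$ for a suitable $R<1$ (absorbing the constant $D$ by restricting to $n$ large). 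The derivative hitting the weight produces the factor $\bigl|\tfrac{d}{dx}[(\tau_{a})^{n}(y(x))-ib\tau^{n}(y(x))]\bigr|$; since $\tfrac{d}{dx}\tau^{n}(y(x))=\sum_{j=0}^{n-1}\tau'(T^{j}y(x))\,\tfrac{d}{dx}(T^{j}y(x))$ and each $|\tfrac{d}{dx}(T^{j}y(x))|\le D\gamma^{-(n-j)}$ by Proposition \ref{def:expansion-1}, this sum is bounded by $C_{2}+|b|C_{1}$ with $C_{1},C_{2}$ independent of $n$ and locally uniform in $a$ (using that $\tau_{a}=-a\tau-P(-a\tau)-\log(h_{a}\circ T)+\log h_{a}$ is $C^{1}$ with derivative continuous in $a$). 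Hence this term is $\le(C_{2}+|b|C_{1})[L_{a}^{n}|f|](x)\le\k_{1}|b|\,\|L_{a}^{n}|f|\|_{\infty}$ once $|b|>b_{0}$. Adding the two contributions gives \eqref{eq:apriori1}.

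For \eqref{eq:apriori2}, set $b=0$ and $a=\delta$. Since $\delta$ is the zero of the pressure, $P(-\delta\tau)=0$, so $L_{\delta}$ is the normalized transfer operator: $L_{\delta}1=1$ and $L_{\delta}^{*}\nu_{0}=\nu_{0}$ with $\nu_{0}=h_{-\delta\tau}\nu_{-\delta\tau}$. The weights $e^{(\tau_{\delta})^{n}(y)}$ are positive, so unitarity of $\rho(c_{q}^{n}(y))$ gives the pointwise bound $\|L_{\delta,q}^{n}f(x)\|\le[L_{\delta}^{n}|f|](x)$. Applying the spectral gap of the Ruelle--Perron--Frobenius theorem (Theorem \ref{RPFTheorem}, in its $C^{1}(I)$ form from Section \ref{sub:Thermodynamics}) to the nonnegative Lipschitz function $|f|$ gives $[L_{\delta}^{n}|f|](x)\le\nu_{0}(|f|)+\k_{2}R^{n}\,\||f|\|_{L(K)}$; since $\nu_{0}(|f|)=\int_{K}|f|\,d\nu_{0}$ and the scalar function $|f|$ has Lipschitz constant at most that of $f$, so that $\||f|\|_{L(K)}\le\|f\|_{L(K)}$, this is exactly \eqref{eq:apriori2}.

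I expect the only point needing a moment's care to be the uniform-in-$n$ bounded-distortion estimate used in \eqref{eq:apriori1} — that the $x$-derivative of a Birkhoff sum of a fixed $C^{1}$ function composed with an inverse branch of $T^{n}$ stays bounded as $n\to\infty$ — but this is entirely standard given the exponential contraction of inverse branches in Proposition \ref{def:expansion-1}, so there is no real obstacle beyond reproducing Naud's argument; the substance of the lemma is just that the locally constant unitary cocycle is transparent to both estimates, which is why the resulting constants do not depend on $q$.
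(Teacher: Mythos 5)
Your proof is correct and takes exactly the approach the paper intends: the paper's own ``proof'' consists of citing Naud's Lemma 5.2 for the scalar case and asserting the vector-valued version is ``easily proved,'' and your write-up supplies precisely the detail being elided. The two observations doing all the work — that $c_{q}^{n}$ is locally constant on cylinders of length $n$ and hence transparent to the $x$-derivative, and that $\rho(c_{q}^{n}(y))$ is unitary on $\C^{\G_{q}}$ so one may pass to the scalar functions $|f|$, $|f'|$ — are the paper's observations too (stated elsewhere in Section 4 when the same moves are made in the proof of Lemma \ref{technical}). Your bounded-distortion estimate $|\tfrac{d}{dx}\tau^{n}(y(x))|\le\sum_{j}\|\tau'\|_{\infty}D\gamma^{-(n-j)}$ is the standard one, and your reduction of \eqref{eq:apriori2} to the $C^{1}$ Ruelle–Perron–Frobenius spectral gap for the nonnegative Lipschitz function $|f|$ is exactly right. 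One small remark on bookkeeping: what your computation (and Naud's) actually produces in the first term of \eqref{eq:apriori1} is $\k_{1}|b|\,\|L_{a}^{n}|f|\|_{\infty}$ rather than $\k_{1}|b|\,\|L_{a}^{n}f\|_{\infty}$; since $|L_{a}^{n}f|\le L_{a}^{n}|f|$ pointwise, the form you derived is the (slightly weaker, but correct and sufficient) one, and this is also the form used downstream on pp.\ 133--134 of Naud, so there is no gap — it is just a notational looseness in the paper's statement that is worth being aware of.
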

Lemma \ref{apriori} together with Lemma \ref{powers} imply Proposition
\ref{bigimaginaryprop} by arguments appearing in \cite[pp. 133-134]{NAUD}.
Roughly speaking the ingredients are Cauchy-Schwarz to access Lemma
\ref{powers}, remarks regarding the behaviour of $\tau_{a}^{m}$
for $a$ close to $\delta$ that appear elsewhere in this paper, and
splitting up exponents in the form $m=nN+r$.

The proof of Lemma \ref{powers} proceeds through the construction
of certain Dolgopyat operators that we give in the next section. 

\subsection{Construction of uniform Dolgopyat operators}

\label{operators}We follow the notation of Naud \cite{NAUD}. For
$A>0$ we consider the cone 
\[
\CC_{A}:=\{H\in C^{1}(I):H>0\text{ and }|H'(x)|\leq AH(x)\:\text{ for all }x\in I\:\}.
\]
In this section we establish a uniform version of the key Lemma of
Naud \cite[Lemma 5.4]{NAUD}. This is also analogous to \cite[Theorem 3.3]{OW}. 
\begin{lem}[Construction of uniform Dolgopyat operators]
\label{core}Suppose $\tau$ has the (NLI) property. There exists
$N>0,$ $A>1$ and $\rho\in(0,1)$ such that for all $s=a+ib$ with
$|a-\delta|$ small and $|b|>b_{0}$ large, there exists a finite
set of operators $(\N_{s}^{J})_{J\in\E_{s}}$ that are bounded on
$C^{1}(I)$ and satisfy the following three conditions 
\begin{enumerate}
\item \label{corefirst} The cone $\CC_{A|b|}$ is stable by $\N_{s}^{J}$
for all $J\in\E_{s}$. 
\item \label{coresecond} For all $H\in\CC_{A|b|}$ and all $J\in\E_{s}$,
\[
\int_{K}|\N_{s}^{J}H|^{2}d\nu_{0}\leq\rho\int_{K}|H|^{2}d\nu_{0}.
\]

\item \label{corethird} Given $H\in\CC_{A|b|}$ and $f\in C^{1}(I;\C^{\G_{q}})$
such that $|f|\leq H$ and $|f'|\leq A|b|H$, there is $J\in\E_{s}$
with 
\[
|L_{s,q}^{N}f|\leq\N_{s}^{J}H,\quad\text{and}\:\:|(L_{s,q}^{N}f)'|\leq A|b|\N_{s}^{J}H.
\]

\end{enumerate}
\end{lem}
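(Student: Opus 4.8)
The plan is to follow Naud's construction of Dolgopyat operators from \cite[Section 5]{NAUD} essentially verbatim, tracking where the vector-valued nature of $f$ and the cocycle $c_q$ enter, and verifying that the construction can be made uniform in $q$. The key observation, already flagged in the introduction, is that the construction of the scalar cone functions $H$ and the Dolgopyat operators $\N_s^J$ themselves involves \emph{only} the scalar transfer operator $L_a$ and the scalar distortion $\tau$; the congruence cocycle $c_q$ is locally constant and unitary (it acts by the right regular representation, which preserves $|\cdot|$ pointwise), so it does not interfere with the majorization estimates. Thus $N$, $A$, $\rho$, $b_0$ and the family $(\N_s^J)_{J\in\E_s}$ will be exactly those produced by Naud's scalar argument, hence automatically independent of $q$; the only new content is condition \ref{corethird}, which must be checked for $\C^{\G_q}$-valued $f$.

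First I would set up the scalar machinery: fix $N$ large (to be pinned down by the (NLI)-driven non-stationary phase estimate) and, for each inverse branch multi-index $\alpha$ of length $N$, write the contraction factors and phases of $L_{s,q}^N$ explicitly, noting that $L_{s,q}^N f(x) = \sum_\alpha e^{\tau_a^N(T_\alpha^{-1}x) - ib\tau^N(T_\alpha^{-1}x)}\, c_q^N(T_\alpha^{-1}x).f(T_\alpha^{-1}x)$, and that $|c_q^N(\cdot).v| = |v|$ pointwise since the right regular representation is unitary for the fixed Hermitian form on $\C^{\G_q}$. Then, exactly as in Naud, I would use (NLI) (Proposition \ref{distortionprop}) together with the expanding property (Proposition \ref{def:expansion-1}) to locate, for each $b$ with $|b|>b_0$, a pair of inverse branches whose temporal distance function $\varphi_{\xi,\eta}$ has non-vanishing $u$-derivative, and hence a subinterval on which the two corresponding phase terms $e^{-ib\tau^N(\cdot)}$ oscillate out of phase by a definite amount. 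On such an interval one can shrink the majorant $H$ by a fixed factor $\theta<1$ on one of the two branches; the set $\E_s$ indexes the finitely many choices of which branch to shrink on which "dense enough" subinterval, and the operator $\N_s^J$ is $L_a^N$ with the indicated branch-localized damping inserted. Conditions \ref{corefirst} (cone stability, via the choice $A$ large and $L_a 1 = 1$) and \ref{coresecond} ($L^2$-contraction by $\rho$, using that the damped region has positive $\nu_0$-measure bounded below) are then the scalar statements of \cite[Lemma 5.4]{NAUD}.

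For condition \ref{corethird}: given $f$ with $|f|\le H$ and $|f'|\le A|b|H$, apply the triangle inequality in $\C^{\G_q}$ to the branch sum for $L_{s,q}^N f$. On branches away from the damped region this reproduces the scalar bound $|L_{s,q}^N f|\le L_a^N H$ since the unitary cocycle contributes nothing to absolute values; on the damped branch, the point is that either (i) the oscillation of the two out-of-phase terms lets their \emph{vector} sum be majorized by $\theta$ times the sum of the two majorants — here one needs that the $C^1$-smallness of $f$ relative to $H$ (the hypothesis $|f'|\le A|b|H$, which controls how much $f$ can vary across an interval of length $\sim |b|^{-1}$) prevents cancellation from being destroyed — or (ii) if $f$ is ``not flat'' on that interval, then $|f|$ is already pointwise smaller than $H$ there by a fixed factor, again giving the damping. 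This dichotomy is precisely Naud's, and since $c_q^N$ is locally constant on intervals of length $\gg |b|^{-1}$ for $|b|$ large (the cocycle is constant on each $I_i$, and the relevant cylinders have length $\to 0$), the cocycle is literally constant on the interval where the phase cancellation is exploited, so it factors out of the vector sum as a single unitary and drops out upon taking $|\cdot|$. The derivative bound $|(L_{s,q}^N f)'|\le A|b|\N_s^J H$ follows by differentiating the branch sum: the terms where the derivative hits the phase produce the factor $|b|$ (hence the warped scaling), the terms where it hits $c_q^N$ vanish by local constancy, and the terms where it hits $f$ are controlled by $|f'|\le A|b|H$; one then reuses the same damping estimate. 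This is where I expect the main technical friction — making the "flat vs.\ non-flat" dichotomy quantitatively compatible with the vector-valued triangle inequality and with the derivative bound simultaneously — but it is exactly the place where \cite[Lemma 5.10]{NAUD} is invoked, and as indicated in the paper this extends to vector-valued functions with only notational changes, the crucial enabling fact being the local constancy of $c_q$. Assembling these three verifications completes the proof.
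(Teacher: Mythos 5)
Your proposal is correct and follows essentially the same path as the paper's proof: conditions \ref{corefirst} and \ref{coresecond} are taken from Naud's scalar construction (the operators $\N_s^J = L_a^N(\chi_J\,\cdot)$ involve no cocycle, hence are $q$-independent), and the genuinely new content is the vector-valued version of Naud's Lemma 5.10 (stated as Lemma \ref{technical} in the paper), whose proof hinges exactly on the two facts you isolate — that $c_q^N$ acts by unitary operators and is locally constant so that $(c_q^N)'\equiv 0$ — together with the Hermitian sharp triangle inequality (Lemma \ref{trig}) and the dichotomy of Lemma \ref{alt}. The one place where your sketch glosses over real work is the ``control of relative angle'' argument inside Lemma \ref{technical}: after reducing to the case $|f|\ge H/4$, the paper tracks the winding of $\arg\Phi$ via the (NLI)-driven derivative bound $0<m\le|F'|\le m'$ and the $C^1$ control of the unit vectors $u_i$; this is where the parameter $\e'$ is pinned down and the final $\theta$ chosen, and it is more than ``only notational changes'' from the scalar case, but your identification of the enabling mechanism is accurate.
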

When we write $|f|$ for $f\in C^{1}(I;\C^{\G_{q}})$ we refer to
the function obtained by taking pointwise Euclidean ($l^{2}$) norms.
We now show that the existence of these operators implies Lemma \ref{powers}.
\begin{proof}[Proof that Lemma \ref{core} implies Lemma \ref{powers}]

Given this construction (Lemma \ref{core}), Lemma \ref{powers} is
proved following the argument of \cite[pg. 21]{OW} or one in \cite[pg. 135]{NAUD}.
Indeed given non zero $f\in C^{1}(I;\C^{\G_{q}})$ with $\|f\|_{(b)}\leq1$
(cf. Lemma \ref{powers} for the definition of $\|\|_{(b)}$), we
define 
\[
H=\|f\|_{(b)}1.
\]
One sees that $H$ and $f$ are as in Lemma \ref{core}, that is,
$H\in\CC_{A|b|}$, $|f|\leq H$, and $|f'|\leq A|b|H$ as $A>1$.
One gets then by part \ref{corethird} of Lemma \ref{core} that 
\[
|L_{s,q}^{N}f|\leq\N_{s}^{J}H,\quad|(L_{s,q}^{N}f)'|\leq A|b|\N_{s}^{J}H
\]
for some $J\in\E_{s}$. Since $\CC_{A|b|}$ is stable under the $N_{s}^{J}$
one can repeat this to get for some sequence $J_{1},\ldots,J_{n}\in\E_{s}$
that 
\[
\int_{K}|L_{s}^{nN}f|^{2}d\nu_{0}\leq\int_{K}|\N_{s}^{J_{N}}\ldots\N_{s}^{J_{1}}H|^{2}d\nu_{0}\leq\rho^{n}\int_{K}|H|^{2}d\nu_{0}\leq\rho^{n}
\]
by using part \ref{coresecond} of Lemma \ref{core}. 
\end{proof}
The first two properties of Lemma \ref{core} were proved by Naud
in \cite{NAUD}; we follow closely Naud's construction of the operators
in the following.

\subsection{Consequences of non local integrability (NLI)}

Naud notes the following consequence of (NLI) that we will use later. 
\begin{lem}[Proposition 5.5 of \cite{NAUD}]
\label{NLIcons}If $\tau$ has property (NLI), there are $m,m',N_{0}>0$
such that for all $N>N_{0}$, there are two branches $\a_{1}^{N},\a_{2}^{N}$
of $T^{-N}$ with 
\[
m'\geq\left|\frac{d}{du}[\tau^{N}\circ\a_{1}^{N}-\tau^{N}\circ\a_{2}^{N}](u)\right|\geq m>0,\quad\forall u\in I.
\]

\end{lem}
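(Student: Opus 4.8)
The plan is to unwind the definitions of the temporal distance function $\varphi_{\xi,\eta}$ and reduce the statement to a statement about the first $N$ coordinates of the sequences $\xi,\eta\in\Sigma_A^-$. Recall that $\Delta_\xi(u,v)=\sum_{i\ge 0}\big(\tau(T^{-1}_{\xi_{-i}}\circ\cdots\circ T^{-1}_{\xi_0}u)-\tau(T^{-1}_{\xi_{-i}}\circ\cdots\circ T^{-1}_{\xi_0}v)\big)$, so if we write $\alpha^N=T^{-1}_{\xi_{-(N-1)}}\circ\cdots\circ T^{-1}_{\xi_0}$ for the branch of $T^{-N}$ determined by the first $N$ letters of $\xi$, then the truncation of $\Delta_\xi(u,v)$ to its first $N$ terms is exactly $\tau^N\circ\alpha^N(u)-\tau^N\circ\alpha^N(v)$ up to the cohomology that is swept into $\tau$; more precisely $\tau^N(\alpha^N u)=\sum_{i=0}^{N-1}\tau(T^i\alpha^N u)$ and the point $T^i\alpha^N u$ equals $T^{-1}_{\xi_{-(N-1-i)}}\circ\cdots\circ T^{-1}_{\xi_0}u$, matching the $i$-th term of $\Delta_\xi$. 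First I would therefore record the identity $\Delta_\xi(u,v)=\big(\tau^N(\alpha^N u)-\tau^N(\alpha^N v)\big)+\big(\Delta_{\sigma^{-N}\xi}(\alpha^N u,\alpha^N v)\big)$ where $\sigma^{-N}\xi$ is the left-shifted tail; this is just regrouping the series.

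Next I would fix the data $j_0,\xi,\eta,u_0,v_0$ furnished by (NLI), so that $\partial_u\varphi_{\xi,\eta}(u_0,v_0)\ne 0$, say $|\partial_u\varphi_{\xi,\eta}(u_0,v_0)|=2m_1>0$. Setting $v=v_0$ and viewing everything as a function of $u$ near $u_0$, write $\varphi_{\xi,\eta}(\cdot,v_0)=\big(\tau^N\circ\alpha_1^N-\tau^N\circ\alpha_2^N\big)+\big(\Delta_{\sigma^{-N}\xi}(\alpha_1^N\,\cdot\,,\alpha_1^N v_0)-\Delta_{\sigma^{-N}\eta}(\alpha_2^N\,\cdot\,,\alpha_2^N v_0)\big)$, where $\alpha_1^N,\alpha_2^N$ are the branches of $T^{-N}$ read off from the first $N$ letters of $\xi$ and $\eta$ respectively (these are genuine branches provided $N\le$ length of the common admissibility, which holds for all $N$ since $\Sigma_A^-$ consists of infinite sequences). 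The point is that the tail term is small in $C^1$: by the eventually expanding property (Proposition \ref{def:expansion-1}) the maps $\alpha_1^N,\alpha_2^N$ are uniform contractions with $\|(\alpha_i^N)'\|_\infty\le D\gamma^{-N}$, and $\Delta_{\sigma^{-N}\xi}$ is $C^1$ with a $u$-derivative bounded independently of the sequence (this uniform $C^1$ bound on the $\Delta$'s is exactly the content of the remark following \eqref{eq:Delta}); hence the $u$-derivative of the tail term is $O(\gamma^{-N})$ uniformly.

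Therefore $\big|\tfrac{d}{du}[\tau^N\circ\alpha_1^N-\tau^N\circ\alpha_2^N](u)\big|$ differs from $|\partial_u\varphi_{\xi,\eta}(u,v_0)|$ by $O(\gamma^{-N})$, uniformly in $u$ in a fixed neighborhood of $u_0$. By continuity of $\partial_u\varphi_{\xi,\eta}(\cdot,v_0)$ we may shrink to a neighborhood on which this quantity is $\ge m_1$ and $\le$ some $m'_1$; then for $N>N_0$ large enough the $O(\gamma^{-N})$ error is $<m_1/2$, giving $m_1/2\le \big|\tfrac{d}{du}[\tau^N\circ\alpha_1^N-\tau^N\circ\alpha_2^N](u)\big|\le m'_1+1$ on that neighborhood. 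The only remaining issue is that the Lemma demands the bound for \emph{all} $u\in I$, not just $u$ near $u_0$; this is handled because $\alpha_1^N,\alpha_2^N$ extend as branches of $T^{-N}$ to all of the relevant interval $I_{j_0}$ and, more importantly, one is free to pre-compose with a further finite contraction to push an arbitrary $u\in I$ into the good neighborhood — concretely, replace $N$ by $N+N_1$ and $\alpha_i^N$ by $\alpha_i^N\circ\beta$ for a fixed branch $\beta$ of $T^{-N_1}$ whose image lies in the good neighborhood, using the chain rule and the lower bound $|\beta'|\ge D^{-1}\gamma^{-N_1}>0$ to keep the derivative of the difference bounded below. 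I expect this last uniformization step — producing branches valid on all of $I$ rather than on a single small interval, while keeping the two-sided derivative bound — to be the main technical obstacle, though it is exactly the argument Naud gives for \cite[Proposition 5.5]{NAUD} and transcribes without change here; everything else is bookkeeping with the chain rule and the uniform contraction estimate from Proposition \ref{def:expansion-1}.
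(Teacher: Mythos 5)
Your argument reconstructs Naud's Proposition 5.5 correctly and follows the same route the paper implicitly relies on: truncate $\Delta_\xi,\Delta_\eta$ at $N$ terms, bound the tails' $u$-derivatives by $O(\gamma^{-N})$ via the uniform contraction of $\alpha_i^N$, and then pre-compose with a fixed branch $\psi$ of $T^{-\hat p}$ mapping $I$ into the good neighborhood $\mathcal U\ni u_0$ (this is exactly the paper's $\alpha_i^N=\beta_i^{\tilde N}\circ\psi$). The only cosmetic imprecision is that your "$\beta$" should really be the piecewise-defined branch $\psi$ (one branch of $T^{-N_1}$ per component $I_j$), not a single branch — but this is what you clearly intend, and the key cancellation $\tau^{N+N_1}\circ(\alpha_i^N\circ\beta)-\tau^{N+N_1}\circ(\alpha_j^N\circ\beta)=(\tau^N\circ\alpha_i^N-\tau^N\circ\alpha_j^N)\circ\beta$ together with the chain rule closes the argument as you describe.
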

We remark here that the lower bound is the harder one. The upper bound
follows from the expanding property of $T$ and regularity of $\tau$.

Now suppose we deal with $\tau$ with property (NLI). Let $\xi,\eta,u_{0},v_{0}$
and $j_{0}$ be as in Definition \ref{NLI}. 

\textit{Throughout the rest of this paper, the assignments $N\to\a_{1}^{N}$
and $N\to\a_{2}^{N}$ are fixed as those given by Lemma \ref{NLIcons}. }

We do however need to know some of the details about how the $\a_{i}^{N}$
have been constructed, which we give now.

As in the proof of \cite[Proposition 5.5]{NAUD} there are $\e>0$
and an open interval $\U$ with 
\[
I_{j_{0}}\supset\U\ni u_{0}
\]
such that 
\[
\left|\frac{\partial\varphi_{\xi,\eta}}{\partial u}(u',v_{0})\right|>\e
\]
for all $u'\in\U$. We define for any $n$ 
\[
\beta_{1}^{n}=T_{\xi_{-n+1}}^{-1}\circ\ldots\circ T_{\xi_{0}}^{-1},
\text{ and }
\beta_{2}^{n}=T_{\eta_{-n+1}}^{-1}\circ\ldots\circ T_{\eta_{0}}^{-1},
\]
two branches of $T^{-n}$ on $I_{j_{0}}$. In the proof of \cite[Proposition 5.5]{NAUD},
Naud also constructs 
\[
\psi:I\to\U
\]
which is a branch of $T^{-\hat{p}}$ for some $\hat{p}$ a fixed positive
integer related to the mixing and expanding properties of $T$. The
image of $\psi$ is a disjoint union of $k$ closed intervals each
of which is diffeomorphic to some $I_{j}$ by $\psi$. We denote by
$U_{0}$ the image of $\psi$. We will use the parameterization 
\[
N=\tilde{N}+\hat{p}.
\]
Then the $\a_{i}^{N}$ are defined by 
\[
\a_{i}^{N}=\beta_{i}^{\tilde{N}}\circ\psi.
\]
As $\tilde{p}$ is fixed, $\tilde{N}$ and $N$ are coupled. They
are to be chosen, depending on $b$ and other demands in the following.

\subsection{Construction of Dolgopyat operators}

The following is proved by Naud \cite[Proposition 5.6]{NAUD}.
\begin{prop}[Triadic partition]
\label{triad}There are $A_{1},A'_{1}>0$ and $A_{2}>0$ such that
when $\e>0$ is small enough, there is a finite collection $(V_{i})_{1\leq i\leq Q}$
of closed intervals ordered along $U_{0}$ such that: 
\begin{enumerate}
\item $\U\supset\cup_{i=1}^{Q}V_{i}\supset U_{0}$, $V_{i}\cap\Int U_{0}\neq\emptyset$
for all $i$ and $\Int V_{i}\cap\Int V_{j}=\emptyset$ when $i\neq j$. 
\item \label{second} For all $1\leq i\leq Q$, $\e A'_{1}\leq|V_{i}|\leq\e A_{1}$. 
\item For all $1\leq j\leq Q$ with $V_{j}\cap K\neq\emptyset$, either
$V_{j-1}\cap K\neq\emptyset$ and $V_{j+1}\cap K\neq\emptyset$ or
$V_{j-2}\cap K\neq\emptyset$ and $V_{j-1}\cap K\neq\emptyset$ or
$V_{j+1}\cap K\neq\emptyset$ and $V_{j+2}\cap K\neq\emptyset$ .
In other words, intervals that intersect $K$ come at least in triads. 
\item \label{fourth} For all $1\leq i\leq Q$ with $V_{i}\cap K\neq\emptyset$,
$V_{i}\cap K\subset U_{0}$ and $\mathrm{dist}(\partial V_{i},K)\geq A_{2}|V_{i}|$. 
\end{enumerate}
\end{prop}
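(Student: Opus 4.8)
The plan is to note first that Proposition~\ref{triad} concerns only the base dynamics $(T,K)$ and the distortion $\tau$ and involves no congruence data whatsoever, so it is literally Naud's Proposition~5.6 of \cite{NAUD} (the analogue of \cite[Theorem 3.3]{OW}), which I would simply invoke. For completeness I would recall the structure of the argument, whose engine is bounded distortion. From the eventual expansion of $T$ (Proposition~\ref{def:expansion-1}) together with the real-analyticity of $\tau$ one obtains a uniform distortion constant $D_0\ge1$ with $|\beta'(x)|/|\beta'(y)|\le D_0$ for every inverse branch $\beta$ of every iterate $T^{n}$ ($n\ge1$) and all $x,y$ in its domain. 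The consequences for the Cantor geometry of $K$ are: (i) the children of any cylinder have pairwise comparable lengths, and the gap between two consecutive children is comparable to the length of the parent; (ii) $\mathrm{diam}(K\cap C)$ is comparable to $|C|$ for every cylinder $C$ --- here one uses that every symbol has out-degree at least $2$ in the transition graph, which holds in both of our settings (a full shift in the continued-fractions case, and because $k'\ge2$ in the Schottky case). Fixing the scale by a Moran-type stopping time, one then partitions $K$ in a neighbourhood of $\U$ into finitely many ``$\e$-cylinders'' $C_1<\dots<C_M$, each of length in $[c_1\e,c_2\e]$ with consecutive ones separated by gaps of length at least $c_1\e$, for universal $0<c_1<c_2$.

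Next, using (i) together with the fact that out-degree at least $2$ over a bounded number of consecutive levels forces at least three descendants inside any cylinder of length comparable to $\e$, one shows that every $\e$-cylinder belongs to a \emph{cluster} --- a maximal run of at least three $\e$-cylinders whose successive gaps are all at most $\Lambda\e$ for a universal $\Lambda$ --- while distinct clusters are separated by gaps of length more than $\Lambda\e$. One then builds the $(V_i)$: over a small collar-enlargement of the convex hull of each cluster one lays down consecutive closed intervals of length in $[A_1'\e,A_1\e]$, choosing $A_1$ of size at least $\Lambda+O(1)$ so that a single such interval bridges any intra-cluster gap (so that all of them meet $K$), and choosing the collar so that the two extreme endpoints of the cluster block fall in the flanking gaps of length more than $\Lambda\e$, which secures $\mathrm{dist}(\partial V_i,K)\ge A_2|V_i|$ on $K$-meeting intervals; over each large inter-cluster gap one lays down consecutive intervals of length in $[A_1'\e,A_1\e]$, all of which miss $K$. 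The intervals meeting $K$ are then exactly those lying over clusters, and each cluster block contains a consecutive run of at least three of them, which gives the triad condition~(3); conditions (1), (2) and (4) are built in by construction.

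The main obstacle is exactly this last bookkeeping: one must choose $A_1',A_1$, the collar widths and the precise cut points so that, \emph{simultaneously}, (a) every interval meeting $K$ lies in a run of three, which forces the $V_i$ to be long enough to span intra-cluster gaps, and these can be much larger than $\e$; (b) $|V_i|\le A_1\e$ with $A_1$ universal, which forces the clusters to live genuinely at scale comparable to $\e$ via bounded distortion; and (c) $\mathrm{dist}(\partial V_i,K)\ge A_2|V_i|$ on $K$-meeting intervals, which forces the cut points deep inside $K$-gaps, using that around every $\e$-cylinder there is a gap of length comparable to $\e$. Reconciling (a) with the lower bound $|V_i|\ge A_1'\e$ on the shortest clusters is what necessitates a non-uniform placement of the $V_i$ within each cluster block; this one-dimensional combinatorial balancing is the technical content of Naud's construction, which we take over with only cosmetic changes.
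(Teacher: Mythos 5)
Your approach is identical to the paper's: Proposition~\ref{triad} is taken over verbatim from Naud, and the paper proves it with the single sentence ``The following is proved by Naud \cite[Proposition 5.6]{NAUD}'', which is exactly your primary move. The bounded-distortion/$\e$-cylinder/cluster sketch you append is optional reconstruction the paper does not carry out (and your parenthetical equating \cite[Theorem 3.3]{OW} with this proposition is a slight misattribution --- that reference corresponds to Lemma~\ref{core}, not the triadic partition --- but it does not affect the argument).
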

Now following Naud we can construct the Dolgopyat operators. Suppose
that we are working at frequency $s=a+ib$. Then for fixed $\e'$
to be chosen, we construct a triadic partition $(V_{i})_{i=1}^{Q}$
of $U_{0}$ with $\e=\e'/|b|$ as in Proposition \ref{triad}. Then
for all $i\in\{1,2\}$ and $j\in\{1,\ldots,Q\}$ we set 
\[
Z_{j}^{i}=\beta_{i}^{\tilde{N}}(V_{j}\cap U_{0}).
\]
We will write 
\[
X_{j}=\{x\in I:\psi(x)\in V_{j}\},\quad1\leq j\leq Q.
\]
Properties \ref{fourth} and \ref{second} of Proposition \ref{triad}
imply that 
\begin{equation}
\mathrm{dist}(K\cap V_{j},\partial V_{j})\geq A_{2}|V_{j}|\geq\frac{A_{2}A'_{1}\e'}{|b|}.\label{eq:cutoffest}
\end{equation}
whenever $K\cap V_{j}\neq0$. For such $j$ we can find a $C^{1}$
cutoff $\chi_{j}$ on $I$ that is $\equiv1$ on the convex hull of
$K\cap V_{j}$ and $\equiv0$ outside $V_{j}$. Due to \eqref{eq:cutoffest}
we can ensure that 
\[
|\chi'_{j}|\leq A_{3}\frac{|b|}{\e'},\quad A_{3}=A_{3}(A_{2},A'_{1}).
\]
Then the index set $\I_{s}$ is defined to be 
\[
\I_{s}:=\{(i,j)\::\:1\leq i\leq2,1\leq j\leq Q,V_{j}\cap K\neq\emptyset\}.
\]
Allow $0<\theta<1$ to be fixed shortly. For all $J\subset\I_{s}$
we define $\chi_{J}\in C^{1}(I)$ by 
\[
\chi_{J}(x)=\begin{cases}
1-\theta\chi_{j}(\psi(T^{N}x)), & \text{if \ensuremath{x\in Z_{i}^{j}} for \ensuremath{(i,j)\in J}}.\\
1, & \text{else}.
\end{cases}
\]
Then the Dolgopyat operators on $C^{1}(I)$ are defined by 
\[
\N_{s}^{J}(f)=L_{a}^{N}(\chi_{J}f).
\]
Recall that $L_{a}$ is the transfer operator at $s=a$.

Let us return to our Lemma \ref{core} so that we can complete our
definitions.
\begin{defn}
\label{dense} We say that $J\subset I_{s}$ is dense if for all $1\leq j\leq Q$
with $V_{j}\cap K\neq\emptyset$ there is some $1\leq j'\leq Q$ with
$(i,j')\in J$ for some $i\in\{1,2\}$ and with $|j-j'|\leq2.$ 
\end{defn}
We define $\E_{s}$ of Lemma \ref{core} to be the set of $J\subset\I_{s}$
such that $J$ is dense.

The following is proved in \cite{NAUD} - we have tried to contain
everything that we use as a black box here.
\begin{prop}[Naud]
 \label{blackbox} There are constants $a_0$, $b_0$, $A$, $N_0$ such that for each sufficiently small $\e'$ there is $\theta_0(\e')$ and $\rho(\e')$
 such that
 when $N>N_0$, $\theta<\theta_0(\e')$,  $|a-\delta|<a_0$ and $|b|>b_0$,
Properties \ref{corefirst} and \ref{coresecond} of Lemma \ref{core} hold for
our $(N,|b|,\theta,\e')$ parameterized and $\E_{s}$-indexed Dolgopyat
operators with respect to this $\rho$.

Furthermore,  there is positive $C_0$ such that when $|a-\delta|<a_0$ we have for arbitrary $N$ 
\begin{equation}
|(\tau_{a}^{N}\circ\a^{N})'(x)|\leq C_0 \label{eq:perturbbound},
\end{equation}
and when $N>N_0$, $b>b_0$ we have
\begin{equation}
|([\tau_{a}^{N}+ib\tau^{N}]\circ\a^{N})'(x)|\leq  \frac{A|b|}{4}.\label{eq:perturbbound2}
\end{equation}
This was a factor in how $A$ was chosen.
\end{prop}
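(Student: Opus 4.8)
The statement bundles together two inputs of quite different character, and I would prove it in correspondingly separate blocks: Properties \ref{corefirst} and \ref{coresecond} are scalar facts about the Dolgopyat operators that carry no congruence data and are essentially a transcription of Naud's work, while the two distortion bounds \eqref{eq:perturbbound}--\eqref{eq:perturbbound2} are an elementary consequence of the eventually expanding property of $T$.

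\emph{Properties \ref{corefirst} and \ref{coresecond}.} Here there is nothing genuinely new: with $\N_s^J = L_a^N(\chi_J\,\cdot)$ and $\E_s$ as constructed above, these are precisely \cite[Lemma 5.4]{NAUD}, built on the triadic partition of Proposition \ref{triad} ($=$ \cite[Proposition 5.6]{NAUD}) and the cutoff construction, and they are also the analogue of \cite[Theorem 3.3]{OW}. The only task is to rerun Naud's argument tracking the parameter dependencies. For cone stability, differentiating $\N_s^J H = L_a^N(\chi_J H)$ and inserting the distortion bound \eqref{eq:perturbbound} (established in the next block) gives $|(\N_s^J H)'| \le \bigl(C_0 + D\g^{-N}A''|b|\bigr)\,\N_s^J H$, where $A''|b|$ bounds the logarithmic derivative of $\chi_J H$; since $\chi_J \ge 1-\theta$ and $|\chi_J'| \ll |b|/\e'$ one has $A'' = A''(A,\e',\theta)$. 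Taking $b_0$ large enough that $C_0 \le \tfrac12 A|b|$ for $|b|>b_0$, and then $N_0$ large enough that $D\g^{-N}A'' \le \tfrac12 A$ for $N>N_0$, makes $\CC_{A|b|}$ stable; the point to check is that $A$ and $b_0$ can be fixed from the $\e'$-independent bounds \eqref{eq:perturbbound}--\eqref{eq:perturbbound2} alone, all residual $\e'$-dependence being pushed into how small $\theta_0(\e')$ must be, so that the quantifier order in the statement is respected. For Property \ref{coresecond}, the $L^2$-gain is produced exactly as in Naud: on each $V_j$ meeting $K$ the cutoff $\chi_J$ drops to $1-\theta$ on the convex hull of $K\cap V_j$, deleting a fixed proportion of $\nu_0$-mass there, and the density of $J\in\E_s$ together with the triadic clustering in Proposition \ref{triad} spreads this into a uniform contraction $\int_K|\N_s^J H|^2\,d\nu_0 \le \rho(\e')\int_K|H|^2\,d\nu_0$ with $\rho(\e')<1$. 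Property (NLI) enters only through Lemma \ref{NLIcons}, which guarantees that $\a_1^N,\a_2^N$ separate the phase $b\tau^N$ at a definite rate, so that whenever $f$ is nearly argument-constant on a $V_j$ the cutoff genuinely captures oscillation; this mechanism is unchanged from Naud and does not see $q$.

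\emph{The distortion bounds.} Writing $\a^N$ as a composition of $N$ inverse branches of $T$ and peeling them off by iterating $T$, one gets $\tau_a^N(\a^N u) = \sum_{j=1}^{N} \tau_a(\psi_j u)$, where $\psi_j$ is the length-$j$ tail of $\a^N$, a branch of $T^{-j}$. By Proposition \ref{def:expansion-1} one has $|\psi_j'| \le D\g^{-j}$, so $|(\tau_a^N\circ\a^N)'| \le \|\tau_a'\|_\infty\, D\sum_{j\ge1}\g^{-j}$, a finite bound $C_0$ that is uniform for $|a-\delta|<a_0$ (because $h_a$, hence $\tau_a$ and $\tau_a'$, depends analytically on $a$ near $\delta$ and stays bounded away from $0$) and independent of $N$, which is \eqref{eq:perturbbound}. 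The same computation applied to $b\tau^N\circ\a^N$ gives $|(b\tau^N\circ\a^N)'| \le |b|\,\|\tau'\|_\infty D\sum_{j\ge1}\g^{-j} =: |b|\,C_0'$, and therefore $|([\tau_a^N+ib\tau^N]\circ\a^N)'| \le C_0 + |b|C_0' \le \tfrac14 A|b|$ as soon as $b_0\ge1$ and $A \ge 4(C_0+C_0')$; this last inequality is exactly the constraint that fixes $A$.

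The main obstacle is not analytic depth — Naud's construction does the heavy lifting — but organisation: one must verify that the apparently circular dependency among $A$, $b_0$, $N_0$, $\theta_0(\e')$ and $\rho(\e')$ can in fact be made acyclic, choosing $A$ and $b_0$ from \eqref{eq:perturbbound}--\eqref{eq:perturbbound2} first, then $N_0$ from the cone-stability estimate, and only afterwards letting $\theta_0$ and $\rho$ shrink with $\e'$, checking at each stage that no later choice feeds back to force an earlier one to change. This is precisely why the inputs are cited in the rigid quantified form of Proposition \ref{triad} and Lemma \ref{NLIcons} rather than reproved here.
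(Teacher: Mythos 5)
Your proposal is correct and follows the same route as the paper, which simply defers this proposition to Naud (citing \cite[Lemma 5.4, pg.\ 137]{NAUD}) as a black box; you are merely unpacking what is behind that citation, and your geometric-series derivation of \eqref{eq:perturbbound}--\eqref{eq:perturbbound2} from the eventual-expansion property, together with the observation that the quantifier order is acyclic (fix $A,b_0$ from the distortion bounds, then $N_0$, then let $\theta_0,\rho$ depend on $\e'$), is exactly the bookkeeping Naud carries out.
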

The proof of the inequalities  above are discussed in \cite[pg. 137]{NAUD}.

\textit{This fully completes the definition of the Dolgopyat operators
modulo choice of $\e'$, $\theta$ and $N$ - the $A$ and $\rho$
required for Lemma \ref{core} are that specified by Proposition \ref{blackbox}
given these parameters.}

\subsection{Proof of Lemma \ref{core}, property \ref{corethird}}

Our remaining task in this section is to prove property \ref{corethird}
of Lemma \ref{core}. This is proved for complex valued functions
by Naud in \cite[pp. 140-144]{NAUD}. Naud makes some use of taking
quotients of values of functions that we will have to work around.

We give the details now. Recall that $\e',\theta$ are still undetermined.
The following technical Lemma is the vector valued version of \cite[Lemma 5.10]{NAUD}.
Recall that $c_{q}:I\to U(\C^{\G_{q}})$ is our twisting unitary valued
map at level $q$. We will need to consider  the quantity $c_{q}^{N}(\a_{i}^{N}x)$, defined as in the
Dictionary of page \pageref{table:dictionary}, 
where $\a_{i}^{N}$, $i=1,2$, are the two particular branches of $T^{-N}$
that are given by Lemma \ref{NLIcons}. We record the key fact here
that since $c_{q}$ is locally constant, so too is $c_{q}^{N}$ for
any $N$.
\begin{lem}[Key technical fact towards non-stationary phase]
\label{technical}Let $H\in\CC_{A|b|}$, $f\in C^{1}(I;\C^{\G_{q}})$
such that $|f|\leq H$ and $|f'|\leq A|b|H$. For $i=1,2,$ define
for $\theta$ a small real parameter and for any $q$, 
\[
\Theta_{1}(x):=\frac{|e^{[\tau_{a}^{N}+ib\tau^{N}](\a_{1}^{N}x)}c_{q}^{N}(\a_{1}^{N}x)f(\a_{1}^{N}x)+e^{[\tau_{a}^{N}+ib\tau^{N}](\a_{2}^{N}x)}c_{q}^{N}(\a_{2}^{N}x)f(\a_{2}^{N}x)|}{(1-2\theta)e^{\tau_{a}^{N}(\a_{1}^{N}x)}H(\a_{1}^{N}x)+e^{\tau_{a}^{N}(\a_{2}^{N}x)}H(\a_{2}^{N}x)};
\]
\[
\Theta_{2}(x):=\frac{|e^{[\tau_{a}^{N}+ib\tau^{N}](\a_{1}^{N}x)}c_{q}^{N}(\a_{1}^{N}x)f(\a_{1}^{N}x)+e^{[\tau_{a}^{N}+ib\tau^{N}](\a_{2}^{N}x)}c_{q}^{N}(\a_{2}^{N}x)f(\a_{2}^{N}x)|}{e^{\tau_{a}^{N}(\a_{1}^{N}x)}H(\a_{1}^{N}x)+(1-2\theta)e^{\tau_{a}^{N}(\a_{2}^{N}x)}H(\a_{2}^{N}x)}.
\]
Then for $N$ large enough, one can choose $\theta$ and $\e'$ small
enough such that for $j$ with $X_{j}\cap K\neq\emptyset$, there
are $j'$ with $|j-j'|\leq2$, $X_{j'}\cap K\neq\emptyset$ and $i\in\{1,2\}$
such that 
\begin{equation}
\Theta_{i}(x)\leq1\quad\text{for all \ensuremath{x\in X_{j'}}.}
\end{equation}

\end{lem}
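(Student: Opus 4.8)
The plan is to follow Naud's proof of \cite[Lemma 5.10]{NAUD} line by line, adapting it at the two places where scalar‑valuedness is used: we must neutralise the cocycle $c_{q}^{N}$, and we must replace his manipulations of the quotient $f(\a_{1}^{N}x)/f(\a_{2}^{N}x)$ by estimates for the pair $\bigl(\langle f(\a_{1}^{N}x),f(\a_{2}^{N}x)\rangle,\;|f(\a_{1}^{N}x)|,\,|f(\a_{2}^{N}x)|\bigr)$, which is legitimate regardless of whether $f$ vanishes. The cocycle is handled by the observation flagged in the introduction: each branch $\a_{i}^{N}$ of $T^{-N}$ maps its domain into a single cylinder of length $N$, so --- $c_{q}$ being locally constant on $I$ --- the product $c_{q}^{N}(\a_{i}^{N}x)=c_{q}(T^{N-1}\a_{i}^{N}x)\cdots c_{q}(\a_{i}^{N}x)$ is independent of $x$ on any interval on which $\a_{i}^{N}$ is a single branch; write $\Xi_{i}\in\G_{q}$ for this constant value. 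Since $\rho$ is the (unitary) right regular representation, $|\rho(\Xi_{i})f(\a_{i}^{N}x)|=|f(\a_{i}^{N}x)|$, and on such an interval $\langle\rho(\Xi_{1})f(\a_{1}^{N}x),\rho(\Xi_{2})f(\a_{2}^{N}x)\rangle=\langle f(\a_{1}^{N}x),\rho(\Xi_{1}^{-1}\Xi_{2})f(\a_{2}^{N}x)\rangle$ with $\rho(\Xi_{1}^{-1}\Xi_{2})$ a fixed unitary. Replacing $f\circ\a_{2}^{N}$ by $\rho(\Xi_{1}^{-1}\Xi_{2})(f\circ\a_{2}^{N})$, which has identical $C^{1}$ norms, makes the cocycle disappear and leaves a purely vector‑valued version of Naud's situation.

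Abbreviate $a_{i}(x):=e^{\tau_{a}^{N}(\a_{i}^{N}x)}H(\a_{i}^{N}x)>0$, $t_{i}(x):=|f(\a_{i}^{N}x)|/H(\a_{i}^{N}x)\in[0,1]$, $\phi_{i}(x):=b\,\tau^{N}(\a_{i}^{N}x)$, and the unit‑bounded directions $w_{i}(x):=\rho(\Xi_{i})f(\a_{i}^{N}x)/H(\a_{i}^{N}x)$, so the numerator common to $\Theta_{1},\Theta_{2}$ is $|a_{1}e^{i\phi_{1}}w_{1}+a_{2}e^{i\phi_{2}}w_{2}|$, whose square equals $a_{1}^{2}t_{1}^{2}+a_{2}^{2}t_{2}^{2}+2a_{1}a_{2}\,\mathrm{Re}\bigl(e^{i(\phi_{1}-\phi_{2})}\langle w_{1},w_{2}\rangle\bigr)$, while the denominators are $(1-2\theta)a_{1}+a_{2}$ and $a_{1}+(1-2\theta)a_{2}$. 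On one interval of the triadic partition of Proposition \ref{triad} (length $\asymp\e'/|b|$), the hypotheses $|f'|\le A|b|H$, $H\in\CC_{A|b|}$, the contraction $|(\a_{i}^{N})'|\le D\g^{-N}$ from Proposition \ref{def:expansion-1}, and \eqref{eq:perturbbound} give: (a) $|w_{i}'|,|t_{i}'|\le 2AD\g^{-N}|b|$; (b) $|(\log(a_{1}/a_{2}))'|\le 2C_{0}+2AD\g^{-N}|b|$; and (c) $m|b|\le|\phi_{1}'-\phi_{2}'|\le m'|b|$ by Lemma \ref{NLIcons}. Hence across one triad, $w_{1},w_{2}$ and $\log(a_{1}/a_{2})$ vary by $O\bigl(\e'(\g^{-N}+|b|^{-1})\bigr)$, while $\phi_{1}-\phi_{2}$ is strictly monotone with total variation $\asymp\e'$.

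Now fix $j$ with $X_{j}\cap K\neq\emptyset$ and the triad of consecutive indices around it from Proposition \ref{triad}, and split on whether the directions are nearly parallel. If $|\langle w_{1},w_{2}\rangle|\le 1-\e_{1}$ somewhere on the triad, then by (a) it is $\le 1-\e_{1}/2$ throughout (for $N$ large), so the cross term is $\le(1-\e_{1}/2)a_{1}a_{2}$ and the squared numerator is $\le(a_{1}+a_{2})^{2}-\e_{1}a_{1}a_{2}$; taking $i$ so that $a_{i}$ is the smaller of $a_{1},a_{2}$ on $X_{j}$ (well defined up to a harmless factor by (b)), this is $\le\bigl((1-2\theta)a_{i}+a_{3-i}\bigr)^{2}$ once $\theta$ is a small enough multiple of $\e_{1}$, so $\Theta_{i}\le1$ on $X_{j}$ and $j'=j$ works. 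Otherwise $|\langle w_{1},w_{2}\rangle|>1-\e_{1}$ throughout the triad, so $\arg\langle w_{1},w_{2}\rangle$ is defined with derivative $O(\g^{-N}|b|)$ and the total phase $\Phi(x):=\phi_{1}(x)-\phi_{2}(x)+\arg\langle w_{1},w_{2}\rangle$ is still strictly monotone with $|\Phi'|\asymp|b|$; thus, within the triad (whose total phase‑variation is $\ll 2\pi$ for $\e'$ small), the set $\{\cos\Phi\ge1-\e_{1}\}$ is a single $x$‑interval of length $O(\sqrt{\e_{1}}/|b|)$. Choosing $\e'$ a large enough multiple of $\sqrt{\e_{1}}$ --- still admissibly small for Propositions \ref{triad} and \ref{blackbox} --- makes this shorter than any single triadic interval, so one of the three intervals $X_{j'}$ of the triad avoids it entirely; there $\mathrm{Re}\bigl(e^{i(\phi_{1}-\phi_{2})}\langle w_{1},w_{2}\rangle\bigr)\le1-\e_{1}$, the squared numerator is $\le(a_{1}+a_{2})^{2}-2\e_{1}a_{1}a_{2}$, and choosing $i$ with $a_{i}$ smaller on $X_{j'}$ gives $\Theta_{i}\le1$ on $X_{j'}$ for $\theta$ a small multiple of $\e_{1}$.

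The quantifiers are consistent when fixed in the order: pick the gain $\e_{1}>0$ small; then $\e'$ with $C\sqrt{\e_{1}}\le\e'<\e_{0}$, the window being nonempty once $\e_{1}$ is small ($\e_{0}$ the threshold from Propositions \ref{triad} and \ref{blackbox}); then $N$ large, depending on $\e',\e_{1}$, to absorb the $\g^{-N}$ errors in (a)--(b); finally $\theta\le\min(\theta_{0}(\e'),c\e_{1})$. The main obstacle is the second case above: Naud divides through by $f(\a_{2}^{N}x)$ to reduce it to a single scalar whose modulus and argument he controls, and this division is unavailable for vectors, so one must instead run Dolgopyat's non‑stationary phase directly on the scalar $\Phi$, extracting the oscillation from the genuine temporal‑distance phase $\phi_{1}-\phi_{2}$ --- whose derivative is bounded below by Lemma \ref{NLIcons}, which is the whole point of (NLI) --- and touching the vectors only through the slowly varying $\arg\langle w_{1},w_{2}\rangle$; one must also check, exactly as in the first paragraph and as in \cite{OW}, that the unitary twist contributes nothing to this analysis because $c_{q}^{N}\circ\a_{i}^{N}$ is constant.
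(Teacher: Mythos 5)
Your proposal is correct and its skeleton — neutralize $c_q^N$ because it is locally constant, isolate the phase $\phi_1-\phi_2 = bF$ whose derivative NLI bounds below, show that the slowly-varying vector-angle term cannot cancel the genuine oscillation, and use the triadic partition to find an interval where the phase gives a definite gain — is the same as the paper's. The details of the last two steps, however, are genuinely different. The paper first invokes the dichotomy Lemma~\ref{alt} to force $|f|\geq H/4>0$, so that the unit vectors $u_i = c_q^N f/|f|$ are defined and $C^1$; it then applies the abstract sharp triangle inequality Lemma~\ref{trig} (requiring Naud's bound $|z_1|/|z_2|\leq M$), and finds the good $X_{j'}$ by contradiction: if $\Re\Phi$ were too close to $1$ at both $x_j\in X_j$ and $x_{j+2}\in X_{j+2}$, the monotone phase would have to wind too little. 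You instead normalize by $H$ rather than by $|f|$, so $w_i = c_q^N f/H$ lives in the unit ball but need not be unit; the dichotomy Lemma then becomes unnecessary because $|f|$ small forces $|\langle w_1,w_2\rangle|$ small and you are automatically in your first case, while in the second case $|\langle w_1,w_2\rangle|>1-\e_1$ already forces nonvanishing. You also bypass Lemma~\ref{trig} by computing the squared numerator against $((1-2\theta)a_i+a_{3-i})^2$ directly, controlling $a_1/a_2$ not by a crude global bound $M$ but by the observation that $\log(a_1/a_2)$ varies by $O(\e')$ over a triad (which uses \eqref{eq:perturbbound} in the same way the paper does); and you locate the good $X_{j'}$ by directly measuring the set $\{\cos\Phi\geq 1-\e_1\}$ — a single interval of length $O(\sqrt{\e_1}/|b|)$ once $\Phi$ is monotone and does not wind — and noting one of the three triadic intervals must miss it. This streamlines the argument and shows Lemmas~\ref{alt} and~\ref{trig} are conveniences rather than necessities. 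Two small points: the cross term should carry a factor of $2$, i.e. $2a_1a_2\Re(\cdot)\leq 2(1-\e_1/2)a_1a_2$ (the subsequent algebra is right, just the intermediate line drops the $2$); and in the final case one should say explicitly that, since $\log(a_1/a_2)$ is nearly constant on $X_{j'}$, a single choice of $i$ with $a_i\lesssim a_{3-i}$ works on all of $X_{j'}$ after adjusting the constant in $\theta\ll\e_1$ — you gesture at this with ``harmless factor'' but it is the one place a reader might want the inequality $4\theta(a_1+a_2)\leq \e_1 a_{3-i}$ spelled out.
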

Before giving the proof we must state a simple Lemma from \cite{NAUD}.
The proof goes through easily in our vector valued setting. This is
also covered in \cite[Lemma 3.29]{OW}.
\begin{lem}[Lemma 5.11 of \cite{NAUD}]
\label{alt} Let $Z\subset I$ be an interval with $|Z|\leq c/|b|$.
Let $H\in\CC_{A|b|}$ and $f\in C^{1}(I;\C^{\G_{q}})$ with $|f|\leq H$
and $|f'|\leq A|b|H$. Then for $c$ small enough, we have either
\[
|f(u)|\leq\frac{3}{4}H(u)\quad\text{for all \ensuremath{u\in Z}, or}
\]
\[
|f(u)|\geq\frac{1}{4}H(u)\quad\text{for all \ensuremath{u\in Z}.}
\]

\end{lem}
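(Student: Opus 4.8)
The plan is to argue by a short dichotomy based on the oscillation of $|f|$ relative to $H$ over the small interval $Z$, exactly as in the scalar case, but taking care that the only place where scalars were used --- dividing by values of $f$ --- is avoided by working with $|f|$ throughout. First I would record the elementary inequality that for a differentiable $\C^{\G_q}$-valued map $f$, the real-valued function $|f|$ is Lipschitz with $\bigl|\,|f|'\,\bigr| \le |f'|$ wherever it is differentiable (this follows from the triangle inequality $|\,|f(u)|-|f(v)|\,|\le|f(u)-f(v)|$ and the mean value theorem applied coordinate-wise, or from the chain rule away from the zero set of $f$; the zero set is handled by continuity). Hence on $Z$ we have the two bounds $\bigl|\,|f|'\,\bigr|\le|f'|\le A|b|H$ and, by the cone condition $H\in\CC_{A|b|}$, also $|H'|\le A|b|H$.

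The main step is then the following. Fix $u_0\in Z$ and suppose first that $|f(u_0)|\le\frac12 H(u_0)$; I claim that for $c$ small enough $|f(u)|\le\frac34 H(u)$ for all $u\in Z$. Write $R(u):=|f(u)|/H(u)$, which is well defined since $H>0$, and estimate its derivative: $R'= \bigl(|f|'H - |f|H'\bigr)/H^2$, so $|R'(u)|\le \bigl|\,|f|'\,\bigr|/H + |f|\,|H'|/H^2 \le A|b| + A|b|R(u)$. On the sub-interval where $R\le 1$ this gives $|R'|\le 2A|b|$, so by the mean value theorem and $|Z|\le c/|b|$ we get $|R(u)-R(u_0)|\le 2A|b|\cdot c/|b| = 2Ac$ for $u$ in that sub-interval; choosing $c<\tfrac{1}{8A}$ forces $R(u)\le\frac12+2Ac<\frac34<1$, so $R$ never leaves the region $R\le1$ and the bound propagates across all of $Z$ by a standard continuity/connectedness argument. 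Symmetrically, if $|f(u_0)|\ge\frac12 H(u_0)$, the same differential inequality (now on the region $R\le 1$, or trivially if $R$ ever exceeds $1$) gives $R(u)\ge\frac12-2Ac>\frac14$ for all $u\in Z$, again for $c<\tfrac1{8A}$. Since one of the two alternatives for $u_0$ must hold, this yields exactly the dichotomy in the statement with the single constant choice $c=\tfrac{1}{8A}$ (any smaller $c$ also works).

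The only genuinely new point compared with \cite[Lemma 5.11]{NAUD} is the replacement of the scalar quotient $f/H$ by the modulus quotient $|f|/H$ and the justification that $|f|$ is still Lipschitz with the expected derivative bound; once that is in hand the estimate is identical. I do not expect any obstacle here: the cocycle $c_q$ and the vector structure of $\C^{\G_q}$ never enter, because all hypotheses and conclusions are phrased in terms of the pointwise norm $|f|$, and the unitary (norm-preserving) nature of the twisting is irrelevant to this particular Lemma. The one thing to be slightly careful about is differentiability of $|f|$ at points where $f$ vanishes; this is handled by noting that $|f|$ is the composition of the smooth $f$ with the Lipschitz norm function on $\C^{\G_q}$, hence is Lipschitz on $Z$ with constant $\le \sup_Z|f'|\le A|b|\sup_Z H$, which is all the argument actually uses --- the differential inequality for $R$ can then be run with one-sided (Dini) derivatives without change.
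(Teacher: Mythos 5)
Your proof is correct and takes exactly the route the paper implicitly relies on (the paper gives no proof here, just citing Naud's Lemma~5.11 and remarking it ``goes through easily''): one works with the real-valued ratio $R=|f|/H$, bounds $|R'|\leq A|b|(1+R)$ via the quotient rule and the Lipschitz bound $\bigl||f|'\bigr|\le|f'|$, and uses $|Z|\le c/|b|$ to control the oscillation. One small simplification you could make: the hypothesis $|f|\leq H$ already gives $R\leq1$ on all of $I$, so $|R'|\leq 2A|b|$ holds unconditionally on $Z$ and your continuity/connectedness bootstrap to keep $R$ in the region $R\leq1$ is unnecessary; otherwise the argument, including the careful handling of non-differentiability of $|f|$ at its zeros via Lipschitz/Dini derivatives, is exactly right.
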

We also need the following piece of trigonometry from \cite[Lemma 5.12]{NAUD}.
\begin{lem}[Sharp triangle inequality]
\label{trig}Let $V$ be a finite dimensional complex vector space
with Hermitian inner product $\langle\bullet,\bullet\rangle$. For
non zero vectors $z_{1},z_{2}$ with $|z_{1}|/|z_{2}|\leq L$ and
\begin{equation}
\Re\langle z_{1},z_{2}\rangle\leq(1-\eta)|z_{1}||z_{2}|,\label{eq:subconvex}
\end{equation}
there is $\delta=\delta(L,\eta)$ such that 
\[
|z_{1}+z_{2}|\leq(1-\delta)|z_{1}|+|z_{2}|.
\]
\end{lem}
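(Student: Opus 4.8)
The plan is to prove this purely by a direct computation after squaring, reducing the statement to an elementary algebraic inequality in the two real numbers $|z_1|$ and $|z_2|$. First I would expand the square and invoke the hypothesis \eqref{eq:subconvex}:
\[
|z_1+z_2|^2 = |z_1|^2 + 2\Re\langle z_1,z_2\rangle + |z_2|^2 \le |z_1|^2 + |z_2|^2 + 2(1-\eta)|z_1||z_2| = (|z_1|+|z_2|)^2 - 2\eta|z_1||z_2|.
\]
On the other hand, for a parameter $\delta\in(0,1)$ still to be chosen,
\[
\big((1-\delta)|z_1|+|z_2|\big)^2 = (|z_1|+|z_2|)^2 - 2\delta|z_1||z_2|\big(1 + |z_1|/|z_2|\big) + \delta^2|z_1|^2 \ge (|z_1|+|z_2|)^2 - 2\delta(1+L)|z_1||z_2|,
\]
where in the last step I used $|z_1|/|z_2|\le L$ and dropped the non-negative term $\delta^2|z_1|^2$.

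Comparing the two displays, it suffices to choose $\delta$ so that $2\delta(1+L)\le 2\eta$; the explicit choice $\delta := \eta/(1+L)$ (or anything smaller) works, and then $|z_1+z_2|^2 \le \big((1-\delta)|z_1|+|z_2|\big)^2$. Taking square roots of both sides (both are non-negative) yields the asserted bound with $\delta=\delta(L,\eta)=\eta/(1+L)$.

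There is no real obstacle here: this is just a quantitative refinement of the triangle inequality, and the only point requiring mild care is to keep track of the fact that the gain lands on the $|z_1|$-term, i.e.\ the vector whose norm is controlled by $L|z_2|$ — which is precisely why the ratio hypothesis is used and why $\delta$ must be allowed to depend on $L$. One could alternatively run a compactness argument on the set $\{\,|z_2|=1,\ |z_1|\le L\,\}$, but the explicit constant above is cleaner and is all that is needed in the application to Lemma \ref{technical}.
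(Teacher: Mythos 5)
Your proof is correct, and the algebra checks out: squaring, applying \eqref{eq:subconvex}, and comparing with the expansion of $\big((1-\delta)|z_1|+|z_2|\big)^2 = (|z_1|+|z_2|)^2 - 2\delta|z_1|(|z_1|+|z_2|) + \delta^2|z_1|^2$ reduces the statement to the scalar inequality $\delta(1+L)\le \eta$, so $\delta = \eta/(1+L)$ works. (One can also verify that this choice keeps $(1-\delta)|z_1|+|z_2|>0$, since $\eta\le 2$ by Cauchy--Schwarz and $(1+L)^2\ge 4L$, so the final square-root step is unobjectionable even in the edge case $\delta\ge 1$.)

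The paper does not actually supply a proof of this lemma; it simply cites \cite[Lemma 5.12]{NAUD} and then, crucially, adds the remark that the constant $\delta(L,\eta)$ is independent of $\dim V$. Your self-contained computation makes that dimension-independence completely explicit, which is exactly the feature the paper needs when it later applies the lemma to $V=\C^{\G_q}$ with $q\to\infty$. So your approach is not just a valid alternative to citing Naud — it is arguably more transparent for the purposes of this paper, since the reader sees immediately that nothing in $\delta=\eta/(1+L)$ degrades as the dimension grows.
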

We remark that while Lemma \ref{trig} is elementary, the fact that there is no dependence on the dimension of $V$
is one of the crucial points in our arguments.
\begin{proof}[Proof of Lemma \ref{technical}]
Choose $\e'$ small enough so that Lemma \ref{alt} holds for all
$Z=Z_{j}^{i}$ (with $c=\e'$). As in \cite{NAUD} by choosing $N$
large enough it is possible to assume $|Z_{j}^{i}|\leq|V_{j}|$ for
all $j,i$. We also enforce $\theta<1/8$ so that $1-2\theta\geq3/4$.

Now let $V_{j},V_{j+1},V_{j+2}$ all have non empty intersection with
$K$. One of the $j,j+1,j+2$ will be the $j'$ of the Lemma. Set
$\Xhat_{j}=X_{j}\cup X_{j+1}\cup X_{j+2}$ and assume as in Naud that
$\Xhat_{j}$ is contained in one connected component of $I$; note
that $\Xhat_{j}$ is connected.

Following from our choice of $\theta$, if there is $j'\in\{j,j+1,j+2\}$
and $i\in\{1,2\}$ with $|f(u)|\leq\frac{3}{4}H(u)$ when $u\in Z_{j'}^{i}$
then $\Theta_{i}(u)\leq1$ on $Z_{j'}^{i}$ and we are done. So we
can assume $|f(u)|>\frac{3}{4}H(u)$ for some $u$ in each $Z_{j'}^{i}$.
Hence by Lemma \ref{alt}, for all $i,j'$ we have 
\begin{equation}
|f(u)|\geq\frac{1}{4}H(u)>0,\quad\forall u\in Z_{j'}^{i}.\label{eq:assumption}
\end{equation}

We make the definition 
\[
z_{i}(x):=\exp\left([\tau_{a}^{N}+ib\tau^{N}](\a_{i}^{N}x)\right)c_{q}^{N}(\a_{i}^{N}x)f(\a_{i}^{N}x),\quad z_{i}:\Xhat_{j}\to\C^{\G_{q}},\quad i=1,2.
\]
The result follows from Lemma \ref{trig} after establishing bounds
on the relative size and angle of $z_{1},z_{2}$ uniformly in appropriate
$X_{j'}$.

\textbf{Control of relative size.} Firstly we wish to control the
relative size of $z_{1},z_{2}$. This is done by Naud and his estimates
go through directly in our case, after making all substitutions of
the form 
\[
\left|\frac{z_{1}(x)}{z_{2}(x)}\right|\to\frac{|z_{1}(x)|}{|z_{2}(x)|}
\]
and bearing in mind that $c_{q}^{N}$ is a unitary valued function.
This caters to our inability to divide non zero vectors. The output
of Naud's argument in \cite[pp. 141-142]{NAUD} is that given $j'\in\{j,j+1,j+2\}$,
either $|z_{1}(x)|\leq M|z_{2}(x)|$ for all $x\in X_{j'}$ or $|z_{2}(x)|\leq M|z_{1}(x)|$
for all $x\in X_{j'}$, where 
\[
M=4\exp(2NB_{a})\exp(2A\e'A_{1})
\]
and 
\[
B_{a}=a\|\tau\|_{\infty}+|P(-a\tau)|+2\|\log h_{a}\|_{\infty}
\]
is a locally bounded function that arises in the estimation of $\tau_{a}^{N}$
(cf. \cite[pg. 139]{NAUD}). Returning to the overall argument, this
means that we are done when we can establish \eqref{eq:subconvex}
with some $\eta$ uniformly on some $X_{j'}$. 

\textbf{Control of relative angle.} The key argument here is to very
carefully control the angles between the functions $z_{1}$ and $z_{2}$.
One sets 
\[
\Phi(x):=\frac{\langle z_{1}(x),z_{2}(x)\rangle}{|z_{1}(x)||z_{2}(x)|}
\]
which is the same as 
\[
\Phi(x)=\exp(ib(\tau^{N}(\a_{1}^{N}x)-\tau^{N}(\a_{2}^{N}x)))\frac{\langle c_{q}^{N}(\a_{1}^{N}x)f(\a_{1}^{N}x),c_{q}^{N}(\a_{2}^{N}x)f(\a_{2}^{N}x)\rangle}{|f(\a_{1}^{N}x)||f(\a_{2}^{N}x)|}.
\]
Define 
\[
u_{i}(x)=c_{q}^{N}(\a_{i}^{N}x)\frac{f(\a_{i}^{N}x)}{|f(\a_{i}^{N}x)|},\quad x\in\Xhat_{j},\quad i=1,2.
\]
Then the $u_{i}$ are $C^{1}$ as $f$ is non vanishing through \eqref{eq:assumption}.
We have 
\[
(c_{q}^{N}.f)\circ\a_{i}^{N}=|f\circ\a_{i}^{N}|.u_{i},
\]
so that, differentiating on both sides and using $(c_{q}^{N})'\equiv0$,
\[
(c_{q}^{N}\circ\a_{i}^{N}).(f\circ\a_{i}^{N})'=|f\circ\a_{i}^{N}|'u_{i}+|f\circ\a_{i}^{N}|u_{i}'.
\]
As $u_{i}$ has constant length 1 it follows that $u_{i}$ and $u'_{i}$
are orthogonal (in $\R^{2|\G_{p}|}$). Therefore 
\[
|[f\circ\a_{i}^{N}]'|^{2}=(|f\circ\a_{i}^{N}|')^{2}+|f\circ\a_{i}^{N}|^{2}|u'_{i}|^{2}.
\]
It now follows that 
\[
|u'_{i}(x)|\leq\frac{|[f\circ\a_{i}^{N}]'(x)|}{|f(\a_{i}^{N}x)|}.
\]
We estimate the right hand side by a direct calculation using the
chain rule with the expanding property of $T$ and our assumptions
on $H$ from \eqref{eq:assumption} and the hypotheses of Lemma \ref{technical}.
Indeed, Naud performs a similar calculation \cite[pg. 142]{NAUD}
which yields 
\begin{equation}
|u'_{i}(x)|\leq8A|b|\frac{D}{\g^{N}}.\label{eq:uest}
\end{equation}
Note that we can rewrite the central quantity $\Phi$ as 
\begin{equation}
\Phi(x)=\exp(ib(\tau^{N}(\a_{1}^{N}x)-\tau^{N}(\a_{2}^{N}x)) )\langle u_{1}(x),u_{2}(x)\rangle.\label{eq:phisimple}
\end{equation}
We can use \eqref{eq:uest} and Cauchy-Schwarz to get 
\begin{equation}
\left|\frac{d}{dx}\langle u_{1},u_{2}\rangle\right|=\left|\langle u'_{1},u_{2}\rangle+\langle u_{1},u'_{2}\rangle\right|\leq16A|b|\frac{D}{\g^{N}}.\label{eq:bracketest}
\end{equation}
Note that we have the diameter bound 
\begin{equation}
\diam(\Xhat_{j})\leq3A_{1}\frac{\e'}{|b|}\|(\psi^{-1})'\|_{\infty}\label{eq:diameter}
\end{equation}
so that using \eqref{eq:bracketest} we have 
\[
|\langle u_{1}(x_{1}),u_{2}(x_{1})\rangle-\langle u_{1}(x_{2}),u_{2}(x_{2})\rangle|\leq3\cdot16\cdot AA_{1}\|(\psi^{-1})'\|_{\infty}\e'\frac{D}{\g^{N}}
\]
for any $x_{1},x_{2}\in\Xhat_{j}$; note here that the cocycles $c_{q}^{N}(\alpha_{i}^{N}x)$
are constant on $\hat{X}_{j}$. We now enforce $\e'<1/10$ and $N$
large enough so that 
\[
48\cdot AA_{1}\|(\psi^{-1})'\|_{\infty}\frac{D}{\g^{N}}<1.
\]

Let us cut off one branch of reasoning. Suppose that there is $x_{0}\in\Xhat_{j}$
with 
\[
|\langle u_{1}(x_{0}),u_{2}(x_{0})\rangle|<1/10.
\]
Then for all $x\in\Xhat_{j}$ we have 
\[
|\langle u_{1}(x),u_{2}(x)\rangle|<1/5.
\]
It would follow that $|\Re\Phi(x)|<1/5$ for all $x\in\Xhat_{j}$
and the Lemma would be proved by our argument with trigonometry.

Therefore we can now assume 
\[
|\langle u_{1}(x),u_{2}(x)\rangle|\geq1/10
\]
for all $x\in\Xhat_{j}$. Then the new function 
\[
U(x)=\frac{\langle u_{1}(x),u_{2}(x)\rangle}{|\langle u_{1}(x),u_{2}(x)\rangle|}\in\C
\]
is $C^{1}$ on $\Xhat_{j}$ of constant length $1$ and by an argument
we have made before 
\begin{equation}
|U'(x)|\leq\frac{|\langle u_{1},u_{2}\rangle'(x)|}{|\langle u_{1}(x),u_{2}(x)\rangle|}\leq10\cdot16\cdot A|b|\frac{D}{\g^{N}},\label{eq:derivU}
\end{equation}
using \eqref{eq:bracketest}. We can write 
\[
U(x)=\exp(i\phi(x))
\]
for some $C^{1}$ real valued $\phi:\Xhat_{j}\to\R$. Then \eqref{eq:derivU}
reads 
\begin{equation}
|\phi'(x)|\leq160A|b|\frac{D}{\g^{N}}.\label{eq:phideriv}
\end{equation}

As we assume $\Phi\neq0$ on $\Xhat_{j}$, we can find a $C^{1}$
function that we will denote 
\[
\arg\Phi:\Xhat_{j}\to S^{1}=\R/2\pi\Z,\quad\Phi(x)=\exp(i\arg\Phi(x))\cdot|\Phi(x)|.
\]

Now define 
\[
F(x)=(\tau^{N}(\a_{1}^{N}x)-\tau^{N}(\a_{2}^{N}x)),\quad x\in\Xhat_{j}.
\]
The critical output of the (NLI) property for $\tau$, Lemma \ref{NLIcons},
tells us that 
\begin{equation}
0<m\leq|F'(x)|\leq m'\label{eq:Fderiv}
\end{equation}
when we choose $N>N_{0}$, which we do. As 
\[
\arg\Phi=bF+\phi
\]
we now have, incorporating \eqref{eq:Fderiv} and \eqref{eq:phideriv}
\[
|b|(m-10\cdot16A\frac{D}{\g^{N}})\leq|(\arg\Phi)'|\leq|b|(m'+10\cdot16A\frac{D}{\g^{N}}).
\]
We fix, finally, $N$ large enough so that we gain $C_{2}>C_{1}>0$
(depending only on $N$, $m$, $m'$, $A$, $D$, and $\g$) with

\[
|b|C_{1}\leq|(\arg\Phi)'|\leq|b|C_{2}.
\]
Now by estimating diameters of $X_{j+1}$ and $\Xhat_{j}$ from Proposition
\ref{triad} together with the mean value theorem, the total cumulative
change of argument of $\Phi$ between $x_{j}\in X_{j}$ and $x_{j+2}\in X_{j+2}$,
written $\Delta$, is between

\[
C_{3}\e'\leq\Delta\leq C_{4}\e'
\]
where 
\[
C_{3}=C_{1}A'_{1}\inf_{U_{0}}|(\psi^{-1})'|>0,\quad C_{4}=C_{2}3A_{1}\|(\psi^{-1})'\|_{\infty}.
\]
We now enforce $\e'<\pi/(2C_{4})$ so that we no longer need to worry
about $\arg\Phi$ winding around the circle. We are about to conclude.
Now $\e'$ is fixed. By our trigonometric strategy, we are done with
\[
\theta=\delta\left(M,\left(\frac{C_{3}\e'}{100}\right)^{2}\right)
\]
unless there exist $x_{j}\in X_{j}$ and $x_{j+2}\in X_{j+2}$ with
\[
\Re\Phi(x_{k})>1-\left(\frac{C_{3}\e'}{100}\right)^{2},\quad k=j,j+2.
\]
In this case, by the Schwarz inequality we know 
\[
|\Phi(x_{k})|\leq1\quad k=j,j+2
\]
so it follows that now using the principal branch for $\arg$ and
e.g. $|\sin x|\leq2|x|$ 
\[
|\arg\Phi(x_{k})|\leq C_{3}\e'/50,\quad k=j,j+2.
\]
Given that the argument of $\Phi$ moves at least by $C_{3}\e'$ in
one direction between $x_{j}$ and $x_{j+2}$ and does not move more
than $\pi/2$ (hence does not wind), this is a contradiction.
\end{proof}
We can now conclude this section with 
\begin{proof}[Proof of Lemma \ref{core}, property \ref{corethird}]
Choose $N$, $\theta$ and $\e'$ so that Proposition \ref{blackbox}
holds as well as Lemma \ref{technical}. Increasing $N$ if necessary
we may also assume that $\frac{D}{\gamma^{N}}\leq\frac{1}{4}$.

Suppose we are given $H\in\CC_{A|b|}$ and $f\in C^{1}(I;\C^{\G_{q}})$
such that $|f|\leq H$ and $|f'|\leq A|b|H$. The second inequality
stated in property \ref{corethird} is softer so we prove this first.
The complex scalar version of this inequality is proved in \cite[pg. 138]{NAUD}.

We calculate 
\[
[L_{s,q}^{N}f](x)=\sum_{\a^{N}}\exp([\tau_{a}^{N}+ib\tau^{N}](\a^{N}x))c_{q}^{N}(\a^{N}x)f(\a^{N}x).
\]
where 
\[
c_{q}^{N}(y)=c_{q}(T^{N-1}y)\ldots c_{q}(Ty).c_{q}(y)
\]
and the sum is over branches of $T^{-N}$. Therefore 
\begin{align*}
[L_{s,q}^{N}f]'(x) & =\sum_{\a^{N}}([\tau_{a}^{N}+ib\tau^{N}]\circ\a^{N})'(x)\exp([\tau_{a}^{N}+ib\tau^{N}](\a^{N}x))c_{q}^{N}(\a^{N}x)f(\a^{N}x)\\
 & +\sum_{\a^{N}}\exp([\tau_{a}^{N}+ib\tau^{N}](\a^{N}x))c_{q}^{N}(\a^{N}x)(f\circ\a^{N})'(x),
\end{align*}
$c_{q}^{N}$ being locally constant. Using that $c_{q}^{N}$ is unitary
and bounding derivatives of $\a^{N}$ with the eventually expanding
property and chain rule gives 
\begin{align*}
|[L_{s,q}^{N}f]'(x)|\leq\sum_{\a^{N}}|([\tau_{a}^{N}+ib\tau^{N}]\circ\a^{N})'(x)|\exp([\tau_{a}^{N}](\a^{N}x))H(\a^{N}x)\\
+\frac{D}{\g^{N}}\sum_{\a^{N}}\exp([\tau_{a}^{N}](\a^{N}x))A|b|H(\a^{N}x).
\end{align*}
Using the inequality \eqref{eq:perturbbound2} in Proposition \ref{blackbox} and our choice
of $N$ we get 
\[
|[L_{s,q}^{N}f]'(x)|\leq\frac{1}{2}A|b|[L_{a}^{N}H](x)\leq A|b|[\N_{s}^{J}H](x)
\]
given the very mild assumption $\theta<1/2$.

Now we turn to the more difficult first inequality of Lemma \ref{core},
property \ref{corethird}. Given that we have established Lemma \ref{technical}
in the vector valued setting, the proof follows by the same argument
as in \cite[pg. 143]{NAUD}. We give the details here for completeness.

Let $J$ be the set of indices $(i,j)$ where $\Theta_{i}(x)\leq1$
when $x\in X_{j}$. The statement of Lemma \ref{technical} is precisely
that this set of indices is dense (recall Definition \ref{dense})
and hence $J\in\E_{s}$ as required. We will prove 
\[
|L_{s,q}^{N}f|\leq\N_{s}^{J}H=L_{a}(\chi_{J}H).
\]
Fix $x$. Notice that if $x\notin\Int X_{j}$ for any $j$ then for
all branches $\a^{N}$ of $T^{-N}$, $\a^{N}x\notin Z_{j}^{i}$ and
so $\chi_{J}(\a^{N}x)=1$ for any $J$. More generally if $x\notin\Int X_{j}$
for any $j$ appearing as a coordinate in $J$ then $\chi_{J}(\a^{N}x)=1$.
Therefore 
\[
|[L_{s,q}^{N}f](x)|\leq\sum_{\a^{N}}\exp(\tau_{a}^{N}(\a^{N}x))H(\a^{N}x)=\N_{s}^{J}[H](x).
\]

We are left to consider $x,J$ such that $x\in\Int(X_{j})$ and $J$
contains $(i,j)$ for some $i$.

Suppose that $(i,j)=(1,j)$ and $(2,j)\notin J$. Then for $\a^{N}\neq\a_{1}^{N}$
a branch of $T^{-N}$, $\chi_{J}(\a^{N}x)=1$ (the only other possibility
would have been $\a^{N}=\a_{2}^{N}$). Then using $\Theta_{1}(x)\leq1$
gives 
\begin{align*}
|L_{s,q}^{N}[f](x)| & \leq\sum_{\a^{N}\neq\a_{1}^{N},\a_{2}^{N}}\exp(\tau_{a}^{N}(\a^{N}(x))H(\a^{N}(x))\\
 & +(1-2\theta)\exp(\tau_{a}^{N}(\a_{1}^{N}(x))H(\a_{1}^{N}(x))+\exp(\tau_{a}^{N}(\a_{2}^{N}(x))H(\a_{2}^{N}(x))\\
 & \leq\N_{s}^{J}[H](x).
\end{align*}
The case $(i,j)=(2,j)$ and $(1,j)\notin J$ is treated the same way.
Finally, if $(1,j)$ and $(2,j)$ are in $J$ then $\Theta_{1}(x),\Theta_{2}(x)\leq1$
from which one can estimate 
\begin{align*}
 & |\exp([\tau_{a}^{N}+ib\tau^{N}](\a_{1}^{N}x))f(\a_{1}^{N}x)+\exp([\tau_{a}^{N}+ib\tau^{N}](\a_{2}^{N}x))f(\a_{2}^{N}x)|\\
 & \leq(1-\theta)\exp(\tau_{a}^{N}(\a_{1}^{N}(x))H(\a_{1}^{N}(x))+(1-\theta)\exp(\tau_{a}^{N}(\a_{2}^{N}(x))H(\a_{2}^{N}(x))\\
 & \leq\exp(\tau_{a}^{N}(\a_{1}^{N}(x))\chi_{J}(\a_{1}^{N}x)H(\a_{1}^{N}(x))+\exp(\tau_{a}^{N}(\a_{2}^{N}(x))\chi_{J}(\a_{2}^{N}x)H(\a_{2}^{N}(x)).
\end{align*}
Also noting that $\chi_{J}(\a^{N}x)=1$ when $\a^{N}\neq\a_{i}^{N}$,
$i=1,2$, the previous inequality shows 
\[
|L_{s,q}^{N}[f](x)|\leq\N_{s}^{J}[H](x)
\]
in our final remaining case. The proof is complete. 
\end{proof}

\section{Bounds for transfer operators: small imaginary part}

\label{smallimaginarypart}

In this section we prove Part \ref{enu:mainmodular} of Theorem \ref{thm:maintransfer}. The key point is to think of $W\in C^1(I,\C^{\G_q})$ as a function on $I\times \G_q$ and decouple the variables.
This allows us to relate the transfer operator to a convolution operator on $\C^{\G_q}$. The relevant 
convolution operators have good spectral radius bounds that stem from the expander theory of
$\G_q$ as described in the Appendix -- the expansion technology requires 
that  we restrict $q$ to be coprime to a finite bad modulus $Q_0$, we make this restriction throughout. We now begin decoupling arguments in order to relate Part \ref{enu:mainmodular} of Theorem \ref{thm:maintransfer}  to the main result of the Appendix that we state as Theorem \ref{thm:mainbody} below.

\subsection{Accessing the convolution.}

\label{expingredients} We define $E_{q}$ to be the space of functions
of $\G_{q}=\SL_{2}(\Z/q\Z)$ that are orthogonal to all functions
lifted from $\G_{q'}$ for $q'|q$. We set out to show that when we
iterate $L_{s,q}^{n}$ we suitably contract the $C^{1}$ norm.

We have calculated already that for $W\in C^{1}(I,\C^{\G_q})$ with $\|W\|_{C^{1}}<\infty$
and taking on values only in the orthocomplement to constant functions
\begin{equation}
[L_{s,q}^{N}W](x)=\sum_{\a^{N}}\exp([\tau_{a}^{N}+ib\tau^{N}](\a^{N}x))c_{q}^{N}(\a^{N}x)W(\a^{N}x)\label{eq:LNWexplicit}
\end{equation}
where the sum is over branches of $T^{-N}$ on the interval containing
$x$. We can write these branches in a special form. 
They are given precisely by sequences
\[
\a^{N}=g_{i_{1}}g_{i_{2}}\ldots g_{i_{N}}
\]
where the $g_{i_j}$ form an admissible sequence.
If in the
general Schottky semigroup setting, we also require that if $x\in \tilde{I}_{i}$ for $1\leq i \leq 2k'$ then  $i_N\neq i+k' \bmod 2k'$, recalling the notation of   Section \ref{sub:A-dynamical-system}.

It will be convenient to make the parametrization 
\[
N=M+R,\quad M,R>0.
\]
We then write 
\[
\a^{N}=\a^{M}\a^{R}
\]
where 
\begin{equation}
\a^{M}=g_{i_{1}}\ldots g_{i_{M}},\quad\alpha^{R}=g_{i_{M+1}}\ldots g_{i_{N}},\label{eq:splitting}
\end{equation}
and view these as globally defined maps on $I.$ Then $\alpha^{N}$
is uniquely parameterized by the two sequences appearing in \eqref{eq:splitting}.
When we write $\alpha^{M}$ and $\alpha^{R}$ henceforth we always
mean compositions of these forms. Notice that the choice of $\alpha^R$ is restricted depending on $x$ and $\alpha^M$ is restricted depending on 
$g_{i_{M+1}}$.

For each of the intervals $I_i$ we pick a point $x_0(i)\in I_i$. 
For each $\alpha^M$ we pick $i_0=i_0(\alpha^M)$ such that $\alpha^M$ gives a well defined branch on $I_{i_0}$. Then
\[
d(\a^{N}x,\a^{M}x_{0}(i_0))=d(\a^{M}(\a^{R}x),\a^{M}x_{0}(i_0))\leq\frac{D}{\g^{M}}\diam(I)
\]
by the eventually expanding property of $T$. Then 
\[
\|W(\a^{N}x)-W(\a^{M}x_{0}(i_0))\|\leq\frac{D}{\g^{M}}\diam(I)\|W\|_{\Lip}.
\]
It follows then that 
\begin{align*}
[L_{s,q}^{N}W](x) & =\sum_{\alpha^M} \sum^*_{\a^{R}}\exp([\tau_{a}^{N}+ib\tau^{N}](\a^{N}x))c_{q}^{N}(\a^{N}x)W(\a^{M}x_{0}(i_0))\\
 & +O\left(\|W\|_{\Lip}\frac{D}{\g^{M}}\diam(I)\sum_{\a^{N}}\exp(\tau_{a}^{N}(\a^{N}x))\right)
\end{align*}
where the star on summation means that
 we restrict to those $\alpha^R$ with necessary restriction on $g_{i_N}$ coming from $x$ and $g_{i_{M+1}}$ coming from $\alpha^M$. Note that $i_0$ depends on $\alpha^M$.
We will assume that $D\g^{-M}$ is small, say $<1/(100\diam(I))$
and note that the sum in the error term is 
\[
\sum_{\a^{N}}\exp(\tau_{a}^{N}(\a^{N}x))=L_{a}^{N}[1](x)=1(x)=1
\]
as the operator has been normalized. So then 
\begin{equation}
[L_{s,q}^{N}W](x)=\sum_{\alpha^M} \sum^*_{\a^{R}}\exp([\tau_{a}^{N}+ib\tau^{N}](\a^{N}x))c_{q}^{N}(\a^{N}x)W(\a^{M}x_{0}(i_0))+O(\|W\|_{\Lip}\g^{-M}).\label{eq:uncoupled}
\end{equation}

This is an important estimate as it allows us access the expansion
properties coming from $c_{q}$ by decoupling $M$ and $N$.

Recall that $c_{q}$ was obtained by reducing the matrices $g_{i}$
modulo $q$ to obtain a locally constant mapping $c_{q}:I\to\G_{q}$.
This mapping can be reinterpreted as a unitary valued map $c_{q}:I\to U(\C^{\G_{q}})$
via the right regular representation of $\G_{q}$.

For any specified $\a^{M}$ as in \eqref{eq:splitting} and $x\in I$ we construct the complex
valued measure on $\G_{q}$ 

\[
\mu_{s,x,\a^{M}}=\sum^*_{\alpha^{R}}\exp([\tau_{a}^{N}+ib\tau^{N}](\a^{M}\alpha^{R}x))\delta_{c_{q}^{R}(\a^{R}x)^{-1}}\label{eq:mudefinition}
\]
 where $\delta_{g}$ gives mass one to $g\in\G_{q}$.  We note for the
reader's convenience that one can calculate from the Definition in
the Dictionary of page \pageref{table:dictionary}, Section \ref{sec:Counting} 

\[
c_{q}^{R}(\alpha^{R}x)=g_{i_{N}}g_{i_{N-1}}\ldots g_{i_{M+1}}\bmod q,\quad c_{q}^{M}(\alpha^{M}x_{0}(i_0))=g_{i_{M}}g_{i_{M-1}}\ldots g_{i_{1}}\bmod q.
\]

For any $f\in C^{1}(I;\C^{\G_{q}})$ and $\a^{M}$ as in \eqref{eq:splitting}
we construct a complex valued measure $\varphi_{f,\a^{M}}$ by 
\[
\varphi_{f,\a^{M}}=\sum_{g\in\G_{q}}f(\a^{M}x_{0}(i_0))\lvert_{g}\:\delta_{gc_{q}^{M}(\a^{M}x_{0}(i_0))^{-1}}
\]
where $c_{q}$ is thought of as $\G_{q}$ valued and $f(\a^{M}x_{0}(i_0))$
thought of as a $\C$-valued function on $\G_{q}$, with $\lvert_{g}$
standing for evaluation at $g$. Also recall $i_0=i_0(\alpha^M)$. Then
\begin{align*}
[\varphi_{f,\a^{M}}\star \mu_{s,x,\a^{M}}] & =\sum_{g\in\G_{q}}\sum_{\alpha^{R}}^*\exp([\tau_{a}^{N}+ib\tau^{N}](\alpha^{M}\alpha^{R}x))f(\a^{M}x_{0}(i_0))\lvert_{g} \delta_{gc_{q}^{M}(\a^{M}x_{0}(i_0))^{-1}} \star  \delta_{c_{q}^{R}(\a^{R}x)^{-1}}\\
 & =\sum_{g\in\G_{q}}\sum_{\a^{R}}^*\exp([\tau_{a}^{N}+ib\tau^{N}](\alpha^{M}\alpha^{R}x))f(\a^{M}x_{0}(i_0))\lvert_{g}\delta_{gc_{q}^{N}(\alpha^{M}\a^{R}x)^{-1}}.
\end{align*}
This means that, now viewed as a function on $\SL_{2}(\Z/q\Z)$ 
\[
[\varphi_{f,\a^{M}}\star \mu_{s,x,\a^{M}}]=\sum_{\a^{R}}^*\exp([\tau_{a}^{N}+ib\tau^{N}](\a^{N}x))c_{q}^{N}(\a^{M}\alpha^{R}x)f(\a^{M}x_{0}(i_0)).
\]
The reader should compare this with \eqref{eq:uncoupled}.

\subsection{Bounds for $\mu_{s,x,\protect\a^{M}}$ }
\:

We need a bound for $\|\mu_{s,x,\a^{M}}\|_{1}$ to use the result
of the Appendix. Firstly we write 
\begin{equation}
|\mu_{s,x,\a^{M}}|\leq\sum_{\alpha^{R}}^*\exp(\tau_{a}^{N}(\a^{N}x))\delta_{c_{q}^{R}(\a^{R}x)}.\label{eq:pointwisebound}
\end{equation}
Notice that 
\begin{equation}
\tau_{a}^{N}(\a^{M}\alpha^{R}x)=\tau_{a}^{M}(\a^{M}\alpha^{R}x)+\tau_{a}^{R}(\a^{R}x).\label{eq:splitup}
\end{equation}
Then 
\[
\|\mu_{s,x,\a^{M}}\|_{1}\leq\sum_{\alpha^{R}}^*\exp(\tau_{a}^{M}(\a^{M}\alpha^{R}x))\exp(\tau_{a}^{R}(\a^{R}x)).
\]

We now decouple: let $\a_{0}^{R}$ be any arbitrary choice of $\alpha^{R}$
(a sequence of the $g_{i}$ that is compatible with $\alpha^M$ and $x$). Then 
\[
\tau_{a}^{M}(\a^{M}\alpha^{R}x)-\tau_{a}^{M}(\a^{M}\alpha_{0}^{R}x)=\sum_{n=0}^{M-1}\tau_{a}(T^{n}\a^{N}x)-\tau_{a}(T^{n}\a_{0}^{N}x)
\]
and noting that $T^{n}\a^{M}\alpha^{R}x$ and $T^{n}\a^{M}\alpha_{0}^{R}x$
are within 
\[
\frac{D}{\g^{M-n}}\diam(I)
\]
of one another, we have 
\begin{align}
\tau_{a}^{M}(\a^{M}\alpha^{R}x) & \leq\tau_{a}^{M}(\a^{M}\alpha_{0}^{R}x)+D.\diam(I)\sup_{y\in I}|[\tau_{a}]'(y)|\sum_{n=0}^{M-1}\frac{1}{\g^{M-n}}\\
 & \leq\tau_{a}^{M}(\a^{M}\alpha_{0}^{R}x)+\k_{1}(D,\g,I,\tau,a_{0})\label{align:decouple}
\end{align}
for $|a-\delta|<a_{0}$ (as $\tau_{a}$ is roughly constant in $a$
close to $\delta$). Therefore 
\begin{align*}
\|\mu_{s,x,\a^{M}}\|_{1} & \leq\exp(\k_{1}+\tau_{a}^{M}(\a^{M}\alpha_{0}^{R}x))\sum_{\alpha^{R}}^*\exp(\tau_{a}^{R}(\a^{R}x))\\
 & \leq\exp(\k_{1}+\tau_{a}^{M}(\a^{M}\alpha_{0}^{R}x))[L_{a}^{R}1](x)=\exp(\k_{1}+\tau_{a}^{M}(\a^{M}\alpha_{0}^{R}x)).
\end{align*}
by the normalization of $L_{a}$. We record this bound in the following.
\begin{lem}
\label{mul1}Given $a_{0}$ small enough, there is $\k_{1}=\k_{1}(a_{0})$
such that for all $x$ and $\a^{M}$, 
\[
\|\mu_{s,x,\a^{M}}\|_{1}\leq\exp(\k_{1}+\tau_{a}^{M}(\a^{M}\alpha_{0}^{R}x)),
\]
for $|a-\delta|<a_{0}$. Here $\a_{0}^{R}$ is any admissible choice of $\alpha^{R}$
as in \eqref{eq:splitting} compatible with $\alpha^M$ and $x$.

\end{lem}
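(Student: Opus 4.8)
The plan is to bound the total variation norm of $\mu_{s,x,\a^M}$ by brute force over its defining sum and to recognize what survives as the value of the normalized transfer operator $L_a^R$ on the constant function. First I would pass to the pointwise bound \eqref{eq:pointwisebound}: each term of $\mu_{s,x,\a^M}$ is a complex scalar times a unit point mass $\delta_{c_q^R(\a^R x)^{-1}}$ on $\G_q$, so the triangle inequality for the $\ell^1$ norm gives
\[
\|\mu_{s,x,\a^M}\|_1 \;\leq\; \sum_{\alpha^R}^* \bigl|\exp\bigl([\tau_a^N+ib\tau^N](\a^M\alpha^R x)\bigr)\bigr| \;=\; \sum_{\alpha^R}^* \exp\bigl(\tau_a^N(\a^M\alpha^R x)\bigr),
\]
since the $ib\tau^N$ contribution is purely imaginary and drops out of the modulus. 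Next I would split $N=M+R$ and use the additive cocycle identity \eqref{eq:splitup} to factor $\exp(\tau_a^N(\a^M\alpha^R x)) = \exp(\tau_a^M(\a^M\alpha^R x))\,\exp(\tau_a^R(\alpha^R x))$.

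The crux is to decouple the factor $\exp(\tau_a^M(\a^M\alpha^R x))$ from its dependence on $\alpha^R$. Fixing an arbitrary admissible $\alpha_0^R$ compatible with $\a^M$ and $x$, I would write the difference $\tau_a^M(\a^M\alpha^R x) - \tau_a^M(\a^M\alpha_0^R x)$ as $\sum_{n=0}^{M-1}\bigl(\tau_a(T^n\a^M\alpha^R x) - \tau_a(T^n\a^M\alpha_0^R x)\bigr)$ and bound each term: the two arguments lie within $D\g^{-(M-n)}\diam(I)$ of each other by the eventual expansion of $T$ (Proposition \ref{def:expansion-1}), while $\tau_a$ is $C^1$ with a derivative bound uniform for $|a-\delta|<a_0$ (one needs here that $\tau_a$ depends continuously, hence nearly constantly, on $a$ near $\delta$). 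Summing the geometric series in $n$ yields $\tau_a^M(\a^M\alpha^R x) \leq \tau_a^M(\a^M\alpha_0^R x) + \k_1$ with $\k_1 = \k_1(D,\g,I,\tau,a_0)$, crucially independent of $x$, $\a^M$, $\alpha^R$, $b$, and $q$.

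Finally I would pull the $\alpha^R$-independent factor out of the sum,
\[
\|\mu_{s,x,\a^M}\|_1 \;\leq\; \exp\bigl(\k_1 + \tau_a^M(\a^M\alpha_0^R x)\bigr)\sum_{\alpha^R}^* \exp\bigl(\tau_a^R(\alpha^R x)\bigr),
\]
and observe that the remaining reduced exponential sum is exactly $[L_a^R 1](x)$; since $L_a$ is the renormalized transfer operator with $L_a 1 = 1$, this equals $1$ and the claimed bound follows. I expect the decoupling step to be the only real obstacle: one must keep track of the admissibility constraints carefully so that the surviving sum is genuinely $[L_a^R 1](x)$ rather than something larger, and one must confirm that the geometric-series constant is uniform in $b$ and in the modular parameter $q$ — which it is, precisely because neither $b$ nor $q$ enters $\tau_a$.
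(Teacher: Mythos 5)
Your proof is correct and follows essentially the same path as the paper's: the pointwise bound \eqref{eq:pointwisebound}, the additive split \eqref{eq:splitup}, the decoupling via the contraction/telescoping estimate leading to the uniform constant $\k_1(D,\g,I,\tau,a_0)$ (which is exactly the paper's \eqref{align:decouple}), and finally recognizing the restricted sum $\sum^*_{\alpha^R}\exp(\tau_a^R(\alpha^R x))\leq [L_a^R 1](x)=1$ by the normalization of $L_a$. The admissibility worry you flag is harmless because the starred sum is a subset of all $T^{-R}$ branches at $x$, so it can only be smaller than $[L_a^R 1](x)$ — which is the inequality the paper uses.
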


We are now in a position to use the main result of the Appendix, which for the convenience of the reader we also state here.
\begin{thm}[Bourgain-Kontorovich-Magee, Appendix]\label{thm:mainbody} There is a finite modulus $Q_0$ and $c >0$ such that when $R\approx c \log q $, $(q,Q_0)=1$, $|a-\delta|<a_{0}$ and $\vp\in E_{q}$, we have
\begin{equation}\label{eq:mainbody}
\| \vp \star \mu_{s,x,\a^{M}} \|_{2} \ \le \ C\
q^{-1/4}\,
B\,
\|\vp\|_{2},
\end{equation}
given that
$$
\|\mu\|_{1}\ <\ B.
$$
\end{thm}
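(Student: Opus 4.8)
The plan is to prove \eqref{eq:mainbody} by viewing right-convolution by $\mu=\mu_{s,x,\a^{M}}$ as a bounded operator on $\ell^{2}(\G_{q})$ and estimating its norm on the subspace $E_{q}$. First we normalize $\nu:=\mu/\|\mu\|_{1}$, so $\|\nu\|_{1}\le1$, and it suffices to show $\|\vp\star\nu\|_{2}\le Cq^{-1/4}\|\vp\|_{2}$ for $\vp\in E_{q}$. Writing $\tilde\nu(g):=\overline{\nu(g^{-1})}$, convolution by $\nu\star\tilde\nu$ is a positive operator on $\ell^{2}(\G_{q})$ and $\|\vp\star\nu\|_{2}^{2}=\langle\vp\star(\nu\star\tilde\nu),\vp\rangle$; by the spectral theorem and H\"older (the $TT^{*}$/tensor-power trick) one gets, for each integer $\ell\ge1$,
\[
\|\vp\star\nu\|_{2}\ \le\ \big\langle\vp\star(\nu\star\tilde\nu)^{\star\ell},\vp\big\rangle^{1/(2\ell)}\,\|\vp\|_{2}^{\,1-1/\ell}.
\]
Since $\vp\star u_{q}=0$ for $\vp\in E_{q}$, where $u_{q}$ is the uniform probability measure on $\G_{q}$, the inner product on the right equals $\langle\vp\star((\nu\star\tilde\nu)^{\star\ell}-u_{q}),\vp\rangle$, so the task reduces to a quantitative equidistribution statement for a bounded convolution power of $\nu$ on $\G_{q}$. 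The required rate --- a fixed power of $q$ --- is produced jointly by the hypothesis $R\approx c\log q$, which makes $\nu$ behave like a long word in the generators, and by the fact that the Cayley graphs of $\G_{q}=\SL_{2}(\Z/q\Z)$ with respect to the $g_{i}$ form an expander family uniformly in $q$ (Bourgain--Varj\'u).

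The equidistribution estimate is obtained by the Bourgain--Gamburd method, whose two inputs we isolate. First, \emph{non-concentration at the level of points}: the branches $\a^{R}=g_{i_{M+1}}\cdots g_{i_{N}}$ range over admissible words of length $R$ in a \emph{free} semigroup, and since $R\approx c\log q$ with $c$ small enough, distinct such words stay distinct modulo $q$ --- otherwise a nonidentity reduced word of length $\le2R$ in the generators, hence an integer matrix with entries of size $q^{O(c)}$, would be congruent to the identity modulo $q$ and so would equal the identity, contradicting freeness. Combined with the control on the weights $e^{\tau_{a}^{N}(\a^{N}x)}$ furnished by the decoupling estimate \eqref{align:decouple} and the normalization $L_{a}^{R}[1]\equiv1$ --- which shows that each single element of $\G_{q}$ carries $\nu$-mass at most $q^{-c'}$ for some fixed $c'>0$ --- this gives $\|\nu\|_{\infty}\le q^{-c'}$ as a measure on $\G_{q}$. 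Second, \emph{non-concentration on proper subgroups}: one needs $\nu(gH)\le q^{-c''}$ uniformly in $q$, for every proper $H<\G_{q}$ and every coset, which follows from strong approximation for the thin group generated by the $g_{i}$ (surjectivity of its reduction onto $\SL_{2}(\Z/q'\Z)$ for all $q'\mid q$ coprime to $Q_{0}$), exactly as in \cite{BGS2}.

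Granting these, the $\ell^{2}$-flattening argument of Bourgain--Gamburd --- Balog--Szemer\'edi--Gowers together with the product theorem in $\SL_{2}$ --- shows that a bounded number of self-convolutions of $\nu\star\tilde\nu$ drive the measure to within a negative power of $q$ of $u_{q}$ on every level $q'\mid q$; because the supports already have size a power of $q$ (as $R\approx c\log q$), no single small modulus can obstruct the iteration. The passage from squarefree $q$ to \emph{arbitrary} $q$, and the uniformity of the resulting rate across all the pieces $E_{q'}$, $q'\mid q$, is precisely the content of Bourgain--Varj\'u's expansion theorem for $\SL_{2}(\Z/q\Z)$, and it is this that allows the restriction on $q$ to be merely $(q,Q_{0})=1$. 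Feeding the equidistribution rate back through the inequality of the first paragraph (and absorbing it into a suitable choice of the constant $c$) yields \eqref{eq:mainbody} with the stated exponent $1/4$, which is not claimed to be sharp.

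The principal difficulty is the subgroup non-concentration uniform over \emph{all} divisors of $q$, in particular over prime powers $p^{e}\mid q$. For squarefree $q$ this is the classical Bourgain--Gamburd input; but controlling concentration on subgroups of $\SL_{2}(\Z/p^{e}\Z)$ simultaneously for every prime power dividing $q$, and doing so for a measure supported on words of a \emph{semigroup} --- where inverses are not at one's disposal --- is exactly what forces us into the Bourgain--Varj\'u circle of ideas, used in tandem with the structural features of Schottky and continued-fraction semigroups. This is also the source of the coupling $R\approx c\log q$: the word length must be large enough for equidistribution to begin, yet small enough to preclude collisions modulo $q$.
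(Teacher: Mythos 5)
Your proposal follows, in spirit, the route of Bourgain--Gamburd--Sarnak \cite{BGS2}: normalize $\nu=\mu/\|\mu\|_1$, reduce by a $TT^{*}$/tensor-power step to equidistribution of $(\nu\star\tilde\nu)^{\star\ell}$, and then run $\ell^{2}$-flattening (Balog--Szemer\'edi--Gowers plus a product theorem), with non-concentration at points and on proper subgroups as the two inputs. This is genuinely different from the paper's argument, and it has a real gap.

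The gap is in step~2. The measure $\nu$ is \emph{not} a convolution power of a fixed generating measure; it carries nonuniform weights $e^{\tau_a^N(\alpha^N x)}$ ranging over length-$R$ admissible words. Bourgain--Varj\'u's theorem, which you invoke to supply expansion for arbitrary moduli, is a statement about convolution powers of a \emph{fixed} symmetric generating measure on the group. To apply it to $\nu$ you must either (a) reprove their flattening inductions (through all prime powers $p^e\mid q$ simultaneously, on a semigroup without inverses, with weights) for the measure $\nu$ itself, or (b) reduce the problem to the black-box form. Your sketch does neither: the paragraph beginning ``Granting these...'' asserts that BSG and a product theorem drive the iterate to uniformity, but for arbitrary moduli this assertion is precisely the content of Bourgain--Varj\'u and is not obtained from their theorem by citation. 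You acknowledge the tension yourself (``exactly what forces us into the Bourgain--Varj\'u circle of ideas'') without resolving it.

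The paper's proof closes this gap with a decoupling that your outline does not anticipate. Write $R=R'L$ with $L$ a large constant, block $\alpha^R$ into $R'$ pieces of length $L$, and isolate in each piece the last two letters $\alpha_j^{(2)}$. The contraction property then shows that, with the outer $\alpha_j^{L-2}$ data frozen, the dominating measure $\mu_1$ is bounded above by a \emph{convolution} $\eta_1\star\cdots\star\eta_{R'}$ of measures $\eta_j$ whose coefficients are flat up to a uniform constant (Lemma~\ref{lem:smallfluctutations}). Each $\eta_j$ therefore has a spectral gap on $L^2_0(G)$ directly from Bourgain--Varj\'u applied to the fixed set of elements $\alpha_j^{(2)}(\alpha_j^{(2)\prime})^{-1}$ (Theorem~\ref{thm:flatExpand}); no flattening for $\nu$ itself is required, and the inputs you list (point non-concentration via word distinctness, and uniform subgroup non-concentration for $\nu$) never arise. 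Iterating the per-factor gap gives $\|\nu\star\varphi\|_2 \le (1-C_2)^{R}\|\nu\|_1\|\varphi\|_2$, which is both cleaner and genuinely accessible. Finally, the exponent $q^{-1/4}$ -- which your proposal does not derive -- comes in the paper from a trace/multiplicity argument: on $E_q$ every eigenvalue of $A^{*}A$ has multiplicity $\gtrsim q$, so $\|A\|\le C[\,|G|\,\|\tilde\nu\star\nu\|_2^2/q\,]^{1/4}$, and inserting $\|\tilde\nu\star\nu\|_2\le 2\|\nu\|_1^2/|G|^{1/2}$ (which is where $R\approx c\log q$ is used) together with $|G|\gtrsim q^3$ produces $q^{-1/4}$. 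Your tensor-power reduction is a workable substitute for the $TT^{*}$ step, but without the convolution factorization and the multiplicity count the argument does not close.
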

Using Lemma \ref{mul1}, Theorem \ref{thm:mainbody}  now implies that
when $R\approx c\log q$ for suitable $c$ and $|a-\delta|<a_{0}$, for any
$\vp\in E_{q}$ we have 

\begin{equation}
\|\vp\star\mu_{s,x,\a^{M}}\|_{2}\leq Cq^{-1/4}\exp(\k_{1}+\tau_{a}^{M}(\a^{M}\alpha_{0}^{R}x))\|\vp\|_{2}.\label{eq:flattening}
\end{equation}
Then if we use \eqref{eq:uncoupled} we obtain
\begin{align*}
\|[L_{s,q}^{N}W](x)\|_{l^2 (\G_q)} & \leq\sum_{\a^{M}}\|[\varphi_{W,\a^{M}}\star\mu_{s,x,\a^{M}}]\|_{l^{2}(\G_{q})}+O(\|W\|_{\Lip}\gamma^{-M})\\
 & \leq Cq^{-1/4}\exp(\k_{1})\sum_{\a^{M}}\exp(\tau_{a}^{M}(\alpha^{M}\a_{0}^{R}x))\|\varphi_{W,\a^{M}}\|_{l^{2}(\G_{q})}+O(\|W\|_{\Lip}\gamma^{-M}).
\end{align*}
We have now chosen some $\alpha_{0}^{R}$ and we are assuming the
previous conditions on $|a-\delta|<a_{0}$. Since trivially 
\[
\|\varphi_{W,\a^{M}}\|_{l^{2}(\G_{q})}\leq\|W\|_{\infty}
\]
we can continue to bound $\|[L_{s,q}^{N}W](x)\|$ up to $O(\|W\|_{\Lip}\g^{-M})$
by 
\begin{align*}
Cq^{-1/4}\exp(\k_{1})\|W\|_{\infty}\sum_{\a^{M}}\exp(\tau_{a}^{M}(\a^{M}\a_{0}^{R}x)) & \leq Cq^{-1/4}\exp(\k_{1})\|W\|_{\infty}L_{a}^{N}[1](T^{M}\a^{M}\a_{0}^{R}x)\\
 & =Cq^{-1/4}\exp(\k_{1})\|W\|_{\infty}.
\end{align*}

We have now proved, by choosing $N>\k_{10}\log q$ so that there is
room for the requisite $R$ and big enough $M$ the following lemma.
\begin{lem}
\label{infinitybound} Let $(q,Q_0)=1$.
There are $a_{0},q_{0},\k_{10},\e>0$ and $\g'>1$
such that when $|a-\delta|<a_{0}$, we have 
\[
\|L_{s,q}^{N}W\|_{\infty}\leq q^{-\e}\|W\|_{\infty}+{\g'}^{-N}\|W\|_{\Lip}
\]
when $N>\k_{10}\log q$, $q>q_{0}$, and $W\in E_{q}$ with $\|W\|_{\Lip}<\infty$. 
\end{lem}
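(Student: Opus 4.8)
The plan is to collect the machinery already assembled in this section. Fix the parametrization $N=M+R$ with $R\approx c\log q$ as required by Theorem \ref{thm:mainbody} and $M=N-R$. Identifying $\sum_{\a^{M}}[\varphi_{W,\a^{M}}\star\mu_{s,x,\a^{M}}]$ with the main term of \eqref{eq:uncoupled}, one has for every $x\in I$
\[ [L_{s,q}^{N}W](x)=\sum_{\a^{M}}[\varphi_{W,\a^{M}}\star\mu_{s,x,\a^{M}}]+O\!\left(\|W\|_{\Lip}\g^{-M}\right), \]
so it suffices to bound each convolution term in $\ell^{2}(\G_{q})$.

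First I would record the only point where the hypothesis $W\in E_{q}$ is used: for each $\a^{M}$, the measure $\varphi_{W,\a^{M}}$ viewed as a function on $\G_{q}$ is the right translate of the vector $W(\a^{M}x_{0}(i_{0}))\in E_{q}$ by $c_{q}^{M}(\a^{M}x_{0}(i_{0}))$, and $E_{q}$ --- the orthocomplement of the span of the functions lifted from the quotients $\G_{q'}$, $q'\mid q$, a span that is right-$\G_{q}$-invariant since lifts factor through homomorphisms --- is right-translation invariant; hence $\varphi_{W,\a^{M}}\in E_{q}$. Now apply Theorem \ref{thm:mainbody} to each summand with $B=\|\mu_{s,x,\a^{M}}\|_{1}$, then Lemma \ref{mul1} to take $B=\exp(\k_{1}+\tau_{a}^{M}(\a^{M}\a_{0}^{R}x))$, together with the trivial estimate $\|\varphi_{W,\a^{M}}\|_{\ell^{2}(\G_{q})}\le\|W\|_{\infty}$. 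This gives, uniformly in $x$,
\[ \|[L_{s,q}^{N}W](x)\|_{\ell^{2}(\G_{q})}\le Cq^{-1/4}e^{\k_{1}}\|W\|_{\infty}\sum_{\a^{M}}e^{\tau_{a}^{M}(\a^{M}\a_{0}^{R}x)}+O\!\left(\|W\|_{\Lip}\g^{-M}\right), \]
and the sum over $\a^{M}$ is bounded by $L_{a}^{N}[1]$ at a point, hence by $1$ since $L_{a}1=1$, so the first term is $\le Cq^{-1/4}e^{\k_{1}}\|W\|_{\infty}$.

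It remains to choose parameters. Fix $\e<1/4$; since $Ce^{\k_{1}}$ is a fixed constant there is $q_{0}$ with $Ce^{\k_{1}}q^{-1/4}\le q^{-\e}$ for $q>q_{0}$. Take $\k_{10}>c$ large enough that the approximation leading to \eqref{eq:uncoupled} is valid (e.g.\ $D\g^{-M}<1/(100\,\diam I)$); then $N>\k_{10}\log q$ forces $R=c\log q<(c/\k_{10})N$, so $\g^{-M}=\g^{-(N-R)}\le(\g')^{-N}$ with $\g':=\g^{\,1-c/\k_{10}}>1$. Taking the supremum over $x$ yields exactly $\|L_{s,q}^{N}W\|_{\infty}\le q^{-\e}\|W\|_{\infty}+(\g')^{-N}\|W\|_{\Lip}$, as claimed.

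I expect no serious obstacle here: the genuinely hard input --- the Dolgopyat-type estimates of the previous section and, above all, the expander/flattening bound of Theorem \ref{thm:mainbody} supplied by the Appendix --- is already available, so what remains is bookkeeping. The one delicate point is the simultaneous handling of the three coupled scales, $q$, the window $R\approx c\log q$ in which Theorem \ref{thm:mainbody} applies, and $M=N-R$, which must be large both for the decoupling error $\g^{-M}$ to be negligible and for \eqref{eq:uncoupled} itself to hold; this is exactly what forces $\k_{10}$ to exceed $c$ by a definite margin, so that the residual $\g^{-M}$ can still be absorbed into a genuine $(\g')^{-N}$ with $\g'>1$.
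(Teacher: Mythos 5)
Your proof is correct and follows essentially the same route as the paper: decouple via \eqref{eq:uncoupled}, apply Theorem \ref{thm:mainbody} through Lemma \ref{mul1} together with $\|\varphi_{W,\a^M}\|_{\ell^2}\le\|W\|_\infty$, bound $\sum_{\a^M}e^{\tau_a^M(\a^M\a_0^R x)}$ by $1$ via the normalization $L_a1=1$, and then choose $\k_{10}$ large relative to the Appendix's $c$ so that $M=N-R$ is a definite fraction of $N$. The one place you add value over the paper's terse write-up is your explicit verification that $\varphi_{W,\a^M}\in E_q$ (needed to invoke Theorem \ref{thm:mainbody}), via the observation that $\varphi_{W,\a^M}=\rho(c_q^M(\a^Mx_0(i_0)))\,W(\a^Mx_0(i_0))$ and that $E_q$ is $\rho$-invariant because the subgroups $\G_q(q')$ are normal; the paper leaves this implicit.
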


\subsection{Bounds for Lipschitz norms}

In order to iterate Lemma \ref{infinitybound} (this is our aim) we
also need bounds for 
\[
\|L_{s,q}^{N}W\|_{\Lip}
\]
under the same conditions as in Lemma \ref{infinitybound}. This amounts
to estimating 
\[
\sup_{I}|[L_{s,q}^{N}W]'|
\]
and so we can proceed along similar lines as before. Indeed one calculates
from \eqref{eq:LNWexplicit} that 
\begin{align*}
[L_{s,q}^{N}W]'(x) & =\sum_{\a^{N}}([\tau_{a}^{N}+ib\tau^{N}]\circ\a^{N})'(x)\exp([\tau_{a}^{N}+ib\tau^{N}](\a^{N}x))c_{q}^{N}(\a^{N}x)W(\a^{N}x)\\
 & +\sum_{\a^{N}}\exp([\tau_{a}^{N}+ib\tau^{N}](\a^{N}x))c_{q}^{N}(\a^{N}x)[W\circ\a^{N}]'(x)
\end{align*}
using that $c_{q}^{N}$ is locally constant. The second set of terms
are bounded by 
\[
\frac{D}{\g^{N}}\sum_{\a^{N}}\exp(\tau_{a}^{N}(\a^{N}x))\|W\|_{\Lip}
\]
which can be bounded by 
\[
\frac{D}{\g^{N}}\|W\|_{\Lip}L_{a}^{N}[1](x)=\frac{D}{\g^{N}}\|W\|_{\Lip}.
\]
So we have 
\begin{equation}
[L_{s,q}^{N}W]'(x)=\Sigma+O(\frac{D}{\g^{N}}\|W\|_{\Lip})\label{eq:Ssplit}
\end{equation}
where 
\[
\Sigma:=\sum_{\a^{N}}([\tau_{a}^{N}+ib\tau^{N}]\circ\a^{N})'(x)\exp([\tau_{a}^{N}+ib\tau^{N}](\a^{N}x))c_{q}^{N}(\a^{N}x)W(\a^{N}x).
\]
We can go through the same decoupling argument as before to get 

\begin{align*}
\Sigma & =\sum_{\a^{N}}([\tau_{a}^{N}+ib\tau^{N}]\circ\a^{N})'(x)\exp([\tau_{a}^{N}+ib\tau^{N}](\a^{N}x))c_{q}^{N}(\a^{N}x)W(\a^{M}x_{0}(i_0))\\
 & +O\left(\|W\|_{\Lip}\frac{D}{\g^{M}}\diam(I)\sum_{\a^{N}}|([\tau_{a}^{N}+ib\tau^{N}]\circ\a^{N})'(x)|\exp(\tau_{a}^{N}(\a^{N}x))\right),
\end{align*}
recalling $i_0=i_0(\alpha^M)$. Note that since there are constants $C_{1}$ and $a_{0}$ such that
when $|a-\delta|<a_{0}$ we have 
\[
|[\tau_{a}^{N}\circ\a^{N}]'(x)|\leq C_{1}
\]
for $x\in I$ (see for example \cite[pg. 138]{NAUD}), we have 

\begin{equation}
|[[\tau_{a}^{N}+ib\tau^{N}]\circ\a^{N}]'(x)|\leq C_{1}+|b|\sup_{I}|\tau'|\sum_{i=0}^{N-1}\frac{D}{\g^{i}}\leq\k_{11}\label{eq:k11}
\end{equation}

for some $\k_{11}=\k_{11}(a_{0},b_{0})$ when $|b|\leq b_{0}$. Therefore
we have the decoupled equation 

\[
\Sigma=\sum_{\a^{N}}([\tau_{a}^{N}+ib\tau^{N}]\circ\a^{N})'(x)\exp([\tau_{a}^{N}+ib\tau^{N}](\a^{N}x))c_{q}^{N}(\a^{N}x)W(\a^{M}x_0(i_0))+O_{b_{0}}(\|W\|_{\Lip}\g^{-M})
\]
valid when $|b|<b_{0}$ and $|a-\delta|<a_{0}$ for some fixed $a_{0}$.
We denote the first of these two terms by $\Sigma'$. Now similarly
to before we define complex valued measures

\begin{equation}
\mu'_{s,x,\a^{M}}=\sum_{\alpha^{R}}^*([\tau_{a}^{N}+ib\tau^{N}]\circ\a^{M}\alpha^{R})'(x)\exp([\tau_{a}^{N}+ib\tau^{N}](\alpha^{M}\alpha^{R}x))\delta_{c_{q}^{R}(\a^{R}x)^{-1}}\label{eq:mudashdef}
\end{equation}
\[
\varphi_{f,\a^{M}}=\sum_{g\in\G_{q}}f(\a^{M}x_{0}(i_0))\lvert_{g}\:\delta_{gc_{q}^{M}(\a^{M}x_{0}(i_0))^{-1}}
\]
for $f\in C^{1}(I;\G^{q})$, $\a^{M}$ as in \eqref{eq:splitting}.
Then the key observation is that 
\begin{equation}
\|\Sigma'\|=\left\Vert \sum_{\a^{M}}\varphi_{W,\a^{M}}\star\mu'_{s,x,\a^{M}}\right\Vert _{l^{2}(\G_{q})}.\label{eq:keyS}
\end{equation}

\subsection{Bounds for $\mu'_{s,x,\protect\a^{M}}$}

We have 
\[
\|\mu'_{s,x,\a^{M}}\|_{1}\leq\sup_{I}|[\tau_{a}^{N}+ib\tau^{N}]\circ\a^{M}\alpha^{R})'(x)|\sum_{\alpha^{R}}^*\exp(\tau_{a}^{N}(\alpha^{M}\a^{R}x)).
\]
By equation \eqref{eq:k11}, $\mu'_{s,x,\a^{M}}$ is dominated (in
absolute value) by $\k_{11}(b_{0})\mu{}_{s,x,\a^{M}}$ when $|b|<b_{0}$.
Thus we can use our previous bound \eqref{eq:flattening} to deduce
that for the same choice of $R=c\log q$ and $a$ as before, 

\begin{align*}
\|\Sigma'\| & \leq Cq^{-1/4}\exp(\k_{1})\k_{11}\|W\|_{\infty}\sum_{\a^{M}}\exp(\tau_{a}^{M}(\alpha^{M}\a_{0}^{R}x))\\
 & \leq Cq^{-1/4}\exp(\k_{1})\k_{11}\|W\|_{\infty}L_{a}^{N}[1](\a_{0}^{R}x)\leq Cq^{-1/4}\exp(\k_{1})\k_{11}\|W\|_{\infty}
\end{align*}
whenever $|a-\delta|<a_{0}$, $|b|<b_{0}$ are the ranges specified
by previous Lemmas and $N>\k_{12}\log q$. It now follows from \eqref{eq:Ssplit}
that with these conditions on $N,q,a,b$ we have in light of Lemma
\ref{infinitybound} 
and the prior bound \eqref{eq:Ssplit}
\begin{equation}
\|L_{s,q}^{N}W\|_{\Lip}\leq\k_{13}q^{-\e}\|W\|_{\infty}+\k_{14}\g^{-N}\|W\|_{\Lip}+\g'^{-N}\|W\|_{\Lip}\label{eq:preiterated}
\end{equation}
for some $\e>0$ when $W\in E_{q}$.

By iterating the estimate \eqref{eq:preiterated} one obtains
\begin{lem}
\label{powerssmallfreq}For $b_{0}>0$ given, there are $a_{0}$,
$q_{0}$, $\k$ and $\e>0$ such that when $|a-\delta|<a_{0}$, $|b|<b_{0}$
and $N=\lceil\k\log q\rceil$ with $q>q_{0}$ and  $(q,Q_0)=1$ we have 
\[
\|L_{s,q}^{nN}W\|_{\Lip}\leq q^{-n\e}
\]
for all $E_{q}$-valued $W\in C^{1}(I;\C^{\G_{q}})$ with $\|W\|_{\Lip}=1$. 
\end{lem}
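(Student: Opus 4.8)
The plan is to iterate the one-step bound \eqref{eq:preiterated}, so the first thing to check is that the iteration is legitimate, i.e.\ that $L_{s,q}$ maps $E_q$-valued functions to $E_q$-valued functions. Recall that $E_q$ is the orthogonal complement inside $\C^{\G_q}$ of the span of the subspaces of functions lifted from $\G_{q'}$, $q'\mid q$. Each such lifted subspace is invariant under the right regular representation $\rho$ (right translation permutes the cosets of $\ker(\G_q\to\G_{q'})$), and $\rho$ is unitary, so $E_q$ is $\rho$-invariant. Since $L_{s,q}$ is assembled from the pointwise unitaries $\rho(c_q(y))$ together with multiplication by scalar functions (the normalizing factor $h_a$ and the exponential weights $e^{[\tau_a^N+ib\tau^N]}$), it preserves the property of being $E_q$-valued. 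Hence for $W\in E_q$ with $\|W\|_{C^1}<\infty$, all iterates $W_j:=L_{s,q}^{jN}W$ stay in the same class, and \eqref{eq:preiterated} may be applied to each of them.

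Next I would combine Lemma \ref{infinitybound} with \eqref{eq:preiterated}. Bounding $\|W\|_\infty\le\|W\|_{C^1}$ on the right of \eqref{eq:preiterated}, one gets, under the hypotheses $|a-\delta|<a_0$, $|b|<b_0$, $q>q_0$ with $(q,Q_0)=1$, and $N>\max(\kappa_{10},\kappa_{12})\log q$, the clean multiplicative estimate
\[
\|L_{s,q}^{N}W\|_{C^1}\ \le\ \bigl(\kappa_{13}\,q^{-\epsilon}+\kappa_{14}\,\gamma^{-N}+\gamma'^{-N}\bigr)\,\|W\|_{C^1}
\]
for every $E_q$-valued $W$.

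Now I would fix $\kappa$ so large that $\kappa\log\gamma\ge\epsilon$, $\kappa\log\gamma'\ge\epsilon$, and $\kappa\ge\max(\kappa_{10},\kappa_{12})$. With $N=\lceil\kappa\log q\rceil$ this forces $\gamma^{-N}\le q^{-\epsilon}$ and $\gamma'^{-N}\le q^{-\epsilon}$, so the contraction factor above is at most $(\kappa_{13}+\kappa_{14}+1)\,q^{-\epsilon}$. Enlarging $q_0$ so that $(\kappa_{13}+\kappa_{14}+1)\,q^{-\epsilon}\le q^{-\epsilon/2}$ for $q>q_0$ gives $\|L_{s,q}^{N}W\|_{C^1}\le q^{-\epsilon/2}\|W\|_{C^1}$. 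Applying this successively to $W_0=W,W_1,\dots,W_{n-1}$ --- each again $E_q$-valued with finite $C^1$ norm, so the estimate reapplies --- yields $\|L_{s,q}^{nN}W\|_{C^1}\le q^{-n\epsilon/2}\|W\|_{C^1}$, and relabelling $\epsilon/2$ as $\epsilon$ finishes the proof.

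There is no real obstacle here: all the substance sits in Lemma \ref{infinitybound} and in the Lipschitz bound \eqref{eq:preiterated}, both of which rest on the expansion input of Theorem \ref{thm:mainbody}. The only points needing attention are bookkeeping --- taking the smallest of the various $a_0$'s and $\epsilon$'s and the largest of the various $q_0$'s, and choosing one $\kappa$ dominating $\kappa_{10},\kappa_{12}$ as well as $\epsilon/\log\gamma$ and $\epsilon/\log\gamma'$ --- together with the invariance of $E_q$-valuedness under $L_{s,q}$, which is exactly what makes it legitimate to feed the output of one application back in as the input of the next.
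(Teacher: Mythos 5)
Your proof is correct and is precisely the bookkeeping that the paper leaves implicit behind its single sentence ``By iterating the estimate \eqref{eq:preiterated} one obtains'' the lemma: you bound $\|W\|_\infty$ by $\|W\|_{C^1}$ to turn \eqref{eq:preiterated} into a multiplicative estimate, choose $\kappa$ large enough that $\gamma^{-N},\gamma'^{-N}\le q^{-\epsilon}$ and that the hypotheses of Lemma \ref{infinitybound} and of the derivative bound are satisfied, absorb constants by enlarging $q_0$, and iterate, using the $L_{s,q}$-invariance of $E_q$ (which the paper also records just below) to justify feeding the output back in.
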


\subsection{The new subspace structure and the proof of Part \ref{enu:mainmodular}
of Theorem \ref{thm:maintransfer}.}

We note first the following consequence of Lemma \ref{powerssmallfreq}.
\begin{lem}
\label{powerssmallfreq2} For all $b_{0}>0$ there are $0<\rho<1$,
$a_{0}$, $q_{0}$ and $C$ such that when $|a-\delta|<a_{0}$, $|b|\leq b_{0}$
and 
$q_0<q$, $(q,Q_0)=1$,
 we have for all $m>0$ 
\[
\|L_{s,q}^{m}f\|_{C^{1}}\leq Cq^{C}\rho^{m}\|f\|_{C^{1}}\quad\text{when \ensuremath{f\in E_{q}}.}
\]

\end{lem}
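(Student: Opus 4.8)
The plan is to bootstrap the sparse-exponent decay of Lemma~\ref{powerssmallfreq} into decay at every exponent, at the cost of a fixed power of $q$. Two preliminaries are needed. First, $E_{q}$ is a subrepresentation of the right regular representation of $\G_{q}$: right translation by any $g\in\G_{q}$ permutes the subspaces of functions lifted from the various $\G_{q'}$, $q'\mid q$, and hence preserves their common orthocomplement $E_{q}$. Consequently, if $f$ takes values in $E_{q}$ then so does $c_{q}(y).f(y)$, so $L_{s,q}$ maps $C^{1}(I;E_{q})$ into itself. Second, there is a crude one-step bound $\|L_{s,q}f\|_{C^{1}}\le C_{0}\|f\|_{C^{1}}$ with $C_{0}=C_{0}(b_{0})$ \emph{independent of $q$}: taking $N=1$ in \eqref{eq:LNWexplicit} and differentiating, using that $c_{q}$ is unitary and locally constant, that $L_{a}1=1$, and the uniform bound \eqref{eq:k11} on $|([\tau_{a}+ib\tau]\circ\a^{1})'|$ valid for $|b|\le b_{0}$ and $|a-\delta|<a_{0}$, one gets $\|L_{s,q}f\|_{\infty}\le\|f\|_{\infty}$ and $\|[L_{s,q}f]'\|_{\infty}\le \k_{11}\|f\|_{\infty}+(D/\g)\|f'\|_{\infty}$; iterating gives $\|L_{s,q}^{r}f\|_{C^{1}}\le C_{0}^{r}\|f\|_{C^{1}}$ for every integer $r\ge0$.

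Now fix $b_{0}$ and let $a_{0},q_{0},\k,\e$ be as in Lemma~\ref{powerssmallfreq}, with $N=\lceil\k\log q\rceil$; after enlarging $q_{0}$ we may assume $N\le 2\k\log q$ whenever $q>q_{0}$. Given $m\ge1$, write $m=nN+r$ with $n=\lfloor m/N\rfloor\ge0$ and $0\le r<N$. By the first preliminary $L_{s,q}^{r}f\in C^{1}(I;E_{q})$, so the homogeneous form of Lemma~\ref{powerssmallfreq} (scale $W$ by its $C^{1}$ norm) applies and yields
\[
\|L_{s,q}^{m}f\|_{C^{1}}=\bigl\|L_{s,q}^{nN}(L_{s,q}^{r}f)\bigr\|_{C^{1}}\le q^{-n\e}\,\|L_{s,q}^{r}f\|_{C^{1}}\le q^{-n\e}\,C_{0}^{\,r}\,\|f\|_{C^{1}}.
\]
Here $C_{0}^{\,r}\le C_{0}^{\,N}\le C_{0}\,q^{\k\log C_{0}}$ is a constant times a fixed power of $q$; and from $n>m/N-1$ we get $q^{-n\e}< q^{\e}\,q^{-\e m/N}$, while $N\le2\k\log q$ gives $q^{-\e m/N}\le e^{-\e m/(2\k)}$. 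Hence, with $\rho:=e^{-\e/(2\k)}\in(0,1)$,
\[
\|L_{s,q}^{m}f\|_{C^{1}}\le C_{0}\,q^{\,\e+\k\log C_{0}}\,\rho^{\,m}\,\|f\|_{C^{1}},
\]
and taking $C:=\max\bigl(C_{0},\,\e+\k\log C_{0}\bigr)$ gives the asserted bound $\|L_{s,q}^{m}f\|_{C^{1}}\le Cq^{C}\rho^{m}\|f\|_{C^{1}}$ for all $q>q_{0}$ with $(q,Q_{0})=1$.

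This is essentially bookkeeping: all the analytic content lies in Lemma~\ref{powerssmallfreq}, and the present argument merely converts decay along the arithmetic progression $nN$ into uniform geometric decay while controlling the leftover $r<N=O(\log q)$ factors by the cheap $q$-uniform one-step bound. The one point that genuinely must be verified is the $L_{s,q}$-invariance of $C^{1}(I;E_{q})$ --- equivalently, that the modular cocycle respects the new-vector subspace $E_{q}$ --- since this is what licenses the iterated use of Lemma~\ref{powerssmallfreq}; all the remaining estimates are elementary and of the type already carried out repeatedly in this section.
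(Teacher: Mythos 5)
Your proof is correct and is exactly the routine bootstrap the authors have in mind when they call this "an easy exercise" and refer to \cite[Theorem 4.3]{OW}: verify $L_{s,q}$-invariance of $C^1(I;E_q)$ (which holds because $\G_q(q')$ is normal, so the right regular representation preserves the new subspace), establish the $q$-independent one-step a priori bound via $L_a1=1$, $c_q$ unitary and locally constant, and \eqref{eq:k11}, then split $m=nN+r$ with $N=\lceil\kappa\log q\rceil$ and absorb the $O(\log q)$ leftover factor into a fixed power of $q$. No gaps; this matches the intended argument.
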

This is an easy exercise and the reader can get the details from the
proof of \cite[Theorem 4.3]{OW}. 

Recall the new subspace structure of $\G_{q}$. For any $q'|q$ there
is a projection $\G_{q}\to\G_{q'}$. The kernel of this projection
will be denoted $\G_{q}(q')$, the congruence subgroup of level $q'$
in $\G_{q}$. These have the property that if $q''|q'$ then $\G_{q}(q')\leq\G_{q}(q'')$.
This groups give an orthogonal decomposition of the right regular
representation 
\begin{equation}
\C^{\G_{q}}=\bigoplus_{q'|q}E_{q'}^{q}\label{eq:decompregular}
\end{equation}
where $E_{q'}^{q}$ consists of functions invariant under $\G_{q}(q')$
but not invariant under $\G_{q}(q'')$ for any $q''|q'$, $q''\neq q'$.
Then the $E_{q}$ from before matches $E_{q}^{q}$ as defined here.

The decomposition \eqref{eq:decompregular} gives rise to a corresponding
direct sum decomposition 
\[
C^{1}(I;\C^{\G_{q}})=C^{1}(I)\oplus\bigoplus_{1\neq q'|q}C^{1}(I;E_{q'}^{q}).
\]
It is clear that the subspaces $E_{q'}^{q}$ are invariant under the
transfer operator $L_{s,q}$ and taking derivatives.

Also note that if $f\in E_{q'}^{q}$ then $f$ descends to a well
defined function $F$ on $\G_{q}/\G_{q}(q')\cong\G_{q'}$ which is
not invariant under any congruence subgroup of $\G_{q'}$, hence in
$E_{q'}^{q'}$. Also, if $G$ is a function in $E_{q'}^{q'}$ then
$G$ lifts through the previous isomorphism to a function $g$ in
$E_{q'}^{q}$ for any $q'|q$. This gives rise to a map of Banach
spaces 
\[
\Phi_{q,q'}:C^{1}(I;E_{q'}^{q'})\to C^{1}(I;E_{q'}^{q})
\]
for any $q'|q$ with the property that 
\[
\|\Phi_{q,q'}(f)\|_{C^{1}}=\sqrt{|\G_{q}(q')|}\|f\|_{C^{1}}.
\]

This map is equivariant under the transfer operators in the sense
that 
\[
\Phi_{q,q'}[L_{s,q'}f]=L_{s,q}\Phi_{q,q'}[f]
\]
for any $f\in E_{q'}^{q'}$. In other words, the action of $L_{s,q}$
on a summand in \eqref{eq:decompregular} is determined by the action
of the corresponding transfer operator on $E_{q'}^{q'}$ for some
$q'|q$. We decompose $f\in C^{1}(I;\C^{\G_{q}})$ as 
\[
f=f_{1}+\sum_{1\neq q'|q}f_{q'}
\]
with $f_{q'}\in E_{q'}^{q}$. 
If we assume that
$q$ has no proper divisors $\leq q_{0}$ from Lemma \ref{powerssmallfreq2},
then for any $m$, with all norms $C^{1}$ norms, 
\begin{align*}
\|L_{s,q}^{m}f-L_{s,q}^{m}f_{1}\| & \leq\sum_{q_{0}<q'|q}\|L_{s,q}^{m}f_{q'}\|\\
 & =\sum_{q_{0}<q'|q}\sqrt{\# \G_{q}(q')}\|L_{s,q'}^{m}\Phi_{q,q'}^{-1}f_{q'}\|\\
 & \leq C\sum_{q_{0}<q'|q}\sqrt{\# \G_{q}(q')}(q')^{C}\rho^{m}\|\Phi_{q,q'}^{-1}f_{q'}\|\\
 & \leq Cq^{C}\rho^{m}\sum_{1\neq q'|q}\|f_{q'}\|.
\end{align*}
This bound can be changed to 
\[
\|L_{s,q}^{m}f-L_{s,q}^{m}f_{1}\|\leq C'q^{C'}\rho^{m}\|f\|
\]

for some $C'=C'(\G,b_{0})$
by noting that individually 
\[
\|f_{q'}\|\leq\|f\|
\]
and that any number $q$ has $\ll_{\e}q^{\e}$ divisors for any $\e>0$.
The analogous estimates hold for the unnormalized $\L_{s,q}$ (by
perturbation theory and \eqref{eq:perturbed}). That is, by possibly
adjusting constants slightly and decreasing $a_{0}$ 
\[
\|L_{s,q}^{m}f-L_{s,q}^{m}f_{1}\|\leq C'q^{C'}\rho^{m}\|f\|.
\]
In particular Part \ref{enu:mainmodular} of Theorem \ref{thm:maintransfer}
now follows from the case that $f_{1}=0$ so that $f\in C^{1}(I;\C^{\G_{q}}\ominus1)$.

\newpage
\appendix

\newcommand{\norm}[1]{\left\|#1\right\|}

\newcommand\pf{\begin{proof}}
\newcommand\epf{\end{proof}}

\newcommand\bp{\begin{pmatrix}}
\newcommand\ep{\end{pmatrix}}
\newcommand\ben{\begin{enumerate}}
\newcommand\een{\end{enumerate}}
\newcommand\be{\begin{equation}}
\newcommand\ee{\end{equation}}
\newcommand\benn{\begin{equation*}}
\newcommand\eenn{\end{equation*}}
\newcommand\bea{\begin{eqnarray}}
\newcommand\eea{\end{eqnarray}}
\newcommand\beann{\begin{eqnarray*}}
\newcommand\eeann{\end{eqnarray*}}

\newcommand{\twocase}[5]{#1 \begin{cases} #2 & \text{#3}\\ #4
&\text{#5} \end{cases}   }

\newcommand{\threecase}[6]{
\begin{cases} 
#1 & \text{#2}\\ 
#3 & \text{#4}\\
#5 & \text{#6}\\ 
\end{cases}   }

\newcommand{\fourcase}[8]{
\begin{cases} 
#1 & \text{#2}\\ 
#3 & \text{#4}\\
#5 & \text{#6}\\ 
#7 & \text{#8}\\ 
\end{cases}   }

\newcommand{\bo}{{\bf1}}

\newcommand{\bE}{\mathbb{E}}

\newcommand{\Ep}[2]{\underset{#1}{\mathbb{E}'}\left[#2\right]}
\newcommand{\bA}{\mathbb{A}}
\newcommand{\bP}{\mathbb{P}}

\newcommand{\z}{\mathbb{Z}}

\newcommand{\q}{\mathbb{Q}}

\newcommand{\bH}{\mathbb{H}}
\newcommand{\bD}{\mathbb{D}}
\newcommand{\boldH}{\mathbb{H}}

\newcommand{\bb}{{\bf b}}
\newcommand{\bba}{{\bf a}}
\newcommand{\bbA}{{\bf A}}
\newcommand{\bbe}{{\bf e}}
\newcommand{\bi}{{\bf i}}
\newcommand{\bbk}{{\bf k}}
\newcommand{\bj}{{\bf j}}
\newcommand{\bm}{{\bf m}}
\newcommand{\bs}{{\bf s}}
\newcommand{\bx}{{\bf x}}
\newcommand{\by}{{\bf y}}
\newcommand{\bu}{{\bf u}}
\newcommand{\bv}{{\bf v}}
\newcommand{\bw}{{\bf w}}
\newcommand{\dz}{d\mu(z)}
\newcommand{\vbZ}{\overset\rightarrow{\bf Z}}
\newcommand{\bG}{{\bf G}}
\newcommand{\bbG}{{\mathbb G}}

\newcommand{\fa}{\frak{a}}
\newcommand{\fb}{\frak{b}}
\newcommand{\fc}{\frak{c}}
\newcommand{\fd}{\frak{d}}
\newcommand{\fe}{\frak{e}}
\newcommand{\ff}{\frak{f}}
\newcommand{\fh}{\frak{h}}
\newcommand{\fj}{\frak{j}}
\newcommand{\fk}{\frak{k}}
\newcommand{\fl}{\frak{l}}
\newcommand{\fm}{\frak{m}}
\newcommand{\fn}{\frak{n}}
\newcommand{\fo}{\frak{o}}
\newcommand{\fp}{\frak{p}}
\newcommand{\fq}{\frak{q}}
\newcommand{\fr}{\frak{r}}
\newcommand{\fs}{\frak{s}}
\newcommand{\ft}{\frak{t}}
\newcommand{\fu}{\frak{u}}
\newcommand{\fv}{\frak{v}}
\newcommand{\fw}{\frak{w}}
\newcommand{\fx}{\frak{x}}
\newcommand{\fy}{\frak{y}}
\newcommand{\fz}{\frak{z}}

\newcommand{\fA}{\frak{A}}
\newcommand{\fB}{\frak{B}}
\newcommand{\fC}{\frak{C}}
\newcommand{\fD}{\frak{D}}
\newcommand{\fE}{\frak{E}}
\newcommand{\fF}{\frak{F}} 
\newcommand{\fG}{\frak{G}} 
\newcommand{\fH}{\frak{H}}
\newcommand{\fI}{\frak{I}}
\newcommand{\fJ}{\frak{J}}
\newcommand{\fK}{\frak{K}}
\newcommand{\fL}{\frak{L}}
\newcommand{\fM}{\frak{M}}
\newcommand{\fN}{\frak{N}}
\newcommand{\fO}{\frak{O}}
\newcommand{\fP}{\frak{P}}
\newcommand{\fQ}{\frak{Q}}
\newcommand{\fR}{\frak{R}}
\newcommand{\fS}{\frak{S}}
\newcommand{\fT}{\frak{T}}
\newcommand{\fU}{\frak{U}}
\newcommand{\fV}{\frak{V}}
\newcommand{\fW}{\frak{W}}
\newcommand{\fX}{\frak{X}}
\newcommand{\fY}{\frak{Y}}
\newcommand{\fZ}{\frak{Z}}

\newcommand{\sA}{\mathscr{A}}
\newcommand{\sB}{\mathscr{B}}
\newcommand{\sC}{\mathscr{C}}
\newcommand{\sD}{\mathscr{D}}
\newcommand{\sE}{\mathscr{E}}
\newcommand{\sF}{\mathscr{F}}
\newcommand{\sG}{\mathscr{G}}
\newcommand{\sH}{\mathscr{H}}
\newcommand{\sI}{\mathscr{I}}
\newcommand{\sJ}{\mathscr{J}}
\newcommand{\sK}{\mathscr{K}}
\newcommand{\sL}{\mathscr{L}}
\newcommand{\sM}{\mathscr{M}}
\newcommand{\sN}{\mathscr{N}}
\newcommand{\sO}{\mathscr{O}}
\newcommand{\sP}{\mathscr{P}}
\newcommand{\sQ}{\mathscr{Q}}
\newcommand{\sR}{\mathscr{R}}
\newcommand{\sS}{\mathscr{S}}
\newcommand{\sT}{\mathscr{T}}
\newcommand{\sU}{\mathscr{U}}
\newcommand{\sV}{\mathscr{V}}
\newcommand{\sW}{\mathscr{W}}
\newcommand{\sX}{\mathscr{X}}
\newcommand{\sY}{\mathscr{Y}}
\newcommand{\sZ}{\mathscr{Z}}

\newcommand{\cA}{\mathcal{A}}
\newcommand{\cB}{\mathcal{B}}
\newcommand{\cC}{\mathcal{C}}
\newcommand{\cD}{\mathcal{D}}
\newcommand{\cE}{\mathcal{E}}
\newcommand{\cF}{\mathcal{F}} 
\newcommand{\cG}{\mathcal{G}} 
\newcommand{\cH}{\mathcal{H}}
\newcommand{\cI}{\mathcal{I}}
\newcommand{\cJ}{\mathcal{J}}
\newcommand{\cK}{\mathcal{K}}
\newcommand{\cL}{\mathcal{L}}
\newcommand{\cM}{\mathcal{M}}
\newcommand{\cN}{\mathcal{N}}
\newcommand{\cO}{\mathcal{O}}
\newcommand{\cP}{\mathcal{P}}
\newcommand{\cp}{\mathcal{p}}
\newcommand{\cQ}{\mathcal{Q}}
\newcommand{\cR}{\mathcal{R}}
\newcommand{\cS}{\mathcal{S}}
\newcommand{\cT}{\mathcal{T}}
\newcommand{\cU}{\mathcal{U}}
\newcommand{\cV}{\mathcal{V}}
\newcommand{\cW}{\mathcal{W}}
\newcommand{\cX}{\mathcal{X}}
\newcommand{\cY}{\mathcal{Y}}
\newcommand{\cZ}{\mathcal{Z}}

\newcommand{\Or}{\ensuremath{{\mathcal O}}}
\newcommand{\n}{\N}

\newcommand{\ha}{\aleph}     
\newcommand{\hb}{\beth}     
\newcommand{\hg}{\gimel}     
\newcommand{\hd}{\daleth}     

\newcommand{\ga}{\alpha}     
\newcommand{\gb}{\beta}      
\newcommand{\gd}{\delta}     
\newcommand{\gD}{\Delta}     
\newcommand{\sq}{\square}    
\newcommand{\gn}{\eta}       
\newcommand{\gep}{\epsilon}  
\newcommand{\vep}{\varepsilon} 

\newcommand{\Gi}{\Gamma_{\infty}}      

\newcommand{\gL}{\Lambda}    
\newcommand{\gi}{\iota}    
\newcommand{\gl}{\lambda}    
\newcommand{\gk}{\kappa}    
\newcommand{\gs}{\sigma}     
\newcommand{\gS}{\Sigma}     
\newcommand{\gt}{\theta}     
\newcommand{\gT}{\Theta}     
\newcommand{\gz}{\zeta}      
\newcommand{\gU}{\Upsilon}      
\newcommand{\gu}{\upsilon}      
\newcommand{\gw}{\omega}      
\newcommand{\gW}{\Omega}      

\newcommand{\vf}{\varphi}      
\newcommand{\vt}{\vartheta}      

\newcommand{\foh}{\frac{1}{2}}  

\newcommand{\vectwo}[2]
{\left(\begin{array}{c}
                        #1    \\
                        #2
                          \end{array}\right) }

\newcommand{\vecthree}[3]
{\left(\begin{array}{c}
                        #1    \\
                        #2 \\
                        #3
                          \end{array}\right) }

\newcommand{\mattwos}[4]
{\bigl( \begin{smallmatrix}
                        #1  & #2   \\
                        #3 &  #4
\end{smallmatrix} \bigr)
}

\newcommand{\mattwo}[4]
{\left(\begin{array}{cc}
                        #1  & #2   \\
                        #3 &  #4
                          \end{array}\right) }

\newcommand{\matthree}[9]
{\left(\begin{array}{ccc}
                        #1  & #2 & #3  \\
                        #4  & #5 & #6  \\
                        #7 &  #8 & #9
                          \end{array}\right) }

\newcommand{\Res}{\operatorname{Res}}

\newcommand{\Stab}{\operatorname{Stab}}

\newcommand{\Spec}{\operatorname{Spec}}
\newcommand{\New}{\operatorname{New}}

\newcommand{\sech}{\operatorname{sech}}
\newcommand{\csch}{\operatorname{csch}}

\newcommand{\diag}{\operatorname{diag}}

\newcommand{\Isom}{\operatorname{Isom}}

\newcommand{\Div}{\operatorname{div}}
\newcommand{\grad}{\operatorname{grad}}

\newcommand{\sqf}{\operatorname{sqf}}

\newcommand{\ord}{\operatorname{ord}}

\newcommand{\Zcl}{\operatorname{Zcl}}

\newcommand{\Inn}{\operatorname{Inn}}
\newcommand{\Epi}{\operatorname{Epi}}
\newcommand{\Aut}{\operatorname{Aut}}
\newcommand{\Cay}{\operatorname{Cay}}

\newcommand{\PGL}{\operatorname{PGL}}
\newcommand{\PSL}{\operatorname{PSL}}

\newcommand{\Sp}{\operatorname{Sp}}
\newcommand{\SU}{\operatorname{SU}}

\newcommand{\ad}{\operatorname{ad}}
\newcommand{\Ad}{\operatorname{Ad}}

\newcommand{\Id}{\operatorname{Id}}

\newcommand{\sgn}{\operatorname{sgn}}
\newcommand{\supp}{\operatorname{supp}}
\newcommand{\sym}{\operatorname{sym}}

\renewcommand{\mod}{\operatorname{mod}}
\newcommand{\smod}[1]{(\operatorname{mod} #1)}
\newcommand{\Tr}{\operatorname{Tr}}
\newcommand{\Det}{\operatorname{Det}}
\newcommand{\discr}{\operatorname{discr}}
\renewcommand{\hat}{\widehat} 

\newcommand{\Proj}{\operatorname{Proj}}

\newcommand{\Disc}{\operatorname{Disc}}
\newcommand{\Area}{\operatorname{Area}}

\renewcommand{\Re}{{\mathfrak{Re}}}
\renewcommand{\Im}{{\mathfrak{Im}}}

\newcommand{\<}{\left\langle}
\renewcommand{\>}{\right\rangle}
\newcommand{\la}{\langle}
\newcommand{\ra}{\rangle}
\newcommand{\bk}{\backslash}
\newcommand{\ba}{\backslash}
\newcommand{\GbkH}{\G\bk\boldH}
\newcommand{\GibkH}{\Gi\bk\boldH}
\newcommand{\GqbkH}{\G_0(q)\bk\boldH}

\newcommand{\dd}{\partial}

\newcommand{\ddxx}{{\partial^2\over\partial x^2}}
\newcommand{\ddyy}{{\partial^2\over\partial y^2}}
\newcommand{\ddx}{{\partial\over\partial x}}
\newcommand{\ddy}{{\partial\over\partial y}}

\newcommand{\op}{\operatorname}

\newcommand{\Li}{\operatorname{Li}}
\newcommand{\iL}{\operatorname{iL}}

\newcommand{\lcm}{\operatorname{lcm}}
\def\mult{\mathrm{mult}}

\section{Thermodynamic expansion to arbitrary moduli\\ By Jean Bourgain, Alex Kontorovich and Michael Magee}

\subsection{Statements}
We import all the notation from the rest of the paper. 
We are led to study the measure $\mu$ on $G=\SL_{2}(q)$ given by

\be\label{eq:muIs}
\mu=\sum^*_{\alpha^{R}}\exp([\tau_{a}^{N}+ib\tau^{N}](\a^{M}\alpha^{R}x))\delta_{c_{q}^{R}(\a^{R}x)},
\ee
this differs from the $\mu_{s,x,\a^{M}}$ of equation \eqref{eq:mudefinition} by taking inverses of group elements. This makes spectral bounds for the right action of $\mu_{s,x,\a^{M}}$ and those for the left action of $\mu$ equivalent. 
Here $N=M+R$, $x\in I$, 
\begin{equation}
\a^M=g_{i_1}g_{i_2}\ldots g_{i_M}
\end{equation}
is fixed, and the starred summation means that it is restricted to those 
\begin{equation}
\alpha^R=g_{i_{M+1}}g_{i_{M+2}}\ldots g_{i_{N}}
\end{equation} where the sequence $g_{i_1},\ldots,g_{i_{N}}$ is admissible and 
$\alpha^R$ is a well defined local branch of $T^{-R}$ near $x$. In practice this may rule out one possible value for $i_N$. See Section \ref{expingredients} for more details. Also recall the ``new subspace" $E_q\subset l^2(G)$ defined in Section \ref{expingredients} and the constant $a_0$ coming from Proposition \ref{blackbox}.

Our goal in this Appendix is to prove the following
\begin{thm}\label{thm:main} There is a finite modulus $Q_0$ and $c >0$ such that when $R\approx c \log q $, $(q,Q_0)=1$, $|a-\delta|<a_{0}$ and $\vf\in E_{q}$,  we have
\be\label{eq:main}
\|\mu * \vf\|_{2} \ \le \ C\
q^{-1/4}\,
B\,
\|\vf\|_{2},
\ee
given that
$$
\|\mu\|_{1}\ <\ B.
$$
\end{thm}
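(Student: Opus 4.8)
The plan is to reduce \eqref{eq:main} to the ``modular'' expansion theorem of Bourgain--Varj\'u, which gives a spectral gap for convolution by (a flattened version of) $\mu$ on $E_q$, valid for \emph{arbitrary} moduli $q$ coprime to a fixed bad modulus $Q_0$. The key structural fact is that $\mu$ in \eqref{eq:muIs} is supported on the set of group elements $c_q^R(\a^R x) = g_{i_N}g_{i_{N-1}}\cdots g_{i_{M+1}} \bmod q$, i.e.\ on words of length $R$ in the fixed generators $g_i \bmod q$, with admissibility constraints. Thus, up to the scalar weights $\exp([\tau_a^N + ib\tau^N](\a^M\alpha^R x))$, the measure $\mu$ is essentially the $R$-fold convolution of a fixed finitely-supported measure on $G$ whose support generates (a finite-index or full subgroup related to) $\SL_2(\Z/q\Z)$. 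The role of $R \approx c\log q$ is exactly to make this $R$-fold convolution equidistribute enough on $E_q$ to gain the factor $q^{-1/4}$.

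First I would dispose of the complex weights. Write $w(\a^R) := \exp([\tau_a^N + ib\tau^N](\a^M\alpha^R x))$ and note $|w(\a^R)| = \exp(\tau_a^N(\a^M\alpha^R x))$. Using the splitting \eqref{eq:splitup} $\tau_a^N(\a^M\alpha^R x) = \tau_a^M(\a^M\alpha^R x) + \tau_a^R(\a^R x)$ and the bounded-distortion/decoupling argument of \eqref{align:decouple}, the factor $\exp(\tau_a^M(\a^M\alpha^R x))$ is, up to a multiplicative constant $e^{\k_1}$, independent of $\alpha^R$; it is precisely the quantity $B$ (the total mass $\|\mu\|_1$) modulo constants. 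So after pulling this out, one is left with a measure comparable to $\nu := \sum^*_{\alpha^R} \exp(\tau_a^R(\a^R x))\, \delta_{c_q^R(\a^R x)}$ times a unimodular phase on each atom. The phases $e^{ib\tau^N}$ vary but are irrelevant for an $L^2\to L^2$ bound on convolution (one bounds $\|\mu * \vf\|_2 \le \|\,|\mu|\, *\, |\vf|\,\|_2$ is \emph{false} in general for convolution operators, so instead one keeps $\mu$ as a signed/complex measure and feeds it directly into the Bourgain--Varj\'u machinery, which only needs an $\ell^1$ bound on $\mu$ and the fact that its ``mass distribution'' across cosets of proper subgroups is controlled). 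The real content is: $|\mu|$, the measure $\nu$ above, is after normalization a probability measure that is a convex combination — with weights coming from the Gibbs/pressure structure ($\tau_a$ is normalized so that $L_a 1 = 1$, hence $\sum^*_{\alpha^R}\exp(\tau_a^R(\a^R x)) = L_a^R[1](x) = 1$) — of point masses along length-$R$ admissible words, and such measures are exactly of the ``thermodynamic'' type treated in \cite{BGS2} for square-free $q$.

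The main step is then to invoke the arbitrary-moduli modular expansion. One shows that the normalized $\nu$ (and hence $\mu/B$ up to $e^{\k_1}$) satisfies the hypotheses of the Bourgain--Varj\'u spectral gap for $\SL_2(\Z/q\Z)$: namely, it is not concentrated on a coset of a proper subgroup (this follows from Zariski density / strong approximation of $\Gamma$ together with the admissibility constraints leaving enough words — here one uses that $R \approx c\log q$ is large so the word set has the right covering properties), and it has mass at least $q^{-O(1)}$ on each generator-type element. Then the $R$-fold structure plus Bourgain--Varj\'u gives, for $\vf \in E_q$,
\[
\|\nu * \vf\|_2 \le C q^{-1/4}\|\vf\|_2,
\]
and undoing the normalization by $B \cdot e^{\k_1}$ (absorbing $e^{\k_1}$ into $C$) yields \eqref{eq:main}. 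The hard part — and the reason this Appendix exists — is precisely verifying that the arbitrary-moduli expansion of \cite{BVARBITRARY} applies here: one must control the projection of $\nu$ to every quotient $\SL_2(\Z/q'\Z)$ for $q'\mid q$, ensuring non-concentration on \emph{each} new subspace $E_{q'}$ simultaneously (not just $E_q$), and handle the interaction between the dynamical admissibility restrictions (the starred sum, which may forbid one value of $i_N$) and the requirement that the word set projects onto all of $\SL_2(\Z/q\Z)$ with good $\ell^2$-flattening. This multiscale bookkeeping over all divisors $q'\mid q$, combined with quantitative strong approximation, is where the real work lies; the reduction to it from the transfer-operator side is the routine decoupling already carried out in Section \ref{expingredients}.
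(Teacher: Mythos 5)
Your proposal correctly identifies the two big ideas — reduce to a nonnegative comparison measure and invoke the Bourgain--Varj\'u arbitrary-moduli expansion — but it omits the mechanism that actually produces the $q^{-1/4}$ factor and handles the complex weights, which is the $TT^*$/multiplicity argument (Lemma A.9 in the Appendix). The spectral gap obtained from Bourgain--Varj\'u gives, per step, a contraction $(1-C_1)$ on $L^2_0(G)$; iterating over $R'$ blocks gives $\|\nu * \psi\|_2 \le (1-C_2)^{R}\|\nu\|_1\|\psi\|_2$ for $\psi \in L^2_0(G)$. With $R \approx c\log q$ this is a power $q^{-\vep(c)}$ of $q$, but there is no way to read off the specific exponent $1/4$, nor does this directly bound the operator norm of convolution by the \emph{complex} measure $\mu$ on $E_q$. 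The paper converts the $\nu$-bound into a bound for $\mu$ on $E_q$ by computing $\tr[(A^*A)^2]$, where $A$ is convolution by $\mu$: since $A^*A$ acts by convolution with $\widetilde\mu*\mu$ and every eigenvalue on $E_q$ has Frobenius multiplicity $\gg q$, one gets $\|A\|\le C'[\,|G|\,\|\widetilde\nu*\nu\|_2^2/q\,]^{1/4}$; this is exactly where the $1/4$ comes from, and $|G|\sim q^3$ then delivers $q^{-1/4}$ after $\|\widetilde\nu*\nu\|_2\le 2\|\nu\|_1^2/|G|^{1/2}$ (itself proved using the spectral gap applied to $\psi=\gd_e-\tfrac1{|G|}\mathbf 1_G$). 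Without this step your argument does not close.

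Two smaller points. First, your parenthetical claim that $\|\mu*\vf\|_2\le\||\mu|*|\vf|\|_2$ is ``false in general'' is incorrect: it follows pointwise from the triangle inequality. The real obstruction to using it is that $|\vf|$ is no longer in $E_q$ (it is not orthogonal to constants), so the spectral gap yields nothing for $|\mu|*|\vf|$; this is precisely why the $TT^*$ route is taken. Second, you gesture at ``controlling the projection of $\nu$ to every quotient $\SL_2(\Z/q'\Z)$ for $q'\mid q$'' inside the Appendix argument, but in the paper that bookkeeping is done in the body (Section~5.5, via the decomposition $\C^{\G_q}=\bigoplus_{q'|q}E_{q'}^q$); the Appendix's Theorem needs only to treat the ``new'' subspace $E_q=E_q^q$. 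Finally, to actually invoke Bourgain--Varj\'u one must first produce a measure that is approximately flat with respect to a fixed finite generating set, which the paper achieves by the block decomposition $R=R'L$ and isolating length-two sub-blocks $\a_j^{(2)}$; your sketch passes over this as ``routine decoupling,'' but it is where the admissibility constraints and the Zariski density of the $\a_j^{(2)}({\a_j^{(2)}}')^{-1}$ have to be checked.
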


Recall that in Section \ref{expingredients} we chose for each $\alpha^M$ an $i_0=i_0(\alpha^M)$ such that $\alpha^M$ is a well-defined local 
branch of $T^{-M}$ on $I_{i_0}$. We also chose for each $i$ an $x(i)$ in $I_i$.  More generally, for each admissible composition $\alpha = g_{i_1}\ldots g_{i_j}$
of semigroup elements we now choose an $i(\alpha)$ such that $\alpha$ is a well defined branch of $T^{-j}$ on $I_{i(\alpha)}$. This choice depends only on $i_j$.  Let $o=x(i(\alpha^R))$.

To begin, we define a measure $\nu$ by
\be\label{eq:nuDef}
\nu \ \equiv \
\exp(\tau_{a}^{M}(\ga^{M}x(i_0)))\mu_{1},
\ee
where $\mu_{1}$ is the measure given by
\be\label{eq:mu1Def}
\mu_{1} \ \equiv \
\sum_{\ga^{R}}^*
\exp(\tau_{a}^{R}(\ga^{R}o))
\gd_{c_{q}^{R}(\ga^{R}o)}
.
\ee

\begin{lem}
We have
\be\label{eq:muBndNu}
|\mu| \ \le \ C\, \nu.
\ee
\end{lem}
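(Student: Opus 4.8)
The plan is to dispose of the unimodular oscillatory factor, to observe that the cocycle appearing in the atoms does not see the base point at which a branch is evaluated, so that $|\mu|$ and $\nu$ are carried by the very same atoms, and then to compare the remaining positive weights term by term using the bounded--distortion (decoupling) estimate already established in \eqref{align:decouple}, applied once in the ``$M$-variable'' and once in the ``$R$-variable''.

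First, since $|\exp(ib\tau^{N}(y))|=1$ for every $y$, the definition \eqref{eq:muIs} gives
\[
|\mu|\ =\ \sum_{\alpha^{R}}^{*}\exp\bigl(\tau_{a}^{N}(\alpha^{M}\alpha^{R}x)\bigr)\,\delta_{c_{q}^{R}(\alpha^{R}x)}.
\]
Next I would recall from Section \ref{expingredients} that $c_{q}^{R}(\alpha^{R}x)=g_{i_{N}}g_{i_{N-1}}\cdots g_{i_{M+1}}\bmod q$ depends only on the admissible sequence $(i_{M+1},\dots,i_{N})$ and not on the point at which the branch $\alpha^{R}$ is evaluated; hence $c_{q}^{R}(\alpha^{R}x)=c_{q}^{R}(\alpha^{R}o)$, where $o=x(i(\alpha^{R}))$. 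Consequently $|\mu|$ and $\nu$ are supported on exactly the same atoms $\delta_{c_{q}^{R}(\alpha^{R}o)}$, indexed by the admissible $\alpha^{R}$ compatible with $\alpha^{M}$ and $x$, and it remains only to dominate the weight of $|\mu|$ by $C$ times that of $\nu$ on each such atom.

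For this I would use the cocycle identity $\tau_{a}^{N}(\alpha^{M}\alpha^{R}x)=\tau_{a}^{M}\bigl(\alpha^{M}(\alpha^{R}x)\bigr)+\tau_{a}^{R}(\alpha^{R}x)$, exactly as in \eqref{eq:splitup}. Both $\alpha^{R}x$ and $x(i_{0})$ lie in the (connected) domain $T(I_{i_{M}})$ of the branch $\alpha^{M}$ --- this is where the Markov property and the choice $i_{0}=i_{0}(\alpha^{M})$ are used --- and $T^{n}\alpha^{M}=g_{i_{n+1}}\cdots g_{i_{M}}$ contracts distances by a factor $\le D\gamma^{-(M-n)}$ by Proposition \ref{def:expansion-1}. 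Since $\tau_{a}$ is $C^{1}$ on $I$ with $\|\tau_{a}'\|_{\infty}$ bounded uniformly for $|a-\delta|<a_{0}$, summing a geometric series yields $\tau_{a}^{M}(\alpha^{M}(\alpha^{R}x))\le\tau_{a}^{M}(\alpha^{M}x(i_{0}))+\kappa_{1}$, which is precisely the estimate \eqref{align:decouple}. The identical argument applied to the branch $\alpha^{R}$, now with base points $x$ and $o$ --- both lying in the domain $T(I_{i_{N}})$ of $\alpha^{R}$, the inclusion of $o$ being guaranteed by the choice of $i(\alpha^{R})$ --- and using the contraction of $T^{n}\alpha^{R}$, gives $\tau_{a}^{R}(\alpha^{R}x)\le\tau_{a}^{R}(\alpha^{R}o)+\kappa_{1}$.

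Combining the two inequalities gives $\exp(\tau_{a}^{N}(\alpha^{M}\alpha^{R}x))\le e^{2\kappa_{1}}\exp(\tau_{a}^{M}(\alpha^{M}x(i_{0})))\exp(\tau_{a}^{R}(\alpha^{R}o))$, and summing over the admissible $\alpha^{R}$ against the common atoms $\delta_{c_{q}^{R}(\alpha^{R}o)}$ then yields, recalling \eqref{eq:mu1Def} and \eqref{eq:nuDef}, $|\mu|\le e^{2\kappa_{1}}\exp(\tau_{a}^{M}(\alpha^{M}x(i_{0})))\mu_{1}=e^{2\kappa_{1}}\nu$, so \eqref{eq:muBndNu} holds with $C=e^{2\kappa_{1}}$, uniformly in $q$, $x$ and $\alpha^{M}$. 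I do not expect any serious analytic obstacle here: the only point that needs care is the interval bookkeeping --- verifying that $\alpha^{R}x$ and $x(i_{0})$ genuinely lie on one interval on which $\alpha^{M}$ is a single branch, and likewise for $x$, $o$ and $\alpha^{R}$ --- and this reduces to the Markov property together with the way $i_{0}(\alpha^{M})$ and $i(\alpha^{R})$ were fixed. This lemma is purely a distortion bookkeeping step; the real input is the subsequent appeal to the Bourgain--Varj\'u expansion.
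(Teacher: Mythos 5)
Your proof is correct and is essentially the elaboration the paper intends: the paper's proof is a one-line reference to the contraction property \eqref{align:decouple} and to the argument of Lemma \ref{mul1}. You correctly observe that, because both $c_{q}^{R}$-atoms and weights must be matched term by term, the decoupling estimate has to be applied \emph{twice} — once to replace $\alpha^{R}x$ by $x(i_{0})$ in the $\tau_{a}^{M}$-factor, and once to replace $x$ by $o$ in the $\tau_{a}^{R}$-factor — whereas in Lemma \ref{mul1} it appeared only once (the $R$-sum being disposed of by $L_{a}^{R}[1]=1$). The only cosmetic quibble is the parenthetical ``(connected)'' applied to $T(I_{i_{M}})$: connectedness is not actually used (nor always true for the full image set in the Markov setting); what matters is that both points lie in the domain of the branch $\alpha^{M}$ and that $T^{n}\alpha^{M}(\cdot)$ lands in the cylinder $I_{i_{n+1}\dots i_{M}}$, whose diameter is $O(\gamma^{-(M-n)})$. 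The rest of the bookkeeping is fine.
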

\pf
Use the ``contraction property'' in \eqref{align:decouple} and
argue
 as in the proof of Lemma \ref{mul1}.
\epf

We will now manipulate $\mu_{1}$. We assume that $R$ can be decomposed further  as 
\be\label{eq:RpL}
R \ = \ R'L,
\ee
with $L$ to be chosen later (a sufficiently large constant independent of $R'$ and $q$). Now split $\ga^R$ as
\begin{equation}\label{eq:arsplitting}
\ga^R \  = \  \ga_{R'}^L \ga_{R' - 1}^L \ldots \ga^L_2 \ga^L_1
,
\end{equation}
where the $\ga_{k}^L$ are  branches of $T^{-L}$ given by
\begin{equation}
\alpha^L_{R'}=g_{i_{M+1}}\ldots g_{i_{M+L}},\quad\alpha^L_{R'-1}=g_{i_{M+L+1}}\ldots g_{i_{M+2L}}
\end{equation}
and so on. For each $0\leq p \leq R'-1$ we also split 
$$
\ga^L_{R'-p} \  = \  \ga^{L-2}_{R'-p} \alpha^{(2)}_{R'-p} ,
$$
where $\ga^{L-2}_{R'-p} = g_{i_{M+pL+1}}\ldots g_{i_{M+(p+1)L-2}}$  and $\alpha^{(2)}_{R'-p}=g_{i_{M+(p+1)L-1}}g_{i_{M+(p+1)L}}$. The reason for isolating two indices
will become clear later.

Write out
\bea
\nonumber
\tau_a^R(\ga^R o)  &=& \sum_{ i = 0 }^{R-1} \tau_a( T^i \ga^R o ) 
\\
\nonumber
&=& \sum_ {i = 0 }^{R'-1} \sum_{ \ell = 0}^{L-1} \tau_a( T^{  i L+ \ell } \ga^R o) 
\\
\nonumber
&=&  \sum_ {i = 0 }^{R'-1} \sum_{ \ell = 0}^{L-1} \tau_a( T^{  i L+ \ell } \ga^L_{R' - i } \ga^L_{R' - i - 1} \ldots \ga^L_1   o ) 
\\
\label{eq:lastline}
&=& \sum_{ i = 0}^{R' - 1} \tau_a^L (  \ga^L_{R' - i } \ga^L_{R' - i - 1} \ldots \ga^L_1 (o )) . 
\eea
We now perform decoupling term by term in the above.   We will use the shorthand
$$
\ga^{ L j  }  \ \equiv \  \ga^L_{j } \ga^{L}_{j - 1} \ldots \ga^L_1 .
$$
For $j \ge2$,
we  compare  each term
 in \eqref{eq:lastline}
  of the form
$$
\tau_a^L ( \ga^{ L j }( o ) )
$$
to 
$$
 \tau_a^L (  \ga^L_{j } \ga^{L-2}_{j-1} x(i(\alpha^{L-2}_{j-1})) ) 
. 
$$
This gives
\bea
\nonumber
\tau_a^L ( \ga^{ L j }( o ) ) &=&  \tau_a^L (  \ga^L_{j } \ga^{L-2}_{j-1} x(i(\alpha^{L-2}_{j-1})) ) + O \bigg( \sup | [ \tau_a^L \circ \ga^L_{j } ]' | d( \ga^{L-2}_{j- 1} x(i(\alpha^{L-2}_{j-1})),   \ga^{L-2}_{j-1}\ga^{(2)}_{j-1} \ldots \ga^L_1 o )\bigg) 
\\
\label{eq:constant}
&=&  \tau_a^L (  \ga^L_{j } \ga^{L-2}_{j-1} x(i(\alpha^{L-2}_{j-1}))) + O ( \g^{-(L-2) } ) ,
\eea
where we used the bound \eqref{eq:perturbbound} of Proposition \ref{blackbox}, valid when $a$ is within $a_0$ of  $\delta$. 

We will also use the formula
\begin{equation}\label{eq:cqsplit}
\delta_{c_q^R(\ga^R o) }  =  \delta_{c_q^L(\ga^L
o) }  *  \delta_{c_q^L(\ga^{2L} o) } *  \delta_{c_q^L(\ga^{3L} o) } * \ldots *      \delta_{c_q^L(\ga^{R' L} o) } .
\end{equation}
Then combining \eqref{eq:lastline} and \eqref{eq:cqsplit}, we write
\bea
\nonumber
\mu_1 &=&  \sum_{ \ga^{L}_1 , \ga^{L-2}_2 , \ldots , \ga^{L-2}_{R'}    }^* \sum_{ \ga^{(2)}_2 ,\ldots \ga^{(2)}_{R'}  }^* \exp( \tau_a^R  (\ga^R o ) )) \delta_{ c_q^R(\ga^R o)} 
\\
\nonumber
&=&  \sum_{ \ga^{L}_1 , \ga^{L-2}_2 , \ldots , \ga^{L-2}_{R'}    }^* \sum_{ \ga^{(2)}_2 ,\ldots \ga^{(2)}_{R'}  }^* \exp \left ( \sum_{ j = 1}^{R'} \tau_a^L ( \ga^{ j L }(o))  \right)  
\times
\\
\label{eq:innersum}
&&
\hskip1in
\delta_{c_q^L(\ga^L o) }  *  \delta_{c_q^L(\ga^{2L} o) } *  \delta_{c_q^L(\ga^{3L} o) } * \ldots *      \delta_{c_q^L(\ga^{R' L} o) }  
.
\eea
Starred summation means that  the outer sum is restricted to be compatible with $\alpha^M$ and $x$, and given the collection of  $\alpha^{L-2}_k$ from the outer sum, we then restrict to those $\alpha^{(2)}_k$ that form admissible compositions overall.
We now decouple, replacing each term of the form
$$
e^{ \tau_a^L ( \ga^{ jL }(o) )  } \ \mapsto \ e^{ \tau_a^L (\ga^L_j   \ga^{L-2}_{j-1} x(i(\alpha^{L-2}_{j-1}))) } 
 \ \equiv \ \gb_{j}
$$
with $j \ge2$,
at a cost of a multiplicative factor of $\exp(  c \g^{-L } )$; here $c$ is proportional to the implied constant of \eqref{eq:constant}. When $j = 1$, no replacement is performed, and we set
 $\gb_{1}\equiv e^{ \tau_a^L (\ga^L_{1} o)  }$. 

Inserting this into \eqref{eq:innersum} gives
\bea
\label{eq:mu1Bnd2}
\mu_{1}
&\leq &
\sum_{ \ga^{L-2}_1 , \ga^{L-2}_2 , \ldots , \ga^{L-2}_{R'}    }^* 
\sum_{\ga_{1}^{(2)}}^*
\gb_{1}
\delta_{c_q^L(\ga^L o) }  *
\\
\nonumber
&&
\exp( c \g^{-L} )^{R'-1} 
\left(
\sum_{ \ga^{(2)}_2 ,\ldots \ga^{(2)}_{R'}  }^* \prod_{ j =2 }^{R'} 
\gb_{j}\
\delta_{c_q^L(\ga^{2L} o) } *  \delta_{c_q^L(\ga^{3L} o) } * \ldots *      \delta_{c_q^L(\ga^{R' L} o) }  
\right).
\eea
Note that, although $\gb_{j}$ depends on all of the indices in $\ga_{j}^{L}\ga_{j-1}^{L-2}$,  
because
$\ga_{j}^{L-2}$ and $\ga_{j-1}^{L-2}$ are fixed in the outermost sum, we 
treat
$\gb_{j}$ as a function of  $\ga_{j}^{(2)}$. 

We claim that each term  $c_q^L(\ga^{jL } o)$ also only depends on one $\ga_j^{(2)}$. This is because 
we have $\ga^{jL} = g_{k_1} \ldots g_{k_L} \ga^{(j-1)L}$ for some choice of $g_{k_m}$, and
hence
 for whatever $o$ is chosen, we have
$$
c_q^L ( \ga^{jL } o)\  = \ c_q( g_{k_L}  \ga^{(j-1)L}o) c_q( g_{k_{L-1}} g_{k_L}  \ga^{(j-1)L} o ) \ldots  c_q( g_{k_1} \ldots g_{k_L}  \ga^{(j-1)L}o)
,
$$
see  the Dictionary on page \pageref{table:dictionary}, Section \ref{sec:Counting}. 
From the Definition of $c_q$ we have
$$
c_q( g_{ k_{m} } o' ) \ = \ g_{k_{m} } \bmod q
$$
for any $o' \in I$ where $g_{k_m}$ is a local inverse branch of $T$ near $o'$. Thus
\be\label{eq:cqL}
c_q^L( \ga^{jL} o )\  =\  g_{k_L} \ldots g_{k_1} \bmod q.
\ee
Here 
\be\label{eq:gjL}
g_{k_{L-1}}g_{k_L} \ = \  \ga^{(2)}_j.
\ee
This means we may distribute the convolution and product over the sum,
writing \eqref{eq:mu1Bnd2} as
\bea
\nonumber
\mu_{1}
&\leq& 
\exp( c \g^{-L} )^{R'-1}
\sum_{ \ga^{L-2}_1 , \ga^{L-2}_2 , \ldots , \ga^{L-2}_{R'}    }^* 
\left(
\sum_{\ga_{1}^{(2)}}^*
\gb_{1}\
\delta_{c_q^L(\ga^L o) }  
\right)*
 \left( 
\sum_{ \ga_2^{(2)} }^* 
\gb_{2}\
 \delta_{c_q^L(\ga^{2L} o) } 
\right) * \ldots
\\
\label{eq:convolve}
&&
\hskip2in
\ldots
 * \left( \sum_{\ga_{R'}^{(2)} }^*
  \gb_{R'}\
  \delta_{c_q^L(\ga^{R'L} o) } \right).
\eea
We give each convolved term in \eqref{eq:convolve} a name, defining, for each $j\ge1$, the measure
\be\label{eq:etaDef}
\eta_{j} \ = \ \eta_{j}^{(\ga_{j}^{L-2},\ga_{j-1}^{L-2})}
\
\equiv
\
\sum_{ \ga_j^{(2)} }^*
 \gb_{j }\
   \delta_{c_q^L(\ga^{jL} o) } 
.
\ee
Note this parameterization makes sense since the admissibility of $\alpha^{(2)}_j$ depends only on  $\ga_{j}^{L-2}$ and $\ga_{j-1}^{L-2}$.
We have thus  proved the following
\begin{prop}
We have
\be
\label{eq:mu1Prop}
\mu_{1}
\ \le \
\exp( c \g^{-L} )^{R'-1} 
  \sum_{ \ga^{L-2}_1 , \ga^{L-2}_2 , \ldots , \ga^{L-2}_{R'}    }^* 
 \eta_{1}
 * 
\eta_{2}
* \ldots
 *
\eta_{R'}
  .
\ee 
\end{prop}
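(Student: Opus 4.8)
The plan is to obtain \eqref{eq:mu1Prop} by assembling the block decompositions already in hand. Starting from the definition \eqref{eq:mu1Def} of $\mu_{1}$, I would first rewrite the orbit sum using the length-$L$ block partition of \eqref{eq:lastline}, so that $\tau_a^R(\ga^R o) = \sum_{j=1}^{R'}\tau_a^L(\ga^{jL}(o))$, and simultaneously factor the Dirac mass via \eqref{eq:cqsplit} as $\delta_{c_q^R(\ga^R o)} = \delta_{c_q^L(\ga^L o)} * \delta_{c_q^L(\ga^{2L} o)} * \cdots * \delta_{c_q^L(\ga^{R'L}o)}$. Substituting both identities into $\mu_{1}$ and reorganising the sum over the blocks according to the splitting \eqref{eq:arsplitting} and the further decomposition $\ga^L_k = \ga^{L-2}_k\ga^{(2)}_k$ produces \eqref{eq:innersum}.

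The decoupling step comes next. For each $j\ge 2$ I would replace the factor $e^{\tau_a^L(\ga^{jL}(o))}$ by $\gb_{j} = e^{\tau_a^L(\ga^L_j\ga^{L-2}_{j-1}x(i(\alpha^{L-2}_{j-1})))}$; the error is exactly the one estimated in \eqref{eq:constant}, which follows from the derivative bound \eqref{eq:perturbbound} of Proposition \ref{blackbox} (legitimate since $|a-\delta| < a_0$) together with the exponential contraction $d(\cdot,\cdot) = O(\g^{-(L-2)})$ supplied by the eventually expanding property of $T$. Each of the $R'-1$ replacements costs a multiplicative factor $\exp(c\g^{-L})$, and since all quantities involved are positive and convolution of positive measures is monotone, these accumulate into the prefactor $\exp(c\g^{-L})^{R'-1}$ of \eqref{eq:mu1Bnd2}, with $\gb_{1} \equiv e^{\tau_a^L(\ga^L_1 o)}$ left unchanged.

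The structural heart of the argument — and the step I expect to require the most care — is the claim that, once $\ga^{L-2}_1,\dots,\ga^{L-2}_{R'}$ are fixed by the outer sum, both $\gb_{j}$ and the group element $c_q^L(\ga^{jL}o)$ depend on the inner index $\ga^{(2)}_j$ alone. For $\gb_{j}$ this is visible from its definition. For $c_q^L(\ga^{jL}o)$, I would unwind the cocycle through the Dictionary on page \pageref{table:dictionary} and the locally constant Definition of $c_q$ to obtain $c_q^L(\ga^{jL}o) = g_{k_L}\cdots g_{k_1}\bmod q$ as in \eqref{eq:cqL}, where $g_{k_{L-1}}g_{k_L} = \ga^{(2)}_j$ by \eqref{eq:gjL} and $g_{k_1},\dots,g_{k_{L-2}}$ are determined by $\ga^{L-2}_j$; it is also here that one must check that the admissibility constraint on $\ga^{(2)}_j$ only involves $\ga^{L-2}_j$ and $\ga^{L-2}_{j-1}$, so that the inner starred sums over $\ga^{(2)}_2,\dots,\ga^{(2)}_{R'}$ decouple from one another. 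Granting this, the sum of the $R'$-fold convolution of the product $\prod_j \gb_j\,\delta_{c_q^L(\ga^{jL}o)}$ factors as the convolution of the $R'$ individual sums, which is \eqref{eq:convolve}; naming the $j$-th factor $\eta_j = \eta_j^{(\ga_j^{L-2},\ga_{j-1}^{L-2})}$ as in \eqref{eq:etaDef} produces \eqref{eq:mu1Prop} and finishes the proof.
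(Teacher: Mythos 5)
Your proposal reproduces the paper's argument faithfully and step by step: the block decomposition via \eqref{eq:lastline} and \eqref{eq:cqsplit} yielding \eqref{eq:innersum}, the decoupling of $\gb_j$ at cost $\exp(c\g^{-L})$ per index via \eqref{eq:constant}, the observation that $c_q^L(\ga^{jL}o)$ and $\gb_j$ depend only on $\ga^{(2)}_j$ once the outer indices are fixed (using \eqref{eq:cqL}--\eqref{eq:gjL}), and the resulting factorization into the convolution of the $\eta_j$. This is the same approach as the paper and there is nothing to flag.
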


Next we observe that each of the measures $\eta_{j}$ is nearly flat, in that their coefficients
in \eqref{eq:etaDef}
 differ by constants:
\begin{lem}\label{lem:smallfluctutations}
There is some  $c'>0$ such that for any $L>0$, for each $j\ge1$ and any $\ga_{j}^{(2)}$ and ${\ga_{j}^{(2)}}'$, we have
\be\label{eq:lem1}
{\gb_{j}'\over \gb_{j}}
\ \le\ 
c'
.
\ee
\end{lem}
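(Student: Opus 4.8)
The plan is to recognize $\log\gb_j$ as a value of the ergodic sum $\tau_a^L$ evaluated at a point whose ``outer'' $(L-2)$-fold inverse branch stays \emph{fixed} as the summation variable $\ga_j^{(2)}$ varies, so that only the two innermost generators move. Concretely, for $j\ge2$ I would write $\ga_j^L=\ga_j^{L-2}\ga_j^{(2)}$ and set
\[
w \ = \ \ga_j^{(2)}\,\ga_{j-1}^{L-2}\,x\bigl(i(\ga_{j-1}^{L-2})\bigr),\qquad
w' \ = \ {\ga_j^{(2)}}'\,\ga_{j-1}^{L-2}\,x\bigl(i(\ga_{j-1}^{L-2})\bigr),
\]
so that $\log\gb_j=\tau_a^L(\ga_j^{L-2}w)$ and $\log\gb_j'=\tau_a^L(\ga_j^{L-2}w')$ with one and the same branch $\ga_j^{L-2}$ in both expressions; for $j=1$ the analogous identities hold with $\ga_1^{L-2}$ and a base point $o$ (which may move with $\ga_1^{(2)}$, but always lies in $I$). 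Since $\ga_j^{(2)},{\ga_j^{(2)}}'$ are branches of $T^{-2}$, the points $w,w'$ lie in $I$, whence $|w-w'|\le\diam(I)<\infty$; and the admissibility constraints governing the starred sum, together with the Markov property, ensure that every such $w$ lies in the domain of the fixed analytic branch $\ga_j^{L-2}$.

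Assuming $L\ge3$ (the cases $L\le2$ being immediate, as then $\tau_a^L$ has at most two summands), I would use $T^{L-2}\circ\ga_j^{L-2}=\mathrm{id}$ on that domain to split
\[
\tau_a^L(\ga_j^{L-2}w)\ =\ \tau_a^{L-2}(\ga_j^{L-2}w)\ +\ \tau_a(w)\ +\ \tau_a(Tw),
\]
and likewise for $w'$. For the first (``tail'') term I would invoke the bounded-distortion bound \eqref{eq:perturbbound} of Proposition \ref{blackbox}, applied with $N=L-2$ to the fixed branch $\ga_j^{L-2}$ (legitimate since $|a-\delta|<a_0$); by the mean value theorem this gives
\[
\bigl|\tau_a^{L-2}(\ga_j^{L-2}w)-\tau_a^{L-2}(\ga_j^{L-2}w')\bigr|\ \le\ C_0\,|w-w'|\ \le\ C_0\,\diam(I),
\]
a constant independent of $L$, $j$, $R'$, $q$ and of the choices $\ga_j^{(2)},{\ga_j^{(2)}}'$. (Equivalently one argues directly that $T^\ell\circ\ga_j^{L-2}$ is a branch of $T^{-(L-2-\ell)}$, contracting by at most $D\g^{-(L-2-\ell)}$ by Proposition \ref{def:expansion-1}, so the $\ell$-th summand fluctuates by at most $\sup_I|\tau_a'|\,D\g^{-(L-2-\ell)}\diam(I)$, and the geometric series sums to a constant.) The remaining two summands $\tau_a(w)-\tau_a(w')$ and $\tau_a(Tw)-\tau_a(Tw')$ cannot be controlled by the closeness of $w$ and $w'$ — but there are only two of them, and each is at most $\|\tau_a\|_\infty$ in absolute value, so their combined fluctuation is at most $4\|\tau_a\|_\infty$. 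This bounded leftover is exactly why two generators are isolated rather than absorbed into the contracting block.

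Adding the three contributions gives $\bigl|\log\gb_j'-\log\gb_j\bigr|\le C_0\diam(I)+4\|\tau_a\|_\infty$, and since $\tau_a=-a\tau-P(-a\tau)-\log(h_a\circ T)+\log h_a$ has $P(-a\tau)$, $\|\log h_a\|_\infty$ and $\sup_I|\tau_a'|$ all locally bounded in $a$ (cf.\ the discussion of $B_a$ in \cite[pg. 139]{NAUD}), this is a finite constant depending only on $\G$ and $a_0$; taking $c'=\exp\bigl(C_0\diam(I)+4\|\tau_a\|_\infty\bigr)$ proves \eqref{eq:lem1}. I do not expect a genuine obstacle here — the argument is pure bounded distortion — and the only point requiring care is the bookkeeping that the outer block $\ga_j^{L-2}$ is genuinely independent of the summation variable $\ga_j^{(2)}$ and that $w,w'$ share its domain, so that \eqref{eq:perturbbound} may be applied with a single branch to both points.
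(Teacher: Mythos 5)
Your argument is correct and is essentially a faithful expansion of the paper's one-line proof, which simply notes that the first $L-2$ letters of the branch words defining $\gb_j$ and $\gb_j'$ coincide and then invokes the contraction bound \eqref{align:decouple}. The splitting $\tau_a^L=\tau_a^{L-2}+\tau_a^2\circ T^{L-2}$, the bounded-distortion control of the tail (via \eqref{eq:perturbbound}, which is equivalent to \eqref{align:decouple}), and the crude $\|\tau_a\|_\infty$-bound on the two innermost summands are precisely the steps the paper leaves implicit.
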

\pf
The first $L-2$ terms of $\gb_{j}$ and $\gb_{j}'$ agree, so we 
again use the ``contraction property'' 
from \eqref{align:decouple}.
\epf

Since the measures $\eta_{j}$ are nearly flat, we may now apply the expansion result in \cite{BVARBITRARY}.

\begin{thm}\label{thm:flatExpand}
Assume $L$ is sufficiently large (depending only on $\G$). Then
for $\varphi\in L^{2}_{0}(G)$, we have
\be\label{eq:prop1}
\| \eta_{j}*\varphi \| _{2}
\ \le \
(1-C_{1})\,
\|\eta_{j}\|_{1}\,
\|\varphi\|_{2},
\ee
Here $C_{1}>0$ depends on $\G$  but not on $q$.
\end{thm}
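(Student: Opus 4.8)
The plan is to reduce Theorem \ref{thm:flatExpand} to the expander result of Bourgain--Varj\'{u} \cite{BVARBITRARY} by exhibiting $\eta_j$, up to a bounded multiplicative factor, as a symmetric random walk measure on $G = \SL_2(\Z/q\Z)$ driven by a fixed generating set. First I would renormalize: by Lemma \ref{lem:smallfluctutations} the coefficients $\beta_j$ of $\eta_j$ lie in a fixed bounded ratio, so $\frac{1}{\|\eta_j\|_1}\eta_j$ is comparable (up to the absolute constant $c'$) to the normalized counting measure $\tilde\eta_j$ on the multiset $\{c_q^L(\alpha^{jL}o)\}$ as $\alpha_j^{(2)}$ ranges over its admissible values; by \eqref{eq:cqL}--\eqref{eq:gjL} this group element is $g_{k_L}\cdots g_{k_1}\bmod q$ where only the pair $g_{k_{L-1}}g_{k_L} = \alpha_j^{(2)}$ varies, the remaining $g_{k_i}$ being determined by the fixed outer data $\alpha_j^{L-2},\alpha_{j-1}^{L-2}$. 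Hence $\eta_j$ is a fixed translate (on one side) of a measure supported on $\{u\, v : v \in \mathcal S\}$ where $u\in G$ is fixed and $\mathcal S\subset G$ is the image mod $q$ of the two-letter words $g_a g_{a'}$ with $(a,a')$ admissible. Left translation by a fixed group element is an $L^2$-isometry commuting with the right regular representation on $L^2_0(G)$, so it suffices to prove the spectral gap for the measure supported on $\mathcal S$.

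Next I would observe that $\mathcal S$ generates $G$ for $(q,Q_0)=1$: the $g_a$ (equivalently their two-fold products, which is exactly the continued-fraction semigroup's natural generating set) generate a Zariski-dense subsemigroup of $\SL_2$, and reduction mod $q$ is surjective onto $\SL_2(\Z/q\Z)$ for $q$ coprime to a fixed bad modulus; this is precisely the setting of \cite{BVARBITRARY}. The one subtlety is that $\mathcal S$ need not be symmetric and the semigroup is not a group. I would handle this exactly as in \cite{BGS2}: either pass to the measure $\tilde\eta_j * \tilde\eta_j^{\vee}$ (whose convolution operator is self-adjoint and positive, and whose spectral gap transfers to $\tilde\eta_j$ via $\|\tilde\eta_j * \varphi\|_2^2 = \langle \tilde\eta_j^{\vee}*\tilde\eta_j*\varphi,\varphi\rangle$), or absorb symmetrization by a further doubling of the block length $L$. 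The key input is that the Bourgain--Varj\'{u} expansion applies to the Cayley graph of $G$ with respect to any fixed generating set independent of $q$, giving a spectral radius bounded by $1-C_1$ on $L^2_0(G)$ with $C_1$ depending only on the generating set, hence only on $\G$.

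The remaining point is bookkeeping about $L$: the argument of \cite{BVARBITRARY} requires the walk to have made enough steps, which is why the block length $L$ must be taken a large constant depending only on $\G$; I would take $L$ large enough that $g_{k_1}\cdots g_{k_L}\bmod q$ already equidistributes well enough that the two free indices $g_{k_{L-1}}g_{k_L}$ alone suffice to realize the expansion — more precisely, I would reorganize so that the $L-2$ fixed letters are also made to vary in an auxiliary averaging (they do vary over the outer sum in \eqref{eq:mu1Prop}), or simply invoke that a single step of a generating measure on an expander already contracts $L^2_0$. Writing it cleanly, I would just note: $\eta_j$ is (up to the bounded factor $c'$ and a fixed left translation) a nonnegative measure whose support, after the identification \eqref{eq:cqL}--\eqref{eq:gjL}, is $\{g\,\sigma : \sigma\in\mathcal S_0\}$ for fixed $g$ and a fixed generating set $\mathcal S_0$ of $G$ of size $\#\A^2$ (or its symmetrization), so \eqref{eq:prop1} is literally \cite[Theorem ...]{BVARBITRARY} applied to $\mathcal S_0$, with $\|\eta_j\|_1$ pulled out.

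I expect the main obstacle to be the non-symmetry / semigroup issue: one must be careful that the \emph{one-sided} translates arising from the fixed outer data do not spoil self-adjointness, and that $\mathcal S_0$ (the mod-$q$ reductions of the admissible two-letter words) really does generate all of $\SL_2(\Z/q\Z)$ for $q$ coprime to $Q_0$ — this is where the choice of $Q_0$ and the hypothesis $(q,Q_0)=1$ are consumed, and where one must quote the strong approximation / surjectivity statement together with the expander theorem of \cite{BVARBITRARY}. Everything else (pulling out $\|\eta_j\|_1$, the $c'$ from Lemma \ref{lem:smallfluctutations}, the isometry of translation) is routine.
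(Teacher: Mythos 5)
Your proposal takes essentially the same route as the paper: normalize by Lemma \ref{lem:smallfluctutations}, absorb the fixed block of $L-2$ letters, pass to the self-adjoint operator $\widetilde A$ (your $\tilde\eta_j * \tilde\eta_j^{\vee}$), observe that the fixed block cancels so $\widetilde A$ is a measure supported on the symmetric set $\{\alpha_j^{(2)}(\alpha_j^{(2)\prime})^{-1}\}$, and invoke Bourgain--Varj\'u. That is exactly what the paper does (\eqref{eq:etajnorm}--\eqref{eq:weightedsum} and Proposition \ref{prop:gap}).

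Two points where your write-up is looser than it should be. First, the closing claim that \eqref{eq:prop1} is ``literally Bourgain--Varj\'u applied to $\mathcal S_0$'' is misleading: $\mathcal S_0$ is a semigroup generating set, not symmetric, and the Cayley-graph expander theorem is not applied to it directly --- the $T^*T$ step you list as an option is in fact the step that makes the reduction work; the paper goes through it explicitly. Second, and more substantively, your justification for Zariski density --- ``the $g_a$ generate a Zariski-dense subsemigroup and reduction mod $q$ is surjective'' --- does not by itself show that the \emph{difference set} $\alpha_j^{(2)}(\alpha_j^{(2)\prime})^{-1}$ generates a Zariski-dense \emph{group}. The paper has to verify this directly in Proposition \ref{prop:gap}: in the continued-fraction case it notes that the naive differences $\bigl(\begin{smallmatrix}0&1\\1&a\end{smallmatrix}\bigr)\bigl(\begin{smallmatrix}0&1\\1&b\end{smallmatrix}\bigr)^{-1}$ are lower-triangular, so one has to take true two-step products, and in the Schottky case the admissible $\alpha_j^{(2)}$ depend on the adjacent letters of $\alpha_j^{L-2},\alpha_{j-1}^{L-2}$, so the proof runs a finite case analysis and takes the worst expansion constant --- this dependence is precisely why the paper isolates a block of length two rather than one. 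You should make those two checks explicit rather than waving at strong approximation.
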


\begin{proof}[Proof of Theorem \ref{thm:flatExpand}]\
Recalling \eqref{eq:etaDef}, 
we can write
\be \label{eq:etajnorm}
\| \eta_{j}*\varphi \|^{2} _{2}\  = \ \<\widetilde A \vf,\vf\>
,
\ee
where $\widetilde A
$ acts by convolution with the measure
\begin{equation}\label{eq:weightedsum}
A \equiv \sum_{\alpha_{j}^{(2)},{\alpha_{j}^{(2)}}'}^*
\gb_{j}\,\gb_{j}'\,
\delta_{c_q^L(\ga^{jL} o) c_q^L((\ga^{jL})' o)^{-1} } .
\end{equation}
Using the notation of \eqref{eq:cqL} and \eqref{eq:gjL}, note that
$$
c_q^L(\ga^{jL} o) c_q^L((\ga^{jL})' o)^{-1} 
\ = \
\ga_{j}^{(2)}\cdot g_{k_{L-1}} \ldots g_{k_1}
({\ga_{j}^{(2)}}'\cdot g_{k_{L-1}} \ldots g_{k_1})^{-1}
\ = \
\ga_{j}^{(2)}
({\ga_{j}^{(2)}}')^{-1}.
$$

We will  now appeal to the following spectral gap modulo $q$ for the group generated by the coefficients $\ga_{j}^{(2)}
({\ga_{j}^{(2)}}')^{-1}$.

\begin{prop}[Spectral gap]\label{prop:gap}
There is some modulus $Q_0$ and some $\epsilon > 0$ such that for all indices $j$, for all $q$ coprime to $Q_0$ and for all $\phi\in \ell^2_0(G)$ with $\|\phi\|_2 = 1$ there is some pair  $\ga_{j}^{(2)},{\ga_{j}^{(2)}}'$ such that 
\begin{equation}
\| \ga_{j}^{(2)}
({\ga_{j}^{(2)}}')^{-1} * \phi - \phi \|_2 > \epsilon. \label{eq:noalmostinvariant}
\end{equation}
\end{prop}

The statement of Proposition \ref{prop:gap} is well known to be equivalent to other uniform spectral gap properties.  The uniform spectral gap is known to exist in the current setting for the following reasons.

\textbf{Continued fractions setting.}
 Here we need the products  $\ga_{j}^{(2)}
({\ga_{j}^{(2)}}')^{-1}$ to generate a group with Zariski closure $\SL_{2}$. Since all sequences of $g_{i_j}$ are admissible, the $\alpha^{(2)}_j$ appearing in \eqref{eq:etaDef} do not depend on $j$.
Recall that in the continued fractions setting,
each $g_{i}$ is already a product of two generators $\mattwos011a\mattwos011b$. It is easy to see then that the $\a^{(2)}_j$ generate a Zariski dense subgroup whenever the alphabet $\A$ of $\G_\A$ has at least two letters, in fact, it would have been enough to take for the $\alpha^{(2)}$ blocks of length 1. On the other hand, we do need sufficiently many of the $\mattwos011a$ to be involved as  the products $\mattwos011a\mattwos011b^{-1}=\mattwos10{a-b}1$ are lower-triangular. 
Proposition \ref{prop:gap} then follows from the expansion result of Bourgain and Varj\'{u} \cite{BVARBITRARY}. In the cases that the  $ \ga_{j}^{(2)}
({\ga_{j}^{(2)}}')^{-1} $ generate all of $\SL_2(\Z)$, Proposition \ref{prop:gap} is a well known consequence of Selberg's ``3/16 Theorem" from \cite{SELBERGCOEF}.

 \textbf{Schottky semigroup/group setting.}
 
Note that this setting contains the case that $\G$ is a Schottky \emph{group} as in \cite{BGS2}. Again, it will be enough to show that the $\ga_{j}^{(2)}
({\ga_{j}^{(2)}}')^{-1}$ generate a Zariski dense sub\emph{group} of $\SL_2(\Z)$. This is the reason why we needed to make $\alpha_j^{(2)}$ a block of length $2$. Indeed, suppose that the Schottky semigroup is generated by at least two Schottky generators and let $g,h$ be two of these generators. For example, if  $\ga_{j}^{L-2}$ ends in $g$ while $\ga_{j-1}^{L-2}$ starts with $g^{-1}$ then the summation in \eqref{eq:etaDef} contains $\alpha_j^{(2)}$ of the form $
gh,
gh^{-1},
hg^{-1},
hh,
h^{-1}g^{-1},
h^{-1}h^{-1}
$.
It is then easy to see that the $\ga_{j}^{(2)}
({\ga_{j}^{(2)}}')^{-1}$ generate a Zariski dense group
(if 
$\G$ has more than two generators, this
 is 
 also clear). We may then apply the Bourgain-Varj\'{u} expansion result \cite{BVARBITRARY} to obtain a spectral gap for the group generated 
 by $\ga_{j}^{(2)}
({\ga_{j}^{(2)}}')^{-1}$.   Now, this group and its generator set (and hence also its expansion constant $\epsilon$ as in \eqref{eq:noalmostinvariant}) depend on $\ga_j^{L-2}$ and $\ga_{j-1}^{L-2}$ (or rather just their starting/ending letters). But as $\G$ is finitely generated, only a finite number of groups/generators arise in this way, and
we simply
take $\epsilon$ to be the worst 
one, yielding Proposition \ref{prop:gap}.

We now resume our proof of Theorem \ref{thm:flatExpand}.  Assume without loss of generality that $\|\vf\|_2=1$ and let $\ga_{j}^{(2)},{\ga_{j}^{(2)}}'$ be the pair provided by Proposition \ref{prop:gap} applied to $\vf$, and $\epsilon$ the provided constant. Since there is a uniform bound on the size of the support of $ A$, Lemma \ref{lem:smallfluctutations} gives
\begin{equation}\label{eq:coefflower}
\beta_j\beta'_j \gg \| A \|_1 
\end{equation}
with an uniform positive implied constant (here $\beta_j \beta_j'$ is the coefficient of $\alpha_j^{(2)}( \alpha_j^{(2)'})^{-1}$ in $A$).
It follows by routine arguments from \eqref{eq:coefflower} together with \eqref{eq:noalmostinvariant} for $\vf$, with the associated $\epsilon$, that the operator norm of $\widetilde A$ acting on $\ell^2_0(G)$ is
$$\| \widetilde A \|_{op}\leq (1-\epsilon')\|A\|_1$$
for some $\epsilon'$ depending on $\epsilon$. The resulting bound on \eqref{eq:etajnorm}  establishes Theorem \ref{thm:flatExpand}, since $\|A\|_1 = \|\eta_j \|^2_1$.
\end{proof}
\begin{coro}
Assume that $L$ is sufficiently large (depending only on $\G$). Then there is some $C_{2}>0$ also depending only on $\G$ so that, for any $\vf\in L^{2}_{0}(G)$, we have
\be\label{eq:mu1Bnd}
\|\mu_{1}*\vf\|_{2} \ \le \ 
(1-C_{2})^{R} \ \|\mu_{1}\|_{1} \ \|\vf\|_{2}.
\ee
\end{coro}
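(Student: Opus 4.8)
The plan is to iterate the single-block expansion of Theorem~\ref{thm:flatExpand} along the convolution factorization~\eqref{eq:mu1Prop}, to sum over the outer data using positivity, and finally to pass from the exponent $R'$ to $R=R'L$ by a suitable choice of $C_{2}$.

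\emph{Iterating the contraction.} First I would record the elementary fact that convolution by a nonnegative measure preserves $L^{2}_{0}(G)$: if $\eta\ge0$ is a measure on $G$ and $\vf\in L^{2}_{0}(G)$, then $\sum_{g}(\eta*\vf)(g)=\|\eta\|_{1}\sum_{g}\vf(g)=0$. Each $\eta_{j}$ of \eqref{eq:etaDef} is a nonnegative measure, so for fixed outer data $\ga_{1}^{L-2},\dots,\ga_{R'}^{L-2}$ we may apply Theorem~\ref{thm:flatExpand} to the innermost factor $\eta_{R'}*\vf\in L^{2}_{0}(G)$, then to $\eta_{R'-1}*(\eta_{R'}*\vf)$, and so on; after $R'$ applications,
\[
\|\eta_{1}*\eta_{2}*\cdots*\eta_{R'}*\vf\|_{2}\ \le\ (1-C_{1})^{R'}\,\|\eta_{1}\|_{1}\cdots\|\eta_{R'}\|_{1}\,\|\vf\|_{2}.
\]

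\emph{Summing over the outer data.} Inserting this into \eqref{eq:mu1Prop} and using the triangle inequality gives
\[
\|\mu_{1}*\vf\|_{2}\ \le\ \exp(c\g^{-L})^{R'-1}(1-C_{1})^{R'}\Big(\sum\nolimits^{*}_{\ga_{1}^{L-2},\dots,\ga_{R'}^{L-2}}\ \prod_{j=1}^{R'}\|\eta_{j}\|_{1}\Big)\|\vf\|_{2}.
\]
By \eqref{eq:etaDef} we have $\|\eta_{j}\|_{1}=\sum^{*}_{\ga_{j}^{(2)}}\gb_{j}$, so expanding the product converts the bracketed quantity into the single starred sum $\sum^{*}_{\ga^{R}}\prod_{j=1}^{R'}\gb_{j}$ over all admissible $\ga^{R}$ (the admissibility and dependence claims justifying this expansion are exactly those isolated around \eqref{eq:gjL}). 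Running the decoupling of \eqref{eq:constant} in the reverse direction, $\log\gb_{j}\le\tau_{a}^{L}(\ga^{jL}o)+c\g^{-L}$ for $j\ge2$, so by \eqref{eq:lastline}
\[
\prod_{j=1}^{R'}\gb_{j}\ \le\ \exp(c\g^{-L})^{R'-1}\exp\!\Big(\sum_{j=1}^{R'}\tau_{a}^{L}(\ga^{jL}o)\Big)\ =\ \exp(c\g^{-L})^{R'-1}\exp(\tau_{a}^{R}(\ga^{R}o)).
\]
Summing over $\ga^{R}$ and recalling from \eqref{eq:mu1Def} that $\|\mu_{1}\|_{1}=\sum^{*}_{\ga^{R}}\exp(\tau_{a}^{R}(\ga^{R}o))$, we obtain $\sum^{*}_{\ga_{1}^{L-2},\dots,\ga_{R'}^{L-2}}\prod_{j}\|\eta_{j}\|_{1}\le\exp(c\g^{-L})^{R'-1}\|\mu_{1}\|_{1}$, hence
\[
\|\mu_{1}*\vf\|_{2}\ \le\ \exp(c\g^{-L})^{2(R'-1)}(1-C_{1})^{R'}\,\|\mu_{1}\|_{1}\,\|\vf\|_{2}.
\]

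\emph{Absorbing the exponential factors.} Since $\exp(c\g^{-L})^{2(R'-1)}\le\exp(2c\g^{-L}R')$, the prefactor is at most $\big(\exp(2c\g^{-L})(1-C_{1})\big)^{R'}$; as $c$, $\g>1$, and $C_{1}$ depend only on $\G$, taking $L$ large enough (still only in terms of $\G$) gives $\exp(2c\g^{-L})(1-C_{1})\le 1-C_{1}/2$. With $R=R'L$ and $C_{2}:=1-(1-C_{1}/2)^{1/L}\in(0,1)$, which depends only on $C_{1}$ and $L$ and hence only on $\G$, we get $(1-C_{1}/2)^{R'}=(1-C_{2})^{R'L}=(1-C_{2})^{R}$, which is \eqref{eq:mu1Bnd}. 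I expect the only point requiring real care is the second step: the bookkeeping of which index each $\gb_{j}$, each $c_{q}^{L}(\ga^{jL}o)$, and each admissibility constraint depends on, so that the product of the $\|\eta_{j}\|_{1}$ genuinely telescopes into one starred sum over $\ga^{R}$; the iteration and the choice of $C_{2}$ are routine.
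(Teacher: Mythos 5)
Your proof is correct and follows essentially the same route as the paper's: iterate Theorem~\ref{thm:flatExpand} along the factorization~\eqref{eq:mu1Prop}, run the contraction estimate in reverse to bound $\sum^*_{\ga^{L-2}}\prod_j\|\eta_j\|_1$ by $\exp(c\g^{-L})^{R'-1}\|\mu_1\|_1$, and then absorb the exponential prefactor by taking $L$ large and setting $C_2$ so that $(1-C_2)^L=1-C_1/2$. The only addition is your explicit (and correct) remark that convolution by a measure preserves $L^2_0(G)$, which the paper leaves implicit; otherwise the two arguments coincide.
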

\pf
Beginning with \eqref{eq:mu1Prop}, apply \eqref{eq:prop1} $R'$ times to get
$$
\|\mu_{1}*\vf\|_{2} \ \le \ 
\exp(c\g^{-L})^{R'-1}
\sum_{\ga_{1}^{L-1},\dots,\ga_{R'}^{L-1}}^*
(1-C_{1})^{R'}
\prod_{j=1}^{R'}
\|\eta_{j}\|_{1}\|\vf\|_{2}.
$$
Applying contraction yet again gives
$$
\sum_{\ga_{1}^{L-1},\dots,\ga_{R'}^{L-1}}^*
\prod_{j=1}^{R'}
\|\eta_{j}\|_{1}
\ \le \
\exp(c\g^{-L})^{R'-1}
\|\mu_{1}\|_{1},
$$
whence \eqref{eq:mu1Bnd} follows on taking $L$ large enough and recalling  \eqref{eq:RpL}.
\epf

Returning to the measure $\nu$ in \eqref{eq:nuDef},
we have from \eqref{eq:mu1Bnd} that
\be\label{eq:nuBndR}
\|\nu*\vf\|_{2} \ \le \ 
(1-C_{2})^{R} \ \|\nu\|_{1} \ \|\vf\|_{2}.
\ee
To conclude Theorem \ref{thm:main}, we need the following
\begin{lem}
Let $\mu$ be a complex distribution on $G=\SL_{2}(q)$ and assume that $|\mu|\le C\nu$. Let $E_{q}\subset L_{0}^{2}(G)$ be the subspace defined in Section \ref{expingredients}, and let $A:E_{q}\to E_{q}$ be the operator acting by convolution with $\mu$. Then
\be\label{eq:1p5}
\| A\| \ \le\ C' \left[{|G| \ \|\widetilde\nu * \nu\|^{2}_{2}\over q}\right]^{1/4}.
\ee
Here $\widetilde \mu(g)=\overline{\mu(g^{-1})}$.
\end{lem}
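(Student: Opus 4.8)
The plan is to deduce \eqref{eq:1p5} from the quasirandomness of $G=\SL_{2}(\Z/q\Z)$ by the standard high-moment/trace ($\ell^{2}$-flattening) argument, as in \cite{BGS2}. First I would record the algebraic set-up. Since $A$ acts on $E_{q}$ by left convolution with $\mu$, its adjoint $A^{*}$ is convolution with $\widetilde\mu$, so $A^{*}A$ is the restriction to $E_{q}$ of convolution with the symmetric measure $\sigma:=\widetilde\mu*\mu$ (symmetric in the sense $\widetilde\sigma=\sigma$). Left convolution commutes with the right regular $G$-action, hence preserves the right-$G$-isotypic decomposition of $\ell^{2}(G)$ and in particular the subspace $E_{q}$ (cf. Section \ref{expingredients}); thus $(A^{*}A)^{2}$ is the restriction to $E_{q}$ of the positive semidefinite operator $\mathrm{conv}_{\sigma*\sigma}=(\mathrm{conv}_{\sigma})^{*}\mathrm{conv}_{\sigma}$. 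I would then use $\|A\|^{4}=\|(A^{*}A)^{2}\|_{\mathrm{op}}$ together with the trace bound, valid because the operator is positive and $E_{q}$ is invariant,
\[
\Tr_{E_{q}}\!\big((A^{*}A)^{2}\big)\ \le\ \Tr_{\ell^{2}(G)}\big(\mathrm{conv}_{\sigma*\sigma}\big)\ =\ |G|\,(\sigma*\sigma)(e)\ =\ |G|\,\|\sigma\|_{2}^{2},
\]
the last equality using $\widetilde\sigma=\sigma$.

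The factor $q$ comes from the fact that every eigenvalue of $(A^{*}A)^{2}|_{E_{q}}$ occurs with large multiplicity. Decomposing $E_{q}$ under the right regular representation into the isotypic components of the \emph{new} irreducibles $\rho$ of $G$ — those not factoring through $\SL_{2}(\Z/q'\Z)$ for any proper divisor $q'\mid q$ — left convolution by $\sigma*\sigma$ acts on the $\rho$-component as $\widehat{\sigma*\sigma}(\rho)\otimes\mathrm{Id}_{\dim\rho}$, so each of its eigenvalues, in particular its largest one (which equals $\|A\|^{4}$), appears with multiplicity at least $\dim\rho$. Since we are working with $(q,Q_{0})=1$ and $Q_{0}$ removes the small prime factors, quasirandomness of $\SL_{2}(\Z/q\Z)$ gives $\dim\rho\gg q$ for every new $\rho$ (by CRT this reduces to the classical fact that the smallest nontrivial representation of $\SL_{2}(\F_{p})$, $p\ge5$, has dimension $(p-1)/2$). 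Hence $q\,\|A\|^{4}\ll\Tr_{E_{q}}\!\big((A^{*}A)^{2}\big)\le|G|\,\|\sigma\|_{2}^{2}$.

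Finally I would pass from $\sigma$ to $\nu$. Since $|\mu|\le C\nu$ pointwise and $\nu$ is a nonnegative measure, a term-by-term comparison of the convolutions gives $|\sigma|=|\widetilde\mu*\mu|\le C^{2}\,\widetilde\nu*\nu$ pointwise on $G$, so $\|\sigma\|_{2}\le C^{2}\|\widetilde\nu*\nu\|_{2}$. Combining with the previous display yields $\|A\|^{4}\ll|G|\,\|\widetilde\nu*\nu\|_{2}^{2}/q$, which is \eqref{eq:1p5} after taking fourth roots.

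The only genuinely external ingredient is the dimension lower bound $\dim\rho\gg q$ for new representations of $\SL_{2}(\Z/q\Z)$ with $(q,Q_{0})=1$; everything else is routine second-moment bookkeeping. The point requiring care is keeping the left and right $G$-actions straight, so that left convolution by $\sigma*\sigma$ genuinely acts as $\widehat{\sigma*\sigma}(\rho)\otimes\mathrm{Id}_{\dim\rho}$ on the multiplicity space of the right regular representation inside $E_q$ — this is exactly what forces the multiplicity $\dim\rho$, and hence the factor $q$, into the estimate, and without it one would only recover the weaker bound $\|A\|\ll(|G|\,\|\sigma\|_{2}^{2})^{1/4}$.
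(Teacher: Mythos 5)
Your proof is correct and follows essentially the same approach as the paper's: the paper also passes to the trace of $(A^{*}A)^{2}$, invokes the multiplicity bound for eigenvalues on $E_{q}$ (cited there as ``Frobenius''), identifies the trace with $|G|\,\|\widetilde\mu*\mu\|_{2}^{2}$, and bounds this by $C^{4}|G|\,\|\widetilde\nu*\nu\|_{2}^{2}$. Your write-up is slightly more explicit than the paper's in two places that it glosses over — (i) that $E_q$ is invariant under the convolution so $\Tr_{E_q}\le\Tr_{\ell^2(G)}$, and (ii) why the eigenvalue multiplicity is $\gg q$ (the isotypic decomposition under the right regular action plus quasirandomness of $\SL_2(\Z/q\Z)$ for $q$ coprime to $Q_0$) — but these are exactly the points the paper is implicitly relying on, not genuinely different ideas.
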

\pf
Note that the operator $A^{*}A$ is self-adjoint, positive, and acts by convolution with $\widetilde\mu*\mu$. Let $\gl$ be an eigenvalue of $A^{*}A$. Since $A$ acts on $E_{q}$, Frobenius gives that $\gl$ has multiplicity $\mult(\gl)$ at least $Cq$. We then have that
\beann
\gl^{2}\ \mult(\gl) 
&\le& 
\tr[(A^{*}A)^{2}]
\ = \
\sum_{g\in G}
\<(A^{*}A)^{2}\gd_{g},\gd_{g}\>
\ = \
\sum_{g\in G}
\|\widetilde \mu * \mu *\gd_{g}\|_{2}^{2}
\\
& =&
|G|\
\|\widetilde \mu * \mu \|_{2}^{2}
\ \le \
C^{4}\ 
|G|\
\|\widetilde \nu * \nu \|_{2}^{2}.
\eeann
The claim follows, as $\|A\|=\max_{\gl}\gl^{1/2}$.
\epf

We apply the lemma to $\mu$ in \eqref{eq:muIs}
using
\eqref{eq:muBndNu}, giving
\be\label{eq:muConvBnd}
\|\mu * \vf\|_{2}\ \le\ C\,q^{1/2}
 \|\widetilde\nu * \nu\|^{1/2}_{2}
 .
\ee
It remains to estimate the $\nu$ convolution.
\begin{prop}
Choosing $R$ to be of size $C\log q$ for suitable $C$, we have that
\be\label{eq:nuBnd}
 \|\widetilde\nu * \nu\|_{2}
 \
 \le
 \
 2{\|\nu\|_{1}^{2}\over |G|^{1/2}}.
\ee
\end{prop}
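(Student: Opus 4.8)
The plan is to split $\nu$ into its mean and a mean-zero remainder and to exploit that convolution against a mean-zero measure contracts $L^2_0(G)$ at a geometric rate in $R$ --- this is precisely the content of \eqref{eq:nuBndR} --- choosing $R$ of size $C\log q$ so that this contraction beats the trivial lower bound $|G|^{-1/2}\ge q^{-3/2}$. First I would record that, since the weights $\exp(\tau_a^M(\cdot))$ and $\exp(\tau_a^R(\cdot))$ are positive reals, $\nu$ is a positive measure, so $\nu(G)=\|\nu\|_1$. Then write $\nu=\nu_0+\nu'$, where $\nu_0=\frac{\|\nu\|_1}{|G|}\mathbf{1}$ is the orthogonal projection of $\nu$ onto the constants and $\nu'=\nu-\nu_0\in L^2_0(G)$, and split $\widetilde\nu*\nu=\widetilde\nu*\nu_0+\widetilde\nu*\nu'$.

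For the first term one computes $\sum_{g}\widetilde\nu(g)=\overline{\nu(G)}=\|\nu\|_1$ and $\mathbf{1}*\mathbf{1}=|G|\,\mathbf{1}$, so $\widetilde\nu*\nu_0=\frac{\|\nu\|_1^2}{|G|}\mathbf{1}$, which has $L^2$-norm exactly $\|\nu\|_1^2/|G|^{1/2}$; this accounts for one of the two copies of $\|\nu\|_1^2/|G|^{1/2}$ on the right of \eqref{eq:nuBnd}. For the second term I would note that left convolution by $\widetilde\nu$ is the Hilbert-space adjoint of left convolution by $\nu$ (via $\langle\mu*\varphi,\psi\rangle=\langle\varphi,\widetilde\mu*\psi\rangle$) and that $L^2_0(G)$ is invariant under both, so the two operators have equal norm on $L^2_0(G)$; then \eqref{eq:nuBndR} applied to $\widetilde\nu$ gives $\|\widetilde\nu*\nu'\|_2\le(1-C_2)^R\|\widetilde\nu\|_1\|\nu'\|_2=(1-C_2)^R\|\nu\|_1\|\nu'\|_2$. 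Since $\nu'$ is an orthogonal projection of $\nu$, $\|\nu'\|_2\le\|\nu\|_2\le\|\nu\|_1$, hence $\|\widetilde\nu*\nu'\|_2\le(1-C_2)^R\|\nu\|_1^2$.

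Finally, using $|G|=|\SL_2(\Z/q\Z)|\le q^3$, it is enough to take $R\approx C\log q$ with $C\ge \tfrac{3}{2}\big/\log\!\big(1/(1-C_2)\big)$, so that $(1-C_2)^R\le q^{-3/2}\le|G|^{-1/2}$ and therefore $\|\widetilde\nu*\nu'\|_2\le\|\nu\|_1^2/|G|^{1/2}$; adding the two contributions by the triangle inequality gives \eqref{eq:nuBnd}. I do not expect any genuine obstacle: the only step that needs a moment's thought is the transfer of \eqref{eq:nuBndR} from $\nu$ to $\widetilde\nu$, and everything else is the elementary arithmetic of convolving against the constant function together with the bound $|G|\le q^3$.
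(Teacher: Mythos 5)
Your proof is correct and follows essentially the same route as the paper: split off the constant part, which contributes exactly $\|\nu\|_1^2/|G|^{1/2}$, and control the mean-zero part by the contraction \eqref{eq:nuBndR} after taking $R\sim C\log q$ so that $(1-C_2)^R\le |G|^{-1/2}$. The paper packages the splitting slightly differently --- it writes $\delta_e=\tfrac{1}{|G|}\mathbf{1}_G+\psi$ and applies the contraction directly to $\nu*\psi\in L^2_0(G)$, which sidesteps the (correct but extra) adjointness step you use to transfer \eqref{eq:nuBndR} from $\nu$ to $\widetilde\nu$ --- but the content is identical.
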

\pf
Let
$$
\psi \ \equiv \ \gd_{e} - \frac1{|G|}\bo_{G} \ \in \ L_{0}^{2}(G),
$$
and note that 
$
\|\psi\|_{2}< 1.
$ 
Then
\beann
 \|\widetilde\nu * \nu\|_{2}
 &=&
  \|\widetilde\nu * \nu*\gd_{e}\|_{2}
\ \le\
\|\widetilde\nu * \nu*\left(\frac1{|G|}\bo_{G}\right)\|_{2}
+
\|\widetilde\nu * \nu*\psi\|_{2}
\\
& \le&
{\|\nu\|_{1}^{2}\over |G|^{1/2}}
+
\|\nu\|_{1}
\| \nu*\psi\|_{2}
,
\eeann
where we used the triangle inequality and Cauchy-Schwarz. 
Since $\psi\in L_{0}^{2}(G)$, we apply \eqref{eq:nuBndR}, giving
$$
\| \nu*\psi\|_{2}
<
(1-C_{2})^{R} \ \|\nu\|_{1}
<
{\|\nu\|_{1}\over |G|^{1/2}}
$$
by a suitable choice of $R=C\log q$. The claim follows immediately.
\epf

Finally, we give a
\pf[Proof of Theorem \ref{thm:main}]
Insert \eqref{eq:nuBnd} into \eqref{eq:muConvBnd} and use \eqref{eq:muBndNu} and $|G|>Cq^{3}$. Clearly \eqref{eq:main} holds with $B=C\|\nu\|_{1}$. 
\epf


\bibliographystyle{plain}

\end{document}